\documentclass[10pt,letterpaper,reqno]{amsart}
\usepackage{amsmath,amsfonts,amsthm,amssymb,amscd}
\usepackage{graphicx} 
\usepackage{upref} 
\usepackage{mathptmx}
\usepackage[T1]{fontenc}
\usepackage{bm}
\usepackage{calc}
\usepackage{caption}
\usepackage{hyperref}
\usepackage[usenames,dvipsnames]{xcolor}

\usepackage{mathrsfs}
\usepackage{changepage}
\usepackage{longtable}
\usepackage{enumerate,letltxmacro}
\LetLtxMacro\itemold\item

\newcommand{\inditem}{\itemindent05mm\itemold}

\addtolength{\textwidth}{2cm} \addtolength{\hoffset}{-1cm}
\addtolength{\marginparwidth}{-1cm} \addtolength{\textheight}{2cm}
\addtolength{\voffset}{-1cm}

\newtheorem{theorem}{Theorem}[section]
\newtheorem*{theorem*}{Theorem}	

\newtheorem{corollary}[theorem]{Corollary}
\newtheorem{lemma}[theorem]{Lemma}
\newtheorem{proposition}[theorem]{Proposition}

\newtheorem{definition}[theorem]{Definition}

\newtheorem{remark}[theorem]{Remark}

\newcommand{\R}{\mathbb{R}}

\newcommand{\ti}{\tilde}
\newcommand{\ol}{\overline}
\newcommand{\tor}{ \left(\frac{t}{r}\right)}

\newcommand{ \eeq  }{\, = \,}
\newcommand{\neeq }{\, \neq \,}
\newcommand{\geeq}{\,  \ge  \,}
\newcommand{ \leeq}{\,  \le  \,}
\newcommand{\pl }{\, + \,}
\newcommand{\mn}{\, - \,}

\newcommand{\st}[1]{\, #1 \,}

\newcommand{\lrf}[2]{ \left( \frac{#1}{#2} \right)}

\makeatletter
\renewcommand*\env@matrix[1][c]{\hskip -\arraycolsep
   \leeq t\@ifnextchar\new@ifnextchar
  \array{*\c@MaxMatrixCols #1}}
\makeatother

\title{Isoperimetric Regions in $\R^n$ with Density $r^p$}
\author[W. Boyer]{Wyatt Boyer}
\author[B. Brown]{Bryan Brown}
\author[G. Chambers]{Gregory R. Chambers}
\author[A. Loving]{Alyssa Loving}
\author[S. Tammen]{Sarah Tammen}

\begin{document}

\begin{abstract}
We show that the unique isoperimetric regions in $\R^n$ with density $r^p$ for $n  \geeq  3$ and $p  \st{>} 0$ are balls with boundary through the origin.
\end{abstract}

\maketitle
\addcontentsline{toc}{section}{Acknowledgement}
\tableofcontents

\section{ Introduction}
Recently, there has been a surge of interest in manifolds with density, partly because of their role in Perelman's proof of the Poincar\'e Conjecture.  We consider the isoperimetric problem when volume and perimeter are weighted by the density function \( r^ p\) and prove the following theorem:\\ \\
\textbf{Theorem \ref{rp_thm}.} \emph{In  \( \R^n \) with density \( r^p \), where \( n  \geeq  3 \) and \( p  \st{>} 0 \), the unique isoperimetric regions, up to sets of measure zero, are balls with boundary through the origin.} 
\\

The density \( r^p \) is one of the simplest radial density functions, but it has some interesting properties.  First,  \( r^p \) is homogeneous in degree \( p \), which means that given an isoperimetric region of one volume, we can scale it to get an isoperimetric region of a different volume.  Second, \(r^p\) (or a constant multiple) is the only density for which spheres through the origin could be isoperimetric (see e.g. Rmk. \ref{r^p_only}).  We can view our present problem as a venture either to prove a partial converse of this statement in the case that \( p > 0 \) or to extend the work of Dahlberg et al., who proved the result in \( \R^2\) \cite[Thm. 3.16]{dahlberg}.
D\'iaz et al. \cite[Conj. 7.6]{diaz} conjectured the generalization to \( \R^n \) and reduced 
the problem to analyzing planar curves. Recently, 
Chambers \cite[Thm. 1.1]{chambers} proved that balls 
\emph{centered} at the origin are isoperimetric in \( \R^n \) 
with any radial \emph{log-convex} density.\\
\indent  We adapt Chambers' proof to density \( r^p \).  Like Chambers, we first consider an isoperimetric region that is spherically symmetric (see Defn. \ref{symmetrization_def}), then prove the result in the general case.  Given a spherically symmetric isoperimetric region, we prove that the generating curve for the boundary is a circle through the origin.  The behavior of this curve is determined by a differential equation corresponding to the fact that isoperimetric hypersurfaces have constant generalized mean curvature \cite[Defn. 2.3]{morgan_pratelli}.  By spherical symmetry and regularity, the rightmost point of the curve is on the \(e_1\)-axis, and the tangent vector at this point is vertical.  Our Lemmas \ref{cn_circ_gamma_circ} and \ref{kappa(0) equals lambda(0)} show that if the osculating circle at the rightmost point of the curve, which we may assume to be \( (1,0) \),
  goes through the origin, then the curve is a circle through the origin.\\
\indent   We suppose for contradiction that the initial osculating circle does not pass through the origin, then take two cases according to whether its center is right or left of \((1/2, 0) \).  We call these cases the right case and the left case, respectively.  In the right case, the curve is like that in Chambers' proof in that the curvature is greater at a point above the \(e_1\)-axis with tangent vector in the third quadrant than at the point of the same height with tangent vector in the second quadrant.  As a result, the curve has a vertical tangent before it meets the \(e_1\)-axis again and then curves in to meet the axis at an angle (Fig. \ref{fig:left_and_right}, right).  In the left case, the opposite inequality regarding curvatures holds, and, as a result, the curve never returns to vertical before reaching the axis (Fig. \ref{fig:left_and_right}, left).\\
\begin{figure}
	\begin{center}
		\includegraphics[scale =  0.49]{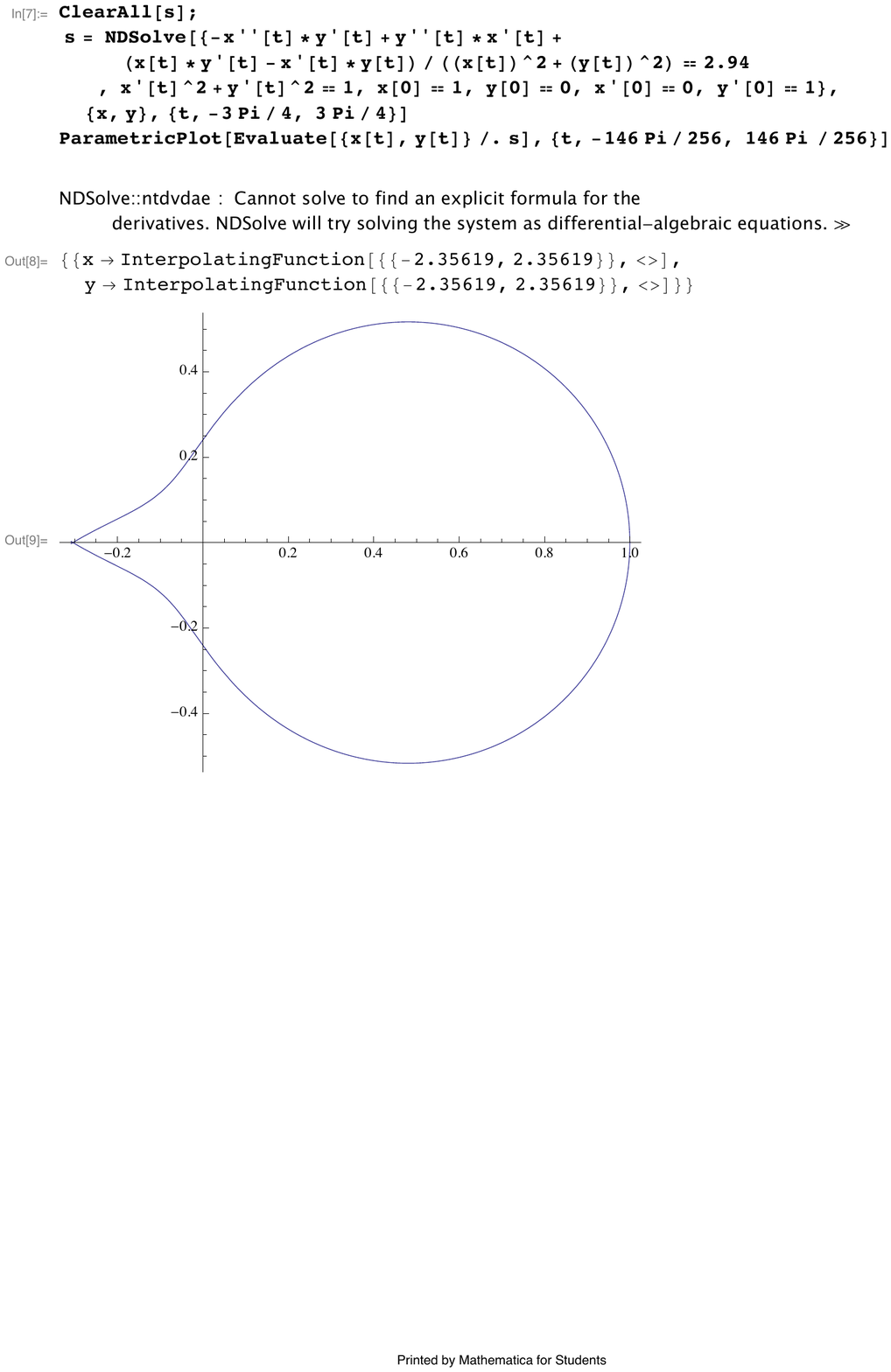}
		\hspace{2 cm}
	    	 \includegraphics[scale =  0.335]{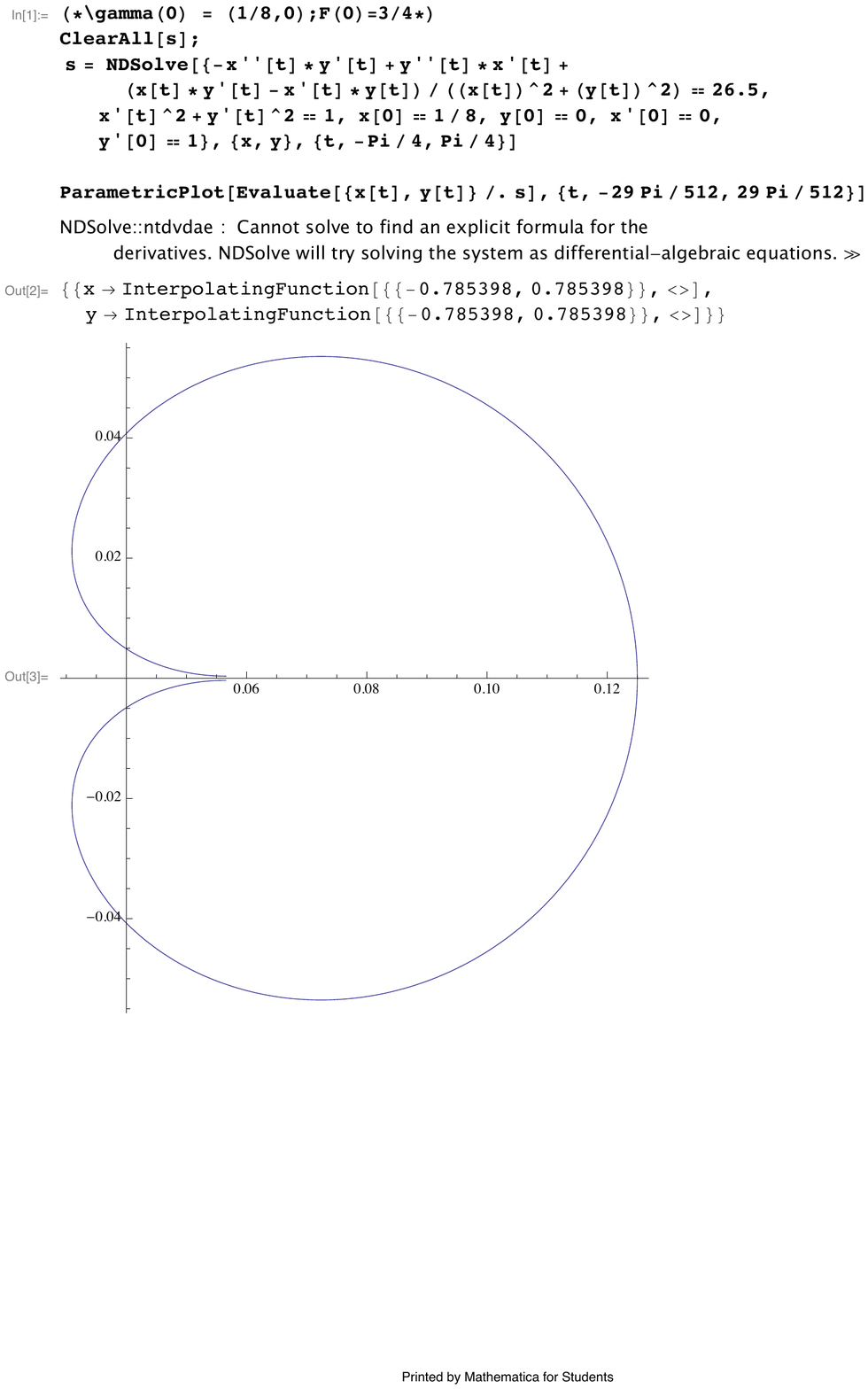}
		 \caption{Sample curves in the left and right cases \label{fig:left_and_right}}
	\end{center}
\end{figure}
\indent  The left case presents the new challenge of showing that there is only one point on the upper half of the curve where the tangent vector is horizontal (Prop. \ref{just_delta}).  Additionally, although the curve in the right case is similar to that in Chambers, the proof is different in that we do not have the hypothesis that an isoperimetric hypersurface is mean convex, which is what Chambers used to prove that curvature was positive on the final segment of the curve (\cite[Prop. 4.1]{chambers}).  We achieve the same result by computations that depend on the fact that our curve ends right of the \(e_2\)-axis (Lemma \ref{end_positive}), which is a property that may not hold for the generating curve in Chambers.\\

\section{Existence, Regularity, and Symmetry}

\begin{definition}
\emph{A} region \emph{$E$ is a measurable subset of $\R^n$.  Its} weighted volume
\emph{is the integral of the density over \(E\). Its} boundary \emph{is the 
topological boundary. Its} weighted perimeter \emph{is the integral of the density over the 
boundary with respect to \((n-1)\)-dimensional Hausdorff 
measure. We say a region is} isoperimetric \emph{if it minimizes weighted perimeter 
for fixed weighted volume.}
\end{definition}

Theorem \ref{existence}, a result of Morgan and Pratelli, guarantees the existence of isoperimetric regions of all volumes.  After defining a regular point (Defn. \ref{regular}), we state a standard result on the regularity of isoperimetric hypersurfaces.

\begin{theorem}
\label{existence}
 \cite[Thm. 3.3]{morgan_pratelli}
Assume that $f$ is a (lower-semicontinuous) radial density that diverges to infinity.  Then there exist isoperimetric sets of all volumes.
\end{theorem}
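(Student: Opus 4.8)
The statement is the existence theorem of Morgan and Pratelli, and the plan is the direct method of the calculus of variations in the space of sets of finite perimeter; the one subtle point is that the hypothesis $f\to\infty$ is exactly what keeps weighted volume from leaking to infinity along a minimizing sequence. Fix a target weighted volume $V>0$. A ball, or a thin spherical shell, of appropriate size has finite weighted volume and finite weighted perimeter, so $I(V):=\inf\{P_f(E):V_f(E)=V\}$ is finite, and there is no loss in restricting the infimum to regions of finite weighted perimeter. Choose a minimizing sequence $(E_j)$ with $V_f(E_j)=V$ and $P_f(E_j)\le I(V)+1$.

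\emph{Compactness and lower semicontinuity.} On every compact subset of $\R^n\setminus\{0\}$ the density is bounded below by a positive constant, so the bound on $P_f(E_j)$ forces the $E_j$ to have uniformly locally bounded Euclidean perimeter there, and since a point is Lebesgue-null this gives uniformly locally bounded perimeter on all of $\R^n$; by the compactness theorem for sets of finite perimeter, after passing to a subsequence $E_j\to E$ in $L^1_{\mathrm{loc}}(\R^n)$ for some set $E$ of locally finite perimeter. For lower semicontinuity, write $f=\sup_k g_k$ with $g_k$ continuous, non-negative and increasing in $k$, which is possible since $f$ is lower semicontinuous; for a continuous non-negative weight the associated weighted perimeter is lower semicontinuous under $L^1_{\mathrm{loc}}$ convergence of sets, so the $g_k$-perimeter of $E$ is at most $\liminf_j P_f(E_j)=I(V)$ for each $k$, and letting $k\to\infty$ by monotone convergence gives $P_f(E)\le I(V)$. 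Fatou's lemma, applied on each ball $B_R$ and then letting $R\to\infty$, gives $V_f(E)\le V$.

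\emph{No loss of volume — the crux.} It remains to prove $V_f(E)=V$. Suppose not, and set $\delta:=V-V_f(E)>0$; then for every $R$ a weighted volume at least $\delta/2$ of $E_j$ escapes the ball $B_R$ for all large $j$. Decompose the tail $E_j\setminus B_R$ into the dyadic annuli $A_i=B_{2^{i+1}R}\setminus B_{2^iR}$, $i\ge 0$. On $A_i$ one has $f\ge M_i$ with $M_i\to\infty$, and for the density $r^p$, or any density comparable from above and below on dyadic annuli, the weighted and Euclidean volumes of $E_j\cap A_i$ are comparable. The Euclidean relative isoperimetric inequality in an annulus, whose constant is dimensional by scaling, bounds the Euclidean perimeter of $E_j$ inside $A_i$ below by a fixed power of $|E_j\cap A_i|$; multiplying by $M_i$, summing over $i$, and using $\sum_i V_f(E_j\cap A_i)\ge\delta/2$ together with $M_i\to\infty$ yields a bound of the form $P_f(E_j)\ge c(n,p)\,R^{p/n}\,\delta$, which contradicts $P_f(E_j)\le I(V)+1$ once $R$ is large. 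In the generality stated by Morgan and Pratelli one instead passes to a \emph{generalized} isoperimetric region — the local limit $E$ together with a piece that has run off to infinity — and observes that $f\to\infty$ forces any nonempty piece at infinity to have infinite weighted perimeter, hence to be empty. Either way, $V_f(E)=V$, and combined with $P_f(E)\le I(V)$ this gives $P_f(E)=I(V)$; after modifying $E$ on a null set so that its topological boundary supports the perimeter, $E$ is an isoperimetric region of weighted volume $V$.

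The main obstacle is this last step: quantifying the escape of volume and converting it, via the Euclidean isoperimetric inequality, into an unbounded contribution to the weighted perimeter. This is precisely where the divergence of $f$ is indispensable, and in full generality it is what necessitates the language of generalized isoperimetric regions rather than an honest limit.
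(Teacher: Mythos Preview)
The paper does not prove this theorem at all: it is quoted verbatim from Morgan--Pratelli \cite[Thm.~3.3]{morgan_pratelli} and invoked as a black box, so there is nothing in the paper to compare your argument against. Your sketch is a reasonable outline of the direct-method proof that Morgan and Pratelli actually carry out, and you have correctly identified the decisive point, namely that divergence of $f$ at infinity is what prevents volume from escaping along a minimizing sequence.

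One remark on the sketch itself: the dyadic-annulus estimate you write down is a bit loose. From the relative isoperimetric inequality in each $A_i$ you get Euclidean perimeter $\gtrsim |E_j\cap A_i|^{(n-1)/n}$, and after weighting by $M_i$ and summing you cannot pass directly from $\sum_i V_f(E_j\cap A_i)\ge\delta/2$ to a lower bound of the form $c\,R^{p/n}\delta$ on $\sum_i M_i\,|E_j\cap A_i|^{(n-1)/n}$ without controlling how the mass is distributed among the annuli; concavity of $t\mapsto t^{(n-1)/n}$ runs the wrong way for a naive Jensen argument. Morgan and Pratelli handle this by a more careful concentration-compactness argument (essentially your second alternative, via generalized minimizers), which is the cleaner route in the stated generality. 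For the specific density $r^p$ one can also make the annulus argument rigorous, but it takes more bookkeeping than you have indicated.
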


\begin{definition} 
\label{regular}
\emph{(Regular Point) Let $E$ be an isoperimetric region.  We say that a point $P \in \partial E$ is} regular \emph{if there is an open set $U$ containing $P$ so that $\partial E \cap U$ is a smooth, embedded $(n-1)$-dimensional manifold.}
\end{definition}

\begin{proposition}
\label{regularity result}
 \cite[Cor. 3.8]{morgan_regularity} Let $S$ be an $n$-dimensional isoperimetric hypersurface in a manifold $M$ with $C^{\,k-1, \, \alpha}$ ($k \geeq 1, \, 0 \st{<} \alpha \st{<}1$) and Lipschitz Riemannian metric.  Then except for a set of Hausdorff dimension at most $n-7$, $S$ is locally a $C^{\,k,\alpha}$ submanifold; real analytic if the metric is real analytic.
\end{proposition}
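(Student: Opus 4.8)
This is the standard interior regularity theorem for isoperimetric hypersurfaces, here imported from Morgan; the plan is to reduce it to the regularity theory of almost-minimizing sets of finite perimeter and then bootstrap. The first step is to show that an isoperimetric region $E$ is a local $(\Lambda,r_0)$-minimizer of the weighted perimeter: there exist constants $\Lambda,r_0>0$ such that $P(E)\le P(F)+\Lambda\,|E\triangle F|$ for every competitor $F$ with $E\triangle F\Subset B_{r_0}(x)$. This is the rigorous form of the Lagrange-multiplier principle: a small local change of weighted volume $v$ can always be compensated by adding or carving out a small piece of a fixed region supported far from $x$, at weighted-perimeter cost $O(|v|)$, and the constant $\Lambda$ is controlled by the generalized mean curvature (the multiplier). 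Since the metric is Lipschitz and, away from the locus where the weight degenerates --- for a density such as $r^p$ this is only the single point $0$ --- locally comparable to the Euclidean metric, after a bi-Lipschitz chart $E$ becomes an honest Euclidean $(\Lambda,r_0)$-minimizer.

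With almost-minimality in hand, one invokes the De Giorgi--Tamanini theory for such minimizers: the density ratio $\rho\mapsto P(E;B_\rho(x))/\rho^{\,n}$ is almost monotone, blow-ups converge subsequentially to perimeter-minimizing cones, and at points where the excess is small $\partial^* E$ is a $C^{1,\alpha}$ embedded hypersurface, with $\alpha$ depending only on $n$ and $\Lambda r_0$. The singular set $\Sigma=\partial E\setminus\partial^* E$ is then the set of points whose tangent cone is not a hyperplane; Federer's dimension-reduction argument, together with the Simons / Bombieri--De Giorgi--Giusti classification of area-minimizing hypercones (flat in $\R^m$ for $m\le 7$, with the Simons cone sharp in $\R^8$), yields $\dim_{\mathcal H}\Sigma\le (n+1)-8=n-7$. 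The $\Lambda$-term scales away under blow-up, so the cone analysis is identical to that for genuine area minimizers.

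Finally, on the regular part $\partial^* E$ has prescribed mean curvature: written locally as a graph, the graphing function solves a quasilinear elliptic equation whose coefficients inherit the regularity of the metric and of the weight. Schauder estimates then upgrade $C^{1,\alpha}$ to $C^{\,k,\alpha}$ whenever the metric is $C^{\,k-1,\alpha}$, and Morrey's analyticity theorem gives real-analyticity when the metric is real-analytic.

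The main obstacle is the dimension-reduction step: it rests on the full classification of minimizing hypercones and on the compactness theory for almost-minimizers, and is precisely why the exceptional set has dimension $n-7$ rather than something smaller. For the purposes of this paper, however, the whole proposition is quoted from Morgan; what will actually be used downstream is only that the boundary of an isoperimetric region is smooth off a set of very low dimension, which --- together with spherical symmetry --- lets us treat its generating curve as a smooth curve.
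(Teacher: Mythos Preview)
The paper does not prove this proposition at all; it is stated purely as a citation to Morgan \cite[Cor.~3.8]{morgan_regularity}, with no accompanying argument. Your sketch is a correct outline of the standard regularity theory underlying that reference---almost-minimality, De Giorgi--Tamanini $C^{1,\alpha}$ regularity, Federer dimension reduction against the Simons classification of minimizing hypercones, and Schauder bootstrapping---and you yourself correctly note in your final paragraph that the result is imported as a black box. There is therefore nothing in the paper to compare your argument against; your proposal simply supplies background that the authors chose to omit.
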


By \cite[Rmk. 3.10]{morgan_regularity}, the conclusion of Proposition \ref{regularity result} holds for a Riemannian manifold with density, provided that the density function is at least as smooth as the metric.  In our case, the density $r^p$ is smooth on $\R^{n} \mn \{0\}$.  Thus,
if $E \subset \R^n$ is an isoperimetric region for density $r^p$, then $\partial E$ is regular except on a set of Hausdorff dimension at most $n-8$, after perhaps altering $E$ by a negligible set of measure 0; henceforth we assume regions open.  By the first variation formula, generalized mean curvature is constant on the set of regular points.
The following proposition gives a sufficient condition for $\partial E$ to be regular at a point.

\begin{proposition}
\label{loc_halfspace_implies_regular}
If $P \in \partial E$ and $E$ locally lies in a half-space to one side of a hyperplane through $P$,  then $\partial E$ is regular at $P$, provided that the density function is positive at $P$.  
\end{proposition}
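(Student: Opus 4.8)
The plan is to reduce the claim to the interior $\varepsilon$-regularity theory for Euclidean almost-minimizers via a blow-up argument. First I would observe that, since $f(P)>0$, necessarily $P\neq 0$, so on a small ball $B=B(P,\rho)$ the density $f=r^p$ is smooth and pinched between positive constants, $0<m\le f\le M$. A standard cut-and-paste comparison (comparing the weighted perimeter of $E$ with that of a competitor agreeing with $E$ outside a small ball, with weighted volume restored by adding or removing a ball, and using $m\,\mathrm{Per}(\,\cdot\,;B_s)\le P_f(\,\cdot\,;B_s)\le M\,\mathrm{Per}(\,\cdot\,;B_s)$ for $B_s\subset B$) shows that $E$ is a $(\Lambda,\rho)$-perimeter minimizer near $P$ in the unweighted sense, with $\Lambda$ depending only on $m$, $M$, $n$: one has $\mathrm{Per}(E;B_s)\le \mathrm{Per}(F;B_s)+\Lambda\,|E\triangle F|$ whenever $E\triangle F\Subset B_s$, $s<\rho$.

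Next I would blow up at $P$. Translate $P$ to the origin and rotate so that the supporting hyperplane is $\{x_n=0\}$ and $E$ lies locally in $\{x_n\le 0\}$. By the monotonicity formula and compactness for $(\Lambda,\rho)$-minimizers, the rescalings $E_k:=E/s_k$ with $s_k\downarrow 0$ subconverge in $L^1_{\mathrm{loc}}$ to a cone $C$ with vertex $0$ whose boundary is an area-minimizing hypersurface in $\R^n$; since each $E_k\subset\{x_n\le 0\}$, also $C\subset\{x_n\le 0\}$, and $0\in\partial C$ because $P\in\partial E$. The crux is then the claim that an area-minimizing cone whose support lies in a half-space with vertex on the bounding hyperplane must be that half-space. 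Granting this, $\partial C$ is a multiplicity-one hyperplane, so the density of $\partial E$ at $P$ equals $1$; by Allard's theorem (equivalently De Giorgi's $\varepsilon$-regularity theorem for $(\Lambda,\rho)$-minimizers) $\partial E$ is a $C^{1,\alpha}$ hypersurface near $P$, and since it then satisfies the prescribed-mean-curvature equation for the density $r^p$, which is real-analytic away from $0$, elliptic bootstrapping upgrades it to a smooth (indeed analytic) hypersurface there, so $P$ is a regular point.

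The only genuinely nontrivial point is the cone claim. On the regular part of $\partial C$ the coordinate $x_n$ is harmonic (the restriction to a minimal hypersurface of a linear function), it is $\le 0$, and by the conical structure it either vanishes identically, or attains its maximum value $0$ only at the vertex, or is $0$ along an entire ray of regular points. Comparing $\partial C$ with the stationary hyperplane $\{x_n=0\}$ and applying the strong maximum principle for minimal hypersurfaces — in the version valid for area-minimizing boundaries, which tolerates the a priori possibly singular vertex — forces $x_n\equiv 0$ on $\partial C$, i.e.\ $\partial C=\{x_n=0\}$. I expect making this maximum principle rigorous at the cone point to be the main obstacle; everything else is routine geometric-measure-theory machinery. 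In the concrete setting of this paper one may alternatively appeal to the known fact that a point at which an isoperimetric region locally lies on one side of a hyperplane through it is a regular point.
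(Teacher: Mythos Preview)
Your argument is correct and follows the same strategy as the paper: pass to the tangent cone, show that an area-minimizing cone supported in a half-space with vertex on the bounding hyperplane must be that hyperplane, and invoke Allard/De Giorgi regularity. The paper compresses all of this into two lines by citing \cite[Prop.~3.5, Rmk.~3.10]{morgan_regularity}, whereas you unpack the machinery explicitly.

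One remark on the step you flag as the main obstacle. The maximum-principle route works, but there is a shorter argument that sidesteps the issue of singularities entirely. With $C\subset H=\{x_n\le 0\}$ and $0\in\partial C$, take the competitor $F=C\cup(B_r\cap H)$; since $C\subset H$, $F$ agrees with $C$ outside $B_r$ and equals $B_r\cap H$ inside, so $P(F;B_r)=\omega_{n-1}r^{n-1}$. Minimality of $C$ gives $P(C;B_r)\le \omega_{n-1}r^{n-1}$, hence the density of $\partial C$ at $0$ is at most $1$; it is also at least $1$ since $0\in\mathrm{spt}\,\|\partial C\|$. Density exactly $1$ already gives regularity at the vertex by Allard, and a cone that is smooth through its vertex is a hyperplane. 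This avoids invoking a strong maximum principle across a possibly singular set.
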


\begin{proof}
Since $E$ is an isoperimetric minimizer and the oriented tangent cone at $P$ lies in a halfspace, the 
oriented tangent cone is a hyperplane.  The result follows by \cite[Prop. 3.5, Rmk. 3.10]{morgan_regularity}.
\end{proof}

\begin{corollary}
\label{regular_max_magnitude}
All points in $\partial E$ of maximal distance from the origin are regular.  
\end{corollary}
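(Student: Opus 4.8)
The plan is to reduce everything to Proposition \ref{loc_halfspace_implies_regular}. Write $R = \sup\{\,|x| : x \in \partial E\,\}$ and fix a point $P \in \partial E$ realizing this supremum, so $|P| = R$. Since $E$ is an isoperimetric region for a positive weighted volume, $E$ is a nonempty open set, so $R > 0$; in particular the density $r^p$ is positive at $P$, which is the nondegeneracy hypothesis of Proposition \ref{loc_halfspace_implies_regular}.

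The first real step is to show $E \subseteq \overline{B(0,R)}$. The key observation is that the open set $\{x \in \R^n : |x| > R\}$ is connected (we are in $\R^n$ with $n \geeq 3$) and disjoint from $\partial E$, hence it lies entirely inside $E$ or entirely inside $\R^n \setminus E$. If it lay inside $E$, then since $r^p \to \infty$ as $r \to \infty$ the region $E$ would have infinite weighted volume, contradicting the fact that it minimizes weighted perimeter for a \emph{finite} weighted volume. Therefore $\{\,|x| > R\,\} \subseteq \R^n \setminus E$, i.e. $E \subseteq \overline{B(0,R)}$.

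Now I would invoke convexity of the closed ball: the hyperplane $\Pi = \{\,x : \langle x - P,\, P \rangle = 0\,\}$ passes through $P$ and supports $\overline{B(0,R)}$, so $\overline{B(0,R)}$ — and hence $E$ — lies in the closed half-space $\{\,x : \langle x - P,\, P\rangle \leeq 0\,\}$. Thus $E$ lies (globally, hence \emph{a fortiori} locally) in a half-space to one side of a hyperplane through $P$, and since $r^p$ is positive at $P$, Proposition \ref{loc_halfspace_implies_regular} yields that $\partial E$ is regular at $P$. As $P$ was an arbitrary point of $\partial E$ of maximal distance from the origin, this proves the corollary.

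The only place that requires any care is the inclusion $E \subseteq \overline{B(0,R)}$: a priori the fact that $\partial E$ lies within distance $R$ of the origin does not force $E$ itself to be bounded, which is why I pair the connectedness of the exterior of the ball with the divergence of the density (equivalently, with the boundedness of isoperimetric regions in the setting of Theorem \ref{existence}). Once that containment is in hand, the supporting-hyperplane argument and the appeal to Proposition \ref{loc_halfspace_implies_regular} are immediate.
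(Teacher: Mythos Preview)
Your proof is correct and follows the same route the paper has in mind: the corollary is stated without proof as an immediate consequence of Proposition \ref{loc_halfspace_implies_regular}, and your argument simply fills in the details of that deduction. Your care in showing $E \subseteq \overline{B(0,R)}$ (rather than just $\partial E \subseteq \overline{B(0,R)}$) via connectedness of the exterior and finiteness of the weighted volume is a genuine point the paper glosses over, so if anything your write-up is more complete.
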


\begin {definition}
\label{symmetrization_def}
\emph{(Spherical Symmetrization) Given a region $E \subset \R^n$, let $A_E(r)$ denote the area of the intersection of $E$ with $S_r$, the sphere of radius $r$ centered at the origin.  We define the \emph{spherical symmetrization} of $E$ to be the unique set $E^*$ such that for all $r  \st{>} 0$,  $A_E(r)  \eeq   A_{E^*}(r)$,
and $E^* \cap S_r$ is a closed spherical cap that passes through $(r,0, ...,0)$ and is rotationally symmetric about the $e_1$-axis.}
\end{definition}

\begin{remark}
\label{off-axis_regularity}
\emph{Since the set of singularities on the boundary of an isoperimetric region $E \subset \R^n$ has dimension at most $(n-8)$, it follows that if $E$ is spherically symmetric about the $e_1$-axis, then all points in $\partial E$ that are not on the $e_1$-axis are regular.}
\end{remark}

The following  theorem demonstrates that for a radial density, spherical symmetrization preserves weighted volume but does not increase weighted perimeter. Moreover there are certain conditions under which the perimeter of a region remains the same after symmetrization only if the original region was spherically symmetric about some (oriented) line through the origin.

\begin{theorem}
\cite[Thm. 6.2]{morgan_pratelli}
\label{symmetrization}
Let $f$ be a radial density on $\R^n$, and let E be a set of finite volume.  Then the spherical symmetrization $E^*$ satisfies 
$$|E^*|  \eeq   |E|$$ and $$P(E^*)  \leeq  P(E).$$
Suppose further that $E$ is an open set of finite perimeter, and let $\nu(x)$ denote the normal vector at any $x \in \partial E$.  If 
\( \mathscr{H}^{n-1}  \left(x \in \partial E: \nu(x)  \eeq   \pm \frac{x}{|x|} \right)  \eeq   0, \)
and the set 
\( I_E : \eeq  \{r \st{>}0: 0  \st{<}  \mathscr{H}^{n-1}(E \cap S_r)  \st{<}  \mathscr{H}^{n-1}(S_r)\} \)
is an interval, then $P(E^*)  \eeq   P(E)$ if and only if $E  \eeq   E^*$ up to rotation about the origin.
\end{theorem}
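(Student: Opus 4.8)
Since $f$ is radial, write $f = f(r)$ with $r = |x|$ and use generalized polar coordinates $x = r\omega$, $\omega \in S^{n-1}$. For a measurable set $E$ put $E(r) = \{\omega \in S^{n-1} : r\omega \in E\}$ and let $m(r) = \mathscr{H}^{n-1}(E(r))$ be its \emph{radial profile}, so that $A_E(r) = r^{n-1} m(r)$ and, by Cavalieri, $|E| \eeq \int_0^\infty f(r)\, r^{n-1} m(r)\, dr$. By construction $E^*(r)$ is the geodesic cap about $e_1$ with $\mathscr{H}^{n-1}(E^*(r)) = m(r)$; hence $E^*$ has the same profile as $E$, so $A_{E^*} = A_E$ and $|E^*| = |E|$, with no regularity hypothesis on $E$. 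The profile $m$ is the only feature of $E$ that survives symmetrization, so the whole game is to bound $P(E)$ below by a quantity depending on $m$ alone that is attained by $E^*$.

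\textbf{The perimeter inequality.}
If $P(E) = \infty$ there is nothing to prove, so assume $E$ has finite perimeter with reduced boundary $\partial^* E$, measure‑theoretic outer normal $\nu$, and radial component $a(x) = \nu(x)\cdot x/|x|$; then the tangential part of $\nu$, and likewise the tangential gradient $\hat r - a\nu$ of $r$ along $\partial^* E$, have length $\sqrt{1-a^2}$. Split $\partial^* E = N \sqcup (\partial^* E \setminus N)$ with $N = \{a = \pm 1\}$. On $N$ the tangent plane of $\partial E$ coincides with that of $S_{|x|}$, so (up to $\mathscr{H}^{n-1}$‑null sets) $N$ is a union of spherical panels at the at most countably many radii where $m$ jumps, and its weighted‑perimeter contribution depends on $m$ alone — hence is identical for $E$ and $E^*$. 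On $\partial^* E \setminus N$ the tangential gradient of $r$ is nonzero, so the coarea formula for $u = |x|$ gives
$$\int_{\partial^* E \setminus N} f\, d\mathscr{H}^{n-1} \eeq \int_0^\infty \Bigl( \int_{\partial^* E \cap S_r} \frac{f(r)}{\sqrt{1-a^2}}\, d\mathscr{H}^{n-2} \Bigr)\, dr,$$
where for a.e.\ $r$ the slice $E \cap S_r$ is a set of finite perimeter inside $S_r$ with reduced boundary $\partial^* E \cap S_r$. Using $1/\sqrt{1-a^2} = \sqrt{1 + a^2/(1-a^2)}$, $f(r)$ constant on the slice, and Minkowski's integral inequality applied to the pair $\bigl(1,\, a/\sqrt{1-a^2}\bigr)$,
$$\int_{\partial^* E \cap S_r} \frac{f(r)}{\sqrt{1-a^2}}\, d\mathscr{H}^{n-2} \geeq \sqrt{\bigl(f(r)\,\mathscr{H}^{n-2}(\partial^* E \cap S_r)\bigr)^2 + \bigl(f(r)\,\Phi_E(r)\bigr)^2},$$
where $\Phi_E(r) = \int_{\partial^* E \cap S_r} a/\sqrt{1-a^2}\, d\mathscr{H}^{n-2}$ is the flux of $\partial E$ through $S_r$; a first‑variation computation identifies $\Phi_E(r) = -\, r^{n-1} m'(r)$, so it too depends only on the profile, and only $\Phi_E^2$ enters. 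Since the geodesic cap minimizes relative perimeter on the round sphere for given area, $\mathscr{H}^{n-2}(\partial^* E \cap S_r) \geeq \mathscr{H}^{n-2}(\partial^* E^* \cap S_r)$. Substituting, integrating, and adding back the common $N$‑term yields $P(E) \geeq P(E^*)$. For the rotationally symmetric set $E^*$ the slice integrand is constant on each $\partial^* E^* \cap S_r$, so Minkowski is an equality, and caps realize equality in the spherical isoperimetric inequality; hence the chain above computes $P(E^*)$ exactly, which proves $P(E^*) \leeq P(E)$.

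\textbf{The equality case.}
Suppose now $E$ is open of finite perimeter with $\mathscr{H}^{n-1}(N) = 0$, with $I_E$ an interval, and $P(E) = P(E^*)$. Equality must then hold in every step for a.e.\ $r \in I_E$: rigidity of the spherical isoperimetric inequality forces $E \cap S_r$ to be a geodesic cap, centered at some pole $\omega(r) \in S^{n-1}$; and equality in Minkowski's inequality forces $\nu$ to have a.e.-constant direction along $\partial^* E \cap S_r$, so that near a.e.\ such sphere $\partial E$ is a hypersurface of revolution about the line $\ell(r) = \R\,\omega(r)$. Because $\mathscr{H}^{n-1}(N) = 0$, the profile $m$ (of bounded variation) has no jumps, hence is continuous, so $I_E = \{r : 0 < m(r) < \mathscr{H}^{n-1}(S^{n-1})\}$ is an open interval on whose interior the cap $E(r)$ is proper and its pole $\omega(r)$ uniquely determined; uniqueness plus the revolution structure make $\omega(r)$ depend continuously on $r$. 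Continuity on the connected set $I_E$ forces $\ell(r) \equiv \ell$, a single line through the origin, and rotating $\ell$ to the $e_1$-axis carries $E$ to $E^*$. The converse is immediate, since weighted volume and perimeter are invariant under rotations about the origin.

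\textbf{Main obstacle.}
The substantive difficulty is the measure‑theoretic bookkeeping rather than any single inequality: making the coarea/slicing steps rigorous for a general set of finite perimeter (the a.e.\ finite‑perimeter structure of $E \cap S_r$ inside $S_r$, and the identification $\Phi_E(r) = -\,r^{n-1} m'(r)$ with the correct powers of $r$), controlling the spherical part $N$, and — in the equality case — upgrading the a.e.-defined local axes of revolution to one global axis. Both hypotheses are needed exactly here: $\mathscr{H}^{n-1}(N) = 0$ forbids the flat spherical panels in $\partial E$ that would let the profile jump (and so pins down a continuous pole), while ``$I_E$ is an interval'' rules out configurations stacking caps about different axes on disjoint radial bands — configurations with the same weighted perimeter as their symmetrization but not a rotation of it.
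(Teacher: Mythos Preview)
The paper does not give its own proof of this statement: Theorem~\ref{symmetrization} is quoted from Morgan--Pratelli \cite[Thm.~6.2]{morgan_pratelli} and used as a black box, so there is nothing in the present paper to compare your argument against.

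That said, your sketch is the standard route and is essentially the strategy of \cite{morgan_pratelli}: slice by spheres via the coarea formula, bound each slice using the spherical isoperimetric inequality together with a Minkowski-type inequality for the radial component, and read off the equality case from rigidity in both steps. One point to tighten: your description of the ``radial'' set $N=\{a=\pm 1\}$ as, up to null sets, a union of spherical panels at the countably many jump radii of $m$ is not quite right in general --- the distributional derivative of the profile can have a Cantor part, and the corresponding portion of $N$ is diffuse in $r$ rather than concentrated on countably many spheres. What you actually need (and what survives) is that the weighted $\mathscr{H}^{n-1}$-mass carried by $N$ is controlled by the singular part of the derivative of the profile and hence is no smaller for $E$ than for $E^*$; this comes from the Vol'pert-type slicing structure for sets of finite perimeter rather than from the ``panels'' picture. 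Under the equality hypotheses $\mathscr{H}^{n-1}(N)=0$ this issue disappears, and your continuity-of-the-pole argument on the connected interval $I_E$ is the right way to finish. Your final paragraph correctly locates where the genuine work lies.
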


It is immediate that if $E$ is an isoperimetric region in Euclidean space with a radial density, then $E^*$ is also isoperimetric.

\section{Spheres Through The Origin Are Uniquely Minimizing}
\indent To prove our main result, Theorem \ref{rp_thm}, we begin by showing that any spherically symmetric isoperimetric region is a ball whose boundary is a sphere through the origin (Prop. \ref{symmetrization_sphere}).  The proof of Proposition \ref{symmetrization_sphere} comprises most of the paper, but we provide a sketch below. We apply this proposition to the symmetrized version of an arbitrary isoperimetric region to show that, in fact, any isoperimetric region is spherically symmetric about some oriented line through the origin (Prop. \ref{iso_implies_symmetric}).
\begin{proposition}
\label{symmetrization_sphere}
Suppose that \( E \subset \R^n \) is a spherically symmetric isoperimetric region in \( \R^n \) with density \( r^p \).  Then \( E \) is a ball whose boundary goes through the origin.
\end{proposition}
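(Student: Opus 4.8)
The plan is to pass to the planar curve that generates $\partial E$ and prove it is a circle through the origin, adapting Chambers' argument for log-convex densities \cite{chambers}. Since $E$ is spherically symmetric, $\partial E$ is a hypersurface of revolution about the $e_1$-axis, and by Remark \ref{off-axis_regularity} its part off that axis is a smooth embedded hypersurface; hence $\partial E$ is obtained by rotating a planar curve $\gamma$ lying in the closed upper half-plane. Using the homogeneity of $r^p$ I may rescale $E$ so that the point of $\partial E$ farthest from the origin is $P = (1,0)$: by Corollary \ref{regular_max_magnitude} this point is regular, and spherical symmetry places it on the positive $e_1$-axis, where regularity forces the tangent of $\gamma$ there to be vertical. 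Constancy of generalized mean curvature on the regular set then becomes a first-order ODE for $\gamma$ in the radial coordinate, whose inhomogeneous term carries the factor $p/r$ coming from $\nabla \log r^p$, with initial data supplied by $P$ and the mean-curvature constant $\lambda$.

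The distinguished solution to keep in view is the circle through the origin tangent to the vertical at $P$ --- necessarily the circle of radius $\tfrac12$ centered at $(\tfrac12,0)$ --- which by Remark \ref{r^p_only} does solve this ODE, since a sphere through the origin has constant generalized mean curvature for density $r^p$. Lemmas \ref{cn_circ_gamma_circ} and \ref{kappa(0) equals lambda(0)} then carry out the rigidity step: if the osculating circle of $\gamma$ at $P$ passes through the origin, its radius must be $\tfrac12$, the curvature data at $P$ pin down the mean-curvature constant to the value carried by that circle (the content of $\kappa(0)=\lambda(0)$), and uniqueness for the ODE with the prescribed data at $P$ makes $\gamma$ coincide with the circle. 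It therefore remains only to rule out the alternative that the osculating circle of $\gamma$ at $P$ does not pass through the origin.

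To do that I would argue by contradiction, distinguishing cases by the position of the center $(c,0)$ of the osculating circle at $P$ relative to $(\tfrac12,0)$. In the right case $c > \tfrac12$, the same curvature comparison as in Chambers holds --- $\gamma$ is more sharply curved at a height where its tangent points into the third quadrant than at the same height where the tangent points into the second quadrant --- and following the turning angle shows that $\gamma$ acquires a downward vertical tangent before returning to the $e_1$-axis and then strikes that axis transversally at a point other than the origin (Figure \ref{fig:left_and_right}, right), contradicting the regularity of $\partial E$. The genuinely new ingredient here is controlling the final arc: where Chambers uses mean convexity of an isoperimetric hypersurface to keep the curvature positive there, I would instead use Lemma \ref{end_positive}, that $\gamma$ ends to the right of the $e_2$-axis, together with a direct estimate from the ODE.

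In the left case $c < \tfrac12$ the curvature inequality reverses, so $\gamma$ never regains a vertical tangent and hence meets the $e_1$-axis with a non-vertical tangent --- again a non-regular boundary point away from the origin, a contradiction. The crux of the whole argument, and the step I expect to be the main obstacle, is Proposition \ref{just_delta}: showing that the tangent of $\gamma$ is horizontal at exactly one point of the upper arc, which is what excludes oscillatory behavior and pins down the qualitative picture of Figure \ref{fig:left_and_right}, left. Once both cases are excluded, the osculating circle of $\gamma$ at $P$ passes through the origin, so $\gamma$ is a circle through the origin and $E$ is a ball with boundary through the origin, proving the proposition.
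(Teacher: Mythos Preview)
Your overall plan matches the paper's: reduce to the generating planar curve $\gamma$, normalize so the rightmost point is $(1,0)$, use Lemmas~\ref{cn_circ_gamma_circ} and~\ref{kappa(0) equals lambda(0)} to reduce to showing the center of the initial osculating circle $C_0$ is $(1/2,0)$, then argue by contradiction in the two cases $F(0)>1/2$ and $F(0)<1/2$, invoking the curvature-comparison machinery (including Lemma~\ref{end_positive} and Proposition~\ref{just_delta}) exactly as the paper does.

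The one genuine discrepancy is your contradiction mechanism. You say that in each case $\gamma$ meets the $e_1$-axis transversally at a point away from the origin, ``contradicting the regularity of $\partial E$.'' The paper does \emph{not} close the argument this way. Instead, Propositions~\ref{right_tangent} and~\ref{left_tangent} show that $\gamma_{1}(\beta)$ and $\lim_{s\to\beta^-}\gamma_{1}'(s)$ have the same sign, forcing $\gamma(s)\cdot\gamma'(s)>0$ on some interval $(\beta-\varepsilon,\beta)$, which contradicts the Tangent Restriction Lemma (Lemma~\ref{tangent_restriction})---a direct consequence of spherical symmetry. Your regularity-based contradiction is problematic: Proposition~\ref{regularity result} permits a singular set of Hausdorff dimension up to $n-8$, so for $n\ge 8$ a single conical point on the axis is not excluded by regularity alone, and your argument would not close. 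The tangent-restriction contradiction avoids this entirely, since it is detected on the open interval $(0,\beta)$ where $\gamma$ is smooth (Remark~\ref{off-axis_regularity}), never touching the possibly singular endpoint $\gamma(\beta)$.
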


\begin{proof} 
Assume without loss of generality that  \( E \) is spherically symmetric about the positive \( e_1 \)-axis.  Then \( E \) can be generated by rotating a planar set \( A \) about the \(e_1\)-axis.  Since \( E \) is spherically symmetric about the positive \( e_1\)-axis,  \(\, A \) is also spherically symmetric about the positive
 \( e_1\)-axis.  By regularity of \( \partial E \) (Defn. \ref{regular}), we are assuming that \( A \) is open and that its boundary is a curve (possibly having multiple connected components). 
 We define \( \gamma \subset \partial A \) by beginning at the rightmost point on \( \partial A \) and following the curve through this point in both directions until it intersects the \( e_1 \)-axis again.   This definition relies on regularity properties of \( \partial E \); see the beginning of Section 4 for more details. 
 
 We assume that \(  \gamma = (\gamma_{\, 1}, \gamma_{\, 2}) : [ -\beta, \beta ] \st{\to} \R^2 \)  is an 
arclength parameterization so that \( \gamma(0) \) is the rightmost point on \( \partial A \) and \( \gamma(\pm \beta) \) is the other intersection of \( \gamma \) with the \( e_1 \)-axis. Since \( r^p \) is homogeneous, all isoperimetric  regions are similar, and we can assume without loss of generality that \( \gamma(0)  \eeq   (1,0) \).  
We will show that \( \gamma \) is a circle through the origin.  Given that \( \gamma \) is a circle through the origin, \( \gamma \) must comprise all of \( \partial A \) by spherical symmetrization.
 By Lemma \ref{cn_circ_gamma_circ}, to prove that \( \gamma \) is a circle through the origin, it suffices to prove that there exists an \( s \) so that the associated canonical circle \( C_s \) (see Defn. \ref{canonical_circle}) has the same curvature as \( \gamma \) at  \( \gamma(s) \) and \( C_s \) goes through the origin.  By Lemma  \ref{kappa(0) equals lambda(0)}, the canonical circle \( C_0 \) at the rightmost point has the same curvature as \( \gamma \) at \( \gamma(0) \).  Therefore, it suffices to prove that $C_0$ passes through the origin, which occurs if and only if the center of $C_0$ is $(1/2,0)$.\\
\indent Suppose that the center of $C_0$ is right of $(1/2,0)$.  By Proposition \ref{right_tangent}, $\gamma_{\, 1}(\beta)  > 0$ and $\lim_{s \to \beta^-} \gamma_{\, 1}\,'(s)  > 0$.  As a result, there exists $\varepsilon  \st{>} 0$ so that $\gamma \st{\cdot} \gamma\,'  \st{>} 0$ on $(\beta - \varepsilon, \beta)$, contradicting Lemma \ref{tangent_restriction}, which is a consequence of spherical symmetry. \\
\indent Now suppose that the center of $C_0$ is left of $(1/2,0)$.  By Proposition \ref{left_tangent}, $\gamma_{\, 1}(\beta)  <  0$ and $\lim_{s \to \beta^-} \gamma_{\, 1}\,'(s)  <  0$, which results in the same contradiction of spherical symmetry.\\
\indent The only remaining possibility is that  $\gamma$ is a circle through the origin.  Thus, \( \gamma  \eeq   \partial A \) and, when rotated, \( \gamma \) generates a sphere through the origin.
\end{proof}

Given Proposition \ref{symmetrization_sphere}, we can prove our claim that any isoperimetric region in \( \R^n \) with density $r^p$ is spherically symmetric.
 \begin{proposition}
\label{iso_implies_symmetric}
If $E$ is an isoperimetric region in $\R^n$ with density $r^p$, then $E  \eeq   E^*$, up to a rotation about the origin.
\end{proposition}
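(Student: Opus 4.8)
\emph{Proof proposal.}

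The plan is to determine $E^*$ explicitly and then invoke the equality case of Theorem \ref{symmetrization}. Since $E$ is isoperimetric, so is its spherical symmetrization $E^*$, and $E^*$ is spherically symmetric by construction, so Proposition \ref{symmetrization_sphere} shows that $E^*$ is a ball with boundary through the origin; as $E^*$ is symmetric about the positive $e_1$-axis we may write $E^* = B_\rho(\rho e_1)$ for some $\rho > 0$. Moreover $|E| = |E^*|$ and $P(E^*) \leeq P(E)$ by Theorem \ref{symmetrization}, while $E$ minimizes weighted perimeter for its volume, so $P(E) = P(E^*)$. Hence it remains only to verify the two hypotheses of the equality case of Theorem \ref{symmetrization} for $E$: (i) $\mathscr{H}^{n-1}(\{x \in \partial E : \nu(x) = \pm x/|x|\}) = 0$, and (ii) $I_E$ is an interval. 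Granting these, the theorem gives $E = E^*$ up to a rotation about the origin, which is the claim.

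Hypothesis (ii) is immediate. By the definition of spherical symmetrization, $\mathscr{H}^{n-1}(E \cap S_r) = \mathscr{H}^{n-1}(E^* \cap S_r)$ for all $r > 0$, and $\mathscr{H}^{n-1}(S_r)$ is the same quantity for both sets, so $I_E = I_{E^*}$. For the explicit ball $E^* = B_\rho(\rho e_1)$ through the origin, $x \mapsto |x|$ maps the open ball onto $(0, 2\rho)$, so $\mathscr{H}^{n-1}(E^* \cap S_r) > 0$ precisely for $r \in (0, 2\rho)$, while $\mathscr{H}^{n-1}(E^* \cap S_r) < \mathscr{H}^{n-1}(S_r)$ for every $r > 0$ (e.g. $-r e_1 \notin B_\rho(\rho e_1)$). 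Thus $I_E = I_{E^*} = (0, 2\rho)$ is an interval.

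The substantial point is hypothesis (i), and this is where I expect the main difficulty. Suppose for contradiction that $\nu(x) = \pm x/|x|$ on a set of positive $\mathscr{H}^{n-1}$-measure. Since $r^p$ is real-analytic on $\R^n \setminus \{0\}$, the regular part of $\partial E$ is a real-analytic embedded hypersurface by Proposition \ref{regularity result} (and the remark after it), with singular set of Hausdorff dimension at most $n-8$. On the regular part, the equation $\nu(x) = \pm x/|x|$ says exactly that the tangential gradient of the real-analytic function $x \mapsto |x|^2$ vanishes, i.e. it describes the critical set of that function. Since a non-constant real-analytic function on a connected manifold has a null critical set, positive measure forces $|x|^2$ to be identically equal to some $R_0^2$ on a whole connected component $C$ of the regular part, so $C \subseteq S_{R_0}$. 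Then $C$ is open in $S_{R_0}$ (equal dimensions), and since $\partial E$ is closed and its singular set has dimension at most $n-8 \leeq n-3$, $C$ is also closed in $S_{R_0}\setminus\mathrm{sing}(\partial E)$, which is connected; hence $C = S_{R_0}\setminus\mathrm{sing}(\partial E)$ and $S_{R_0} \subseteq \partial E$. Consequently $\mathscr{H}^{n-1}(E \cap S_{R_0}) = 0$ because $E$ is open and disjoint from $\partial E$, so $\mathscr{H}^{n-1}(E^* \cap S_{R_0}) = 0$ as well.

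To close the contradiction I would pin down $R_0$. The generalized mean curvature of $\partial E$ equals the Lagrange multiplier $\lambda$, and by homogeneity of $r^p$ the isoperimetric profile is a fixed power of the volume, so $\lambda$ depends only on $|E| = |E^*|$; hence $\lambda$ equals the generalized mean curvature $\lambda^*$ of $\partial E^*$. A direct computation on the sphere $S_\rho(\rho e_1)$ through the origin gives $\lambda^* = \frac{2(n-1)+p}{2\rho} > 0$, while on the spherical piece $C \subseteq S_{R_0}$ the generalized mean curvature is $\pm\frac{n-1+p}{R_0}$; matching the positive value forces $R_0 = \frac{2\rho(n-1+p)}{2(n-1)+p}$, which lies in $(0, 2\rho)$ because $n \geeq 3$. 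But for such $R_0$ we saw $\mathscr{H}^{n-1}(E^* \cap S_{R_0}) > 0$, a contradiction. This verifies hypothesis (i) and completes the proof. The crux is turning a positive-measure set of radial normals into an entire sphere contained in $\partial E$ via analytic continuation, and then using the value of $\lambda$ forced by homogeneity to contradict the area identity $\mathscr{H}^{n-1}(E \cap S_r) = \mathscr{H}^{n-1}(E^* \cap S_r)$.
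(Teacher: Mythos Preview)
Your proof is correct and follows the same overall architecture as the paper's: identify $E^*$ via Proposition \ref{symmetrization_sphere}, read off that $I_E = I_{E^*}$ is an interval, and then argue by contradiction that the tangential set has measure zero so that the equality case of Theorem \ref{symmetrization} applies.

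The route you take to the contradiction, however, differs from the paper's in two respects. First, to promote a positive-measure tangential set to an entire origin-centered sphere inside $\partial E$, the paper invokes Morgan--Pratelli \cite[Pf.\ of Cor.\ 6.4]{morgan_pratelli}: at a smooth density point of the tangential set the generalized mean curvature matches that of a centered sphere, and elliptic uniqueness then forces a whole component of $\partial E$ to be such a sphere. You instead exploit real-analyticity of the regular part (Proposition \ref{regularity result}) and the fact that the critical set of the analytic function $|x|^2$ on a connected analytic hypersurface is null unless the function is constant; this is more self-contained. Second, once $S_{R_0}\subseteq\partial E$, the paper finishes with a one-line topological observation: $E$ fills an annular collar on one side of $S_{R_0}$, so $S_r\cap E$ is a \emph{full} sphere for an interval of radii, which is impossible since $E^*$ is a ball through the origin. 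You instead note that $E\cap S_{R_0}=\emptyset$ and then pin down $R_0\in(0,2\rho)$ by matching $H_f(E)=H_f(E^*)$ via the homogeneity of the isoperimetric profile. This is valid but heavier than necessary: the annular-collar argument gives the contradiction without ever computing $R_0$ or appealing to the profile's differentiability. Conversely, your analyticity step avoids the outside reference and would still work if one did not have the Morgan--Pratelli argument at hand.
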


\begin{proof}
By regularity (Defn. \ref{regular}), we are assuming $E$ is open.
By Theorem \ref{symmetrization}, it suffices to show that $I_E$ is an interval and that
\begin{equation}
\label{area_zero}
\mathscr{H}^{n-1}  \left(x \in \partial E: \nu(x)  \eeq   \pm \frac{x}{|x|} \right)  \eeq   0.
\end{equation}
We call a point \( x \) with \( \nu(x) \eeq \pm \, x/|x| \) \emph{tangential}.
Since symmetrization (Defn. \ref{symmetrization_def}) preserves weighted volume without increasing weighted perimeter, $E^*$ is also isoperimetric.  Applying Proposition \ref{symmetrization_sphere}, we conclude that $E^*$ is a ball with
boundary through the origin. It follows that $I_E$ is an interval.  Moreover, there exists no $r \st{>} 0$ such that the spherical cap $S_r \cap E$ is a full sphere.  This will be important in our proof of (\ref{area_zero}). 
 Suppose for contradiction that there exists a positive area subset of $\partial E$ that is tangential. As in Morgan-Pratelli \cite[Pf. of Cor. 6.4]{morgan_pratelli}, at any smooth point of density of this tangential subset of $\partial E$, $\partial E$ has the same generalized mean curvature as a sphere 
 centered at the origin.  It follows by uniqueness of solutions to elliptic partial differential equations that a component of $\partial E$ is a sphere centered at the origin. 
 $E$ must contain an annular region centered at the origin with this spherical component as one of its bounding components.  Thus, there exists an interval $(r_0, r_1)$ such that for any $r$ in $(r_0, r_1)$, $S_r \cap E$ is a full sphere, contradicting the fact that the boundary of $E^*$ is a sphere through the origin.
\end{proof} 
Combining Propositions \ref{symmetrization_sphere} and \ref{iso_implies_symmetric} along with Theorem \ref{existence}, we have proved:

\begin{theorem}
\label{rp_thm}
In  \( \R^n \) with density \( r^p \), where \( n  \geeq  3 \) and \( p  \st{>} 0 \), the unique isoperimetric regions, up to sets of measure zero, are balls with boundary through the origin.
\end{theorem}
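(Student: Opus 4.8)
The plan is to assemble Theorem \ref{rp_thm} from three ingredients that are already in hand: existence of minimizers, the symmetrization comparison, and the classification of \emph{spherically symmetric} minimizers. First I would note that $r^p$ with $p>0$ is a radial density on $\R^n$ that diverges to infinity, so Theorem \ref{existence} supplies an isoperimetric region of every prescribed weighted volume and the statement is not vacuous. Fix an arbitrary isoperimetric region $E$; after discarding a null set we may take $E$ open, and by the regularity discussion following Proposition \ref{regularity result}, $\partial E$ is smooth off a set of Hausdorff dimension at most $n-8$ (here $n \geeq 3$ is used so that the singular set is genuinely small and, for $n\geeq 8$, still of codimension $\geeq 1$).

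Next I would apply Proposition \ref{iso_implies_symmetric} to conclude that $E$ coincides, up to a rotation about the origin, with its spherical symmetrization $E^*$; since symmetrization preserves weighted volume and does not increase weighted perimeter (Theorem \ref{symmetrization}), $E^*$ is again isoperimetric. Then Proposition \ref{symmetrization_sphere}, applied to $E^*$, says that a spherically symmetric isoperimetric region for density $r^p$ is a ball whose bounding sphere passes through the origin. Combining these, every isoperimetric region in $\R^n$ with density $r^p$ is, up to a rotation about the origin and a set of measure zero, a ball with boundary through the origin; homogeneity of $r^p$ in degree $p$ rescales these balls to realize every volume, and the only freedom in the description is the choice of rotation, so uniqueness holds within the asserted family. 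This finishes the theorem modulo the two propositions.

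The hard part is of course Proposition \ref{symmetrization_sphere}, whose proof occupies essentially the whole paper. There the strategy is: reduce the spherically symmetric region to a planar generating set $A$, extract the arc $\gamma\colon[-\beta,\beta]\to\R^2$ of $\partial A$ through the rightmost point (normalized so $\gamma(0)=(1,0)$ by scaling), and use the constancy of generalized mean curvature to govern $\gamma$ by an ODE. One introduces the canonical circle $C_s$ matching the curvature of $\gamma$ at $\gamma(s)$; by Lemma \ref{kappa(0) equals lambda(0)} the circle $C_0$ already matches at the rightmost point, and by Lemma \ref{cn_circ_gamma_circ} it suffices to show some matching canonical circle passes through the origin, i.e.\ that the center of $C_0$ is $(1/2,0)$. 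The remaining work is to exclude the center lying strictly right of $(1/2,0)$ (the right case, Proposition \ref{right_tangent}) or strictly left (the left case, Proposition \ref{left_tangent}); in each case one shows $\gamma$ meets the $e_1$-axis with $\gamma\cdot\gamma' $ of a definite sign near $s=\beta$, contradicting the tangency restriction forced by spherical symmetry (Lemma \ref{tangent_restriction}). I expect the genuinely new obstacles — absent from Chambers' log-convex argument — to be (i) the left case, where one must prove there is a \emph{unique} horizontal-tangent point on the upper half of $\gamma$ (Proposition \ref{just_delta}), and (ii) replacing Chambers' use of mean convexity, which is unavailable here, by the direct estimate that $\gamma$ ends to the right of the $e_2$-axis so that its curvature stays positive on the final segment (Lemma \ref{end_positive}).
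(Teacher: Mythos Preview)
Your proposal is correct and follows essentially the same assembly as the paper: invoke Theorem \ref{existence} for existence, Proposition \ref{iso_implies_symmetric} to reduce an arbitrary minimizer to a spherically symmetric one, and Proposition \ref{symmetrization_sphere} to identify the latter as a ball through the origin. One small imprecision in your sketch of Proposition \ref{symmetrization_sphere}: the canonical circle $C_s$ is the circle centered on the $e_1$-axis and tangent to $\gamma$ at $\gamma(s)$, not the circle matching the curvature of $\gamma$; the point of Lemma \ref{kappa(0) equals lambda(0)} is precisely that the curvatures $\kappa(0)$ and $\lambda(0)$ happen to agree at $s=0$.
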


\section{Structure of Proof}
\indent 
 Sections 5, 6, and 7 are devoted to filling in the details of the proof of Proposition \ref{symmetrization_sphere}. 
Throughout these sections, we work within the following framework:

 Let $E$ be a spherically symmetric isoperimetric region.  Then there is a set $A \subset \R^2$ such that $E$ is the rotation of $A$ about the $e_1$-axis.  We will analyze a certain curve on the boundary of $A$.  We begin at the point $P$ on the $e_1$-axis that is the rightmost point on $\partial A$.
By spherical symmetry, $P$ is a point of $E$ farthest from the origin, so $\partial E$ is regular at $P$ by Corollary \ref{regular_max_magnitude}.  The tangent space to $\partial A$ at $P$ is spanned by $e_2$.  We follow $\partial A$, which has finite length, in both directions until it intersects the $e_1$-axis at another point.  The result is a Jordan curve 
 $\gamma(s): [-\beta, \beta] \to \R^2$ such that $\gamma(0)  \eeq   P$ and $\gamma(\pm \beta)$ is the other intersection of the curve with the $e_1$-axis (Fig. \ref{fig:gamma}).  Since $r^p$ is homogeneous, all isoperimetric regions are similar to each other.  Therefore, we may assume without loss of generality that $P  \eeq   (1,0)$.  We assume that $\gamma$ is a counterclockwise  arclength parameterization. Let $\gamma_{\, 1}$ and $\gamma_{\,2}$ denote the coordinates of $\gamma$.  Then $\gamma_{\, 1}(-s)  \eeq   \gamma_{\, 1}(s)$ and $\gamma_{\,2}(-s)  \eeq   -\gamma_{\,2}(s)$ for all $s$.  We let $\kappa(s)$ denote the curvature of $\gamma$ at $\gamma(s)$. \\
\indent  By Corollary \ref{regular_max_magnitude}, \(\gamma \) is smooth at \(0\).  By Remark \ref{off-axis_regularity}, \( \gamma \) is smooth at all remaining points in \( (-\beta, \beta) \).  Since \( \gamma \) is smooth at \(0 \) and \(0 \) is a global maximum point of \( \gamma_{\, 1}\), it follows that
\(\gamma\,'(0)  \eeq   (0, 1) \) and that $\kappa(0) \geeq 0$.  In fact, $0$ is a strict maximum point of $\gamma_{\,1}$.  To prove so, note that if there were an $s \neeq 0$ so that $\gamma_{\,1}(s) \st{>} \gamma_{\,1}(0)$, then it would also be the case that $|\gamma(s)| \st{>} |\gamma(0)|$.  However, there would be no point on $\partial A$ that was on the positive $e_1$-axis and was the same distance from the origin as $\gamma(s)$, contradicting spherical symmetry.  Since $0$ is a strict maximum point of $\gamma_{\,1}$, $\kappa(0) >0$.  Moreover, since $\gamma$ is symmetric over the $e_1$-axis, $\kappa'(0) \eeq 0.$

In addition to analyzing the curvature of $\gamma$, we will also consider the generalized mean curvature of the surface generated by $\partial A$ at a point $\gamma(s)$. 

\begin{definition}
\label{GMC}
\emph{As in \cite[Defn. 2.3]{morgan_pratelli}, we define \emph{generalized mean curvature} of a hypersurface in ${\R}^n$ with density $ f(x)  \eeq   e^{\, \psi(x)}$ by
\begin{equation}
\label{generalized mean curvature definition}
H_f  \eeq   H_0  \pl  \frac{\partial \psi}{\partial \nu} \text{,}
\end{equation}
where \( H_0 \) is the unaveraged Riemannian mean curvature and \( \nu \) is the outward unit normal vector.   If \( \psi(x)  \eeq   g(|x|) \) for some smooth function \( g \), then
\begin{equation}
\label{formula for generalized mean curvature}
H_f(x)  \eeq   H_0(x)  \pl  g'(|x|)\frac{x}{|x|} \st{\cdot} \nu(x)
\end{equation}
for any regular point $x$ on the hypersurface with $x \neeq 0$.
In ${\R}^n$ with density $r^p$, $g(r)  \eeq   \log{(r^p)}$.  Henceforth, we will denote 
\[
\frac{\partial \psi}{\partial \nu}(x)
\]
by \(H_1(x) \).  For concision, given a point \( \gamma (s) \), we refer to \( H_1( \gamma(s) ) \) as  $H_1(s)$ with analogous notation for the values of $H_0$ and $H_f$ at $\gamma(s)$.}
\end{definition}

 The following lemma of Chambers gives a useful result of spherical symmetrization.
\begin{lemma}
\label{tangent_restriction} \emph{(Tangent Restriction)} \cite[Lemma 2.6]{chambers} 
For every $s \in (0,\beta)$,   
\(
\gamma(s) \st{\cdot} \gamma \, '(s)  \leeq 0.
\)
\end{lemma}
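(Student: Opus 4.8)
\emph{Proof proposal.} The plan is to establish the equivalent statement that $s \mapsto |\gamma(s)|^2$ is non-increasing on $(0,\beta)$, using the identity $\gamma(s)\cdot\gamma'(s) = \tfrac12\,\tfrac{d}{ds}|\gamma(s)|^2$. Two consequences of spherical symmetry (Defn.~\ref{symmetrization_def}, Thm.~\ref{symmetrization}) drive the argument. First, since $\gamma(0)=(1,0)$ is the rightmost point of $\partial A$, no point of $\overline{A}$ can have modulus larger than $1$: a point of $\overline{A}$ on a circle $S_r$ with $r>1$ would force $(r,0)\in\overline{A}$ as well, because $\overline{A}\cap S_r$ is a closed circular arc about the positive $e_1$-axis, and then following $\partial A$ rightward from $(r,0)$ would produce a boundary point with first coordinate $\ge r>1$, a contradiction. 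Hence $|\gamma(0)|=1=\max_{[-\beta,\beta]}|\gamma|$. Second, for every $r$ the set $A\cap S_r$ is an open circular arc centered on the positive $e_1$-axis (possibly empty or all of $S_r$), so the only point of $\partial A\cap S_r$ lying strictly in the open upper half-plane is the upper endpoint of that arc. Finally, from the construction of $\gamma$ together with $\gamma'(0)=(0,1)$, the image $\gamma((0,\beta))$ lies in the open upper half-plane, and $\gamma$ is injective there since it is a simple closed curve.

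Now suppose for contradiction that $\gamma(s_0)\cdot\gamma'(s_0)>0$ for some $s_0\in(0,\beta)$. Then $|\gamma|$ is strictly increasing near $s_0$, so in particular $|\gamma(s_0)|<1$. Choose $\rho$ with $|\gamma(s_0)|<\rho<1$ close enough to $|\gamma(s_0)|$ that $|\gamma|$ still attains the value $\rho$ at some $s''\in(s_0,\beta)$ just past $s_0$; and, since $|\gamma|$ decreases from $|\gamma(0)|=1$ to $|\gamma(s_0)|<\rho$, the intermediate value theorem gives $s'\in(0,s_0)$ with $|\gamma(s')|=\rho$ as well. Then $\gamma(s')$ and $\gamma(s'')$ are two distinct points (injectivity) of $\partial A$ lying on the single circle $S_\rho$, both in the open upper half-plane. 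Writing them in polar coordinates as $(\rho,\phi')$ and $(\rho,\phi'')$ with $0<\phi'<\phi''<\pi$, and letting $\theta(\rho)$ be the half-angle of the arc $A\cap S_\rho$: the possibility $\phi''<\theta(\rho)$ is excluded because $(\rho,\phi'')$ would lie in the open arc $A\cap S_\rho\subseteq A$, impossible for a boundary point of the open set $A$; the possibility $\phi''>\theta(\rho)$ is excluded because $\partial A\subseteq\overline{A}$ and $\overline{A}\cap S_\rho$ is the closed arc $\{|\phi|\le\theta(\rho)\}$; and $\phi''=\theta(\rho)$ together with $\phi'<\theta(\rho)$ again places $(\rho,\phi')$ inside $A\cap S_\rho\subseteq A$. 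Every case is contradictory, so no such $s_0$ exists and $\gamma(s)\cdot\gamma'(s)\le 0$ for all $s\in(0,\beta)$.

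The step demanding the most care — the one genuinely importing Chambers' analysis rather than soft topology — is the second assertion of the first paragraph: the identification of $\partial A\cap S_r$ and of $\overline{A}\cap S_r$ as the endpoints, respectively the closure, of a single symmetric arc. One must rule out, using the off-axis regularity of $\partial E$ (Rmk.~\ref{off-axis_regularity}) and the measure-theoretic structure of spherical symmetrization (Thm.~\ref{symmetrization}, following \cite{morgan_pratelli} and \cite{chambers}), the degenerate situation in which $\partial A$ meets some circle $S_r$ in a nondegenerate arc; in that case $|\gamma|$ is merely locally constant along $\gamma$, which is still consistent with $\gamma\cdot\gamma'\le 0$, so the conclusion survives but the clean case analysis above must be supplemented. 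Everything else — the reformulation, the maximality $|\gamma(0)|=1$, the intermediate value step, and injectivity of the simple closed curve — is routine.
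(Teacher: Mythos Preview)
The paper does not prove this lemma at all: it simply imports it verbatim from \cite[Lemma~2.6]{chambers}. So there is no ``paper's own proof'' to compare against; your proposal is a from-scratch argument.

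Your approach is the natural one and is essentially what Chambers does: rewrite the inequality as monotonicity of $|\gamma|^2$, then use that spherical symmetry forces each slice $\partial A\cap S_\rho$ in the upper half-plane to be a single point, so $|\gamma|$ cannot repeat a value and hence (being continuous with global maximum at $s=0$) must be non-increasing. The intermediate-value step and the injectivity of the simple arc are correctly deployed.

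Two comments on presentation. First, your ordering $\phi'<\phi''$ is asserted without justification; it is harmless (relabel if necessary) but you should say so. Second, and more substantively, your case analysis silently assumes $\overline{A}\cap S_\rho=\{|\phi|\le\theta(\rho)\}$, and only afterwards do you flag this as the ``step demanding the most care.'' It would be cleaner to handle this upfront. One way to avoid the issue entirely: you do not need the full description of $\partial A\cap S_\rho$; you only need that no point of $\gamma((0,\beta))$ lies in the open set $A$. Since $\gamma$ traces $\partial A$ and $A$ is open, $\gamma(s)\notin A$ for all $s$. Now if $\gamma(s'),\gamma(s'')\in S_\rho$ are distinct upper-half-plane boundary points with polar angles $\phi'\ne\phi''$, then $\min(\phi',\phi'')<\max(\phi',\phi'')\le\pi$, and the point at the smaller angle lies strictly between the positive $e_1$-axis and another point of $\overline A$ on $S_\rho$; if additionally the smaller angle is $<\theta(\rho)$ you are done, and if both angles are $\ge\theta(\rho)$ one invokes off-axis regularity (Rmk.~\ref{off-axis_regularity}) to see $\theta$ is continuous near $\rho$, forcing both angles to equal $\theta(\rho)$, a contradiction. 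Your final paragraph already contains this idea; just integrate it into the body rather than appending it as a caveat.
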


\begin{figure}
\begin{center}

 \includegraphics{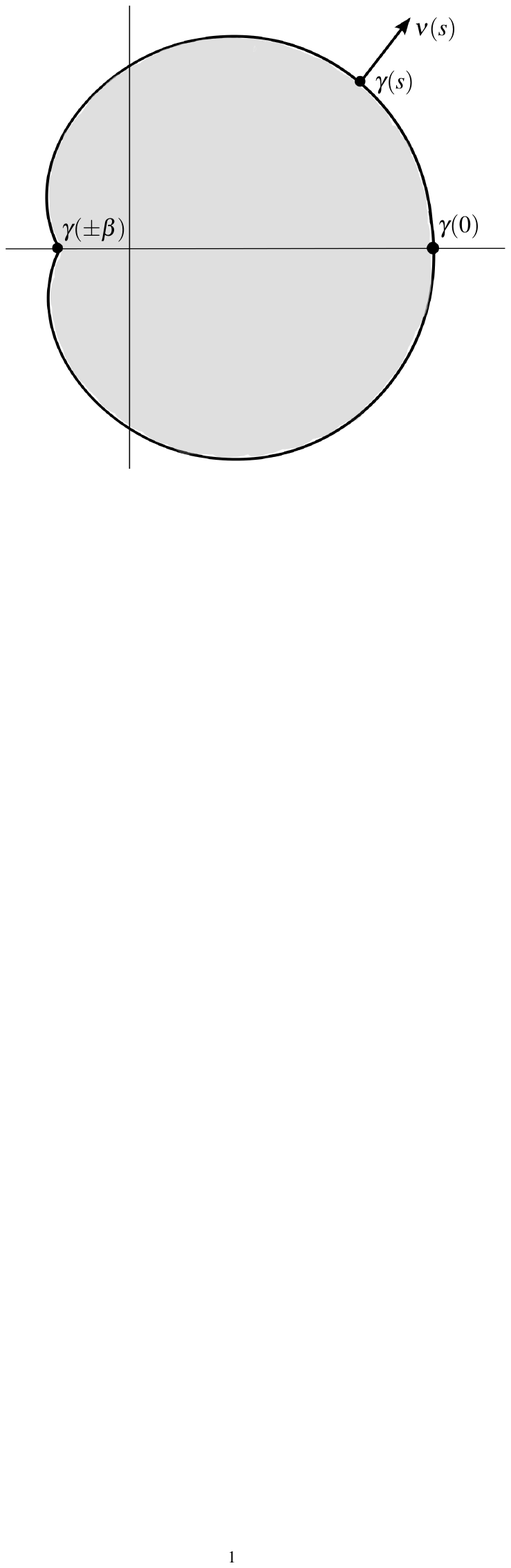}
\caption{The generating curve $\gamma$ and the outward unit normal vector at a point $\gamma(s)$ \label{fig:gamma}}		
\end{center}
\end{figure}

At each point on $\gamma$, we define a related circle that we call the canonical circle.  We show in Proposition \ref{unavg_std_curvature_formula} that the curvature of the canonical circle accounts for one of two terms in a formula for the mean curvature of the surface of revolution.

\begin{definition}
\label{canonical_circle} \cite[Defns. 3.1, 3.2]{chambers} 
\emph{Given $s \st{\in} (-\beta, \beta)$ with $s \neeq 0$, let the} canonical circle 
at $s$, \emph{denoted $C_s$, be the unique oriented circle centered on the $e_1$-axis that passes through $\gamma(s)$ and has unit tangent vector at $\gamma(s)$ equal to $\gamma \, '(s)$. 
If $\gamma \,'(s)$ is a multiple of $e_2$, then $C_s$ is an oriented vertical line. 
We define $C_0$ to be $\lim_{s \rightarrow 0} C_s$.
The regularity of the surface at $\gamma(0)$ guarantees the existence of this limit. We let $R(s)$ denote the radius of $C_s$ and let $\lambda(s)$ denote its signed curvature.  Then \( \lambda(s)  \eeq   1/R(s) \) if \(C_s \) is counterclockwise oriented, and \( \lambda(s)  \eeq   -1/R(s) \) if \(C_s \) is clockwise oriented.  
Finally, we let $F(s)$ denote the abscissa of the center of $C_s$.}
\end{definition}

The following lemma shows that spheres through the origin have constant generalized mean curvature.  We apply this result to prove Lemmas  \ref{cn_circ_gamma_circ} and \ref{kappa(0) equals lambda(0)}, which imply that $\gamma$ is a sphere through the origin, given that the curvature at the rightmost point is the same as the curvature of the circle through that point and the origin. 
\begin{proposition} 
\label{const MC}
In $\R^n$ with density $r^p$, hyperspheres through the origin have constant generalized mean curvature.
\end{proposition}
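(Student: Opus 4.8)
The plan is to pass to an explicit model for the sphere and compute the two terms $H_0$ and $H_1$ of $H_f$ separately. A rotation about the origin is an isometry of $\R^n$ that preserves the density $r^p$, so any hypersphere through the origin may be assumed to be $\partial B_a$, where $B_a \eeq \{\, x \in \R^n : |x \mn a e_1| \leeq a \,\}$ for some $a \st{>} 0$; the outward unit normal of $B_a$ at a boundary point $x$ is $\nu(x) \eeq (x \mn a e_1)/a$. Since $\partial B_a$ is an ordinary Euclidean sphere of radius $a$, its unaveraged Riemannian mean curvature $H_0$ is the constant $(n-1)/a$ (up to the sign convention, which is irrelevant to the question of constancy). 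So the real content is to check that $H_1(x) \eeq (\partial \psi/\partial \nu)(x)$ is likewise constant on $\partial B_a \mn \{0\}$.

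For the density $r^p \eeq e^{\psi}$ we have $\psi(x) \eeq p \log |x|$, so $g(r) \eeq p \log r$ in the notation of Definition \ref{GMC} and $g'(r) \eeq p/r$. Hence by formula (\ref{formula for generalized mean curvature}),
\[
H_1(x) \eeq \frac{p}{|x|} \cdot \frac{x}{|x|} \st{\cdot} \nu(x) \eeq \frac{p}{a\,|x|^2}\, \bigl( x \st{\cdot} (x \mn a e_1) \bigr) \eeq \frac{p}{a} \cdot \frac{ |x|^2 \mn a\,(x \st{\cdot} e_1) }{ |x|^2 } .
\]
The decisive observation is that $x$ lies on the sphere through the origin centered at $a e_1$: expanding $|x \mn a e_1|^2 \eeq a^2$ gives $|x|^2 \eeq 2a\,(x \st{\cdot} e_1)$, that is, $x \st{\cdot} e_1 \eeq |x|^2/(2a)$. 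Substituting this into the numerator above collapses it to $|x|^2 \mn |x|^2/2 \eeq |x|^2/2$, so $H_1(x) \eeq p/(2a)$, which does not depend on $x$.

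Combining the two computations, $H_f(x) \eeq H_0 \pl H_1(x) \eeq (n-1)/a \pl p/(2a)$ for every $x \in \partial B_a \mn \{0\}$, which proves the proposition. I do not anticipate a genuine obstacle here: once one spots the identity $x \st{\cdot} e_1 \eeq |x|^2/(2a)$ — precisely the algebraic encoding of ``the sphere passes through the origin'' — everything else is a one-line manipulation. The only points deserving a word of care are fixing the sign convention for $H_0$ compatibly with Definition \ref{GMC}, and noting that $H_1$ need only be evaluated away from the origin (the unique point of $\partial B_a$ at which $r^p$ fails to be smooth), so that formula (\ref{formula for generalized mean curvature}) indeed applies.
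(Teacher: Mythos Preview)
Your proof is correct and follows essentially the same approach as the paper: both split $H_f$ into $H_0$ (trivially constant on a round sphere) and $H_1$, and both show $H_1 \equiv p/(2a)$ via the sphere-through-the-origin identity. The only cosmetic difference is that the paper reduces by rotational symmetry to the generating circle and computes in polar coordinates ($r = 2a\cos\theta$, hence $\frac{p}{r}\cos\theta = \frac{p}{2a}$), whereas you work directly in $\R^n$ with the Cartesian identity $|x|^2 = 2a\,(x\cdot e_1)$; these are the same computation in different coordinates.
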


\begin{proof} Let $S$ be a hypersphere through the origin, and assume without loss of generality that $S$ can be obtained by rotating a circle $C$ in the plane about the $e_1$-axis.  It suffices to prove that  generalized mean curvature is the same at all points on \(C\).
 $H_0$ is constant on $C$ since it is constant on \( S \).  It remains to prove that $H_1$ is constant on $C$.\\
\indent
Let the center of $C$ be $(a,0)$ with $a \st{>}0$.  Then  the polar coordinates equation for $C$ is $r  \eeq   2a \cos \theta$.  
At a point $(r(\theta), \theta)$, the outward unit normal vector makes angle $2 \theta$ to the positive $e_1$-axis, and the angle between the position vector and the outward unit normal vector is $\theta$.  Supposing that \(x \) has polar coordinates $ (r, \theta)$, we have
\[
g'(|x|)\frac{x}{|x|} \st{\cdot} \nu(x)  \eeq   \frac{p}{r} \cos \theta  \eeq   \frac{p}{r} \frac{r}{2a}  \eeq   \frac{p}{2a}.
\]
 Therefore, $H_1$ is constant on $C$, as required.
\end{proof} 

\begin{remark}  
\label{r^p_only}
\emph{These computations show that the only density on $\R^2 \mn \{0\}$ ($\R^n \mn \{0\}$) for which circles (spheres) through the origin are isoperimetric is $r^p$, or a constant multiple thereof.   On a circle $C$ through the origin, parameterized by $\alpha$, the quantity $\alpha(t)/|\alpha(t)| \st{\cdot} \nu(t)$ is a constant multiple of the magnitude of the position vector.  Hence, for $H_1$ to be constant it must be the case that \( g \, '(r) \) is inversely proportional to \(r\).  This occurs only if $g(r) \eeq  \log(r^p)  \pl  c$ for some \( p \) and some constant $c$.}
\end{remark}

\begin{lemma}
\label{cn_circ_gamma_circ} 
\emph{(cf. \cite[Lemma 3.2]{chambers})}
For any point $s \in [0, \beta)$, if $C_s$ passes through the origin and $\kappa(s)  \eeq   \lambda(s)$, then $\gamma$ is a circle through the origin.
\end{lemma}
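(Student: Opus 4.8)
The plan is to produce a hypersphere through the origin that agrees with $\partial E$ to second order at the point over $\gamma(s)$ and carries the same constant generalized mean curvature, and then to conclude by uniqueness for the ordinary differential equation governing the generating curve of a surface of revolution of prescribed constant generalized mean curvature. Write $s_0$ for the given parameter, and let $\Sigma$ be the hypersphere obtained by revolving the canonical circle $C_{s_0}$ about the $e_1$-axis. Since $C_{s_0}$ passes through the origin, so does $\Sigma$, and by Proposition \ref{const MC} it has constant generalized mean curvature $h_\Sigma$; meanwhile $\partial E$ has constant generalized mean curvature $h$ by the first variation formula. I claim $h = h_\Sigma$. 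Let $P$ be the point of $\partial E$ over $\gamma(s_0)$. By Definition \ref{canonical_circle}, $C_{s_0}$ passes through $\gamma(s_0)$ with unit tangent $\gamma'(s_0)$, so $\Sigma$ and $\partial E$ are tangent at $P$; both being surfaces of revolution about the $e_1$-axis, they share the outward unit normal there, and hence the same value of $H_1$, which depends only on the point and the normal. By Proposition \ref{unavg_std_curvature_formula} the unaveraged mean curvature of $\partial E$ at $P$ is $\kappa(s_0)$ plus a contribution of $\lambda(s_0)$ in each of the $n-2$ rotational directions; applying the same formula to $\Sigma$, whose meridian through $P$ and whose canonical circle at $P$ are both $C_{s_0}$, replaces the $\kappa(s_0)$ by $\lambda(s_0)$. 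Since $\kappa(s_0) = \lambda(s_0)$ these agree, so the two generalized mean curvatures agree at $P$; being constant, they equal $h$ and $h_\Sigma$, whence $h = h_\Sigma$.

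Next, $\partial E$ and $\Sigma$ agree to second order at $P$: they have a common tangent plane, and their second fundamental forms coincide because the principal curvatures of $\partial E$ at $P$ are $\kappa(s_0)$ in the meridian direction and $\lambda(s_0)$ in the $n-2$ rotational directions, while those of the round sphere $\Sigma$ all equal $\lambda(s_0)$, and $\kappa(s_0)=\lambda(s_0)$. Now the generating curves $\gamma$ and $C_{s_0}$ of these two surfaces of revolution satisfy one and the same second-order ODE, namely the condition that a surface of revolution about the $e_1$-axis have generalized mean curvature $h$, whose coefficients are smooth wherever the curve avoids the $e_1$-axis and the origin; so uniqueness for this ODE together with the second-order agreement at $\gamma(s_0)$ forces $\gamma$ to coincide with $C_{s_0}$ near $s_0$. (When $s_0 = 0$, so that $\gamma(0) = (1,0)$ lies on the axis of revolution and the ODE is singular, the same conclusion follows from the regular-singular-point analysis of that ODE, or equivalently by writing both surfaces as graphs over their common tangent hyperplane $\{x_1 = 1\}$ and invoking uniqueness for the resulting elliptic generalized-mean-curvature equation.) To propagate: the set of $s \in (-\beta,\beta)$ near which $\gamma$ coincides with $C_{s_0}$ is open by the above and relatively closed by continuity of $\gamma$ and $C_{s_0}$, since for every interior parameter $s \neq 0$ the point $\gamma(s)$ avoids both the $e_1$-axis — which $\gamma$ meets only at $s = \pm\beta$, by the definition of $\beta$ — and the origin, so the ODE is non-singular and local uniqueness applies there. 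Hence $\gamma$ coincides with $C_{s_0}$ on $[0,\beta)$ and then, by the symmetry $\gamma(-s) = (\gamma_1(s), -\gamma_2(s))$ and the fact that $C_{s_0}$ is symmetric about the $e_1$-axis, on all of $(-\beta,\beta)$. Thus $\gamma$ parameterizes an arc of the circle $C_{s_0}$; being a Jordan curve contained in it, $\gamma$ is all of $C_{s_0}$, which passes through the origin.

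The main obstacle is the uniqueness step in the case $s_0 = 0$ — precisely the case applied in Proposition \ref{symmetrization_sphere} — because there $\gamma(0)$ lies on the axis of revolution and the generating-curve ODE degenerates, so one must argue via the graph-over-tangent-plane / elliptic-equation formulation; in addition, one has to keep careful track of the sign conventions for $\kappa$ and $\lambda$ so that the hypothesis $\kappa(s_0) = \lambda(s_0)$ genuinely delivers matching second-order data and matching mean-curvature constants, rather than mere equality of curvature magnitudes.
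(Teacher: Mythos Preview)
Your proof is correct and follows essentially the same approach as the paper's: both arguments (i) use Proposition~\ref{const MC} to see that the revolved $C_{s_0}$ has constant generalized mean curvature, (ii) use the hypothesis $\kappa(s_0)=\lambda(s_0)$ together with the shared tangent to match that constant with the one for $\partial E$, (iii) invoke ODE uniqueness for the generating curve to get local coincidence, and (iv) run an open--closed/connectedness argument to propagate over $(-\beta,\beta)$. Your treatment is in fact more careful than the paper's about the degeneracy of the generating-curve ODE at the axis point $\gamma(0)=(1,0)$, a point the paper passes over silently; your suggested fix via the graphical/elliptic formulation (or regular-singular-point analysis) is a reasonable way to close that gap.
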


\begin{proof}
Supposing that $C_s$ is arclength parameterized, to prove that $C_s$ agrees with $\gamma$ locally, it suffices by uniqueness theorems concerning solutions of ODEs to prove that both satisfy the differential equation $H_f  \eeq   c$.  This is clearly true since the tangent vectors of the two curves agree at \( \gamma(s) \) and the generalized mean curvature of the surfaces generated by these curves is the same at \( \gamma(s) \).  To prove that $H_f  \eeq   c$ at all points on $C_s$, it suffices to show that $H_f$ is constant on $C_s$.  This follows from the computations in Proposition \ref{const MC}.  Having proved that $\gamma$ and $C_s$ coincide locally, we claim that, in fact, $\gamma$ and $C_s$ must coincide everywhere.\\
\indent 
Let $S  \eeq   \{t \in [-\beta, \beta]: \gamma([s,t)) \subset C_s \}$.  Since $\gamma$ and $C_s$ agree near $\gamma(s)$, $S$ is nonempty and therefore has a least upper bound $m$.    Letting $\alpha$ be an arclength parameterization of $C_s$, it follows by smoothness of $\alpha$ and of $\gamma$ that $m \in S$, that $C_s$ is tangent to $\gamma$ at $\gamma(m)$, and that $\kappa(m)  \eeq   \lambda(s)$. (To conclude smoothness of $\gamma$ at $m$, we are using our assumption that $m  \st{<}  \beta$.)  By an identical argument to that in the first paragraph,  there exists an open interval $I$ containing $m$ such that $\gamma(I) \subset C_s$, contradicting the fact that $m  \eeq   \sup S$. 
We conclude that $m  \eeq   \beta$.  
A similar argument shows that $\gamma$ coincides with $C_s$ on $[-\beta, s]$.
\end{proof}

\begin{remark}
\label{centered_circ_rmk}
 \emph{By radial symmetry, spheres centered at the origin also have constant generalized mean curvature.  Thus, if $C_s$ is centered at the  origin and $\kappa(s)  \eeq   \lambda(s)$, then $\gamma$ is a circle that is centered at the origin.  We use this result to obtain contradictions in several places.} 
\end{remark}

\begin{lemma}
\label{kappa(0) equals lambda(0)}
\cite[p. 12]{chambers}
We have that $\kappa(0)  \eeq   \lambda(0)$.
\end{lemma}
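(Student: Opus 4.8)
The plan is to compute $\lambda(0)$ directly from Definition \ref{canonical_circle}, using the regularity of $\gamma$ at $0$. The conceptual picture is this: since $\gamma(0) \eeq (1,0)$ lies on the $e_1$-axis, it is a pole of the surface of revolution $\partial E$, which is smooth there by Corollary \ref{regular_max_magnitude}; at a smooth pole of a rotationally symmetric hypersurface in $\R^n$ ($n \geeq 3$) the second fundamental form must be $SO(n-1)$-invariant on the tangent space, hence a multiple of the identity, so all principal curvatures coincide --- and $\kappa(0)$ (the meridional one) and $\lambda(0)$ (the limiting latitudinal one) are among them. Equivalently, since the curvature vector $\gamma\,''(0)$ is parallel to the $e_1$-axis, the osculating circle of $\gamma$ at $\gamma(0)$ is centered on that axis and so satisfies the defining conditions of the canonical circle; thus $C_0$ is that osculating circle, which has curvature $\kappa(0)$ at $\gamma(0)$ by definition. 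To turn this into a computation I would proceed as follows.

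First record the local data of $\gamma$ at $0$: by Corollary \ref{regular_max_magnitude}, $\gamma$ is smooth at $0$, so the limit defining $C_0$ exists and $\gamma$ is $C^2$ near $0$. As observed in Section 4, $\gamma\,'(0) \eeq (0,1)$, $\kappa(0) \st{>} 0$, and $0$ is a strict maximum of $\gamma_{\,1}$; since $\gamma\,''(0)$ is orthogonal to $\gamma\,'(0)$ and $\gamma_{\,1}$ has a maximum at $0$, this forces $\gamma\,''(0) \eeq (-\kappa(0), 0)$, so $\gamma_{\,1}\,''(0) \eeq -\kappa(0)$ while $\gamma_{\,2}\,'(0) \eeq 1$. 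Next, for $s \neeq 0$ extract a formula for $\lambda(s)$: by Definition \ref{canonical_circle} the center of $C_s$ lies on the normal line to $\gamma$ at $\gamma(s)$, so the curvature vector of $C_s$ at $\gamma(s)$ equals $\lambda(s)$ times the unit normal $(-\gamma_{\,2}\,'(s), \gamma_{\,1}\,'(s))$, and requiring the center $\gamma(s) \pl \lambda(s)^{-1}(-\gamma_{\,2}\,'(s), \gamma_{\,1}\,'(s))$ to lie on the $e_1$-axis yields
\[
\lambda(s) \eeq -\,\frac{\gamma_{\,1}\,'(s)}{\gamma_{\,2}(s)}
\]
(the degenerate case $\gamma_{\,1}\,'(s) \eeq 0$, where $C_s$ is a vertical line, is consistent, giving $\lambda(s) \eeq 0$). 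The symmetry relations $\gamma_{\,1}(-s) \eeq \gamma_{\,1}(s)$ and $\gamma_{\,2}(-s) \eeq -\gamma_{\,2}(s)$ make $\lambda$ even, so the two-sided limit at $0$ exists.

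Finally, since $\gamma_{\,1}\,'(0) \eeq \gamma_{\,2}(0) \eeq 0$ while $\gamma_{\,2}\,'(0) \eeq 1 \neeq 0$ and $\gamma$ is $C^2$ at $0$, L'H\^opital's rule gives
\[
\lambda(0) \eeq \lim_{s \to 0} \lr{ -\,\frac{\gamma_{\,1}\,'(s)}{\gamma_{\,2}(s)} } \eeq -\,\frac{\gamma_{\,1}\,''(0)}{\gamma_{\,2}\,'(0)} \eeq \kappa(0),
\]
the sign being confirmed by the fact that $C_0$ has center $(1 \mn \kappa(0)^{-1}, 0)$, which lies to the left of $\gamma(0)$, so $C_0$ is counterclockwise with radius $\kappa(0)^{-1}$ and $\lambda(0) \eeq 1/R(0) \eeq \kappa(0) \st{>} 0$. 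I expect the only real friction to be bookkeeping --- pinning down the orientation (hence the sign) of the canonical circle and justifying the $0/0$ limit --- rather than anything conceptually deep, since everything rests on the regularity of $\gamma$ at $0$ already established.
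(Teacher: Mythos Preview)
Your proof is correct and follows essentially the same approach as the paper: both compute the limit defining the canonical circle at $0$ by expressing a canonical-circle quantity in terms of $\gamma$ and $\gamma'$ and then applying L'H\^opital's rule, using $\gamma'(0) \eeq (0,1)$ and $\gamma''(0) \eeq (-\kappa(0),0)$. The only cosmetic difference is that the paper computes $F(0)$ via $F(s) \eeq \gamma(s)\cdot\gamma'(s)/\gamma_{\,1}'(s)$ and shows $F(0) \eeq 1 - 1/\kappa(0)$, whereas you go directly for $\lambda(s) \eeq -\gamma_{\,1}'(s)/\gamma_{\,2}(s)$ (a formula the paper also derives later, in the proof of Proposition~\ref{kappa_negative}); your route is slightly more direct but the content is the same.
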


\begin{proof}
Showing that $\kappa(0)  \eeq   \lambda(0)$ is equivalent to showing that $F(0)  \eeq   1 \mn 1/\kappa(0)$.  If $\gamma_{\, 1}\,'(s)  \neeq 0$, then $$F(s)  \eeq   \frac{\gamma(s) \st{\cdot} \gamma\,'(s)}{\gamma_{\, 1}\,'(s)}.$$  
Since $\kappa(0)  \st{>} 0$ and \( \kappa \) is continuous at \( 0 \), there is a neighborhood of $0$ on which $\gamma_{\, 1} \, '(s)  \neeq 0$ except when $s  \eeq   0$.
By definition,

\[
F(0)  \eeq   \lim_{s \to 0}F(s)  \eeq   \lim_{s \to 0}\frac{\gamma(s) \st{\cdot} \gamma\,'\,(s)}{\gamma_{\, 1}\,'(s)}
 \eeq   1 - \frac{1}{\kappa(0)}.
\]
\end{proof}

\indent By Lemmas \ref{cn_circ_gamma_circ} and \ref{kappa(0) equals lambda(0)}, if $C_0$ is a circle  through the origin, then $\gamma$ is a circle through the origin. This means that if $F(0)  \eeq   1/2$, then $\gamma$ is a circle through the origin.  We argue by contradiction, taking cases according to whether $F(0)  \st{>} 1/2$ or $F(0) \st{<}  1/2$.  In each case, we obtain a result that contradicts spherical symmetry.  We state these results as the Right Tangent Lemma and the Left Tangent Lemma, and we devote a section to proving each.

\begin{proposition}
\label{right_tangent} \emph{(Right Tangent Lemma)} If  $F(0)  \st{>} 1/ 2$, then $\gamma_{\, 1}(\beta)  > 0$, $\lim_{s \rightarrow \beta^-}\gamma \,'(s)$ is in the fourth quadrant, and $\lim_{s \rightarrow \beta^-}\gamma \,'(s)  \neeq (0,-1)$.
\end{proposition}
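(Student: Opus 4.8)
The plan is to follow $\gamma$ past its top and show that, when $F(0)>1/2$, the curve curls around to a vertical tangent pointing downward while still above the $e_1$-axis, and then turns strictly further, so that it reaches the $e_1$-axis heading down and to the right. Parametrize the tangent by its angle: $\gamma'(s)=(\cos\phi(s),\sin\phi(s))$ with $\phi(0)=\pi/2$, so $\kappa=\phi'$, the outward normal is $\nu=(\sin\phi,-\cos\phi)$, and for $\gamma_2(s)>0$ one computes $\lambda(s)=-\cos\phi(s)/\gamma_2(s)$, $H_1(s)=p\,(\gamma_1\gamma_2'-\gamma_2\gamma_1')/|\gamma(s)|^2$, and (differentiating $F=\gamma_1+\gamma_2\tan\phi$) $F'(s)=\gamma_2(\kappa-\lambda)/\cos^2\phi$, so $F$ increases exactly where $\kappa>\lambda$. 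Constancy of generalized mean curvature together with the surface-of-revolution formula $H_0=\kappa+(n-2)\lambda$ (Proposition \ref{unavg_std_curvature_formula}) gives
\[
\kappa(s)+(n-2)\lambda(s)+H_1(s)=c\qquad\text{on }(-\beta,\beta),
\]
and evaluating at $s=0$ by Lemma \ref{kappa(0) equals lambda(0)}, $|\gamma(0)|=1$, $\nu(0)=e_1$ yields $c=(n-1)\kappa(0)+p$. I would use throughout that Lemma \ref{tangent_restriction} forces $|\gamma(s)|^2$ to be non-increasing on $[0,\beta]$, so $\gamma$ lies in the closed unit disk with $\gamma(0)=(1,0)$ its unique farthest point.

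First, $\gamma$ goes over its top: $\phi$ leaves $\pi/2$ increasing (as $0$ is a strict maximum of $\gamma_1$), $\sin\phi$ must change sign since $\gamma_2$ vanishes at both ends of $[0,\beta]$ but is positive in between, and (using the mean-curvature identity and the tangent restriction; near $s=0$ the curve closely follows its osculating circle $C_0$, of radius $1-F(0)<1/2$, which does clear a top) $\phi$ reaches $\pi$ at the highest point, of height $h:=\max\gamma_2>0$. Thus for each $y\in(0,h)$ there is an ascending point with tangent angle $\phi_+\in(\pi/2,\pi)$ and, while it persists, a descending point with angle $\phi_-\in(\pi,3\pi/2)$. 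The crux, adapting Chambers' right case (cf.\ \cite[Prop.~4.1]{chambers}), is to compare the two branches at each common height: writing the identity as $\kappa=c-(n-2)\lambda-H_1$, compare the branches' canonical curvatures $\lambda$ — here $F(0)>1/2$ pins down the initial data, and $|\gamma|$ non-increasing controls how the canonical radii compare at equal height — together with their terms $H_1$, and conclude that $\gamma$ turns strictly faster on the descending branch. Granting this, the descending branch cannot level off: $\phi_-$ must reach $3\pi/2$ at some $\delta\in(0,\beta)$ with $\gamma_2(\delta)>0$, i.e.\ $\gamma'(\delta)=(0,-1)$ strictly above the axis.

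It remains to control the terminal arc $(\delta,\beta)$, where $\phi(\delta)=3\pi/2$. Chambers deduced positivity of the curvature there from mean convexity, which is unavailable; instead I would establish (Lemma \ref{end_positive}) that $\gamma_1\ge 0$ on $[\delta,\beta)$ — directly from the mean-curvature identity, since on $\{\gamma_1>0,\ \gamma_2\ge 0\}$ the sign of $H_1$ is controlled — and conclude $\kappa>0$ on $(\delta,\beta)$. Then $\phi$ increases strictly past $3\pi/2$; it cannot reach $2\pi$, since at a point with $\phi=2\pi$ one has $\gamma\cdot\gamma'=\gamma_1>0$, contradicting Lemma \ref{tangent_restriction}. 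Hence $\phi(\beta^-)\in(3\pi/2,2\pi)$ strictly, so $\lim_{s\to\beta^-}\gamma'(s)$ lies in the open fourth quadrant and is not $(0,-1)$; and since $\cos\phi>0$ on $(\delta,\beta)$, $\gamma_1$ is strictly increasing there, which with $\gamma_1(\delta)\ge 0$ gives $\gamma_1(\beta)>0$.

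The main obstacle is the curvature comparison of the second paragraph: in $\kappa=c-(n-2)\lambda-H_1$ three terms compete, and extracting the sign of $\kappa_--\kappa_+$ — equivalently, controlling where $\kappa-\lambda$ (hence $F$) changes sign — requires tracking $\lambda$ and $H_1$ simultaneously along the ascending and descending branches; this is exactly the adaptation of Chambers' key estimate to the density $r^p$, and it is delicate precisely because mean convexity is not available as a crutch. A secondary difficulty, and where the right-case argument genuinely departs from \cite{chambers}, is proving Lemma \ref{end_positive} and the consequent positivity of $\kappa$ on the terminal arc from the algebra of the mean-curvature identity rather than from a convexity hypothesis.
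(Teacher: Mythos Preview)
Your overall architecture matches the paper's: an upper curve ending at a horizontal tangent (the paper's $\delta$), a lower curve on which a curvature comparison forces a vertical tangent $\gamma'(\eta)=(0,-1)$ with $\gamma_2(\eta)>0$ (your $\delta$ is the paper's $\eta$), and a terminal arc on which $\kappa>0$ forces $\gamma'$ strictly into the fourth quadrant. The use of the tangent restriction to prevent $\phi$ from reaching $2\pi$ is also exactly right.

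The genuine gap is your treatment of Lemma~\ref{end_positive}. You write that you would prove $\gamma_1\ge 0$ on the terminal arc ``directly from the mean-curvature identity, since on $\{\gamma_1>0,\ \gamma_2\ge 0\}$ the sign of $H_1$ is controlled.'' But this is circular: the mean-curvature identity lets you conclude $\kappa>0$ \emph{once} you know $\gamma_1\ge 0$ and $\gamma'$ is in the fourth quadrant (this is the paper's Lemma~\ref{curving_tight_right}), and then a bootstrap keeps both conditions alive (Lemma~\ref{curving_tight_after_eta_right}). It does not by itself launch the bootstrap; for that you need the single-point fact $\gamma_1(\eta)>0$, and the identity at $s=\eta$ gives no such information (there $\lambda(\eta)=0$ and the sign of $H_1(\eta)$ is exactly what is in question). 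The paper proves $\gamma_1(\eta)>0$ by an entirely different mechanism: the height-matched comparison (Lemma~\ref{lower_curve_lemma}) gives $\gamma_1(\bar\eta)-\gamma_1(\delta)\ge\gamma_1(\delta)-\gamma_1(\eta)$, then $\gamma_1(\delta)\ge F(\bar\eta)$ and $\gamma_1(\bar\eta)-F(\bar\eta)<R(\bar\eta)$ combine to give $\gamma_1(\eta)>F(\bar\eta)-R(\bar\eta)$, and the crucial input is that $F>R$ persists along the upper curve (Lemma~\ref{F_dominates_R_right}, proved by showing $F'\ge 0$ and $R'\le 0$ on $K$). None of this comes from ``the sign of $H_1$.''

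A related but lesser gap: your curvature comparison on the descending branch is only gestured at. The paper's argument is not merely that ``$|\gamma|$ non-increasing controls how the canonical radii compare''; it sets up an explicit \emph{admissibility} notion (Definition~\ref{admissible_right}) requiring (i) $F(\bar s)>R(\bar s)$, (ii) $\theta(\gamma'(s))\ge 2\pi-\theta(\gamma'(\bar s))$, and (iii) $\gamma_1(\bar s)-F(\bar s)\ge F(\bar s)-\gamma_1(s)$, verifies these via Lemmas~\ref{F_dominates_R_right} and~\ref{lower_curve_lemma} and Proposition~\ref{upper_curve_structure_right}(3), and then proves the $H_1$ inequality by a direct trigonometric computation (Proposition~\ref{H1_comparison_right}). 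Your sketch does not isolate the persistence of $F>R$ on the upper curve, and that inequality is load-bearing both here and in Lemma~\ref{end_positive}.
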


\begin{proposition}
\label{left_tangent} \emph{(Left Tangent Lemma)} If $F(0)  \st{<}  1 / 2$, then $\gamma_{\, 1}(\beta) <  0$, $\lim_{s \rightarrow \beta^-}\gamma \,'(s)$ is in the third quadrant, and $\lim_{s \rightarrow \beta^-}\gamma \,'(s)  \neeq (0,-1)$.
\end{proposition}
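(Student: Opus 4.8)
The plan is to integrate the ODE expressing that the surface generated by $\gamma$ has constant generalized mean curvature, while tracking the turning angle $\theta(s)$ of the unit tangent: write $\gamma'(s) = (\cos\theta(s),\sin\theta(s))$, so $\theta(0) = \pi/2$ and $\theta'(s) = \kappa(s)$. By Definition \ref{GMC}, Proposition \ref{unavg_std_curvature_formula}, and Lemma \ref{kappa(0) equals lambda(0)}, this ODE has the form
\[
\kappa(s) + (n-2)\lambda(s) + H_1(s) = c, \qquad c = (n-1)\kappa(0) + p,
\]
where $\lambda(s) = -\cos\theta(s)/\gamma_2(s)$ is the signed curvature of the canonical circle $C_s$ and $H_1(s) = p\,\gamma(s)\cdot\nu(s)/|\gamma(s)|^2$ with $\nu(s) = (\sin\theta(s),-\cos\theta(s))$. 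In the left case $F(0) < 1/2$, so $\kappa(0) = \lambda(0) = 1/(1-F(0)) < 2$ and the canonical circle $C_0$, of center $(F(0),0)$ and radius $1 - F(0) > 1/2$, strictly encloses the origin. This enclosure is the engine of the argument: by Lemma \ref{cn_circ_gamma_circ} and Remark \ref{centered_circ_rmk}, $\gamma$ cannot locally coincide with a canonical circle through the origin or a circle centered at the origin without being a full such circle, which $F(0)\ne 1/2$ forbids, so any threatened coincidence of this kind produces a contradiction.

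The first and hardest step is Proposition \ref{just_delta}: there is a unique $\delta\in(0,\beta)$ with $\gamma'(\delta)$ horizontal, forcing $\theta(\delta)=\pi$, and $\theta(s)\in(\pi/2,\pi)$ on $(0,\delta)$. This is essentially the statement that $\kappa>0$ on $(0,\beta)$, so that $\theta$ is strictly increasing; $\theta$ cannot then remain in $(\pi/2,\pi)$ (else $\gamma_2$ would increase and never return to $0$ at $\beta$), so it passes $\pi$ exactly once. To control the sign of $\kappa$ without Chambers' mean-convexity hypothesis, suppose $s_0$ is the first zero of $\kappa$; the CMC relation then pins down $\lambda(s_0) = (c-H_1(s_0))/(n-2)$ and hence the geometry of $C_{s_0}$, while the tangent restriction $\gamma\cdot\gamma'\le 0$ of Lemma \ref{tangent_restriction} and the strict enclosure of the origin by $C_0$ constrain the position of $\gamma(s_0)$ and the way $C_{s_0}$ curves; the aim is to force either $\kappa(s_0)>0$ after all or a forbidden coincidence with a circle through or centered at the origin. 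Carrying this out — replacing mean convexity by sharp estimates of $\lambda$ and $H_1$ in the CMC equation — is the main obstacle of the whole proof.

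Granting Proposition \ref{just_delta}, on $(\delta,\beta)$ the tangent starts in the third quadrant, and I would show it stays strictly in the open third quadrant, i.e. $\theta(\beta^-)\in(\pi,3\pi/2)$. The mechanism is the left-case curvature inequality, opposite in sign to the right-case one of Chambers: for $\gamma(s_2)$ with $s_2\in(\delta,\beta)$ at height $h=\gamma_2(s_2)$ and $\gamma(s_1)$ with $s_1\in(0,\delta)$ at the same height, subtract the CMC relations at $s_1$ and $s_2$; using that $\lambda$ depends only on $h$ and $\theta$, and that $\gamma_1(s_1)>\gamma_1(s_2)$ since $\cos\theta<0$ makes $\gamma_1$ strictly decreasing, one obtains $\kappa(s_2)/|\sin\theta(s_2)|<\kappa(s_1)/|\sin\theta(s_1)|$. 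Reparameterizing each arc by height and integrating $d\theta=\kappa\,ds$ then gives that the turning of $\gamma$ on $(\delta,\beta)$ is strictly less than its turning $\pi/2$ on $(0,\delta)$, so $\theta(\beta^-)<3\pi/2$; in particular $\lim_{s\to\beta^-}\gamma'(s)\ne(0,-1)$. (A short continuity argument bootstraps $\theta<3\pi/2$ so that it holds on all of $(\delta,\beta)$.) Since $\kappa>0$ also makes $\theta$ strictly increase past $\theta(\delta)=\pi$, we get $\theta(\beta^-)>\pi$, and the limiting tangent lies in the open third quadrant.

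It remains to prove $\gamma_1(\beta)<0$. As $\cos\theta<0$ on $(0,\beta)$, $\gamma_1$ is strictly decreasing, so it suffices to rule out $\gamma_1(\beta)\ge 0$. If $\gamma_1(\beta)\ge 0$, then by Lemma \ref{tangent_restriction} $|\gamma|$ is non-increasing on $(0,\beta)$, so $\gamma$ — and, by spherical symmetry, all of $E$ — avoids the ball of radius $\gamma_1(\beta)$ about the origin; when $\gamma_1(\beta)>0$ this is a genuine neighborhood, and filling it in removes boundary (cheaply, since $r^p\to 0$ at the origin) while increasing weighted volume, after which rescaling — legitimate by homogeneity of $r^p$ — yields a competitor of equal volume and strictly smaller weighted perimeter, contradicting that $E$ is isoperimetric; the borderline $\gamma_1(\beta)=0$, where $\gamma$ would terminate at the singular point $0$, is excluded by the same considerations, or by pushing the flatness estimate of the previous step a little further. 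This gives all of the Left Tangent Lemma; as in the proof of Proposition \ref{symmetrization_sphere}, it then follows that $\gamma\cdot\gamma' = \gamma_1\gamma_1' + \gamma_2\gamma_2'\to\gamma_1(\beta)\lim_{s\to\beta^-}\gamma_1'(s)>0$ near $s=\beta$, contradicting Lemma \ref{tangent_restriction}.
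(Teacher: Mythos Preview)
Your proposal contains a fundamental error that derails the argument: you assert that Proposition~\ref{just_delta} ``is essentially the statement that $\kappa>0$ on $(0,\beta)$,'' and you then use $\kappa>0$ repeatedly to argue that $\theta$ is strictly increasing, that the limit $\lim_{s\to\beta^-}\gamma'(s)$ exists, and that it lies strictly past $\pi$. In the left case this is false. The paper proves in Proposition~\ref{kappa_negative} that $\kappa<0$ on an interval $(\beta-\varepsilon,\beta)$; it is precisely this eventual negativity (so $\theta$ is \emph{decreasing} and bounded below near $\beta$) that guarantees the limit exists and is bounded away from $(0,-1)$. Proposition~\ref{just_delta} says only that there is a unique point where $\gamma'=(-1,0)$, which is entirely compatible with $\kappa$ changing sign later; indeed the paper's proof of that proposition (Section~7.1) nowhere establishes global positivity of $\kappa$. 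Your sketch of how to prove $\kappa>0$ by ruling out a ``first zero'' via the CMC relation and tangent restriction is therefore aimed at a false target.

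Two further gaps. First, your curvature comparison ``subtract the CMC relations and use $\gamma_1(s_1)>\gamma_1(s_2)$'' hides the real difficulty: the comparison of $H_1$ at equal heights is delicate in the left case and is the content of Proposition~\ref{admissibility theorem, left case}, requiring the admissibility conditions of Definition~\ref{admissible, left case} (including $0<F(\bar s)<R(\bar s)$, $R(s)\le R(\bar s)$, and the two-sided bound $x^*\le\gamma_1(s)\le x_1'$), each of which needs its own argument (Propositions~\ref{Greg's Fact 4}, \ref{F_less_than_R_persists}, \ref{3.13 analogue, left case}). Second, your variational argument for $\gamma_1(\beta)<0$ --- fill in the ball of radius $\gamma_1(\beta)$ and rescale --- is not justified: $\gamma$ is only the outermost component of $\partial A$, so you do not know that this ball is disjoint from $E$ or free of other boundary, and the ``cheaply, since $r^p\to 0$'' remark is irrelevant since the boundary you would remove sits at radius $\gamma_1(\beta)>0$. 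The paper instead proves $\gamma_1(\beta)<0$ directly (Proposition~\ref{end_left}) from the height-by-height comparison (\ref{gamma_one_ineq}) and the monotonicity of $F$ on the upper curve.
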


\section{Preliminary Lemmas}
\indent This section contains results relevant to both cases.  Proposition \ref{unavg_std_curvature_formula} and Corollary \ref{generalized_curvature_ODE} give expressions for the mean curvature and generalized mean curvature at a point on the hypersurface generated by \( \gamma \) in terms of the curvature of \( \gamma \), the curvature of the canonical circle, and the normal derivative of the log of the density at that point.  We then discuss computational techniques that we use to determine how these functions (and others) vary with arclength.  Finally, Proposition \ref{curvature_comparison_theorem} is used in both cases to compare curvatures at pairs of points on the curve that are at the same height.

\begin{proposition}
\label{unavg_std_curvature_formula}
\cite[Prop. 3.1]{chambers}
Given a point $s \in [0, \beta)$, we have that 
\begin{equation}
\label{mean_curvature}
H_0(s)  \eeq   \kappa(s)  \pl  (n-2) \lambda(s).
\end{equation}
\end{proposition}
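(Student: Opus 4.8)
The plan is to compute the principal curvatures of the hypersurface of revolution $S$ generated by rotating $\gamma$ about the $e_1$-axis, identify them as $\kappa(s)$ (with multiplicity one) and $\lambda(s)$ (with multiplicity $n-2$), and then obtain the formula by summing, $H_0(s)$ being the unaveraged mean curvature; the boundary value $s=0$ is handled afterward by continuity. First I would work at a point $\gamma(s)$ with $s \st{\in} (0,\beta)$, so that $\gamma_2(s) \st{>} 0$ and $\gamma(s)$ is off the axis, and split $T_{\gamma(s)}S$ into the \emph{meridian} line $\R\,\gamma\,'(s)$ and the $(n-2)$-dimensional \emph{latitudinal} subspace, namely the tangent space at $\gamma(s)$ to the round $(n-2)$-sphere $L$ of revolution through $\gamma(s)$.

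The rotation group $SO(n-1)$ acting on the last $n-1$ coordinates preserves $S$; the stabilizer of $\gamma(s)$ is a copy of $SO(n-2)$, it fixes the unit normal $\nu(s)$, and this forces $\nu(s)$ to lie in the $e_1 e_2$-plane. Consequently the normal curvature of $S$ in the meridian direction is exactly the signed planar curvature of $\gamma$, namely $\kappa(s)$, with the orientation conventions fixed in Section 4 and Definition \ref{canonical_circle}. The same symmetry shows that the shape operator of $S$ at $\gamma(s)$ commutes with the $SO(n-2)$-action, hence preserves the latitudinal subspace and restricts to a scalar there; so the meridian and latitudinal directions are principal, the latter with multiplicity $n-2$.

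Next I would identify the latitudinal principal curvature with $\lambda(s)$ by comparison. Let $S'$ be the hypersurface obtained by rotating the canonical circle $C_s$ about the $e_1$-axis: a round hypersphere of signed curvature $\lambda(s) = 1/R(s)$, or a hyperplane with $\lambda(s) = 0$ when $C_s$ is a vertical line. Both $S$ and $S'$ contain the latitude sphere $L$, and at $\gamma(s)$ they share the tangent hyperplane $\R\,\gamma\,'(s) \oplus T_{\gamma(s)}L$ and the unit normal $\nu(s)$, using that $C_s$ is tangent to $\gamma\,'(s)$ at $\gamma(s)$ by Definition \ref{canonical_circle}. Hence for any $v$ tangent to $L$, extending $v$ by a vector field tangent to $L \subset S \cap S'$ and differentiating with the common ambient connection, the second fundamental forms of $S$ and of $S'$ agree on $v$; since $S'$ is umbilic with principal curvature $\lambda(s)$, the latitudinal normal curvature of $S$ equals $\lambda(s)$. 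Summing the principal curvatures gives $H_0(s) = \kappa(s) + (n-2)\lambda(s)$ for $s \st{\in} (0,\beta)$. For $s=0$, the point $\gamma(0) = (1,0)$ lies on the axis and $L$ degenerates to a point, but $\gamma$ is smooth at $0$ by Corollary \ref{regular_max_magnitude}, so $\gamma(0)$ is a smooth ``pole'' of $S$ at which every principal curvature equals $\kappa(0)$; combined with $\lambda(0) = \lim_{s \to 0}\lambda(s) = \kappa(0)$ from Lemma \ref{kappa(0) equals lambda(0)}, this gives $H_0(0) = (n-1)\kappa(0) = \kappa(0) + (n-2)\lambda(0)$, so the formula extends by continuity.

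The main obstacle is bookkeeping rather than depth: keeping the orientation and sign conventions consistent so that both $\kappa(s)$ and $\lambda(s)$ enter with positive coefficients, and making the second-fundamental-form comparison with $S'$ airtight, equivalently verifying that the latitudinal directions are genuinely principal with curvature $\lambda(s)$. A purely computational alternative sidesteps the symmetry discussion: parametrize $S$ by $(s,\omega) \mapsto (\gamma_1(s),\,\gamma_2(s)\,\omega)$ with $\omega \st{\in} S^{n-2} \subset \R^{n-1}$; since $\gamma$ is arclength, the unit normal is $\nu = (-\gamma_2\,'(s),\,\gamma_1\,'(s)\,\omega)$, and one reads off $\mathrm{II}(\partial_s,\partial_s) = \gamma_1\,'\gamma_2\,'' - \gamma_1\,''\gamma_2\,' = \kappa(s)$, while the shape operator on the sphere directions is the scalar $-\gamma_1\,'(s)/\gamma_2(s)$; a short computation with the center $F(s)$ and radius $R(s)$ of $C_s$ shows $-\gamma_1\,'(s)/\gamma_2(s) = \lambda(s)$, and the trace is the claimed expression. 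I would give the conceptual argument in the text and record the explicit computation as a check.
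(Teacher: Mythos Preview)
Your proof is correct and shares the paper's skeleton: compute the principal curvatures of the surface of revolution, identify the meridian one as $\kappa(s)$ and the $n-2$ latitudinal ones as $\lambda(s)$, then sum. The genuine difference is in how the latitudinal curvature is pinned down as $\lambda(s)$.

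The paper uses Meusnier's formula directly: the latitude circle through $\gamma(s)$ has curvature $1/y$ with $y=\gamma_2(s)$, and the normal curvature in that direction is $(\cos\phi)/y$ where $\phi$ is the angle between the circle's inward normal and the surface normal; a short case analysis (inward normal ``upward'' versus ``downward'') then shows $(\cos\phi)/y = \pm 1/R(s) = \lambda(s)$. Your route is the comparison with $S'$, the hypersphere (or hyperplane) of revolution generated by $C_s$: since $S$ and $S'$ share the latitude $L$ and the normal $\nu(s)$ at $\gamma(s)$, their second fundamental forms agree on $T_{\gamma(s)}L$, and $S'$ is umbilic with curvature $\lambda(s)$. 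This is slicker and handles the vertical-tangent case ($C_s$ a line, $S'$ a hyperplane, $\lambda(s)=0$) uniformly, whereas the paper reaches those points by first arguing that $\gamma_2\equiv 0$ and $\gamma'\equiv(0,\pm1)$ cannot hold on any interval and then invoking continuity. Your explicit parametrization at the end is effectively a third proof and is the most direct way to nail the signs; the identity $-\gamma_1'(s)/\gamma_2(s)=\lambda(s)$ is exactly what the paper's Meusnier computation encodes.

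One small remark: your treatment of $s=0$ appeals to Lemma~\ref{kappa(0) equals lambda(0)}, which is logically fine since that lemma's proof is a pure limit computation in $F$ and does not rely on the present proposition. The paper instead absorbs $s=0$ into the continuity argument.
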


\begin{proof}
\indent We consider the principal curvatures of the surface at a point $P \eeq  \gamma(s)$. 
 We treat the case that $y  \eeq   \gamma_{\,2}(s)  \st{>} 0$ and that $\gamma\,'(s)  \neeq (0, \pm 1)$. A similar argument shows that (\ref{mean_curvature}) holds if $\gamma_{\,2}(s)  \st{<}  0$ and  $\gamma\,'(s)  \neeq (0, \pm 1)$.  We claim that there exists no interval on which $\gamma_{\,2}$ is identically $0$ or $\gamma\,'$ is vertical; then it will follow by smoothness of $\gamma$ that (\ref{mean_curvature})
holds at the remaining points.\\
\indent 
To prove the claim, recall that \( \gamma \) is smooth at \( 0 \) and that, as a consequence of spherical symmetry, $\kappa(0)  \st{>} 0$.  Thus, $\gamma_{\,2}$ cannot be identically $0$ on an interval including $0$.
On the other side of the curve, $\gamma(\beta)$ is defined to be the first point where the curve intersects the axis again, so even if a portion of the curve were a line segment along the $e_1$-axis, that segment would not be parameterized by the function $\gamma$.  The curve cannot have vertical tangent vector on an interval either.  If a portion of the curve were a vertical line segment, then this vertical line segment, when rotated, would generate a portion of a hyperplane, which would have zero mean curvature.  However, $H_1$ (the normal derivative of the log of the density) would vary as one moved up or down along the line segment, contradicting the fact that the surface has constant generalized mean curvature.\\
\indent With this technical point out of the way, we proceed in the case that $y  \eeq   \gamma_{\,2}(s)  \st{>} 0$ and that $\gamma\,'(s)  \neeq (0, \pm 1)$.  One of the principal curvatures at $P$ is the the curvature of $\gamma$ at this point.  The cross section of the surface obtained by fixing the first coordinate is an $(n-2)$-dimensional sphere of revolution.  The remaining principal curvatures of the surface are the principal curvatures of the sphere, which are equal.  Thus, to compute one of the principal curvatures of the sphere, it is sufficient to compute the second principal curvature of a $2$-dimensional surface in the $n \eeq  3$ case.  This second principal curvature is the normal curvature of a circle of revolution $C$.

By assumption that $y  \eeq   \gamma_{\,2}(s)  \st{>}0$, the curvature of the circle $C$ is $1/y$.  We let $n$ denote the inward unit normal vector to the surface and $N$ denote the normal vector to the circle of revolution.  Since \( y  \st{>} 0 \), \(C_s\) is counterclockwise oriented if and only if \(n \) is downward (i.e. \(n \) has a negative \(e_2\)-component).  Thus,
 \[
 \lambda(s)   \eeq   
 \begin{cases}
  \frac{1}{R(s)}, & n \text{ downward}\\
  \frac{-1}{R(s)}, &n \text{ upward}.
\end{cases}
 \]
 Meanwhile, by Meusnier's formula, the second principal curvature is given by 
\[
\kappa_2  \eeq   \frac{1}{y} \cos \phi,
\]
where $\phi$ is the angle between $n$ and $N$. 
 Again, since \( y  \st{>} 0 \), 
 \[
 \cos \phi  \eeq   
 \begin{cases}
  \frac{y}{R(s)}, & n \text{ downward}\\
  \frac{-y}{R(s)}, &n \text{ upward}.
\end{cases}
 \]
 
The first of these cases is depicted in Figure \ref{fig:kappa_2}.  
In both cases, the second principal curvature is the curvature of the canonical circle.\\

\indent Since one principal curvature of the surface at $\gamma(s)$ equals $\kappa(s)$ and all of the others equal $\lambda(s)$, mean curvature is given by
$$H_0(s)  \eeq   \kappa(s)  \pl  (n-2)\lambda(s).$$
\end{proof}

\begin{figure}
	\begin{center}
		\includegraphics[scale=0.9]{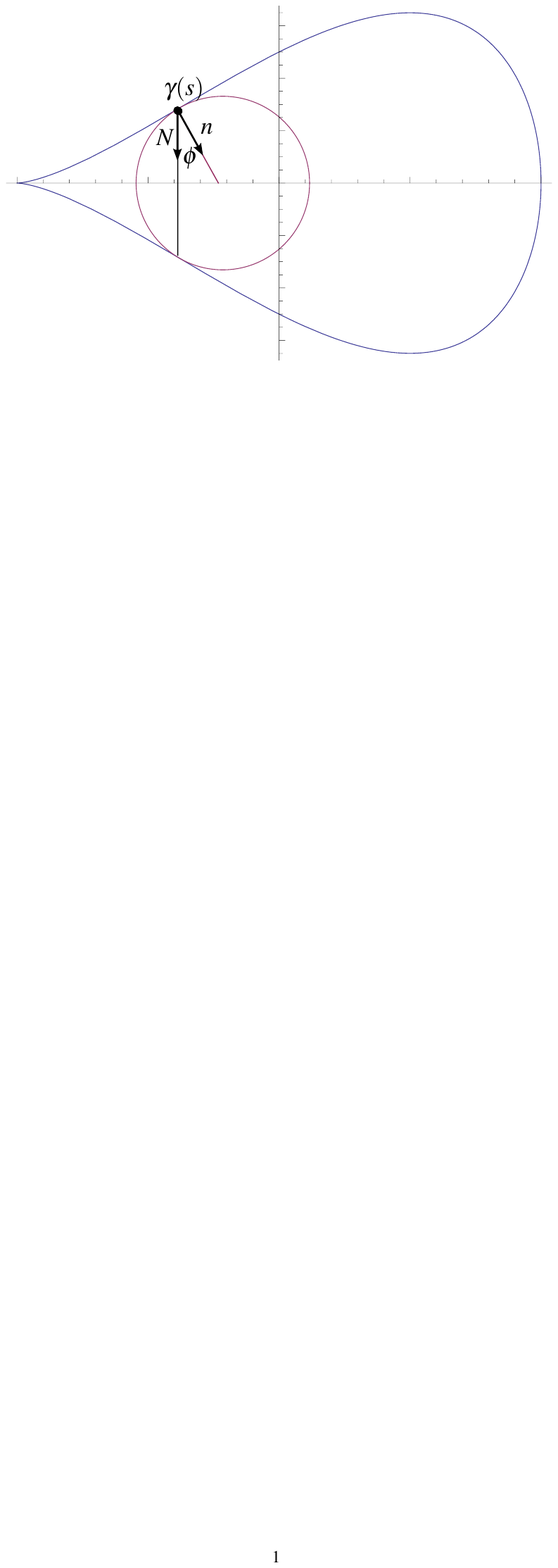}
		\includegraphics[scale=0.6]{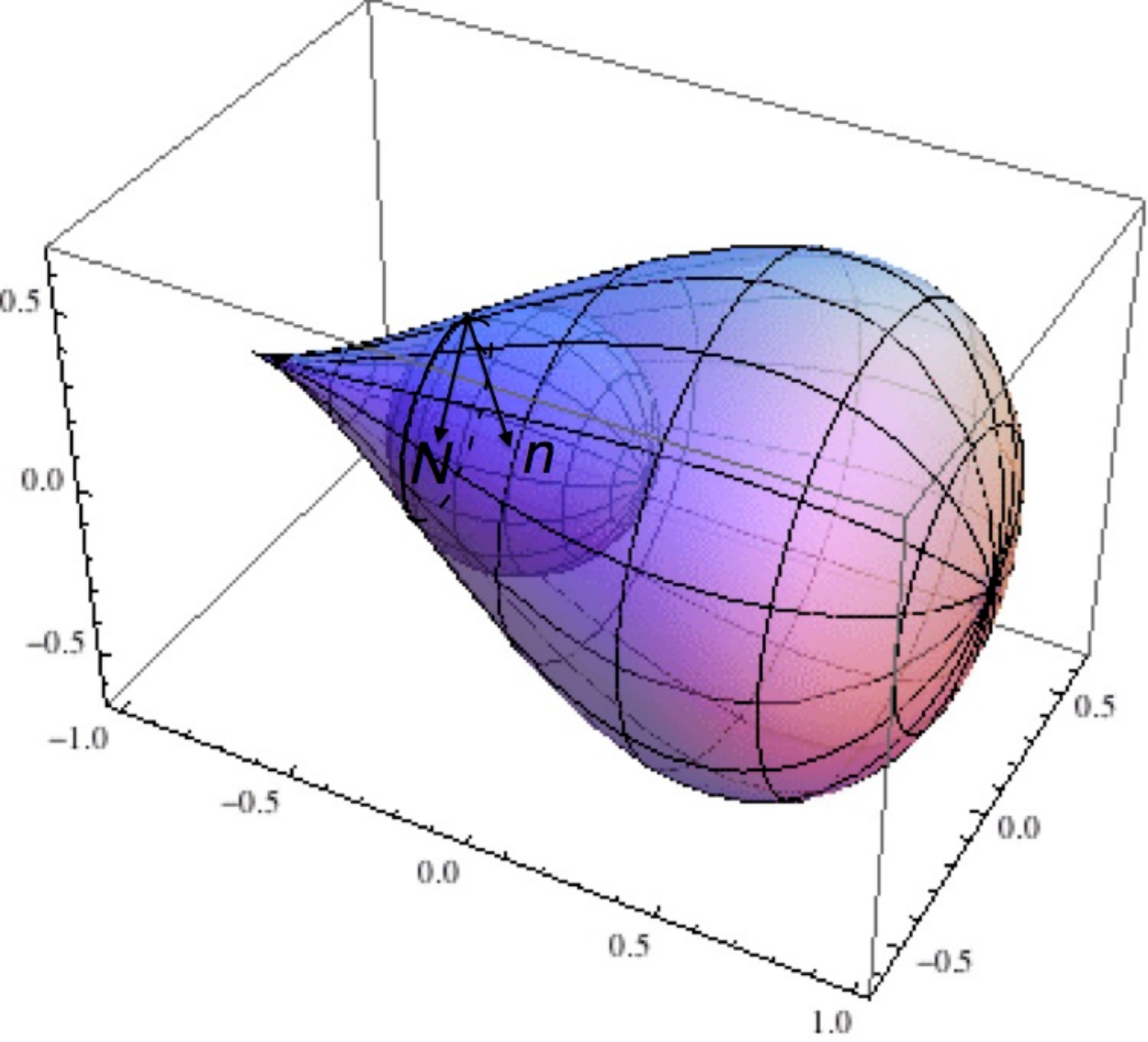}	
	    \caption{A cross section of the surface in the $xy$-plane and the inward unit normal vectors to the surface and to a circle of revolution  \label{fig:kappa_2}}
	\end{center}
\end{figure}

\begin{corollary}
\label{generalized_curvature_ODE}
Since generalized mean curvature is constant on the set of regular points of $\partial E$, there is a constant $c$ so that 
\begin{equation}
\label{H_f}
c \eeq H_f(s)  \eeq   \kappa(s)  \pl  (n-2)\lambda(s)  \pl  H_1(s).
\end{equation}
for all $s \in (-\beta, \beta)$.
\end{corollary}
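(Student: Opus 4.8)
The plan is to obtain the identity by combining Proposition~\ref{unavg_std_curvature_formula} with the definition of generalized mean curvature and the fact that this quantity is constant on isoperimetric hypersurfaces. First I would note that, by Definition~\ref{GMC}, at every regular point $x\in\partial E$ with $x\neq 0$ one has $H_f(x)=H_0(x)+H_1(x)$, where $H_1$ is the normal derivative of $\log(r^p)$. Proposition~\ref{unavg_std_curvature_formula} gives $H_0(s)=\kappa(s)+(n-2)\lambda(s)$ for $s\in[0,\beta)$; the same computation (its proof already handles the case $\gamma_{\,2}(s)<0$) shows the formula holds for all $s\in(-\beta,\beta)$, so that $H_f(s)=\kappa(s)+(n-2)\lambda(s)+H_1(s)$ wherever the right-hand side is defined. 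Equivalently, one may extend from $[0,\beta)$ to $(-\beta,0]$ using the reflection symmetry $\gamma_{\,1}(-s)=\gamma_{\,1}(s)$, $\gamma_{\,2}(-s)=-\gamma_{\,2}(s)$ across the $e_1$-axis, which is an ambient isometry preserving the radial density and hence $\kappa$, $\lambda$, and $H_0$.

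Next I would check that the right-hand side is defined — and the identity meaningful — for every $s\in(-\beta,\beta)$, which amounts to checking that $\gamma(s)$ is a regular point of $\partial E$ at which the density is smooth and nonzero. For $s=0$, regularity is Corollary~\ref{regular_max_magnitude}; for $s\neq 0$, the point $\gamma(s)$ lies off the $e_1$-axis, since by construction $\pm\beta$ are the first parameters at which $\gamma$ returns to that axis, so regularity follows from Remark~\ref{off-axis_regularity}. Moreover $\gamma(s)\neq 0$ for all $s\in(-\beta,\beta)$: the origin lies on the $e_1$-axis, whereas $\gamma(0)=(1,0)$ and, for $s\neq 0$, $\gamma(s)$ is not on that axis. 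Hence $r^p$ is smooth and positive at $\gamma(s)$, and $H_1(s)$ and $H_f(s)$ are well defined there.

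Finally I would invoke the first variation formula: since $E$ is isoperimetric, $H_f$ is constant on the set of regular points of $\partial E$, as recorded in Section~2. Combined with the previous paragraph, this produces a single constant $c$ with $H_f(s)=c$ for all $s\in(-\beta,\beta)$, and substituting the formula for $H_f$ gives~(\ref{H_f}). There is essentially no obstacle here: the corollary is a bookkeeping consequence of results already proved, and the only points needing a moment's attention are the extension of the curvature formula off $[0,\beta)$ and the observation that the generating curve avoids the origin on the open parameter interval, so that $H_1$ — and therefore $H_f$ — is even defined at every $\gamma(s)$ with $s\in(-\beta,\beta)$.
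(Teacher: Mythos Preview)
Your argument is correct and is exactly the approach the paper intends: the corollary is stated without proof as an immediate consequence of Proposition~\ref{unavg_std_curvature_formula}, Definition~\ref{GMC}, and the constancy of $H_f$ noted in Section~2, and you have simply filled in those details carefully (including the extension from $[0,\beta)$ to $(-\beta,\beta)$, which the paper handles implicitly via the case $\gamma_{\,2}(s)<0$ in the proof of Proposition~\ref{unavg_std_curvature_formula}).
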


\indent In the left and right cases delineated on p. 2, for any $s \in [0,\beta)$ we can analyze how $\gamma$ and related functions are instantaneously changing at $\gamma(s)$ by computing the requisite derivatives on the osculating circle to $\gamma$ at $\gamma(s)$.  A justification for this procedure will follow after we introduce some notation.
\begin{definition}
\label{as parameterized}
\emph{
Given $s$ in $(-\beta,\beta)$, let $A_s$ denote the unique oriented circle that is tangent to $\gamma$ at $\gamma(s)$ and has curvature $\kappa(s)$.  Note that if $\kappa(s)  \eeq   0$, then $A_s$ is an oriented line with direction vector $\gamma\,'(s)$.  For a fixed $s$, let $\alpha$ be an arclength parameterization of $A_s$, and let $\ti{s}$ be the point in the domain of $\alpha$ so that $\alpha(\tilde{s})  \eeq   \gamma(s)$.  For each $t$ in the domain of $\alpha$, let $\tilde{\kappa}(t)$ denote the signed curvature of $\alpha$ at $\alpha(t)$,
and let
\begin{equation*}
\ti{H_1}(t) \eeq \frac{\partial \psi}{\partial \nu}(\alpha (t)) \eeq \frac{p}{|\alpha(t)|} \frac{\alpha(t)}{|\alpha(t)|} \cdot \nu(t),
\end{equation*}
where $\nu$ is the outward unit normal vector to $\alpha$ at $\alpha(t)$. 
}
\end{definition}

Since $A_s$ is tangent to $\gamma$ at $\gamma(s)$ and has curvature $\kappa(s)$, we have 
$\alpha'(\tilde{s})  \eeq   \gamma\,'(s)$
and
$\alpha''(\tilde{s})  \eeq   \gamma\,''(s).$
Both $\tilde{\kappa}$ and $\tilde{H_1}$ are smooth functions on their domains.  

We also consider circles tangent to $\alpha$ that are centered on the $e_1$-axis, and we define analogues of the functions $F$, $R$, and $\lambda$ introduced in Definition \ref{canonical_circle}.  We use these functions to approximate their counterparts on $\gamma$ (cf. Lemma \ref{F_dominates_R_right} and Lemma \ref{Fact 10}).

\begin{definition}
\label{tilde quantities}
\emph{
Let $A_s$ and $\alpha$ be as in Definition \ref{as parameterized}.  Given $t$ in the domain of $\alpha$, let $\ti{C}_t$ denote the \emph{canonical circle to $A_s$ at $\alpha(t)$}, defined as follows: if $\alpha_2(t)  \neeq 0$, then we define $\tilde{C}_t$ to be the unique oriented circle that has its center on the $e_1$-axis and is tangent to $A_s$ at the point $\alpha(t)$.  If $\alpha_2(t)  \eeq   0$ and $\alpha'(t)  \eeq   (0, \pm 1)$, then we define $\tilde{C}_t$ to be $A_s$.  If $\alpha_2(t)  \eeq   0$ and $\alpha'(t)  \neeq (0, \pm 1)$, then $\tilde{C}_t$ is undefined.  For each $t$ so that $\tilde{C}_t$ is defined, the canonical circle is defined on a neighborhood of $t$.  We define the functions $\tilde{\lambda}$, $\tilde{R}$, and $\tilde{F}$ by letting $\tilde{\lambda}(t)$, $\tilde{R}(t)$, and $\tilde{F}(t)$ be the signed curvature of $\tilde{C}_t$, the radius of $\tilde{C}_t$, and the abscissa of the center of $\tilde{C}_t$, respectively.  Figure \ref{tilde figure} shows the osculating circle $A_s$ at a point on $\gamma$ and the canonical circle $\ti{C_t}$ at a point on $A_s$. }

\end{definition}

 \begin{figure}
\begin{center}
\includegraphics[scale=0.95]{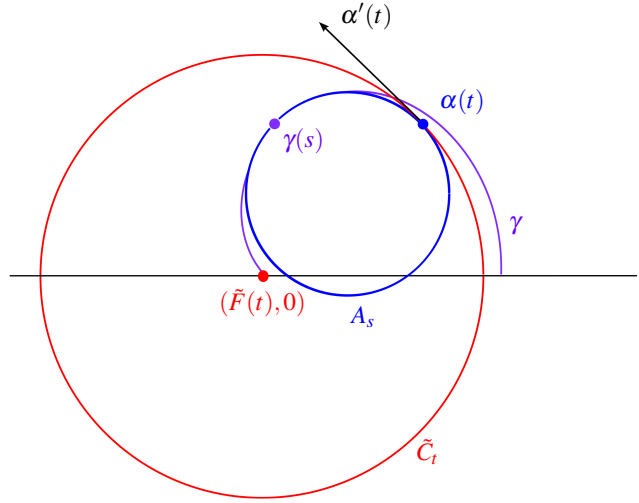}
\caption{ The osculating circle $A_s$ at a point on $\gamma$ and the canonical circle $\ti{C_t}$ at a point on $A_s$ \label{tilde figure}}
\end{center}
\end{figure}

\indent 
For a given $t$, the canonical circle $\tilde{C}_t$ depends only on $\alpha(t)$ and $\alpha'(t)$.  It follows that $\tilde{F}$, $\tilde{R}$, and $\tilde{\lambda}$ can be computed in terms of $\alpha$ and $\alpha'$ and that their derivatives depend on $\alpha$ and its first two derivatives.  In particular, since $\alpha'(\tilde{s})  \eeq   \gamma\,'(s)$ and $\alpha''(\tilde{s})  \eeq   \gamma\,''(s)$, we have 
\[
\tilde{F}'(\tilde{s})  \eeq   F'(s),
\]
 \[
 \tilde{R}'(\tilde{s})  \eeq   R'(s),
 \]
  and 
  \[
  \tilde{\lambda}'(\tilde{s})  \eeq   \lambda'(s).
  \]
  
\indent As well as analyzing these functions, we also consider the angle the tangent vector makes to the horizontal. 
\begin{definition}
\label{theta definition}
\emph{We define the function $\theta: S^1 \to (0, 2 \pi]$ by letting $\theta(v)$ be the angle in the specified interval that $v$ makes to the positive $e_1$-axis.}
\end{definition}

The next proposition of Chambers concerns two $C^2$ functions on an interval $(a,b)$.  Given $h: (a,b) \to \R_{ \geeq 0}$, we let $t_h(x)$ denote the unit tangent vector
\[
\frac{(1, h'(x))}{|(1,h'(x))|}\text{,}
\]
and we let $\kappa_h(x)$ denote the upward curvature of the graph of $h$ at $x$.
\begin{proposition}
\label{curvature_comparison_theorem}
\cite[Prop 3.8]{chambers}
Consider two $C^2$ functions $f,g : (a,b) \rightarrow \R_{ \geeq 0}$ with $b  \st{>} a$ that satisfy the following:

\begin{enumerate}[$(1)$]
\inditem $\lim_{x \rightarrow b^-} t_f(x)$ and $\lim_{x \rightarrow b^-} t_g(x)$ exist,

\inditem $\lim_{x \rightarrow b^-} f(x)$ and $\lim_{x \rightarrow b^-} g(x)$ exist,

\inditem $f'(x)  \geeq 0$ and $g'(x)  \geeq 0$ for all $x \in (a,b)$,

\inditem $\lim_{x \rightarrow b^-} f(x)  \leeq \lim_{x \rightarrow b^-} g(x)$, and $\lim_{x \rightarrow b^-} \theta(t_f(x))  \geeq \lim_{x \rightarrow b^-} \theta(t_g(x))$,

\inditem $\kappa_f(x)  \leeq \kappa_g(x)$ for all \( x \in (a,b) \). \\
\end{enumerate}
Then for every $x \in (a,b)$,
$f(x)  \leeq g(x)$,
and
$ \theta(t_f(x))  \geeq \theta(t_g(x))$.
Furthermore, if there exists a point $x_0 \in (a,b)$ such that $\kappa_f(x_0)  \st{<}  \kappa_g(x_0)$, then there is some $\phi  \st{>} 0$ such that $\phi  \leeq \theta(t_f(x)) - \theta(t_g(x))$ for all $x \in (a, x_0)$.  
\end{proposition}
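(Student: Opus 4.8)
The plan is to reduce both conclusions to the single claim that $\Delta(x) := \theta(t_f(x)) \mn \theta(t_g(x))$ is nonnegative on $(a,b)$. Since $f' \geeq 0$ and $g' \geeq 0$, the unit tangents $t_f(x),t_g(x)$ lie in the closed first quadrant, so $\theta(t_f(x)) \eeq \arctan f'(x)$ and $\theta(t_g(x)) \eeq \arctan g'(x)$; thus $\theta(t_f(x)) \geeq \theta(t_g(x))$ is the same as $f'(x) \geeq g'(x)$, and granting this one recovers $f(x) \leeq g(x)$ by integrating backward from $b$,
\[
g(x) \mn f(x) \eeq \lim_{y \to b^-}\big(g(y) \mn f(y)\big) \;\mn\; \int_x^{b}\big(g'(u) \mn f'(u)\big)\,du \geeq 0,
\]
since both right-hand terms are nonnegative by hypotheses $(2)$ and $(4)$.

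\textbf{A linear differential inequality for $\Delta$.} Next I would derive a differential inequality that packages hypothesis $(5)$. From $\theta(t_h(x)) \eeq \arctan h'(x)$ and the graph-curvature formula $\kappa_h \eeq h''/(1 \pl h'^2)^{3/2}$ one gets $\tfrac{d}{dx}\theta(t_h(x)) \eeq \kappa_h(x)\sqrt{1 \pl h'(x)^2}$, so
\[
\Delta'(x) \eeq \kappa_f(x)\sqrt{1 \pl f'(x)^2} \mn \kappa_g(x)\sqrt{1 \pl g'(x)^2}.
\]
Writing $\kappa_f \eeq \kappa_g \mn \delta$ with $\delta := \kappa_g \mn \kappa_f \geeq 0$ and factoring the divided difference $\sqrt{1 \pl f'^2} \mn \sqrt{1 \pl g'^2}$ as $\Phi(f',g')\,(\arctan f' \mn \arctan g') \eeq \Phi(f',g')\,\Delta$ through a continuous function $\Phi$, this rearranges to
\[
\Delta'(x) \eeq a(x)\,\Delta(x) \mn b(x) \;\leeq\; a(x)\,\Delta(x),
\]
with $a := \kappa_g\,\Phi(f',g')$ continuous on $(a,b)$ and $b := \delta\sqrt{1 \pl f'^2} \geeq 0$, and $b(x) \st{>} 0$ exactly where $\kappa_f(x) \st{<} \kappa_g(x)$.

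\textbf{Backward Gronwall sweep.} The heart of the argument is then a Gronwall comparison run backward from $b$. For any base point $x_*$, the quantity $W(x) := \Delta(x)\exp\big(-\int_{x_*}^x a\big)$ satisfies $W'(x) \eeq -b(x)\exp\big(-\int_{x_*}^x a\big) \leeq 0$, so $W$ is non-increasing wherever $a$ is integrable. Suppose, for contradiction, that $\Delta \st{<} 0$ somewhere, and put $x_2 := \sup\{x \in (a,b): \Delta(x) \st{<} 0\}$. If $x_2 \st{<} b$, then $\Delta(x_2) \eeq 0$ while $\Delta \st{<} 0$ immediately to the left of $x_2$; taking $x_* \eeq x_2$, the monotonicity of $W$ forces $\Delta \geeq 0$ immediately left of $x_2$, a contradiction. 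If $x_2 \eeq b$, then $\Delta \st{<} 0$ on an interval $(x_1,b)$ and hypothesis $(4)$ forces $\lim_{x \to b^-}\Delta(x) \eeq 0$; since $f'$ and $g'$ have limits at $b$ by hypothesis $(1)$, $a$ is controlled near $b$, so taking $x_* \eeq x_1$ makes $W$ non-increasing with limit $0$ at $b$ and hence $\geeq 0$ on $(x_1,b)$, again contradicting $\Delta \st{<} 0$ there. Thus $\Delta \geeq 0$ on $(a,b)$, which with the first step gives $f \leeq g$ and $\theta(t_f) \geeq \theta(t_g)$.

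\textbf{The strict improvement, and where the difficulty lies.} For the final assertion: if $\kappa_f(x_0) \st{<} \kappa_g(x_0)$ then $b(x_0) \st{>} 0$, and since $\Delta \geeq 0$ on $(a,b)$, $\Delta$ cannot vanish at the interior point $x_0$ --- a zero there would be an interior minimum, forcing $0 \eeq \Delta'(x_0) \eeq \mn b(x_0) \st{<} 0$. Hence $\Delta(x_0) \st{>} 0$, and taking $x_* \eeq x_0$ the non-increasing $W$ gives $\Delta(x) \geeq \Delta(x_0)\exp\big(-\int_x^{x_0} a\big)$ for $x \st{<} x_0$; boundedness of $a$ on the interval then supplies a uniform $\phi \st{>} 0$ with $\phi \leeq \Delta(x)$ on $(a,x_0)$. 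The step I expect to be the main obstacle is the touching point $x_2$ in the sweep: a direct sign test on $\Delta'(x_2)$ is inconclusive precisely when the two graphs are \emph{second-order} tangent at $x_2$ (that is, $f'(x_2) \eeq g'(x_2)$ and also $\kappa_f(x_2) \eeq \kappa_g(x_2)$), and it is to handle this degeneracy uniformly that one must upgrade the pointwise comparison $\kappa_f \leeq \kappa_g$ to the linear inequality $\Delta' \leeq a\Delta$ and integrate it. A secondary technical point is verifying that $a$ is integrable/bounded near the endpoints, for which hypotheses $(1)$--$(2)$ and the $C^2$ regularity (in applications, the bounded curvature of $\gamma$) are what one leans on.
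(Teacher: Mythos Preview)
The paper does not give its own proof here; it simply cites \cite[Prop.~3.8]{chambers}. So the comparison is between your sketch and the ``standard'' argument.

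Your reduction to the angle difference $\Delta=\theta(t_f)-\theta(t_g)$ and the derivation $\Delta'=a\Delta-b$ with $b\ge 0$ are correct, and the Gronwall sweep is the right instinct. But the issue you label ``secondary'' is in fact the main gap: the hypotheses control only $f,g,f',g'$ near $b$ (and nothing at all near $a$), not the second derivatives, so $\kappa_g$---and hence $a=\kappa_g\,\Phi$---need not be bounded or even integrable near either endpoint. Your case $x_2=b$ and the uniform bound on $(a,x_0)$ both hinge on exactly this, so as written the argument does not close for the proposition as stated (it does close in every application in the paper, where the curvature of $\gamma$ is bounded).

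There is a short fix that avoids Gronwall entirely. Since $\sin\theta(t_h)=h'/\sqrt{1+h'^2}$, one computes directly
\[
\frac{d}{dx}\,\sin\theta(t_h(x))=\frac{h''}{(1+h'^2)^{3/2}}=\kappa_h(x),
\]
so $S(x):=\sin\theta(t_f(x))-\sin\theta(t_g(x))$ satisfies $S'=\kappa_f-\kappa_g\le 0$. Thus $S$ is non-increasing, and by (1) and (4) its limit at $b^-$ is $\ge 0$; hence $S\ge 0$ on $(a,b)$, i.e.\ $\theta(t_f)\ge\theta(t_g)$ (as $\sin$ is increasing on $[0,\pi/2)$), and then $f\le g$ follows by your backward integration. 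For the ``furthermore'': $S'(x_0)<0$ forces $S(x_0)>0$ (otherwise $S$ would become negative just to the right of $x_0$, contradicting $S\ge 0$), and monotonicity gives $S(x)\ge S(x_0)$ for $x<x_0$; finally, concavity of $\sin$ on $[0,\pi/2]$ yields $\theta(t_f)-\theta(t_g)\ge \sin\theta(t_f)-\sin\theta(t_g)=S\ge S(x_0)=:\phi>0$. This replaces your coefficient $a$ by the constant $0$ and removes all endpoint integrability concerns.
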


\section{Proof of Right Tangent Lemma}

 \indent To prove Proposition \ref{right_tangent}, we assume that $F(0)  \st{>}  1/2$.  Then we consider two subintervals of $[0, \beta)$ that we call the upper curve and the lower curve after the objects of the same names in \cite{chambers} (see Definitions \ref{upper_curve_right} and \ref{lower_curve_right}).  We will prove that the lower curve ends in a vertical tangent at a point right of the $e_2$-axis (Lemma \ref{end_positive})  and that, past this point, curvature is positive and the tangent vector is strictly in the fourth quadrant (Lemma \ref{curving_tight_after_eta_right}).  The end behavior of the curve is similar to that of the generating curve in \cite{chambers} except that our curve must terminate right of the \( e_2\)-axis, an additional feature which allows us to achieve a contradiction to spherical symmetry without an analogue of Chambers' Second Tangent Lemma \cite[Lemma 2.5]{chambers}.  As such, many intermediate results are also similar to results in \cite{chambers} and are cross-referenced.\\
\indent Our analysis requires comparing curvatures at points of the same height on opposite sides of the curve.  Specifically, we show that the curvature at the point on the left is strictly greater than the curvature at the corresponding point on the right (Prop. \ref {lower_upper_curvature_comparison_right}).  By Corollary \ref{generalized_curvature_ODE}, it suffices to prove that $\lambda$, the canonical circle curvature, is less at the point on the left and $H_1$, the normal derivative of the log of the density, is strictly less at the point on the left.  For any $s \in [0, \beta)$, \( \gamma(s)  \neeq (0,0) \), so the normal derivative of $\log(r^p)$ at $\gamma(s)$ is given by
\begin{equation}
\label{H_1_computation}
H_1(s)  \eeq   \frac{p}{|\gamma(s)|} \frac{\gamma(s)}{|\gamma(s)|} \st{\cdot} \nu(s)  \eeq   p \frac{\gamma(s)}{|\gamma(s)|^2} \st{\cdot} \nu(s).
\end{equation}
More generally, given points $(x_1, y), (x_2,y) \in \R^2 \mn \{0\}$, and unit vectors $v_1$ and $v_2$, one can compare the quantities
\[
 \frac{(x_1,y)}{|(x_1, y)|^2} \st{\cdot} v_1^{\perp}\,\,\,\,
\text{and}\,\,\,\,
 \frac{(x_2,y)}{|(x_2,y)|^2} \st{\cdot} v_2^{\perp} \text{,}
\]
where $v_1^{\perp}$ and $v_2^{\perp}$ denote clockwise rotations of $v_1$ and $v_2$ by $\pi/2$ radians.  (In our context, $v_1$ and $v_2$ will be tangent vectors to the curve at two points, so $v_1^{\perp}$ and $v_2^{\perp}$ will be the outward unit normal vectors.)  We have discovered a set of sufficient conditions for the points $(x_1,y)$ and $(x_2,y)$ and the vectors $v_1$ and $v_2$ to satisfy the inequality
\begin{equation}
\label{H_1_comp_right_intro}
\frac{(x_1,y)}{|(x_1, y)|^2} \st{\cdot} v_1^{\perp}  \st{>}
\frac{(x_2,y)}{|(x_2,y)|^2} \st{\cdot} v_2^{\perp}.
\end{equation}

In Definition \ref{admissible_right},
we define two unit vectors $v_1$ and $v_2$ to be admissible with respect to $(x_1,y)$ and $(x_2,y)$ if they satisfy these conditions.  

\begin{definition}
\label{admissible_right}
\emph{
Consider a pair of points $(x_1,y)$ and $(x_2,y)$ with \(y  \st{>} 0\),  and a pair of unit vectors, $v_1$ and $v_2$, which lie strictly in the second and third quadrants, respectively. Let $v_1'$ denote the reflection of $v_1$ over the $e_1$-axis.  Let $C_i$ denote the canonical circle with respect to $v_i$ at $(x_i,y)$, with center $(a_i,0)$ and radius $R_i$. As depicted in Figure \ref{fig:admissible_right}, $v_1$ and $v_2$ are} admissible \emph{with respect to $(x_1,y)$ and $(x_2,y)$ if the following occur:}

\begin{enumerate}[$(1)$]
\inditem $a_1  \st{>} R_1$,
\inditem $\theta(v_2)  \geeq   \theta(v_1')$,
\inditem $x_1 - a_1  \geeq a_1 - x_2$.
\end{enumerate}

\end{definition}

\begin{figure}
	\begin{center}
	    \includegraphics[scale=0.83]{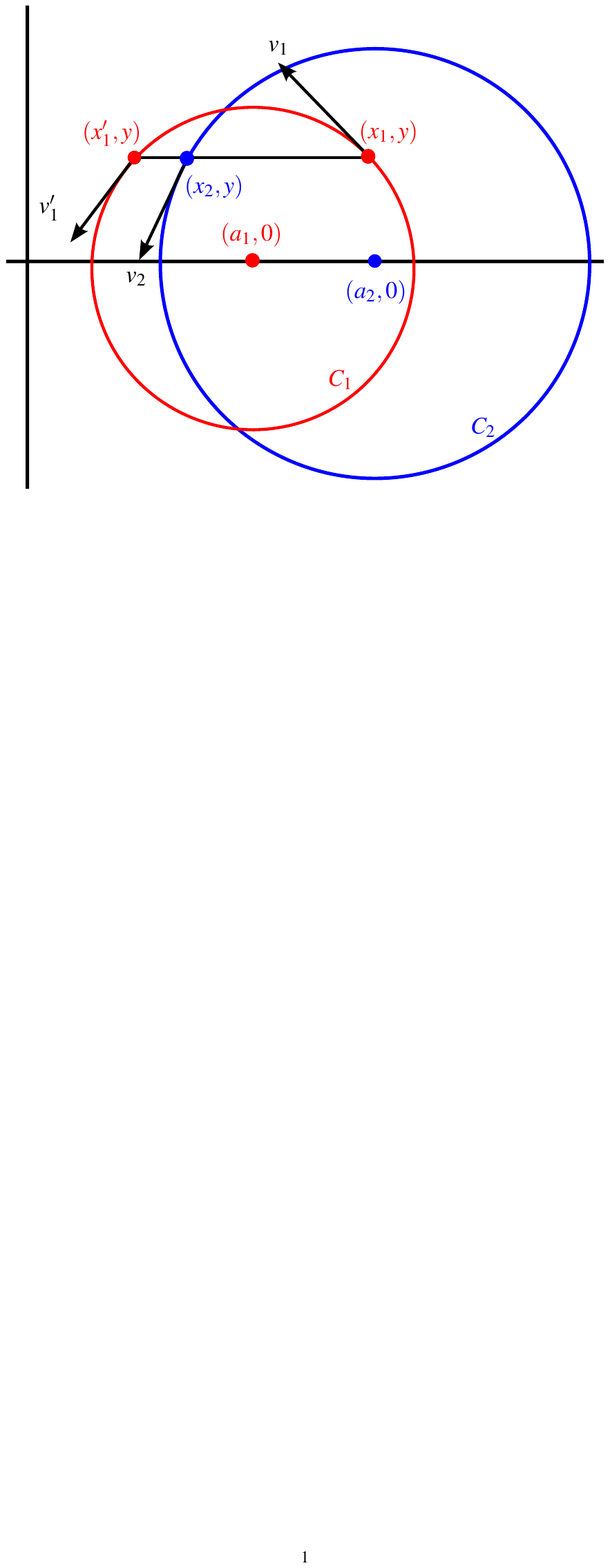}
	    \caption{The vectors $v_1$ and $v_2$ are admissible with respect to $(x_1,y)$ and $(x_2,y)$. \label{fig:admissible_right}}
	\end{center}
\end{figure}

\begin{proposition}
\label{H1_comparison_right}
Consider a pair of points $(x_1,y)$ and $(x_2,y)$ in the upper half plane with $x_1  \geeq x_2$. Let $v_1$ and $v_2$ be two unit vectors, and let $v_1^{\perp}$ and $v_2^{\perp}$ denote the clockwise rotations of these respective vectors through $\pi /2$ radians. If $v_1$ and $v_2$ are admissible with respect to $(x_1,y)$ and $(x_2,y)$, then 

\[
\frac{(x_1,y)}{|(x_1,y)|^2} \st{\cdot} v_1^{\perp}  \st{>} \frac{(x_2,y)}{|(x_2,y)|^2} \st{\cdot} v_2^{\perp}.
\]
\end{proposition}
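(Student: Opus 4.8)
The plan is to reduce the inequality to elementary trigonometry and then to a single algebraic inequality controlled by the three admissibility conditions. The key observation is that for any unit vector $w$ and any nonzero $(x,y)$ with polar representation $(x,y) = \rho(\cos\delta,\sin\delta)$,
\[
\frac{(x,y)}{|(x,y)|^2}\cdot w^{\perp} = \frac{\sin\bigl(\theta(w)-\delta\bigr)}{\rho},
\]
since $w = (\cos\theta(w),\sin\theta(w))$ gives $w^{\perp} = (\sin\theta(w),-\cos\theta(w))$ and the dot product expands to $\rho\sin(\theta(w)-\delta)$. Writing $\rho_i,\delta_i$ for the polar data of $(x_i,y)$ and $\phi_i = \theta(v_i)$, the claim becomes $\rho_1^{-1}\sin(\phi_1-\delta_1) > \rho_2^{-1}\sin(\phi_2-\delta_2)$.

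First I would record the elementary consequences of admissibility. From $(x_1,y)-(a_1,0)\perp v_1$ (and $v_1$ not vertical) one gets $a_1 = x_1 + y\tan\phi_1$ and $R_1 = -y/\cos\phi_1$, hence $x_1 - a_1 = -y\tan\phi_1 = \sqrt{R_1^2-y^2}$, which lies in $(0,R_1)$ because $y>0$; with condition $(1)$, $x_1 > a_1 > R_1 > 0$, and since $x_1 - a_1 < R_1 < a_1$ also $x_1 < 2a_1$. Then condition $(3)$ forces $x_2 \ge 2a_1 - x_1 > 0$. So $\delta_1,\delta_2 \in (0,\pi/2)$ and $\rho_1,\rho_2 > 0$; since $\phi_1 \in (\pi/2,\pi)$ we have $\phi_1 - \delta_1 \in (0,\pi)$, so the left side is positive. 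If the right side is $\le 0$ we are done, so assume $\sin(\phi_2-\delta_2) > 0$, which with $\phi_2 \in (\pi,3\pi/2)$ forces $\phi_2 - \delta_2 \in (\pi/2,\pi)$, i.e. $\phi_2 < \pi + \delta_2$.

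Next I would use condition $(2)$ to pass to the reflected vector $v_1'$, which has $\theta(v_1') = 2\pi - \phi_1$. Since $\phi_2 \ge 2\pi - \phi_1$ we get $\phi_2 - \delta_2 \ge (2\pi-\phi_1) - \delta_2$, and $(2\pi-\phi_1)-\delta_2$ also lies in $(\pi/2,\pi)$ (using $2\pi - \phi_1 \le \phi_2 < \pi + \delta_2$), where $\sin$ is decreasing; hence $\sin(\phi_2-\delta_2) \le \sin\bigl((2\pi-\phi_1)-\delta_2\bigr) = -\sin(\phi_1+\delta_2)$. Thus it suffices to prove the stronger inequality $\rho_1^{-1}\sin(\phi_1-\delta_1) > -\rho_2^{-1}\sin(\phi_1+\delta_2)$. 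Clearing the positive denominators, writing $\rho_i^2 = x_i^2+y^2$, $\cos\delta_i = x_i/\rho_i$, $\sin\delta_i = y/\rho_i$, expanding the sines, and substituting $y\cot\phi_1 = -y^2/(x_1-a_1)$ (from the formula for $a_1$), one finds everything collapses, up to a positive factor, to
\[
x_1 x_2 (x_1 - a_1) + y^2 (x_2 - a_1) > 0 .
\]

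Finally I would verify this last inequality from conditions $(1)$ and $(3)$. If $x_2 \ge a_1$, both terms are nonnegative and the first is strictly positive since $x_1 - a_1 > 0$. If $x_2 < a_1$, then condition $(3)$ gives $a_1 - x_2 \le x_1 - a_1$, while $x_1 = a_1 + (x_1-a_1)$ together with $x_2 \ge 2a_1 - x_1 = a_1 - (x_1-a_1)$ and $a_1^2 > R_1^2 = (x_1-a_1)^2 + y^2$ (condition $(1)$) yields $x_1 x_2 \ge a_1^2 - (x_1-a_1)^2 > y^2$; hence $x_1 x_2 (x_1-a_1) > y^2 (x_1-a_1) \ge y^2 (a_1 - x_2)$, as needed. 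The main obstacle is the middle step: one must confirm that condition $(2)$ is being applied on the interval where $\sin$ is monotone — so that replacing $v_2$ by $v_1'$ only increases the right side — and then organize the algebra so that the contribution of condition $(3)$, which on its own only furnishes a ratio at most $1$, is strictly dominated by the positive gap $x_1 x_2 - y^2 > 0$ supplied by condition $(1)$.
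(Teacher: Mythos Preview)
Your proof is correct. Both your argument and the paper's exploit condition~(2) by passing to the reflected vector $v_1'$, but the executions diverge after that. The paper introduces the reflected point $(x_1',y) = (2a_1 - x_1, y)$ on $C_1$ and splits the inequality into two pieces: a strict comparison between $(x_1,y)$ and $(x_1',y)$ on the circle $C_1$ (a direct computation whose numerator is $2a_1(a_1^2 - R_1^2)\cos t_1$, positive by condition~(1)), followed by a non-strict cosine--angle comparison between $(x_1',y,v_1')$ and $(x_2,y,v_2)$ (using $x_2 \ge x_1' > 0$ from conditions~(1),(3) and $\theta(v_2)\ge \theta(v_1')$ from condition~(2)). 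You instead keep the point $(x_2,y)$ fixed, use condition~(2) only to bound $\sin(\phi_2-\delta_2)$ by $-\sin(\phi_1+\delta_2)$ via monotonicity of $\sin$ on $(\pi/2,\pi)$, and then reduce everything in one stroke to the single algebraic inequality $x_1x_2(x_1-a_1) + y^2(x_2-a_1) > 0$, dispatched cleanly by conditions~(1) and~(3). Your route is more computational but avoids the intermediate geometric object $(x_1',y)$ and isolates exactly how each admissibility condition enters the final estimate; the paper's two-step decomposition is more geometric and makes the role of the reflection across $x=a_1$ visually explicit.
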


\begin{proof}
Let $(x_1',y)$ be the reflection of $(x_1,y)$ over the vertical line $x \eeq   a_1$. It follows that $x_1'  \eeq   a_1-(x_1-a_1)$. By symmetry, $C_1$ is also the canonical circle with respect to $v_1'$ at $(x_1',y)$.   We will show that

\begin{equation}
\label{eq::H1_comparison_right_1}
\frac{(x_1,y)}{|(x_1,y)|^2} \st{\cdot} v_1^{\perp}  \st{>} \frac{(x_1',y)}{|(x_1',y)|^2} \st{\cdot} v_1'^{\perp}
\end{equation}
and that

\begin{equation}
\label{eq::H1_comparison_right_2}
\frac{(x_1',y)}{|(x_1',y)|^2} \st{\cdot} v_1'^{\perp}  \geeq \frac{(x_2,y)}{|(x_2,y)|^2} \st{\cdot} v_2^{\perp}
\end{equation}

To prove (\ref{eq::H1_comparison_right_1}), we parameterize $C_1$ by $\alpha(t)  \eeq   (a_1  \pl  R_1 \cos t , R_1 \sin t)$ for $t$ in $[0,2\pi)$. Taking $t_1 \in ( 0, \pi /2 )$ so that $\alpha(t_1)  \eeq   (x_1,y)$, we have by symmetry that $(x_1',y)  \eeq   \alpha(\pi - t_1)$. Using this parameterization to simplify the quantities in (\ref{eq::H1_comparison_right_1}), we have
\[
\frac{(x_1,y)}{|(x_1,y)|^2} \st{\cdot} v_1^{\perp} 
 \eeq   \frac{(a_1  \pl  R_1 \cos t_1, R_1 \sin t_1 )} {|\alpha(t_1)|^2} \st{\cdot} (\cos t_1, \sin t_1) \\
 \eeq   \frac{a_1 \cos t_1  \pl  R_1}{|\alpha( t_1)|^2}
\]
and 
\[
\frac{(x_1',y)}{|(x_1',y)|^2} \st{\cdot} v_1'^{ \perp}  \eeq   \frac{(a_1 - R_1 \cos t_1, R_1 \sin t_1)}{ |\alpha(\pi - t_1)|^2} \st{\cdot} (-\cos t_1, \sin t_1) \eeq 
\frac{-a_1 \cos t_1  \pl  R_1}{|\alpha(\pi - t_1)|^2}\text{,}
\]
whence
\[
\frac{(x_1,y)}{|(x_1,y)|^2} \st{\cdot} v_1^{\perp}  - \frac{(x_1',y)}{|(x_1',y)|^2} \st{\cdot} v_1'^{\perp}
 \eeq   \frac{(a_1 \cos t_1  \pl  R_1) |\alpha(\pi - t_1)|^2 - (-a_1 \cos t_1  \pl  R_1) |\alpha(t_1)|^2}{ |\alpha(t_1)|^2 |\alpha(\pi-t_1)|^2}.
\]
The denominator is positive, so we need only show that the numerator is positive to conclude that (\ref{eq::H1_comparison_right_1}) holds. A short computation reveals that 
\[
(a_1 \cos t_1  \pl  R_1) |\alpha(\pi - t_1)|^2 - (-a_1 \cos t_1  \pl  R_1) |\alpha(t_1)|^2  \eeq   2a_1  (a_1^2 -R_1^2)\cos t_1  \st{>} 0.
\]

Since $v_1$ and $v_2$ are admissible with respect to $(x_1,y)$ and $(x_2,y)$, we have that $x_2  \geeq a_1 - (x_1 - a_1)  \eeq   x_1'$. Moreover, $x_1'$ must be positive, as $a_1-(x_1 - a_1)  \st{>} a_1 - R_1  \st{>} 0$. It follows that 
\[
\frac{1}{|(x_1',y)|}  \geeq \frac{1}{|(x_2,y)|}.
\]
Therefore, to prove (\ref{eq::H1_comparison_right_2}), it suffices to show that 

\begin{equation}
\label{eq::H1_comparison_right_3}
\frac{(x_1',y)}{|(x_1',y)|} \st{\cdot} v_1'^{\perp}  \geeq \frac{(x_2,y)}{|(x_2,y)|} \st{\cdot} v_2^{\perp}.
\end{equation}

We note that the left-hand side of (\ref{eq::H1_comparison_right_3}) is
$ \cos (\theta(v_1'^{\perp}) - \theta( (x_1',y)) )$
 and the right-hand side is equal to
$\cos (\theta(v_2^{\perp}) - \theta((x_2,y)))$.
Since $v_1$ is strictly in the second quadrant, $v_2$ is strictly in the third,  and $ x_2  \geeq  x_1' \st{>}0$, it follows that
$0  \st{<}  \theta(v_1'^{\bot})-\theta((x_1',y)), \theta(v_2^{\bot})-\theta((x_2,y))  \st{<}  \pi.$
As cosine is decreasing on $(0,\pi)$, it suffices to show that 
\begin{equation}
\label{eq::H1_comparison_right_4}
\theta(v_2^{\perp}) - \theta((x_2,y))  \geeq \theta(v_1'^{\perp}) - \theta( (x_1',y)).
\end{equation}
As noted above, $x_2  \geeq x_1'$, so $\theta((x_2,y))  \leeq \theta( (x_1',y))$.  
By the admissibility of $v_1$ and $v_2$, we have that $\theta(v_2^{\perp})  \geeq  \theta(v_1'^{\perp})$. Combining these inequalities establishes (\ref{eq::H1_comparison_right_4}), completing our proof of  (\ref{eq::H1_comparison_right_2}).
\end{proof}

Having proved Proposition \ref{H1_comparison_right}, we define the upper and lower curves and prove various properties that hold on these intervals.  Our definition of the upper curve is motivated by the following observation.

\begin{lemma}
\label{kappa''(0)} \emph{(cf. \cite[Lemma 3.5]{chambers})}
Given that $F(0)  \st{>}1/2$, we have $\kappa''(0)  \st{>} 0$.
\end{lemma}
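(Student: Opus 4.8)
The plan is to compute $\kappa''(0)$ directly from the generalized mean curvature equation $c = H_f(s) = \kappa(s) + (n-2)\lambda(s) + H_1(s)$ of Corollary~\ref{generalized_curvature_ODE}. Differentiating twice at $s=0$ and using $\kappa'(0)=0$ (established on p.~5), we get
\[
0 = \kappa''(0) + (n-2)\lambda''(0) + H_1''(0),
\]
so the task reduces to showing $(n-2)\lambda''(0) + H_1''(0) < 0$ whenever $F(0) > 1/2$. Here I would exploit the machinery of Section~5: at $s=0$ the osculating circle $A_0$ agrees with $\gamma$ to second order, so by the identities $\tilde\lambda'(\tilde s) = \lambda'(s)$ etc., and more to the point by differentiating once more (the second derivatives of $\tilde\lambda$ and $\tilde H_1$ depend on $\alpha,\alpha',\alpha''$, which match $\gamma,\gamma',\gamma''$ at $0$), I can replace $\lambda''(0)$ and $H_1''(0)$ by the corresponding quantities on the \emph{circle} $A_0$, which is the canonical circle $C_0$ itself since $\gamma'(0)=(0,1)$ and $\kappa(0)=\lambda(0)$ by Lemma~\ref{kappa(0) equals lambda(0)}. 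Thus everything becomes an explicit computation on a single circle centered at $(F(0),0)$ passing through $(1,0)$.

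Concretely, I would parameterize $C_0$ by $\alpha(t) = (F(0) + R_0\cos t,\ R_0\sin t)$ with $R_0 = 1 - F(0)$ (taking $F(0)<1$; the case $F(0)\ge 1$ can be treated separately or ruled out), so that $\alpha(0) = \gamma(0) = (1,0)$ and arclength near $t=0$ is $R_0 t$. Then $\tilde\lambda(t)$ is the signed curvature of the canonical circle to $C_0$ at $\alpha(t)$ — but since $C_0$ is itself centered on the $e_1$-axis, $\tilde C_t = C_0$ for all $t$, so $\tilde\lambda \equiv \lambda(0)$ is \emph{constant}, giving $\lambda''(0) = 0$. (I should double-check this: the canonical circle to a circle already centered on the axis, at any of its points, is that same circle — yes.) So in fact the inequality collapses to $H_1''(0) < 0$, and I only need to show that $H_1(s) = p\,\gamma(s)/|\gamma(s)|^2 \cdot \nu(s)$, computed along $C_0$ near the rightmost point, has strictly negative second derivative in arclength exactly when $F(0) > 1/2$. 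On $C_0$ this is the single-variable function $t \mapsto \frac{p}{|\alpha(t)|^2}\,\alpha(t)\cdot \nu(t)$ with $\nu(t) = (\cos t,\sin t)$, i.e. $\frac{p(F(0)\cos t + R_0)}{|\alpha(t)|^2}$ where $|\alpha(t)|^2 = F(0)^2 + R_0^2 + 2F(0)R_0\cos t$; differentiating twice at $t=0$ and converting to arclength (dividing by $R_0^2$) should yield an expression whose sign is governed by $F(0) - 1/2$, using $R_0 = 1 - F(0)$.

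The main obstacle is the second step — correctly justifying the reduction from $\gamma$ to the circle $A_0$ for the \emph{second} derivatives of $\lambda$ and $H_1$. The identities spelled out in Section~5 are stated for first derivatives of $\tilde F,\tilde R,\tilde\lambda$ (which only need $\alpha,\alpha',\alpha''$); for second derivatives of $\lambda$ and $H_1$ along $\gamma$ one genuinely needs that $\gamma$ and $A_0$ agree to \emph{second} order, which they do by construction of $A_0$, but I must be careful that $\lambda''$ and $H_1''$ are expressible purely in terms of $\gamma,\gamma',\gamma''$ at the point — this is true because $\lambda$ and $H_1$ are functions of position and tangent direction only, so along a curve their derivatives up to order two involve at most $\gamma''$. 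Once that is pinned down, the remaining work is the routine circle computation sketched above, and the positivity of the curvature $\kappa(0)>0$ (so $R_0$ and the parameterization are nondegenerate) is already in hand from the Section~4 discussion. I would close by remarking that $\kappa''(0)>0$ means $\kappa$ has a strict local minimum at $0$, which is what motivates the definition of the upper curve in the next Definition.
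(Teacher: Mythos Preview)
Your overall strategy matches the paper's proof exactly: differentiate the constant-mean-curvature relation twice, reduce to the osculating circle $A_0 = C_0$, observe that $\tilde\lambda$ is constant so $\lambda''(0)=0$, and compute $\tilde H_1''(0)<0$ directly on the circle. The computation you sketch is correct and yields the paper's formula $\tilde H_1''(0) = \dfrac{p}{|\alpha(0)|^4}\dfrac{a}{r^2}(r^2-a^2)$ with $a=F(0)$, $r=R_0$.

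There is, however, a genuine error in your justification of the reduction step. You claim that ``$\lambda''$ and $H_1''$ are expressible purely in terms of $\gamma,\gamma',\gamma''$ at the point --- this is true because $\lambda$ and $H_1$ are functions of position and tangent direction only, so along a curve their derivatives up to order two involve at most $\gamma''$.'' This is false: if $f(s)=\Phi(\gamma(s),\gamma'(s))$, then $f'(s)$ already involves $\gamma''(s)$, and $f''(s)$ involves $\gamma'''(s)$. Second-order contact between $\gamma$ and $A_0$ is therefore \emph{not} enough to transfer second derivatives of $\lambda$ and $H_1$. What saves the argument --- and what the paper invokes --- is precisely the fact $\kappa'(0)=0$ that you cited at the outset but did not use here: since the osculating circle has constant curvature, $\kappa'(0)=0$ forces $\gamma'''(0)=\alpha'''(0)$, so $\gamma$ and $A_0$ agree to \emph{third} order at $0$. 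That third-order agreement is exactly what is required to conclude $\lambda''(0)=\tilde\lambda''(0)$ and $H_1''(0)=\tilde H_1''(0)$. Once you insert this correction, your proof is complete and coincides with the paper's. (Incidentally, $F(0)\ge 1$ cannot occur: $F(0)=1-1/\kappa(0)$ with $\kappa(0)>0$.)
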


\begin{proof}
Differentiating (\ref{H_f}) and substituting $0$ into the resulting equation, we have
\[
0 \eeq \kappa''(0) \pl (n-2)\lambda''(0) \pl H_1''(0).
\]
Since $\kappa'(0) \eeq 0$, $A_0$ approximates $\gamma$ up to fourth order at $\gamma(0)$.  Thus, parameterizing $A_0$ by 
\[ \alpha(t)  \eeq    \left(a  \pl  r\cos\tor, r\sin\tor\right) 
\]
over $[-\pi r, \pi r)$,
 we have that
$\lambda''(0) \eeq \ti{\lambda}''(0)$
and 
$H_1''(0) \eeq \ti{H_1}''(0)$. 
In particular, since $\ti{\lambda}$ is constant, we have that $\lambda'(0) \eeq \ti{\lambda}'(0) \eeq 0$ and $\lambda''(0) \eeq \ti{\lambda}''(0) \eeq 0$.
(One can deduce that $\ti{\lambda}$ is constant as follows: recall that for each $t \in [-\pi r, \pi r)$, $\ti{\lambda}(t)$ denotes the curvature of $\ti{C}_t$, where $\ti{C}_t$ is defined to be the unique circle that has its center on the $e_1$-axis and is tangent to $A_0$ at the point $\alpha(t)$.  $A_0$ is a circle whose center is on the $e_1$-axis.  Thus, for each $t \in [-\pi r, \pi r)$, $\ti{C}_t \eeq A_0$.)

To prove that $\kappa''(0) > 0$,
it now suffices to prove that $\ti{H_1}''(0)  \st{<}  0$.  Since $a  \eeq   F(0)$ and $r  \eeq   R(0)$, our assumption that 
\[
F(0) \st{>} \frac{1}{2} \eeq \frac{\gamma_{\,1}(0)}{2} 
\]
is equivalent to the inequality \( a \st{>} r \).
Thus, computing $\ti{H_1}''$, we have
\[
 \ti{H_1}''(0)  \eeq    \frac{p}{|\alpha (0)|^4}\,\frac{a}{r^2}\,(r^2-a^2)  \st{<}  0.
 \]
\end{proof}

\begin{definition} \emph{(cf. \cite[Defn. 3.4]{chambers})}
\label{upper_curve_right}
\emph{
Let the} upper curve \emph{$K$ be defined as the set of all $s \in [0, \beta)$  such that for all $t$ in $[0,s]$ the following properties are satisfied:}
\begin{enumerate}[$(1)$]
\inditem  $\gamma\,'(t)$ \emph{lies in the second quadrant,}
\inditem  $\kappa(t)  \geeq \lambda(t)  \st{>} 0$.
\end{enumerate}
\end{definition}

\begin{lemma} \emph{(cf. \cite[Lemma 3.11]{chambers})}
We have that \( K \) is nonempty and that \( \sup K > 0 \).
\end{lemma}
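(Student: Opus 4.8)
The plan is to verify $0\in K$ outright and then to produce an $\varepsilon>0$ with the whole interval $[0,\varepsilon]$ contained in $K$; since $\varepsilon\in K$ this yields $\sup K\ge\varepsilon>0$. For membership of $0$ I would simply check the two clauses of Definition \ref{upper_curve_right} at $s=0$: the tangent vector $\gamma\,'(0)=(0,1)$ points up along the boundary ray of the second quadrant, and by Lemma \ref{kappa(0) equals lambda(0)} together with the inequality $\kappa(0)>0$ (a consequence of spherical symmetry) we have $\kappa(0)=\lambda(0)>0$. Hence $K\neq\emptyset$.

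It then suffices to find $\varepsilon>0$ so that clauses (1) and (2) of Definition \ref{upper_curve_right} hold at \emph{every} $t\in[0,\varepsilon]$, since then every $s\in[0,\varepsilon]$ satisfies them on $[0,s]$ and hence lies in $K$. For clause (1): because $0$ is a strict interior maximum of $\gamma_{\,1}$ at which $\gamma$ is smooth, $\gamma_{\,1}\,''(0)=-\kappa(0)<0$ while $\gamma_{\,2}\,'(0)=1>0$, so by continuity $\gamma_{\,1}\,''<0$ and $\gamma_{\,2}\,'>0$ near $0$; since $\gamma_{\,1}\,'(0)=0$, this forces $\gamma_{\,1}\,'(s)<0$ for all sufficiently small $s>0$, and therefore $\gamma\,'(s)$ lies strictly in the second quadrant for such $s$.

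For clause (2) I would show $\kappa(s)\ge\lambda(s)>0$ for small $s\ge 0$. Continuity of $\lambda$ near $0$ (for instance from the formula $F(s)=(\gamma(s)\cdot\gamma\,'(s))/\gamma_{\,1}\,'(s)$ appearing in the proof of Lemma \ref{kappa(0) equals lambda(0)}) and $\lambda(0)=\kappa(0)>0$ give $\lambda(s)>0$ near $0$. For the comparison, put $\delta=\kappa-\lambda$, which is smooth near $0$: here $\delta(0)=0$ by Lemma \ref{kappa(0) equals lambda(0)}; $\delta'(0)=0$ because $\kappa'(0)=0$ by symmetry of $\gamma$ and $\lambda'(0)=0$ as shown in the proof of Lemma \ref{kappa''(0)}; and $\delta''(0)=\kappa''(0)-\lambda''(0)=\kappa''(0)>0$, invoking Lemma \ref{kappa''(0)} (whose hypothesis $F(0)>1/2$ is standing throughout this section) together with the fact $\lambda''(0)=0$ from that same proof. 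Thus $0$ is a strict local minimum of $\delta$ with value $0$, so $\delta(s)>0$ for $s$ in a punctured neighborhood of $0$; in particular $\kappa(s)>\lambda(s)$ on $(0,\varepsilon]$ once $\varepsilon$ is small.

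Taking $\varepsilon$ small enough for both of the preceding paragraphs, clauses (1) and (2) hold at every $t\in[0,\varepsilon]$ — at $t=0$ via the equality $\kappa(0)=\lambda(0)$ and the fact that the positive $e_2$-axis borders the second quadrant — so $[0,\varepsilon]\subseteq K$ and $\sup K\ge\varepsilon>0$. I expect the only real subtlety to be the bookkeeping at the endpoint $s=0$, where $\gamma\,'(0)$ lies on an axis rather than strictly inside a quadrant and where the curvature comparison degenerates to an equality; all substantive input, in particular the sign of $\kappa''(0)$, is already isolated in Lemma \ref{kappa''(0)}, so no fresh estimate is needed here.
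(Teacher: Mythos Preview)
Your proof is correct and follows essentially the same approach as the paper's: both arguments verify clause~(1) from $\gamma\,'(0)=(0,1)$ and $\kappa(0)>0$, and clause~(2) from the Taylor data $\kappa(0)=\lambda(0)$, $\kappa'(0)=\lambda'(0)=0$, $\lambda''(0)=0$, and $\kappa''(0)>0$ (Lemma~\ref{kappa''(0)}). One minor cosmetic point: avoid naming the difference $\kappa-\lambda$ by $\delta$, since immediately after this lemma the paper reserves $\delta$ for $\sup K$.
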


\begin{proof}
Since $\gamma\,'(0) \eeq (0, 1)$, $\kappa(0) > 0$, and $\kappa$ is continuous, we can conclude that there exists $\rho_1 \st{>} 0$ so that $\gamma\,'(s)$ lies in the second quadrant for all $s \in [0, \rho_1]$.  Meanwhile, recall that $\kappa(0) \eeq \lambda(0) > 0$ (Prop. \ref{kappa(0) equals lambda(0)}) and that $\kappa'(0) \eeq 0$ by spherical symmetry.  As deduced in the proof of Lemma \ref{kappa''(0)}, we have that $\lambda'(0) \eeq \lambda''(0) \eeq 0$.  However, $\kappa''(0) \st{> 0}$.  It follows by taking Taylor approximations that there exists $\rho_2 \st{>} 0$ so that $\kappa(s) \geeq \lambda(s) \st{>} 0$ for all $s \in [0, \rho_2]$.
Taking \( \rho \eeq \min(\rho_1, \rho_2) \), it follows that \( [0, \rho] \subset K \).  Thus, 
\( K \) is nonempty and \( \sup K > 0 \).
\end{proof}

Having proved that \( \sup K > 0 \), we let 
\[
\delta  \eeq   \sup K.
\]

The following lemma extends our assumption that \( F(0) > R(0) \) and allows us to check the first condition of admissibility.

\begin{lemma}
\label{F_dominates_R_right}
If $s \in K$, then $F(s)  \st{>} R(s)$.
\end{lemma}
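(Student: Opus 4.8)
The plan is to write the canonical-circle quantities $F$ and $R$ explicitly in terms of $\gamma$ and the tangent angle, differentiate, and check that $F-R$ is nondecreasing along $K$ while being positive at $s=0$. Write $\gamma'(s)=(\cos\theta(s),\sin\theta(s))$ with $\theta$ chosen as a smooth branch; on $K$ one has $\theta(0)=\pi/2$ and $\theta'=\kappa>0$, so $\theta(s)\in(\pi/2,\pi]$ for $s\in K$ with $s>0$, whence $\cos\theta(s)<0$. Also $\gamma_2>0$ on $(0,\beta)$ by the definition of $\beta$, and $\gamma$ is smooth there by Remark \ref{off-axis_regularity}. Since $C_s$ has center $(F(s),0)$, passes through $\gamma(s)$, and is tangent there to $\gamma'(s)$, perpendicularity of the radius to the tangent gives
\[
F(s)=\gamma_1(s)+\gamma_2(s)\tan\theta(s),\qquad R(s)=\frac{-\gamma_2(s)}{\cos\theta(s)},\qquad \lambda(s)=\frac{1}{R(s)}=\frac{-\cos\theta(s)}{\gamma_2(s)}\,,
\]
where $\lambda=1/R$ because $\lambda>0$ on $K$ makes $C_s$ counterclockwise. (Equivalently one may differentiate the functions $\tilde F,\tilde R$ of Definition \ref{tilde quantities} on the osculating circle, but direct differentiation of these formulas is enough.)

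Next I would differentiate, using $\gamma_1'=\cos\theta$, $\gamma_2'=\sin\theta$, $\theta'=\kappa$. A short computation gives
\[
F'(s)=\frac{\cos\theta+\gamma_2\kappa}{\cos^2\theta}=\frac{\gamma_2\,(\kappa-\lambda)}{\cos^2\theta}\,,\qquad R'(s)=-\sin\theta\,F'(s)\,,
\]
and therefore
\[
(F-R)'(s)=\bigl(1+\sin\theta(s)\bigr)\,\frac{\gamma_2(s)\,\bigl(\kappa(s)-\lambda(s)\bigr)}{\cos^2\theta(s)}\,.
\]
On $K$ we have $\kappa\ge\lambda$, together with $\gamma_2>0$ and $1+\sin\theta>0$, so $(F-R)'\ge 0$ on $(0,\sup K)$. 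Hence $F-R$ is nondecreasing there, and by continuity of $F$ and $R$ at $s=0$ it follows that $F(s)-R(s)\ge F(0)-R(0)$ for every $s\in K$.

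Finally, $F(0)-R(0)>0$: the circle $C_0$ passes through $\gamma(0)=(1,0)$ with center $(F(0),0)$, so $R(0)=1-F(0)$ and $F(0)-R(0)=2F(0)-1>0$ by the standing hypothesis $F(0)>1/2$ (this is exactly the equivalence of $F(0)>1/2$ with $F(0)>R(0)$ already recorded in the proof of Lemma \ref{kappa''(0)}). Combining the two displays gives $F(s)>R(s)$ for all $s\in K$. The one point requiring care is the degenerate endpoint $s=0$, where $\theta=\pi/2$ and the formulas for $F,R,\lambda$ are singular; this is dealt with precisely as above, by running the monotonicity argument on the open interval $(0,\sup K)$ and then letting $s\to 0^+$, the limiting values $F(0),R(0)$ being supplied directly by Definition \ref{canonical_circle}. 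Beyond keeping the orientation convention straight (so that $\lambda=1/R$ rather than $-1/R$, which is forced by $\lambda>0$ on $K$), there is no real obstacle: the heart of the argument is the one-line observation that the sign of $F'$ coincides with the sign of $\kappa-\lambda$.
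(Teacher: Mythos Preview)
Your proof is correct and follows essentially the same idea as the paper's: show that $F'\ge 0$ and $R'\le 0$ on $K$ (hence $F-R$ is nondecreasing), then use the initial condition $F(0)>R(0)$. The paper computes these derivatives indirectly via the osculating circle $A_s$ (parameterizing $A_s$, writing $\tilde F,\tilde R$ explicitly, and using that $\tilde F'(\tilde s)=F'(s)$, $\tilde R'(\tilde s)=R'(s)$); the condition $\kappa\ge\lambda$ appears there as $b\ge 0$, where $(a,b)$ is the center of $A_s$. Your direct computation in terms of the tangent angle $\theta$ reaches the same formulas more quickly---indeed your identity $F'=\gamma_2(\kappa-\lambda)/\cos^2\theta$ makes the role of $\kappa-\lambda$ completely transparent---and avoids the extra layer of notation. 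The paper's osculating-circle framework is set up once and reused in several later lemmas (e.g.\ Lemmas \ref{app_circle_curvatures_equal}, \ref{kappa'(delta)}, \ref{Fact 10}), which is why they invest in it here; for this lemma in isolation, your approach is cleaner.
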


\begin{proof}
By the assumptions defining the right case, $F(0)  \st{>} R(0)$. We claim that \( F' \geeq 0 \) on \( K \) and \( R' \leeq 0 \) on $K$.  To prove so, we will use a similar argument to that in \cite[Lemma 5.3]{chambers}: for a fixed $s \in K$, let
\begin{equation}
\label{approx_circ_param}
\alpha(t) \eeq \left(a \pl r \cos \tor, \, b \pl r \sin \tor \right)
\end{equation}
be an arclength parameterization of $A_s$, and let $\ti{s}$ be the point in the domain of $\alpha$ so that $\alpha(\ti{s}) \eeq \gamma(s)$.  Since $\kappa(s) \geeq \lambda(s)$, it follows that $b \geeq 0$.  By the discussion following Definition \ref{tilde quantities}, $F'(s) \eeq \ti{F}'(\ti{s})$ and $R'(s) \eeq \ti{R}'(\ti{s})$.
Thus, we seek formulae for $\ti{F}(t)$ and $\ti{R}(t)$.  We will only consider $t$ for which $\alpha_2(t) \st{>} 0$.

Fix $t$ with $\alpha_2(t) \st{>} 0$.  As depicted in Figure \ref{FoRmulas figure},
the vector from $\alpha(t)$ to the center of $\ti{C_t}$ is in the direction of the inward unit normal vector at $\alpha(t)$.  An arclength parameterization of the line containing these points is given by 
\[
\beta(u) \eeq \alpha(t) \pl u 
\left[ \begin{array}{cc}
0 & -1 \\
1 & 0
\end{array}
\right] \alpha'(t)
\eeq \left[\begin{array}{c}
a \pl r \cos \tor \mn u \cos \tor \\
b \pl r \sin \tor \mn u \sin \tor
\end{array} \right].
\]
We let $u_0$ be the value of $u$ so that $\beta_2(u_0) \eeq 0$.  Then we have 
\[
u_0 \eeq \frac{b \pl r \sin \tor}{\sin \tor}.
\]
Since $\beta$ is an arclength parameterization, $u_0$ is the distance from $\alpha(t)$ to the center of $\ti{C}_t$, i.e.
\begin{equation}
\label{foRmula}
\ti{R}(t) \eeq u_0 \eeq \frac{b \pl r \sin \tor}{\sin \tor}.
\end{equation}
Meanwhile,
\begin{equation}
\label{Formula}
\ti{F}(t) \eeq \beta_1(u_0) \eeq a \mn b \cot \tor.
\end{equation}
Differentiating, we obtain 
\begin{equation}
\label{F'ormula}
\ti{F}'(t) \eeq \frac{b}{r} \csc^2 \tor,
\end{equation}
and
\begin{equation}
\label{foR'mula}
\ti{R}'(t) \eeq - \frac{b}{r} \csc^2 \tor \cos \tor.
\end{equation}
Since $b \geeq 0$, we have $\ti{F}'(\ti{s}) \geeq 0$.  Meanwhile, since $s \in K$, $\gamma\,'(s)$ is in the second quadrant.  Thus, $\cos(\ti{s}/r) \geeq 0$, from which it follows that $\ti{R}'(\ti{s}) \leeq 0$.  

Since $F' \geeq 0$ on $K$ and $R' \leeq 0$ on $K$, we have $F(s)  \geeq F(0)  \st{>} R(0)  \geeq R(s)$ for any $s \in K$.
\end{proof}

 \begin{figure}
\begin{center}
\includegraphics{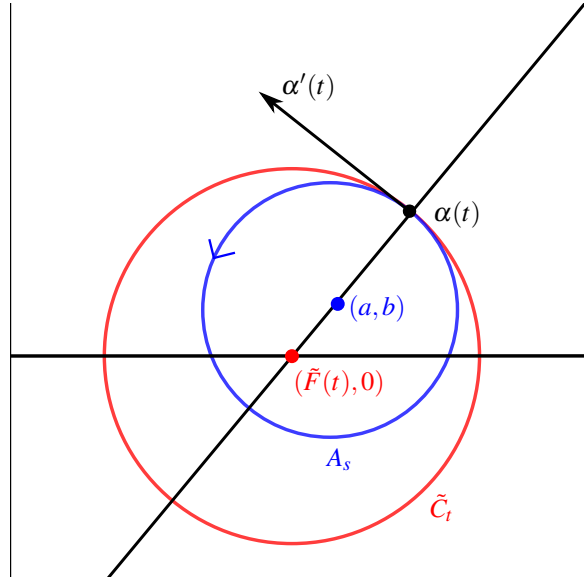}
\caption{The canonical circle at \( \alpha(t) \) and the line parameterized by $\beta$ \label{FoRmulas figure}}
\end{center}
\end{figure}

We will soon prove several properties of $\delta$, but first we require one more lemma.
\begin{lemma}
\label{app_circle_curvatures_equal}
\emph{(cf. \cite[Lemma 3.4]{chambers})}
Let $s \in (0, \delta)$. If 
 $\kappa(s)  \eeq   \lambda(s)  \st{>} 0$, then $\lambda'(s)  \eeq   0$, but $\kappa'(s)  \st{>} 0$.
\end{lemma}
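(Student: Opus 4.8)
The plan is to follow the template of the proof of Lemma \ref{kappa''(0)}, working on the osculating circle $A_s$ and exploiting the hypothesis $\kappa(s) \eeq \lambda(s)$. The key observation is that when $\kappa(s) \eeq \lambda(s) \st{>} 0$, the osculating circle $A_s$ and the canonical circle $C_s$ pass through the same point $\gamma(s)$, share the unit tangent $\gamma\,'(s)$ there, and have the same signed curvature; hence $A_s \eeq C_s$. In particular $A_s$ is a circle centered on the $e_1$-axis, so in the parameterization (\ref{approx_circ_param}) of $A_s$ from the proof of Lemma \ref{F_dominates_R_right} we have $b \eeq 0$, and $a \eeq F(s)$, $r \eeq R(s)$.

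First I would establish $\lambda'(s) \eeq 0$. By the remarks following Definition \ref{tilde quantities}, $\lambda'(s) \eeq \ti{\lambda}'(\ti s)$, where $\ti{\lambda}(t)$ is the signed curvature of the canonical circle $\ti{C}_t$ to $A_s$ at $\alpha(t)$. Since $A_s$ is itself a circle centered on the $e_1$-axis, $\ti{C}_t \eeq A_s$ for every $t$ at which $\ti{C}_t$ is defined, so $\ti{\lambda}$ is constant and $\ti\lambda'(\ti s) \eeq 0$. (Equivalently, substituting $b \eeq 0$ into (\ref{foRmula}) gives $\ti R \equiv r$, so $\ti R'(\ti s) \eeq R'(s) \eeq 0$; this also matches formulas (\ref{F'ormula}) and (\ref{foR'mula}).) Hence $\lambda'(s) \eeq 0$.

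Next, differentiating the generalized mean curvature equation (\ref{H_f}) of Corollary \ref{generalized_curvature_ODE} and evaluating at $s$ gives $0 \eeq \kappa'(s) \pl (n \mn 2)\lambda'(s) \pl H_1'(s)$, which, since $\lambda'(s) \eeq 0$, reduces to $\kappa'(s) \eeq -\,H_1'(s)$. So it remains to show $H_1'(s) \st{<} 0$. As in the proof of Lemma \ref{kappa''(0)}, $H_1'(s)$ depends only on $\gamma(s)$, $\gamma\,'(s)$, $\gamma\,''(s)$, so $H_1'(s) \eeq \ti{H_1}'(\ti s)$ computed on $A_s$. Parameterizing $A_s$ counterclockwise about $(a,0)$ so that the outward normal at the point of angle $\tau \eeq t/r$ is $(\cos\tau, \sin\tau)$, a short computation gives
\[
\ti{H_1}(t) \eeq \frac{p\,(a\cos\tau \pl r)}{a^2 \pl 2ar\cos\tau \pl r^2}
\qquad\text{and}\qquad
\ti{H_1}'(t) \eeq -\,\frac{p}{r}\,\frac{a\,(a^2 \mn r^2)}{|\alpha(t)|^4}\,\sin\tau .
\]
Since $s \in (0,\delta)$ forces $s \in K$, the tangent $\gamma\,'(s)$ lies strictly in the second quadrant, so $\ti s / r \in (0, \pi/2)$ and $\sin(\ti s/r) \st{>} 0$; and $a \eeq F(s) \st{>} R(s) \eeq r \st{>} 0$ by Lemma \ref{F_dominates_R_right}. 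Therefore $\ti{H_1}'(\ti s) \st{<} 0$, giving $H_1'(s) \st{<} 0$ and $\kappa'(s) \eeq -\,H_1'(s) \st{>} 0$.

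The only delicate points are orientation and sign bookkeeping: checking that $A_s \eeq C_s$ (and not the circle on the other side), that membership of $s$ in $K$ places $\gamma(s)$ on the upper-right arc of $A_s$ so that $\sin(\ti s/r) \st{>} 0$, and that $a^2 \mn r^2 \st{>} 0$ via Lemma \ref{F_dominates_R_right}. The derivative computation itself is routine, being the first-derivative analogue of the second-derivative computation already carried out in the proof of Lemma \ref{kappa''(0)}.
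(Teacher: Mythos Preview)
Your proof is correct and follows essentially the same approach as the paper: both use $\kappa(s)=\lambda(s)$ to identify $A_s=C_s$ (hence $b=0$, $a=F(s)$, $r=R(s)$), deduce $\lambda'(s)=\ti\lambda'(\ti s)=0$ from constancy of $\ti\lambda$, differentiate (\ref{H_f}) to reduce to $H_1'(s)<0$, and verify this via the explicit formula for $\ti{H_1}'$ on the osculating circle together with Lemma~\ref{F_dominates_R_right}. The only cosmetic difference is that the paper obtains $\sin(\ti s/r)>0$ directly from $\gamma_{\,2}(s)=r\sin(\ti s/r)>0$, whereas you argue via $\gamma\,'(s)$ being strictly in the second quadrant; both are valid.
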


\begin{proof}
Differentiating Equation (\ref{H_f}) gives $\kappa'(s)  \pl  (n-2)\lambda'(s)  \pl  H_1'(s)  \eeq   0.$  By the hypothesis that $\kappa(s)  \eeq   \lambda(s)$, we have that $A_s$  \eeq   $C_s$.  It follows that the canonical circle to $A_s$ at each point is $A_s$, so $\ti{\lambda}$ is constant. In particular,  $\lambda'(s)  \eeq   \ti{\lambda}'(\ti{s})  \eeq   0$.\\  
\indent Given this result, to prove that $\kappa'(s)  \st{>} 0$, it suffices to prove that $H_1'(s) \st{<} 0$.  Parameterizing \(A_s\) 
as in (\ref{approx_circ_param}),
we compute that 
\begin{equation}
\label{H_1_computation_right}
\tilde{H_1}'(t)  \eeq    \frac{-p(a^2  \pl  b^2-r^2)\left(-b \cos\tor  \pl  a \sin \tor \right)}{r |\alpha(t)|^4}.
\end{equation}
 Since $A_s  \eeq   C_s$, we have that $b \eeq  0$, $a \eeq  F(s)$, and $r \eeq  R(s)$.  By Lemma \ref{F_dominates_R_right},  $a \st{>}r >0$.  Finally, since \( r \sin \lrf{\ti{s}}{r} \eeq \gamma_{\,2}(s) \st{>} 0 \), we have that $ \sin \lrf{\ti{s}}{r} \st{>} 0$.  
 Thus, 
 \[
 H_1'(s)  \eeq   \ti{H_1}'(\ti{s})  \eeq   \frac{-p(a^2-r^2)\left( a \sin \left( \frac{\ti{s}}{r} \right)\right)}{r |\alpha(\ti{s})|^4} \st{<} 0. 
 \]
\end{proof}

\begin{proposition}
\label{upper_curve_structure_right} \emph{(cf. \cite[Prop. 3.12]{chambers})} The following properties of $\delta$ hold:
\begin{enumerate}[$(1)$]
\inditem $\delta  \st{<}  \beta$,
\inditem $\delta \in K$,
\inditem $\gamma_{\, 1}(\delta)  \geeq F(s)$ for any $s \in [0,\delta]$,
\inditem $\gamma_{\, 1}(\delta)  \st{>} 0$,
\inditem $\gamma \, '(\delta)  \eeq   (-1,0)$.
\end{enumerate}

\end{proposition}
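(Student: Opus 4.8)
The plan is to study $\delta \eeq \sup K$ via a limiting analysis at the endpoint $\delta$, proving the five assertions in the order $(1)$, $(2)$, $(5)$, and then $(3)$ and $(4)$ as corollaries; the workhorse throughout is the ``barrier'' supplied by Lemma \ref{app_circle_curvatures_equal}: wherever $\kappa \eeq \lambda \st{>} 0$ on the upper curve, $\kappa$ is strictly increasing while $\lambda$ is momentarily flat, so $\kappa$ cannot drop below $\lambda$. Note first that $K$ is downward-closed, hence an interval $[0,\delta)$ or $[0,\delta]$, since its defining conditions in Definition \ref{upper_curve_right} are inherited by every subinterval $[0,t]\subseteq[0,s]$. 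To prove $(1)$, observe that on $K$ the tangent vector lies in the closed second quadrant, so $\gamma_2' \geeq 0$ and $\gamma_2$ is nondecreasing on $[0,\delta)$; since $\gamma'(0)\eeq(0,1)$, $\gamma_2$ is strictly increasing near $0$, whence $\gamma_2(s)\geeq\gamma_2(s_0)\st{>}0$ for all $s$ beyond some small $s_0\st{>}0$. If $\delta\eeq\beta$ this contradicts $\gamma_2(\beta)\eeq0$ together with continuity of the Jordan curve $\gamma$; so $\delta\st{<}\beta$, and the same estimate gives $\gamma_2(\delta)\st{>}0$, so by Remark \ref{off-axis_regularity} and $\delta\st{<}\beta$, $\gamma$ is smooth at $\delta$ and $\kappa,\lambda,F,R$ are $C^1$ near $\delta$.

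For $(2)$, I would first establish $\kappa\st{>}\lambda\st{>}0$ on $(0,\delta)$. Put $\phi\eeq\kappa\mn\lambda\geeq0$ on $[0,\delta)$; using $\kappa(0)\eeq\lambda(0)\st{>}0$ (Lemma \ref{kappa(0) equals lambda(0)}), $\kappa'(0)\eeq\lambda'(0)\eeq\lambda''(0)\eeq0$, and $\kappa''(0)\st{>}0$ (Lemma \ref{kappa''(0)}), Taylor's theorem gives $\phi\st{>}0$ just to the right of $0$. If $\phi$ had a zero in $(0,\delta)$, let $s^*$ be the least one; then $s^*\in K$ forces $\kappa(s^*)\eeq\lambda(s^*)\st{>}0$, so Lemma \ref{app_circle_curvatures_equal} gives $\phi'(s^*)\eeq\kappa'(s^*)\mn\lambda'(s^*)\st{>}0$, contradicting $\phi\st{>}0$ on $(0,s^*)$ with $\phi(s^*)\eeq0$. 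Hence $\phi\st{>}0$ on $(0,\delta)$, and (with $\lambda\st{>}0$ on $K$ and $\kappa(0)\st{>}0$) also $\kappa\st{>}0$ on $[0,\delta)$. Since $\gamma$ is traced counterclockwise, the tangent angle $\theta(\gamma'(s))$ then has derivative $\kappa(s)\st{>}0$ on $[0,\delta)$ and starts at $\pi/2$, so $\theta(\gamma'(s))\in(\pi/2,\pi]$ on $(0,\delta)$ (the upper bound because $\gamma'$ stays in the closed second quadrant on $K$), and in the limit $\theta(\gamma'(\delta))\in(\pi/2,\pi]$. This excludes $\gamma'(\delta)\eeq(0,\pm1)$, forcing $\lambda(\delta)\st{>}0$, while $\kappa(\delta)\geeq\lambda(\delta)$ as a limit of $\kappa\st{>}\lambda$; thus every defining condition of $K$ holds at $\delta$ (they hold on $[0,\delta)$ since $\delta\eeq\sup K$), so $\delta\in K$, which is $(2)$.

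For $(5)$, I would first upgrade to $\kappa(\delta)\st{>}\lambda(\delta)$: if equality held, then since $\delta\in K$ gives $F(\delta)\st{>}R(\delta)\st{>}0$ (Lemma \ref{F_dominates_R_right}) and $\gamma_2(\delta)\st{>}0$, the computation in the proof of Lemma \ref{app_circle_curvatures_equal} applies verbatim at $\delta$ to give $\lambda'(\delta)\eeq0$ and $\kappa'(\delta)\st{>}0$; but $\phi\st{>}0$ on $(0,\delta)$ with $\phi(\delta)\eeq0$ forces $\phi'(\delta)\leeq0$, a contradiction. Now, because $\delta\eeq\sup K\st{<}\beta$, along any sequence $t_n\downarrow\delta$ we have $t_n\notin K$, so --- all defining conditions of $K$ already holding on $[0,\delta]$ --- at each $t_n$ one of ``$\gamma'(t_n)$ lies in the closed second quadrant'', ``$\kappa(t_n)\geeq\lambda(t_n)$'', ``$\lambda(t_n)\st{>}0$'' fails. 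By continuity at $\delta$ the last two cannot fail for infinitely many $n$ (that would contradict $\kappa(\delta)\st{>}\lambda(\delta)\st{>}0$), so $\gamma'(t_n)$ leaves the closed second quadrant infinitely often; since $\gamma'(\delta)$ lies in that quadrant but is a limit of vectors outside it, $\gamma'(\delta)$ lies on its boundary, and $\theta(\gamma'(\delta))\st{>}\pi/2$ leaves only $\gamma'(\delta)\eeq(-1,0)$, which is $(5)$. Finally, with $\gamma'(\delta)\eeq(-1,0)$ and $\gamma_2(\delta)\st{>}0$, the canonical circle $C_\delta$ has a horizontal tangent at $\gamma(\delta)$, so its center lies directly below $\gamma(\delta)$: $F(\delta)\eeq\gamma_1(\delta)$ and $R(\delta)\eeq\gamma_2(\delta)$. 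Since $F'\geeq0$ on $K$ (from the proof of Lemma \ref{F_dominates_R_right}), $F$ is nondecreasing on $[0,\delta]$, so $\gamma_1(\delta)\eeq F(\delta)\geeq F(s)$ for all $s\in[0,\delta]$, which is $(3)$; and $\gamma_1(\delta)\eeq F(\delta)\st{>}R(\delta)\eeq\gamma_2(\delta)\st{>}0$ (Lemma \ref{F_dominates_R_right}, $\delta\in K$), which is $(4)$.

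The step I expect to be the main obstacle is the limiting analysis at $\delta$ underlying $(2)$ and $(5)$: one must show that the \emph{only} way the upper-curve conditions can degenerate at $\delta$ is the tangent becoming horizontal. Two points require care there: that the barrier Lemma \ref{app_circle_curvatures_equal} can be invoked \emph{at the endpoint} $\delta$, not just on $(0,\delta)$ --- legitimate precisely because $\delta\in K$ furnishes $F(\delta)\st{>}R(\delta)\st{>}0$ and $\delta\st{<}\beta$ furnishes smoothness --- and that the monotonicity of the tangent angle $\theta(\gamma'(s))$ is exactly what rules out the competing possibility $\gamma'(\delta)\eeq(0,1)$.
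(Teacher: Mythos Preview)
Your proof is correct and rests on the same engine as the paper's --- Lemma~\ref{app_circle_curvatures_equal} as a barrier preventing $\kappa$ from dropping to $\lambda$, and $F'\geeq0$ on $K$ from Lemma~\ref{F_dominates_R_right} --- but the packaging differs in a few places worth noting. The paper simply cites \cite[Prop.~3.12]{chambers} for $(1)$--$(3)$; you supply self-contained arguments, which is a plus. For $(5)$ the paper argues by contradiction (assume $\gamma'(\delta)\neeq(-1,0)$, use the Tangent Restriction Lemma~\ref{tangent_restriction} to place $\gamma'(\delta)$ strictly in the second quadrant, then extend $K$ past $\delta$, handling $\kappa(\delta)\eeq\lambda(\delta)$ and $\kappa(\delta)\st{>}\lambda(\delta)$ separately), whereas you argue directly: first upgrade to $\kappa(\delta)\st{>}\lambda(\delta)$ via the endpoint barrier, deduce that the only defining condition able to fail just past $\delta$ is the quadrant condition, and rule out $\gamma'(\delta)\eeq(0,1)$ by the strict monotonicity of $\theta(\gamma')$ coming from $\kappa\st{>}0$ rather than from Lemma~\ref{tangent_restriction}. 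For $(4)$ the paper uses $\gamma_1(\delta)\geeq F(0)\st{>}1/2$, while your chain $\gamma_1(\delta)\eeq F(\delta)\st{>}R(\delta)\eeq\gamma_2(\delta)\st{>}0$ is equally valid. One small tightening: when you say ``at each $t_n$ one of the conditions fails'', what $t_n\notin K$ literally gives is a failure at some $t_n'\in(\delta,t_n]$, not necessarily at $t_n$ itself; the argument goes through unchanged with $t_n'$ in place of $t_n$ since $t_n'\to\delta$ as well.
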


\begin{proof}
The proofs of (1)-(3) are identical to their counterparts in \cite[Prop. 3.12]{chambers}.  Setting \( s \eeq 0 \) in the inequality \( \gamma_{\,1}(\delta) \geeq F(s) \), we have 
\[
\gamma_{\, 1}(\delta) \geeq F(0) \st{>} \frac{\gamma_{\,1}(0)}{2} \st{>} 0.
\]
\indent To prove that $\gamma \, ' (\delta)  \eeq   (-1,0)$, we argue by contradiction; specifically, we show that if $\gamma\,'(\delta) \neeq (-1,0)$, then there exists $\varepsilon \st{>} 0$ so that $[\delta, \delta \pl \varepsilon) \subset K$. 

Suppose that $\gamma\,'(\delta) \neeq (-1,0)$.  We have by Lemma \ref{tangent_restriction} that $\gamma\,'(\delta)$ is strictly in the second quadrant.  By continuity of $\gamma\,'$, there exists $\varepsilon_1 > 0$ so that $\gamma\,'(s)$ is in the second quadrant for all $s \in [\delta, \delta \pl \varepsilon_1)$.  Since $\delta \in K$, $\lambda(\delta) > 0$.  By continuity of $\lambda$, $\lambda > 0$ on an open interval containing $\delta$.  By reducing $\varepsilon_1$ if necessary, we can assume that $\lambda(s) > 0$ for all $s \in [\delta, \delta \pl \varepsilon_1)$.

From here, it suffices to show that there exists $\varepsilon_2 \st{>} 0$ so that $\kappa(s) \geeq \lambda(s)$ for all $s \in [\delta, \delta \pl \varepsilon_2)$.  To demonstrate the existence of such an $\varepsilon_2$, we take two cases.  Since $\delta \in K$, $\kappa(\delta) \geeq \lambda(\delta)$.  If $\kappa(\delta) \st{>} \lambda(\delta)$, then the existence of such an $\varepsilon_2$ follows by continuity of $\kappa \mn \lambda$.  Meanwhile, if $\kappa(\delta) \eeq \lambda(\delta)$, then we apply Lemma \ref{app_circle_curvatures_equal} to conclude that $\lambda'(\delta) \eeq 0$, but $\kappa\,'(\delta) > 0$.  It follows that there exists $\varepsilon_2 \st{>} 0$ so that $\kappa(s) \geeq \lambda(s)$ for all $s \in [\delta, \delta \pl \varepsilon_2)$.  
In either case, taking $\varepsilon \eeq \min(\varepsilon_1, \varepsilon_2)$ guarantees that $[\delta, \delta + \varepsilon) \subset K$, contradicting the fact that $\delta$ is an upper bound for $K$.
\end{proof}

\begin{lemma}
\label{kappa'(delta)}
We have that $\kappa'(\delta)  \st{>}0$.
\end{lemma}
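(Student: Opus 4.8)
The plan is to differentiate the generalized mean curvature equation (\ref{H_f}) at $s = \delta$ and to show that $\lambda'(\delta) = 0$ and $H_1'(\delta) < 0$, so that $\kappa'(\delta) = -(n-2)\lambda'(\delta) - H_1'(\delta) > 0$. Everything rests on the three facts supplied by Proposition \ref{upper_curve_structure_right}: that $\delta \in K$, that $\gamma\,'(\delta) = (-1,0)$, and that $\gamma_{\,1}(\delta) > 0$.

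First I would bring in the osculating circle $A_\delta$, parameterized as in (\ref{approx_circ_param}) by $\alpha(t) = (a \pl r\cos(t/r),\, b \pl r\sin(t/r))$, and let $\tilde{s}$ be the parameter with $\alpha(\tilde{s}) = \gamma(\delta)$. Since $\alpha\,'(\tilde{s}) = \gamma\,'(\delta) = (-1,0)$, we may take $\tilde{s}/r = \pi/2$; then $a = \gamma_{\,1}(\delta)$, $b \pl r = \gamma_{\,2}(\delta)$, and $r = 1/\kappa(\delta)$. Because $\delta \in K$ we have $\kappa(\delta) \geeq \lambda(\delta)$, and since $C_\delta$ is the circle centered at $(\gamma_{\,1}(\delta),0)$ of radius $\gamma_{\,2}(\delta)$ we get $\lambda(\delta) = 1/\gamma_{\,2}(\delta)$, whence $r \leeq \gamma_{\,2}(\delta)$, i.e.\ $b \geeq 0$. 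Moreover, Lemma \ref{F_dominates_R_right} applies at $\delta$ (as $\delta \in K$) and gives $F(\delta) > R(\delta)$, that is $\gamma_{\,1}(\delta) > \gamma_{\,2}(\delta)$; hence $a > b \pl r \geeq r > 0$, and therefore $a^2 \pl b^2 - r^2 \geeq a^2 - r^2 > 0$.

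Then I would read off the two needed derivatives at $\tilde{s}$. From (\ref{foR'mula}), $\tilde{R}\,'(\tilde{s}) = -\frac{b}{r}\csc^2(\pi/2)\cos(\pi/2) = 0$; since the canonical circle to $A_\delta$ keeps a fixed orientation near $\tilde{s}$ (the tangent there is horizontal, not vertical), $\tilde{\lambda} = \pm 1/\tilde{R}$ with constant sign nearby, so $\tilde{\lambda}\,'(\tilde{s}) = 0$, and hence $\lambda'(\delta) = \tilde{\lambda}\,'(\tilde{s}) = 0$ by the discussion following Definition \ref{tilde quantities}. From (\ref{H_1_computation_right}), evaluated at $t/r = \pi/2$,
\[
H_1'(\delta) = \tilde{H_1}\,'(\tilde{s}) = \frac{-p\,(a^2 \pl b^2 - r^2)\,a}{r\,|\gamma(\delta)|^{4}} < 0 ,
\]
since $p, r, a > 0$ and $a^2 \pl b^2 - r^2 > 0$ (the identity $H_1'(\delta) = \tilde{H_1}\,'(\tilde{s})$ holds for the same reason as in the proof of Lemma \ref{app_circle_curvatures_equal}). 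Substituting $\lambda'(\delta) = 0$ and this last inequality into the derivative of (\ref{H_f}) at $\delta$ then gives $\kappa'(\delta) > 0$. The point that requires care, and what I take to be the crux, is the inequality $a^2 \pl b^2 - r^2 > 0$, which says precisely that the origin lies outside the osculating circle $A_\delta$; this is exactly what Lemma \ref{F_dominates_R_right} together with $b \geeq 0$ delivers, and everything else is routine. No separate treatment of the subcases $\kappa(\delta) = \lambda(\delta)$ and $\kappa(\delta) > \lambda(\delta)$ is needed, though in the former case the conclusion also follows directly from Lemma \ref{app_circle_curvatures_equal}.
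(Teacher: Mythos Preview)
Your proof is correct and follows essentially the same approach as the paper: differentiate (\ref{H_f}), use the osculating circle $A_\delta$ and the formulas (\ref{foR'mula}), (\ref{H_1_computation_right}) to show $\lambda'(\delta)=0$ and $H_1'(\delta)<0$, with the key inequality $a^2+b^2-r^2>0$ coming from Lemma \ref{F_dominates_R_right} and $b\geq 0$. Your derivation of that inequality (from $a>b+r\geq r$ directly to $a^2>r^2$) is in fact a bit cleaner than the paper's, which squares $a-b>r$ and then uses $b\geq 0$ to drop the cross term.
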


\begin{proof}
Differentiating the ODE $H_f  \eeq   c$, we obtain
$\kappa'(\delta)  \pl  (n-2)\lambda'(\delta)  \pl  H_1'(\delta)  \eeq   0$.
Let $(a,b)$ be the center of $A_{\delta}$ and $r$ be its radius. Since $\kappa(\delta)  \geeq  \lambda(\delta)$, it follows that $b  \geeq  0$.  Parameterizing  $A_{\delta}$ as in (\ref{approx_circ_param}), we see that $\gamma(\delta)  \eeq   \alpha(\pi r/2)$.  Thus, $\lambda'(\delta)  \eeq   \tilde{\lambda}'(\pi r/2)$.  By inverting (\ref{foRmula}) and differentiating, we conclude that   $\tilde{\lambda}'(\pi r/2) \eeq 0$.  Since $H_1'(\delta)  \eeq   \tilde{H_1}'(\pi r/2)$, it suffices to prove that $\tilde{H_1}'(\pi r/2)  \st{<}  0$.\\
\indent Looking to (\ref{H_1_computation_right}), we claim that $a^2 \pl b^2 \st{>}r^2$.  To prove so, let $R  \eeq   R(\delta)$ be the radius of $C_{\delta}$.   As depicted in Figure \ref{horizon}, since $\gamma\,'(\delta)  \eeq   (-1,0)$, we have that $R  \eeq   r \pl b$ and $a  \eeq   F(\delta)$.  We apply Lemma \ref{F_dominates_R_right} to give
$a  \st{>} R  \eeq   r  \pl  b$.  Since both sides of the inequality $a-b \st{>}r$ are positive, we may square to give $(a-b)^2  \st{>} r^2$.  Since $b  \geeq  0$, this implies that $a^2 \pl b^2  \st{>} r^2$.  Therefore, we have that 
\[
\tilde{H_1}' \left(\frac{\pi r}{2}\right)  \eeq   \frac{-pa(a^2 \pl b^2-r^2)}{r |\alpha(\frac{\pi r}{2})|^4}  \st{<}  0.
\]
\end{proof}

\begin{figure}
\begin{center}
\includegraphics{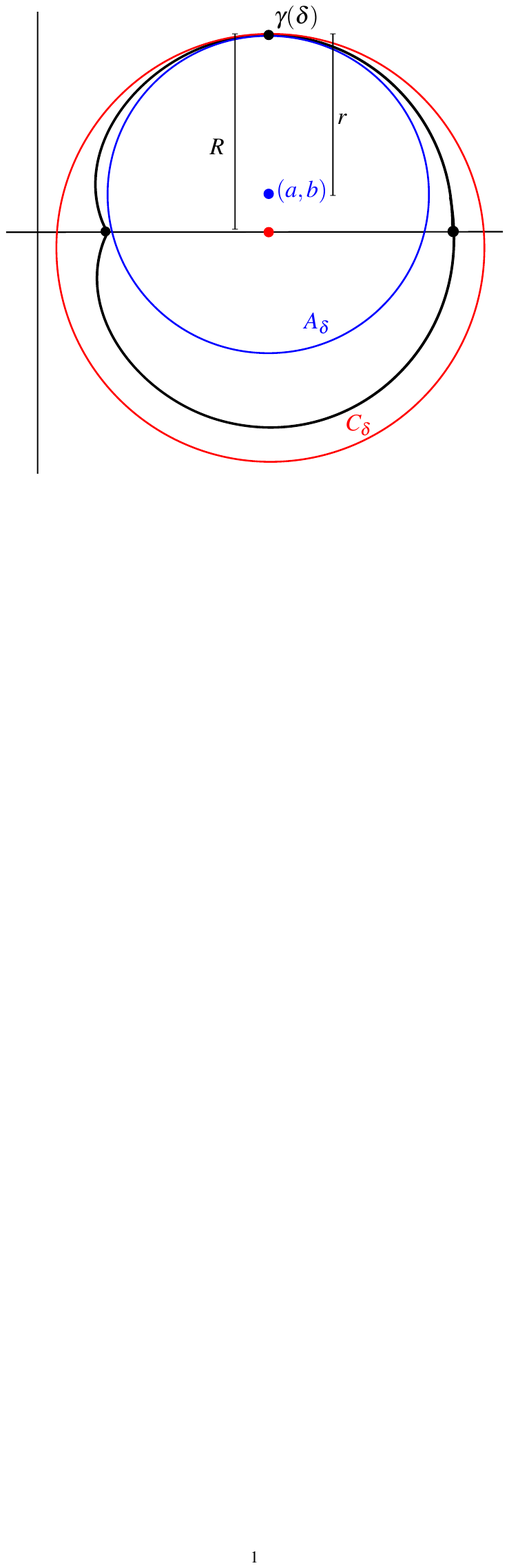}
\caption{The canonical circle $C_{\delta}$ and the osculating circle $A_{\delta}$ \label{horizon}}		
\end{center}
\end{figure}

\begin{definition}
\label{lower_curve_right} \emph{(cf. \cite[Defn. 3.5]{chambers})}
\emph{
Let the} lower curve \emph{$L$ be defined as the set of all $s$ in $[\delta,\beta)$ such that for all $t \in [\delta,s]$ the following hold:
\begin{enumerate}[$(1)$]
\inditem $\gamma\, '(t)$ is in the third quadrant with \( \gamma \, '(t)  \neeq   (-1, 0) \) if \( t  \st{>} \delta \) \text{,}
\inditem If $\overline{t}$ is the unique point in $K$ with $\gamma_{\,2}(\overline{t})  \eeq   \gamma_{\,2}(t)$, then $\kappa(\overline{t})  \leeq \kappa(t)$.
\end{enumerate}
 Since \(\gamma \, '(\delta) \eeq (-1,0) \), \( \kappa(\delta) > 0 \), and \( \kappa \, '(\delta) > 0 \), these conditions hold on an interval $[\delta, \delta  \pl  \varepsilon)$.  Thus, $L$ is nonempty and has a supremum, which we denote by $\eta$.
}
\end{definition}

By condition (1) in Definition \ref{lower_curve_right} , \( \gamma \, '(s)  \neeq   (-1, 0) \) if  $s \in (\delta, \eta)$.  Similarly,
there can be no $s_0 \in (0, \delta)$ with $\gamma \,' (s_0) \eeq (-1,0)$.  (If there were such an $s_0$, then we would have $\kappa(s_0) \geeq \lambda(s_0) \st{>} 0$.  Consequently, on an interval immediately following $s_0$, $\gamma\,'$ would be strictly in the third quadrant, contradicting the fact that $\delta$ is the least upper bound of $K$.)  Since $\gamma_{\,2}\,'$ does not vanish on $(0, \delta)$ or on $(\delta, \eta)$, we can apply the Inverse Value Theorem to define a local inverse of $\gamma_{\,2}$ over each of these intervals.

\begin{definition}
\label{local_inverses} 
\emph{
 We define $h:(\gamma_{\,2}(\eta), \gamma_{\,2}(\delta)) \to (0,\delta)$ by letting $h(y)$ be the unique $t \in (0, \delta)$ such that $\gamma_{\,2}(t) \eeq  y$. 
 Similarly, we define $k:(\gamma_{\,2}(\eta), \gamma_{\,2}(\delta)) \to (\delta,\eta)$ by letting $k(y)$ be the unique $t \in (\delta, \eta)$ such that $\gamma_{\,2}(t) \eeq  y$. 
 }
 \end{definition}
 
Using these local inverse functions, we define functions
$f,g: \,(\gamma_{\,2}(\eta), \gamma_{\,2}(\delta)) \to \R$ as follows.

\begin{definition}
\label{deFininG}
\emph{
Given $y \in (\gamma_{\,2}(\eta), \gamma_{\,2}(\delta))$, let
\begin{equation*}
f(y)  \eeq   2 \gamma_{\, 1}(\delta) - \gamma_{\, 1}(h(y)),
\end{equation*}
and let
\begin{equation*}
g(y)  \eeq   \gamma_{\, 1}(k(y)).
\end{equation*}
}
\end{definition}
The function $g$ gives the $e_1$-coordinate of a point in $\gamma(L)$ with a given $e_2$-coordinate.  If we begin with the point in $\gamma(K)$ with a given $e_2$-coordinate, then $f$ gives the $e_1$-coordinate of the reflection of this point over the line $x  \eeq   \delta$.  We can use these functions to prove two properties of the lower curve.

\begin{lemma} 
\label{lower_curve_lemma} \emph{(cf. \cite[Lemma 3.13]{chambers})}
For each $s \in [\delta, \eta)$, let $\overline{s}$ be the unique point in $K$ so that $\gamma_{\,2}(\overline{s})  \eeq   \gamma_{\,2}(s)$.  Then the following hold:
\begin{equation}
\label{gamma_one_ineq_right}
\gamma_{\, 1}(\overline{s}) - \gamma_{\, 1}(\delta)  \geeq \gamma_{\, 1}(\delta) - \gamma_{\, 1}(s)\text{,}
\end{equation}

\begin{equation}
\label{theta_ineq_right}
\theta(\gamma \, ' (s))  \geeq 2 \pi - \theta(\gamma \, '(\overline{s})).
\end{equation}
\end{lemma}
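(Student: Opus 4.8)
\textbf{Proof proposal for Lemma \ref{lower_curve_lemma}.}

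The plan is to compare, at each common height $y$, the lower curve with the upper curve reflected across the vertical line $e_1 = \gamma_{\,1}(\delta)$, and to read off both inequalities from Proposition \ref{curvature_comparison_theorem} applied to the functions $f$ and $g$ of Definition \ref{deFininG}. First I would record the monotonicity that makes this legitimate. Since $\gamma\,'$ lies in the second quadrant on $(0,\delta)$ and in the third on $(\delta,\eta)$, the coordinate $\gamma_{\,2}$ is strictly increasing on $(0,\delta)$ and strictly decreasing on $(\delta,\eta)$ — as already noted in the paper, $\gamma_{\,2}\,'$ does not vanish on either interval — with $\gamma_{\,2}(\delta)$ its maximum over $[0,\eta)$; hence the inverses $h,k$, and so $f,g$, are $C^2$ on $(\gamma_{\,2}(\eta),\gamma_{\,2}(\delta))$, and the sign of $\gamma_{\,1}\,'$ on each interval gives $f'\ge 0$ and $g'\ge 0$. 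Geometrically, after the coordinate swap $(e_1,e_2)\mapsto(e_2,e_1)$ that puts things in the graph-over-a-horizontal-axis form of Proposition \ref{curvature_comparison_theorem}, the graph of $g$ is the lower curve $\gamma([\delta,\eta))$ and the graph of $f$ is the reflection of the upper curve $\gamma([0,\delta))$ across $e_1=\gamma_{\,1}(\delta)$; both start at the common point $\gamma(\delta)$ and run downward as $y$ decreases from $\gamma_{\,2}(\delta)$.

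Next I would verify the five hypotheses of Proposition \ref{curvature_comparison_theorem} with independent variable $x=y$ on $(a,b)=(\gamma_{\,2}(\eta),\gamma_{\,2}(\delta))$. Items (1)--(3) hold because the value and tangent limits as $y\to\gamma_{\,2}(\delta)^-$ exist — both curve points tend to $\gamma(\delta)$, where $\gamma$ is smooth since $0<\delta<\beta$ and $\gamma_{\,2}(\delta)>0$ — and $f',g'\ge 0$ as above. For (4): as $h(y),k(y)\to\delta$ we have $f(y),g(y)\to\gamma_{\,1}(\delta)$ and $t_f,t_g$ tend to the common image of $\gamma\,'(\delta)=(-1,0)$, so both clauses of (4) hold with equality. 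Nonnegativity of $f$ is clear: Proposition \ref{upper_curve_structure_right}(3) with $s=0$ and the standing hypothesis $F(0)>1/2$ give $\gamma_{\,1}(\delta)\ge F(0)>1/2$, while $\gamma_{\,1}(h(y))\le\gamma_{\,1}(0)=1<2\gamma_{\,1}(\delta)$, so $f(y)=2\gamma_{\,1}(\delta)-\gamma_{\,1}(h(y))>0$; for $g$ one either invokes $\gamma_{\,1}\ge 0$ on $[\delta,\eta)$ or observes that adding a constant to both $f$ and $g$ affects none of the hypotheses or conclusions, so the $\R_{\ge 0}$ requirement is inessential.

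The heart of the matter is hypothesis (5), $\kappa_f\le\kappa_g$. Both the coordinate swap and the reflection across a vertical line are orientation-reversing isometries, so their composition preserves signed curvature; tracking the traversal directions one finds the upward curvature of the graph of $f$ at $y$ equals the curvature of $\gamma$ at $h(y)$, and that of the graph of $g$ at $y$ equals the curvature of $\gamma$ at $k(y)$ — both positive, since $\kappa>0$ on $K$ and on $[\delta,\eta)$. Writing $s=k(y)$ and $\overline{s}=h(y)$, hypothesis (5) is thus precisely the inequality $\kappa(\overline{s})\le\kappa(s)$, which is condition (2) of Definition \ref{lower_curve_right} defining the lower curve. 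So (5) holds and Proposition \ref{curvature_comparison_theorem} applies.

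Finally I would translate the two conclusions back. The inequality $f(y)\le g(y)$ reads $2\gamma_{\,1}(\delta)-\gamma_{\,1}(\overline{s})\le\gamma_{\,1}(s)$, which is (\ref{gamma_one_ineq_right}). Computing the tangent angles in the swapped frame gives $\theta(t_f(y))=\theta(\gamma\,'(\overline{s}))-\pi/2$ (the upper-curve tangent lies in the second quadrant and is reflected and then swapped) and $\theta(t_g(y))=3\pi/2-\theta(\gamma\,'(s))$ (the lower-curve tangent lies in the third quadrant and the lower curve is traversed as $s$ decreases), so $\theta(t_f)\ge\theta(t_g)$ becomes $\theta(\gamma\,'(s))\ge 2\pi-\theta(\gamma\,'(\overline{s}))$, which is (\ref{theta_ineq_right}). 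The main thing to get right is the bookkeeping of the curvature and angle conventions of Proposition \ref{curvature_comparison_theorem} across the reflection and coordinate swap; once that is set up carefully, the lemma reduces to the established properties of $\delta$ (Proposition \ref{upper_curve_structure_right}) and the defining conditions of the lower curve.
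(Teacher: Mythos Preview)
Your proposal is correct and follows essentially the same approach as the paper's own proof: apply Proposition \ref{curvature_comparison_theorem} to the functions $f,g$ of Definition \ref{deFininG}, verifying hypothesis (5) via condition (2) of Definition \ref{lower_curve_right}, and then translate the two conclusions using the same tangent-angle identities $\theta(t_f(y))=\theta(\gamma\,'(\overline{s}))-\pi/2$ and $\theta(t_g(y))=3\pi/2-\theta(\gamma\,'(s))$. You simply spell out in more detail the verification of hypotheses (1)--(4) and the curvature bookkeeping that the paper compresses into the phrase ``$f$ and $g$ satisfy the hypotheses of Proposition \ref{curvature_comparison_theorem}.''
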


\begin{proof}
Both inequalities are trivially true if \( s  \eeq   \delta \).  Now let $s \in (\delta, \eta)$ be fixed, and let $y  \eeq   \gamma_{\,2}(s)$.  By the definition of \( L \) (Defn. \ref{lower_curve_right}), $f$ and $g$ satisfy the hypotheses of Proposition \ref{curvature_comparison_theorem}.  From the inequality $f  \leeq  g$ in Proposition \ref{curvature_comparison_theorem},  (\ref{gamma_one_ineq_right}) above is immediate.  To arrive at (\ref{theta_ineq_right}), let \( t_f(y) \) and \(t_g(y) \) denote the unit tangent vectors to the graphs of \( f \) and \( g \) at \( y\). 
Note that $\gamma(\,(\delta, \eta)\,)$ is the set $\{(g(y), y): y \in (\gamma_{\,2}  (\eta), \gamma_{\,2}  (\delta)\}$, and the reflection of $\gamma((0, \delta))$ over the line $x  \eeq   \delta$ is the set  $\{(f(y), y): y \in (\gamma_{\,2}  (\eta), \gamma_{\,2}  (\delta)\} $.
Let $y  \eeq   \gamma_{\,2}  (s)$.  Then we obtain the tangent vector $t_g(y)$ from $\gamma\,'(s)$ by rotating $\gamma\,'(s)$ clockwise through $\pi$ radians and reflecting the resulting vector in the first quadrant over the line $y  \eeq  x$.  Therefore, we have 
\[
\theta(t_g(y))  \eeq   \frac{\pi}{2} \mn \left(\theta(\gamma\,'(s)) \mn \pi \right)  \eeq   \frac{3 \pi}{2}-\theta(\gamma\,'(s)).
\]
Similarly, we obtain \( t_f(y) \) from \( \gamma \, '(\ol{s}) \) by reflecting over the line \( x  \eeq   \delta \) and reflecting over the line \( y  \eeq   x \).  Thus,
\[
\theta(t_f(y))  \eeq   \frac{\pi}{2} \mn \left(\pi - \theta(\gamma \, '(\ol{s})) \right)  \eeq   \theta(\gamma\, '(\ol{s})) \mn \frac{\pi}{2}.
\]
Substituting these results into the second inequality in Proposition \ref{curvature_comparison_theorem} completes the proof.
\end{proof}

\begin{proposition}
\label{lower_upper_curvature_comparison_right}
Let $s \in (\delta, \eta)$, and suppose that $\gamma \, '(s) \neq (0,-1)$. If $\overline{s}$ is the unique point in $K$ so that $\gamma_{\,2}(\overline{s})  \eeq   \gamma_{\,2}(s)$, then $\kappa(s)  \st{>}  \kappa(\overline{s})$. \\
\end{proposition}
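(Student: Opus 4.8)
The plan is to use equation~(\ref{H_f}) of Corollary~\ref{generalized_curvature_ODE}, evaluated at the two points $s$ and $\overline{s}$, to reduce the statement to two scalar inequalities. Subtracting the two instances of~(\ref{H_f}) gives
\[
\kappa(s)-\kappa(\overline{s})=(n-2)\bigl(\lambda(\overline{s})-\lambda(s)\bigr)+\bigl(H_1(\overline{s})-H_1(s)\bigr),
\]
so, since $n\geeq 3$, it suffices to show that $\lambda(\overline{s})\geeq\lambda(s)$ and $H_1(\overline{s})\st{>}H_1(s)$; that is, that $\lambda$ is no larger and $H_1$ is strictly smaller at the left-hand point $s$. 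First I would pin down the geometry. Put $y\eeq\gamma_{\,2}(s)\eeq\gamma_{\,2}(\overline{s})\st{>}0$. Since $\gamma_{\,2}$ is strictly increasing on $[0,\delta]$ and strictly decreasing on $(\delta,\eta)$, the point $\overline{s}$ must lie in $(0,\delta)$; hence $\gamma\,'(\overline{s})$ lies strictly in the open second quadrant (it is not $(0,1)$, since then $C_{\overline{s}}$ would be a vertical line with $\lambda(\overline{s})\eeq 0$, contradicting $\overline{s}\in K$, and it is not $(-1,0)$, as noted just after Definition~\ref{lower_curve_right}), while $\gamma\,'(s)$ lies strictly in the open third quadrant (not $(-1,0)$ by Definition~\ref{lower_curve_right}, not $(0,-1)$ by hypothesis). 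Moreover $\gamma_{\,1}(\overline{s})\geeq\gamma_{\,1}(\delta)\geeq\gamma_{\,1}(s)$.

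For the canonical-circle inequality I would first record a one-line computation from Definition~\ref{canonical_circle}: at any point of $\gamma$ of height $y\st{>}0$ whose tangent vector has negative first coordinate, the centre of the canonical circle lies where the normal line to $\gamma$ at that point meets the $e_1$-axis, from which the circle is seen to be counterclockwise oriented with signed curvature $-\cos\theta(\gamma\,')/y$. Applying this at $\overline{s}$ and at $s$ gives
\[
\lambda(\overline{s})-\lambda(s)=\frac{\cos\theta(\gamma\,'(s))-\cos\theta(\gamma\,'(\overline{s}))}{y}.
\]
By inequality~(\ref{theta_ineq_right}) of Lemma~\ref{lower_curve_lemma}, $\theta(\gamma\,'(s))\geeq 2\pi-\theta(\gamma\,'(\overline{s}))$, and both of these angles lie in $(\pi,3\pi/2)$, on which cosine is increasing; hence $\cos\theta(\gamma\,'(s))\geeq\cos\!\bigl(2\pi-\theta(\gamma\,'(\overline{s}))\bigr)=\cos\theta(\gamma\,'(\overline{s}))$, so $\lambda(\overline{s})\geeq\lambda(s)$.

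For the $H_1$ inequality I would apply Proposition~\ref{H1_comparison_right} with $(x_1,y)=\gamma(\overline{s})$, $(x_2,y)=\gamma(s)$, $v_1=\gamma\,'(\overline{s})$, and $v_2=\gamma\,'(s)$: then $v_1^{\perp}$ and $v_2^{\perp}$ are the outward unit normals at the two points, and by~(\ref{H_1_computation}) the numbers $H_1(\overline{s})$ and $H_1(s)$ are $p$ times the left- and right-hand sides of the inequality asserted there, so with $p\st{>}0$ the conclusion gives $H_1(\overline{s})\st{>}H_1(s)$. It remains to verify $x_1\geeq x_2$ (immediate from $\gamma_{\,1}(\overline{s})\geeq\gamma_{\,1}(\delta)\geeq\gamma_{\,1}(s)$) and the three admissibility conditions of Definition~\ref{admissible_right}. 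Condition~(1), $F(\overline{s})\st{>}R(\overline{s})$, is exactly Lemma~\ref{F_dominates_R_right}, as $\overline{s}\in K$. Condition~(2), $\theta(v_2)\geeq\theta(v_1')$, is~(\ref{theta_ineq_right}) once more, since the reflection $v_1'$ of $v_1$ over the $e_1$-axis satisfies $\theta(v_1')=2\pi-\theta(\gamma\,'(\overline{s}))$. Condition~(3), $\gamma_{\,1}(\overline{s})-F(\overline{s})\geeq F(\overline{s})-\gamma_{\,1}(s)$, I would get from the chain
\[
\gamma_{\,1}(\overline{s})-F(\overline{s})\geeq\gamma_{\,1}(\overline{s})-\gamma_{\,1}(\delta)\geeq\gamma_{\,1}(\delta)-\gamma_{\,1}(s)\geeq F(\overline{s})-\gamma_{\,1}(s),
\]
in which the two outer inequalities use $F(\overline{s})\leeq\gamma_{\,1}(\delta)$ (Proposition~\ref{upper_curve_structure_right}(3) at the parameter value $\overline{s}\in[0,\delta]$) and the middle inequality is~(\ref{gamma_one_ineq_right}) of Lemma~\ref{lower_curve_lemma}. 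Feeding $\lambda(\overline{s})\geeq\lambda(s)$ and $H_1(\overline{s})\st{>}H_1(s)$ back into the first display yields $\kappa(s)\st{>}\kappa(\overline{s})$.

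I expect the main obstacle to be the admissibility check, and in particular condition~(3): this is where the two strands of the argument must be joined --- the bound $F(\overline{s})\leeq\gamma_{\,1}(\delta)$ produced by the structure of the upper curve (Proposition~\ref{upper_curve_structure_right}) and the reflected-point inequality~(\ref{gamma_one_ineq_right}) produced by applying the curvature-comparison theorem to the lower curve (Lemma~\ref{lower_curve_lemma}) --- and one has to be careful that the inequalities stack in the right direction. The other pieces (the closed form for $\lambda$ in terms of tangent angle and height, the orientations of the canonical circles, and the monotonicity of cosine on $(\pi,3\pi/2)$) are routine.
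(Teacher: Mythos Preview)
Your proposal is correct and follows essentially the same route as the paper: reduce via Corollary~\ref{generalized_curvature_ODE} to the two inequalities $\lambda(\overline{s})\geeq\lambda(s)$ and $H_1(\overline{s})\st{>}H_1(s)$, obtain the first from~(\ref{theta_ineq_right}) (the paper just says ``right triangle trigonometry'' where you spell out $\lambda=-\cos\theta(\gamma\,')/y$ and the monotonicity of cosine on $(\pi,3\pi/2)$), and obtain the second by checking admissibility and invoking Proposition~\ref{H1_comparison_right}, with condition~(3) verified exactly as you do via Proposition~\ref{upper_curve_structure_right}(3) and~(\ref{gamma_one_ineq_right}).
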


\begin{proof}
Since $H_f$ is constant,
\[
\kappa(s)  \pl  (n-2) \lambda(s)  \pl  H_1(s)  \eeq   \kappa(\overline{s})  \pl  (n-2)\lambda(\overline{s})  \pl  H_1(\overline{s}).
\]
It can be shown using right triangle trigonometry and (\ref{theta_ineq_right}) from Lemma \ref{lower_curve_lemma} that \( \lambda(s)  \leeq  \lambda(\ol{s}) \).  Thus, to prove that \( \kappa(s)  \st{>} \kappa(\overline{s}) \), it suffices to prove that \( H_1(s)  \st{<}  H_1(\ol{s}) \).
We show that $\gamma \, '(\overline{s})$ and $\gamma \,'(s)$ are admissible with respect to $\gamma(\overline{s})$ and $\gamma(s)$ and then appeal to Proposition \ref{H1_comparison_right}. Since $\gamma \, '(s)$ is not equal to $(0, \mn 1)$, $\gamma \, '(s)$  lies strictly in the third quadrant. By Lemma \ref{F_dominates_R_right}, $F(\overline{s})  \st{>} R(\overline{s})$. Thus the first condition in the definition of admissibility is met.

By Lemma \ref{lower_curve_lemma}, $\theta(\gamma \, ' (s))  \geeq 2 \pi - \theta(\gamma \, '(\overline{s}))$,
satisfying the second condition of admissibility. Furthermore, by the same lemma, we have $\gamma_{\, 1}(\overline{s}) - \gamma_{\, 1}(\delta)  \geeq \gamma_{\, 1}(\delta) - \gamma_{\, 1}(s)$. By Proposition \ref{upper_curve_structure_right}, $\gamma_{\, 1}(\delta)  \geeq F(\overline{s})$, so $\gamma_{\, 1}(\overline{s}) - F(\overline{s})  \geeq F(\overline{s}) - \gamma_{\, 1}(s)$, and the final condition for admissibility is satisfied.

Because $\gamma \, '(\overline{s})$ and $\gamma \,'(s)$ are admissible with respect to $\gamma(\overline{s})$ and $\gamma(s)$, 
we conclude by Proposition \ref{H1_comparison_right} that

\[
\frac{\gamma(\overline{s})}{ |\gamma(\overline{s})|^2} \st{\cdot} \gamma \,' (\overline{s})^{\perp}  \st{>} \frac{\gamma(s)}{ |\gamma(s)|^2} \st{\cdot} \gamma \, '(s)^{\perp}.
\]
By (\ref{H_1_computation}), it follows that $H_1(s)  \st{<}  H_1(\overline{s})$, as required.  
\end{proof}

By a similar argument to that in \cite[Lemma 3.14]{chambers} along with Proposition \ref{lower_upper_curvature_comparison_right},  \( \eta  \st{<}  \beta \), \(\eta \in L \), and \( \gamma \, '(\eta) \eeq (0,-1) \).
 In addition to these properties of \( \eta \), we can also show using the curvature comparison that \(\gamma_{\, 1}(\eta)  \st{>} 0 \).  Then proving that \( \gamma_{\, 1}(\beta)  \st{>} 0 \) is a matter of showing that \( \gamma_{\, 1} \) is increasing on \( (\eta, \beta) \).  
To establish the second claim of the Right Tangent Lemma, we consider the functions \( \kappa \) and \( \gamma\,' \) on \( (\eta, \beta) \).  Lemma \ref{curving_tight_right} gives a computational result regarding \( \kappa \), whereas Lemma \ref{curving_tight_after_eta_right} extends this result as well as showing that \( \gamma\,' \) is strictly in the fourth quadrant on (\(\eta, \beta\)).

\begin{lemma}
\label{end_positive}
We have that $\gamma_{\, 1}(\eta)  \st{>} 0$.
\end{lemma}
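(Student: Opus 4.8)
The plan is to obtain the bound from Lemma~\ref{lower_curve_lemma} together with the estimate \(\gamma_{\,1}(\delta)>1/2\), which follows from Proposition~\ref{upper_curve_structure_right} and the defining inequality \(F(0)>1/2\) of the right case. First I would record the facts about \(\eta\). Since \(\eta\in L\) and \(\eta<\beta\) (as established just before the lemma), \(\gamma(\eta)\) lies strictly in the open upper half plane, and \(\gamma_{\,2}\) is strictly decreasing on \([\delta,\eta]\) because its tangent vector there lies in the third quadrant; hence \(0<\gamma_{\,2}(\eta)<\gamma_{\,2}(\delta)\). By condition~(2) in Definition~\ref{lower_curve_right} there is a unique \(\overline{\eta}\in K\) with \(\gamma_{\,2}(\overline{\eta})=\gamma_{\,2}(\eta)\), and \(\overline{\eta}\in(0,\delta)\): it is not \(0\) because \(\gamma_{\,2}(\overline{\eta})>0=\gamma_{\,2}(0)\), and it is not \(\delta\) because \(\gamma_{\,2}(\overline{\eta})<\gamma_{\,2}(\delta)\).

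Next I would apply Lemma~\ref{lower_curve_lemma}. For every \(s\in(\delta,\eta)\), with \(\overline{s}=h(\gamma_{\,2}(s))\), inequality~(\ref{gamma_one_ineq_right}) rearranges to \(\gamma_{\,1}(s)\geeq 2\gamma_{\,1}(\delta)-\gamma_{\,1}(\overline{s})\); in the notation of Definition~\ref{deFininG} this is exactly the inequality \(f\leeq g\) of Proposition~\ref{curvature_comparison_theorem}. Since \(h\), \(k\), and \(\gamma_{\,1}\) are continuous, letting \(s\to\eta^-\) (so \(\gamma_{\,2}(s)\to\gamma_{\,2}(\eta)^+\), \(\overline{s}\to\overline{\eta}\), and \(\gamma_{\,1}(s)\to\gamma_{\,1}(\eta)\)) gives
\[
\gamma_{\,1}(\eta)\geeq 2\gamma_{\,1}(\delta)-\gamma_{\,1}(\overline{\eta}).
\]

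Finally I would insert the two known bounds. Proposition~\ref{upper_curve_structure_right}(3) with \(s=0\) gives \(\gamma_{\,1}(\delta)\geeq F(0)\), and the right case hypothesis is \(F(0)>1/2=\gamma_{\,1}(0)/2\), so \(2\gamma_{\,1}(\delta)>\gamma_{\,1}(0)=1\). On the other hand \(0\) is the strict maximum point of \(\gamma_{\,1}\) (Section~4) and \(\overline{\eta}\neeq 0\), so \(\gamma_{\,1}(\overline{\eta})<\gamma_{\,1}(0)=1\). Combining, \(\gamma_{\,1}(\eta)\geeq 2\gamma_{\,1}(\delta)-\gamma_{\,1}(\overline{\eta})>1-1=0\), which is the claim. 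The only step that is not pure bookkeeping is the passage to the limit \(s\to\eta^-\) in Lemma~\ref{lower_curve_lemma}, and I do not anticipate a real difficulty there; alternatively one can run the argument directly on \(f\) and \(g\), which extend continuously to \(\gamma_{\,2}(\eta)\) once \(\overline{\eta}\) and \(\gamma(\eta)\) have been identified.
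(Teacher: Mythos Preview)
Your proof is correct. Both you and the paper start from the same rearranged form of inequality~(\ref{gamma_one_ineq_right}), namely \(\gamma_{\,1}(\eta)\geeq 2\gamma_{\,1}(\delta)-\gamma_{\,1}(\overline{\eta})\), but you finish differently. The paper bounds the right-hand side via the canonical-circle data at \(\overline{\eta}\): it uses \(\gamma_{\,1}(\delta)\geeq F(\overline{\eta})\) from Proposition~\ref{upper_curve_structure_right}(3), then \(\gamma_{\,1}(\overline{\eta})-F(\overline{\eta})<R(\overline{\eta})\) and finally \(F(\overline{\eta})>R(\overline{\eta})\) from Lemma~\ref{F_dominates_R_right}. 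You instead evaluate Proposition~\ref{upper_curve_structure_right}(3) at \(s=0\) to get \(2\gamma_{\,1}(\delta)\geeq 2F(0)>1\), and pair this with \(\gamma_{\,1}(\overline{\eta})<\gamma_{\,1}(0)=1\) coming from the strict maximum of \(\gamma_{\,1}\) at \(0\). Your route is more elementary in that it avoids Lemma~\ref{F_dominates_R_right} and the geometry of \(C_{\overline{\eta}}\) altogether; the paper's route, on the other hand, yields the sharper quantitative estimate \(\gamma_{\,1}(\eta)>F(\overline{\eta})-R(\overline{\eta})\), though that extra strength is not used downstream. Your care about passing to the limit \(s\to\eta^-\) is warranted (Lemma~\ref{lower_curve_lemma} is stated on \([\delta,\eta)\)); the paper applies the inequality directly at \(\eta\), implicitly relying on the same continuity.
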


\begin{proof}
By Lemma \ref{lower_curve_lemma},
$\gamma_{\, 1}(\overline{\eta}) - \gamma_{\, 1}(\delta)  \geeq \gamma_{\, 1}(\delta) - \gamma_{\, 1}(\eta)$.
Furthermore, $\gamma_{\, 1}(\delta)  \eeq   F(\delta)  \geeq F(\overline{\eta})$. Therefore,
$
\gamma_{\, 1}(\overline{\eta}) - F(\overline{\eta})  \geeq \gamma_{\, 1}(\overline{\eta}) - \gamma_{\, 1}(\delta) 
 \geeq \gamma_{\, 1}(\delta) - \gamma_{\, 1}(\eta) 
  \geeq F(\overline{\eta}) - \gamma_{\, 1}(\eta).
$

Finally, $\gamma_{\, 1}(\overline{\eta}) - F(\overline{\eta})  \st{<}  R(\overline{\eta})$, because $R(\ol{\eta})$ is the distance from $(F(\ol{\eta}),0)$ to $\gamma(\eta)$.  It follows that 
$
\gamma_{\, 1}(\eta)  \geeq F(\overline{\eta}) - (\gamma_{\, 1}(\overline{\eta}) - F(\overline{\eta})) 
 \st{>} F(\overline{\eta}) - R(\overline{\eta}).
$
By Lemma \ref{F_dominates_R_right}, this final expression is positive.

\end{proof}

\begin{lemma}
\label{curving_tight_right}
Let $s \in (0, \beta)$. If $\gamma_{\, 1}(s)  \geeq 0$ and $\gamma \, '(s)$ is in the fourth quadrant, then $\kappa(s)  \st{>} 0$.
\end{lemma}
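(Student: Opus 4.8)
The plan is to read $\kappa(s)$ off the constant-generalized-mean-curvature identity of Corollary~\ref{generalized_curvature_ODE}. Writing $c = H_f(s) = \kappa(s) + (n-2)\lambda(s) + H_1(s)$, we get $\kappa(s) = c - (n-2)\lambda(s) - H_1(s)$, so it is enough to show that $c > 0$ and that, at the point in question, both $\lambda(s) \le 0$ and $H_1(s) \le 0$; since $n \ge 3$, these three facts give $\kappa(s) \ge c > 0$.

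First I would fix the sign of $c$ by evaluating the identity at $0$: $c = \kappa(0) + (n-2)\lambda(0) + H_1(0)$. We have $\kappa(0) > 0$ (established in Section~4 from spherical symmetry), $\lambda(0) = \kappa(0) > 0$ by Lemma~\ref{kappa(0) equals lambda(0)}, and, since $\gamma(0) = (1,0)$ and $\gamma\,'(0) = (0,1)$ force the outward normal $\nu(0) = (1,0)$, formula~(\ref{H_1_computation}) gives $H_1(0) = p > 0$; hence $c = (n-1)\kappa(0) + p > 0$. Next I would record two elementary geometric facts at $\gamma(s)$. Because $\gamma$ does not re-cross the $e_1$-axis on $(0,\beta)$ and $\gamma_{\,2} > 0$ just past $0$ (as $\gamma\,'(0) = (0,1)$), continuity gives $\gamma_{\,2}(s) > 0$; combined with the hypothesis $\gamma_{\,1}(s) \ge 0$, the position vector $\gamma(s)$ lies in the closed first quadrant. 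Since $\gamma\,'(s)$ is in the fourth quadrant and $\gamma$ is counterclockwise, the outward normal $\nu(s)$ — the clockwise rotation of $\gamma\,'(s)$ by $\pi/2$ — lies in the closed third quadrant. A vector with nonnegative entries dotted with a vector with nonpositive entries is nonpositive, so $\gamma(s) \cdot \nu(s) \le 0$, whence $H_1(s) \le 0$ by~(\ref{H_1_computation}) (here $p > 0$). For $\lambda(s)$, the center of $C_s$ is $(F(s),0)$, so the vector from $\gamma(s)$ to that center has $e_2$-component $-\gamma_{\,2}(s) < 0$; for an oriented circle this vector is the radius times the counterclockwise rotation of the tangent if the circle is counterclockwise, and the clockwise rotation if clockwise. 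Were $C_s$ counterclockwise, this would force $\gamma_{\,1}\,'(s) < 0$, contradicting that $\gamma\,'(s)$ lies in the fourth quadrant; hence $C_s$ is clockwise (or, when $\gamma\,'(s)$ is vertical, an oriented line), so $\lambda(s) \le 0$. Plugging into $\kappa(s) = c - (n-2)\lambda(s) - H_1(s)$ finishes the proof.

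There is no analytic difficulty here; the whole content is sign bookkeeping, and the one spot that needs care is the claim $\lambda(s) \le 0$. I would commit to reading ``fourth quadrant'' as the closed quadrant and then check the two degenerate tangent directions separately: for $\gamma\,'(s) = (1,0)$ the circle $C_s$ is still clockwise (its center lies directly below $\gamma(s)$), and for $\gamma\,'(s) = (0,-1)$ it is a vertical line with $\lambda(s) = 0$. As an alternative to the normal-direction argument, the orientation dichotomy can be imported verbatim from the proof of Proposition~\ref{unavg_std_curvature_formula}: since $\gamma_{\,2}(s) > 0$, $C_s$ is counterclockwise exactly when the inward normal is downward, and here the inward normal is the counterclockwise rotation of a fourth-quadrant vector, hence upward.
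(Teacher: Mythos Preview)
Your proof is correct and follows essentially the same approach as the paper: both evaluate the constant $c$ at $s=0$ to get $c>0$, then observe that $\lambda(s)\le 0$ (tangent in the fourth quadrant with $\gamma_{2}(s)>0$) and $H_1(s)\le 0$ (position in the first quadrant, outward normal in the third), and read off $\kappa(s)>0$ from the identity. Your write-up is more detailed---particularly the explicit treatment of the degenerate tangent directions for the sign of $\lambda(s)$---but the argument is the same.
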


\begin{proof}
Since \( \gamma \, '(s) \) is in the fourth quadrant and \( \gamma_{\,2}(s) > 0 \), \( \lambda(s) \leeq 0 \).  Since \( \gamma(s) \) is in the first quadrant and \( \nu(s) \) is in the third, \( \gamma(s) \cdot \nu(s) \leeq 0 \), which implies that \( H_1(s) \leeq 0 \).   Meanwhile, we have that \( H_f(s) \eeq H_f(0) > 0 \), because \( H_1(0) > 0 \), \( \kappa(0) > 0 \) by spherical symmetry, and \( \kappa(0) \eeq \lambda(0) \) (Proposition \ref{kappa(0) equals lambda(0)}). Hence, it must be the case that \( \kappa(s) > 0 \).
\end{proof}

\begin{lemma}
\label{curving_tight_after_eta_right}
\emph{( cf. \cite[Prop. 4.1]{chambers})}
For $s \in (\eta, \beta)$, $\gamma \, '(s)$ lies strictly in the fourth quadrant, and $\kappa(s)  \st{>} 0$.
\end{lemma}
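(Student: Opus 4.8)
The plan is a connectedness argument on $(\eta,\beta)$: the tangent vector starts at $\gamma\,'(\eta) \eeq (0,-1)$, and I will show it can reach neither $(1,0)$ nor $(0,-1)$ again before $\beta$, which is the only way it could leave the open fourth quadrant; once it stays strictly in the fourth quadrant the positivity of $\kappa$ is immediate from Lemma \ref{curving_tight_right}.

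First I would pin down the behavior just past $\eta$. Since $\gamma\,'(\eta) \eeq (0,-1)$ is a multiple of $e_2$, the canonical circle $C_\eta$ is a vertical line, so $\lambda(\eta) \eeq 0$; the outward normal is $\nu(\eta) \eeq (-1,0)$, so by (\ref{H_1_computation}) and Lemma \ref{end_positive} we get $H_1(\eta) \eeq -p\,\gamma_{\,1}(\eta)/|\gamma(\eta)|^2 \st{<} 0$. Since $H_f$ is constant (Corollary \ref{generalized_curvature_ODE}) and equals $H_f(0) \st{>} 0$ (established in the proof of Lemma \ref{curving_tight_right}), Corollary \ref{generalized_curvature_ODE} gives $\kappa(\eta) \eeq H_f(0) - (n-2)\lambda(\eta) - H_1(\eta) \st{>} 0$. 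As $\gamma$ is smooth at $\eta$ (Remark \ref{off-axis_regularity}, since $\gamma_{\,2}(\eta) \st{>} 0$), $\kappa$ stays positive on a neighborhood of $\eta$, so $\theta(\gamma\,'(s))$ is strictly increasing there; since it equals $3\pi/2$ at $\eta$, the tangent vector moves strictly into the fourth quadrant for $s$ slightly greater than $\eta$.

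Now let $T$ be the set of $s \in (\eta,\beta)$ such that $\gamma\,'(t)$ lies strictly in the fourth quadrant for every $t \in (\eta,s]$; the previous step shows $T \neq \emptyset$, and $T$ is an interval with left endpoint $\eta$, so I set $m \eeq \sup T$. On $(\eta,m)$ we have $\gamma_{\,1}\,' \st{>} 0$, hence $\gamma_{\,1} \st{>} \gamma_{\,1}(\eta) \st{>} 0$, so Lemma \ref{curving_tight_right} gives $\kappa \st{>} 0$ and therefore $\theta(\gamma\,'(\cdot))$ strictly increasing on $(\eta,m)$. Suppose for contradiction that $m \st{<} \beta$. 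Then $\gamma_{\,2}(m) \st{>} 0$ (otherwise $m$ would be $\beta$ by the definition of $\beta$) and $\gamma_{\,1}(m) \geeq \gamma_{\,1}(\eta) \st{>} 0$, so $\gamma(m)$ is in the open first quadrant, while by continuity $\gamma\,'(m)$ lies in the closed fourth quadrant, hence is strictly in the fourth quadrant, or equals $(0,-1)$, or equals $(1,0)$. If it is strictly in the fourth quadrant, then so is $\gamma\,'$ on a neighborhood of $m$, contradicting $m \eeq \sup T$. If $\gamma\,'(m) \eeq (0,-1)$, then $\theta(\gamma\,'(\cdot))$ has returned to $3\pi/2$, contradicting its strict increase on $(\eta,m)$. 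If $\gamma\,'(m) \eeq (1,0)$, then $\gamma(m)\cdot\gamma\,'(m) \eeq \gamma_{\,1}(m) \st{>} 0$, contradicting Lemma \ref{tangent_restriction}. Hence $m \eeq \beta$, so $\gamma\,'$ is strictly in the fourth quadrant throughout $(\eta,\beta)$, whence $\gamma_{\,1} \st{>} \gamma_{\,1}(\eta) \st{>} 0$ there and Lemma \ref{curving_tight_right} yields $\kappa \st{>} 0$ on $(\eta,\beta)$.

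The main obstacle is the case $\gamma\,'(m) \eeq (1,0)$: because $\kappa$ remains positive, nothing internal to the curvature ODE prevents the tangent vector from swinging up out of the fourth quadrant into the first. The resolution is that at such a point the curve would be heading outward with $\gamma(m)\cdot\gamma\,'(m) \st{>} 0$, which is forbidden on $(0,\beta)$ by the Tangent Restriction (Lemma \ref{tangent_restriction}). The one ingredient this relies on is $\gamma_{\,1}(\eta) \st{>} 0$ (Lemma \ref{end_positive}), which forces $\gamma_{\,1}(m) \st{>} 0$ and hence makes $\gamma(m)\cdot\gamma\,'(m) \eeq \gamma_{\,1}(m)$ genuinely positive; this is exactly the feature of our curve — that it ends right of the $e_2$-axis — singled out in the introduction as absent from Chambers' setting.
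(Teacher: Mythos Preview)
Your proof is correct and follows essentially the same approach as the paper: a supremum/continuity argument on the interval past $\eta$, ruling out $\gamma\,'(m)=(0,-1)$ via the strict increase of $\theta\circ\gamma\,'$ forced by $\kappa>0$, ruling out $\gamma\,'(m)=(1,0)$ via the Tangent Restriction together with $\gamma_{\,1}(m)>0$ (from Lemma \ref{end_positive}), and invoking Lemma \ref{curving_tight_right} for positivity of $\kappa$. The only cosmetic difference is that the paper builds the condition $\kappa>0$ into the definition of its maximal interval, whereas you track only the fourth-quadrant condition and recover $\kappa>0$ from Lemma \ref{curving_tight_right}; your explicit computation of $\kappa(\eta)>0$ is a welcome addition.
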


\begin{proof}
Define $A \subset (\eta, \beta)$ so that $s \in A$ if and only if for all $t \in (\eta, s)$, $\gamma \, ' (t)$ lies strictly in the fourth quadrant and $\kappa(t)  \st{>} 0$. Note that \(A\) is nonempty because \(\kappa(\eta)  \st{>} 0\), \( \gamma\,'(\eta)  \eeq   (0, -1) \), and \( \kappa \) is continuous at \( \eta \).  Thus, \(A \) has a supremum \(\omega \).
 To prove the lemma we show that $\omega  \eeq   \beta$. \\
\indent Suppose for contradiction that $\omega  \st{<}  \beta$.  Then \(\gamma \) is smooth at \( \omega \); in particular, \(\gamma\,'(\omega) \) and \(\kappa(\omega) \) are defined.   Since $\gamma \, '(t)$ lies in the fourth quadrant for all $ t \in (\eta, \omega)$, \(\gamma\,'(\omega) \) is in the fourth quadrant.  Since $\kappa  \st{>} 0$ on \( (\eta, \omega) \),  $\gamma \, '(\omega) $ is not equal to $(0,-1)$. Furthermore, $\gamma_{\, 1}(\omega)  \st{>} 0$, as \( \gamma_{\, 1}( \eta)  \st{>} 0\) (Lemma \ref{end_positive}) and $\gamma \, '$ lies in the fourth quadrant on \( (\eta, \omega) \). 
If $\gamma \, ' (\omega)$ were equal to $(1,0)$, then we would have \( \gamma \, ' (\omega) \st{\cdot} \gamma(\omega) \eeq  \gamma_{\, 1}(\omega)  \st{>} 0 \),  contradicting the Tangent Restriction Lemma (Lemma \ref{tangent_restriction}). Thus $\gamma \, '(\omega)$ lies strictly in the fourth quadrant. By Lemma \ref{curving_tight_right}, $\kappa(\omega)  \st{>} 0$. Thus, by continuity of \( \gamma\,' \) and \(\kappa \) on \( [0, \beta) \), \(A\) could be extended past \( \omega \), contradicting the definition of \( \omega \).
\end{proof}

\begin{proof}[Proof of the Right Tangent Lemma  (Lemma \ref{right_tangent})]
It follows from Lemma \ref{curving_tight_after_eta_right} that $\gamma_{\, 1}(\beta)  \st{>} 0$, as $\gamma \, '(s)$ lies strictly in the fourth quadrant for all $s \in (\eta, \beta)$, and $\gamma_{\, 1}(\eta)  \st{>} 0$. As $\kappa  \st{>} 0$ and $\gamma \, '$ is in the fourth quadrant on $(\eta, \beta)$, the angle \( \theta(s) \) that $\gamma \, '(s) $ makes with the $e_1$-axis, measured counterclockwise in radians, 
must be a strictly increasing function on $(\eta, \beta)$ that is bounded above by $2 \pi$. Therefore, $\lim_{s \rightarrow \beta^-} \theta(s)$ exists and is in $( \theta(\eta), 2\pi]$. It follows that $\lim_{s \rightarrow \beta^-} \gamma \, ' (s)$ exists, lies in the fourth quadrant, and is not $(0,-1)$.
\end{proof}

\section{Proof of Left Tangent Lemma}
 \indent In the previous section, the key to proving the Right Tangent Lemma was to show that the curvature was greater at a point on the lower curve than at its corresponding point on the upper curve, allowing us to find $\eta  \st{<}  \beta$ where $\gamma\,'(\eta)  \eeq   (0,-1)$.  Now, in the left case (Prop. \ref{left_tangent}), we will prove the opposite inequality concerning curvatures at corresponding points, with the aim of showing that $\lim_{s \to \beta^-}\gamma\,'(s)$ is in the third quadrant and not equal to \( (0, -1) \).  This case, however, presents new obstacles. One difficulty we eliminate is the possibility that there are multiple points on the portion of $\gamma$ parameterized by $[0, \beta)$ where the tangent vector is $(-1,0)$.  In the right case, the lower curve naturally terminated at a point where the tangent vector was $(0,-1)$.  However, the goal in the left case will be to show that the lower curve does not terminate before $\beta$ (Lemma \ref{eta_equals_beta}), allowing us to apply the curvature comparison all the way up to $\beta$.
We begin with a new definition of admissibility for the left case and an analogue of Proposition \ref{H1_comparison_right}.

\begin{definition}
\label{admissible, left case}
\emph{Consider two points $(x_1,y)$ and $(x_2,y)$ and two unit vectors $v_1$ and $v_2$, strictly in the second and third quadrants, respectively.  Let $C_i$, $a_i$, $R_i$, $x_1'$, and $v_1'$ be as in Definition \ref{admissible_right}.  Finally, let $(x^*,0)$ be the unique point on the $e_1$-axis so that  $v_2$ is tangent at $(x^*,y)$ to the circle centered at the origin that passes through $(x^*,y)$.  We say that $v_1$ and $v_2$ are} admissible \emph{with respect to $(x_1,y)$ and $(x_2, y)$ if the following hold:}

\begin{enumerate}
\item $0 \st{<} a_1  \st{<} R_1$,
\item $\theta(v_2)  \leeq  \theta(v_1')$,
\item $R_2  \leeq  R_1$,
\item $x_2 \in [ x^*,x_1']$.
\end{enumerate}

\end{definition}

Figures \ref{fig: admissible_left_x_2_at_least_0} and \ref{fig: admissible_left_x_2_less_than_0} depict vectors $v_1$ and $v_2$ that are admissible with respect to $(x_1,y)$ and $(x_2,y)$ when $x_2  \geeq  0$ and when $x_2 \st{<} 0$.

\begin{figure}

	\begin{center}
		 \includegraphics{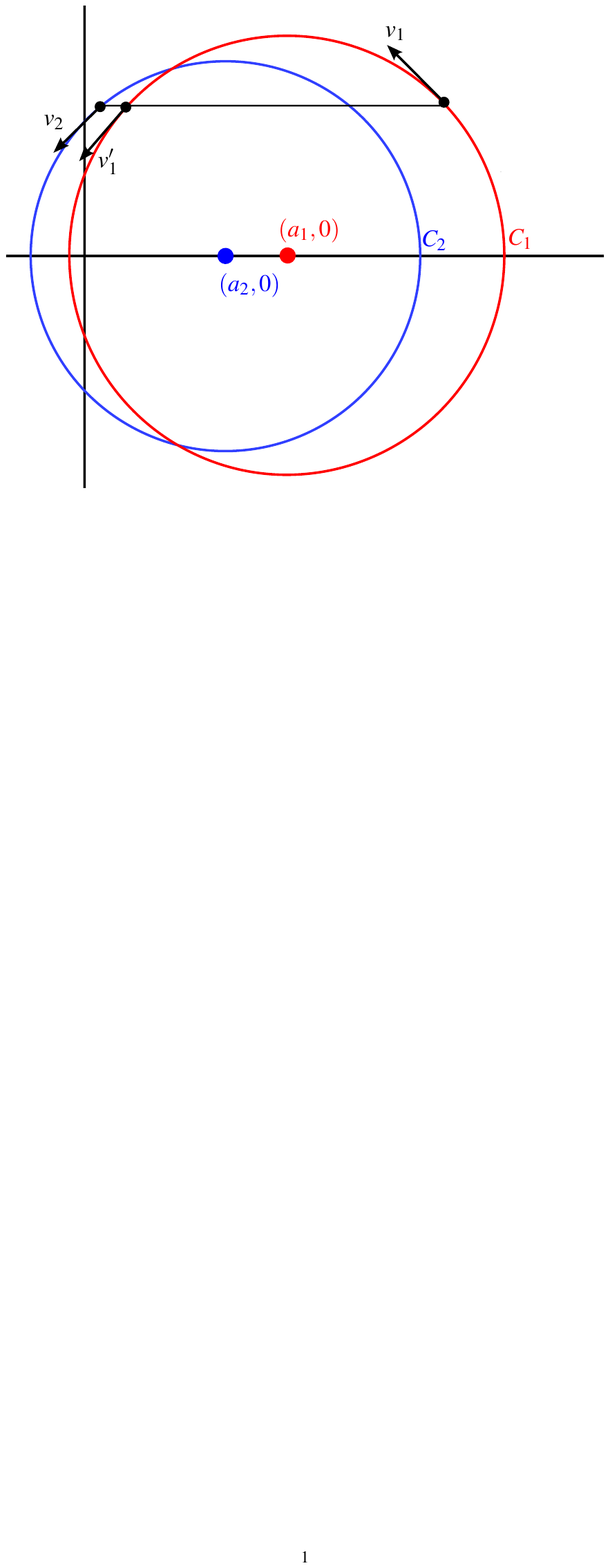}	
 
    \caption{The vectors $v_1$ and $v_2$ are admissible with respect to $(x_1,y)$ and $(x_2,y)$ with $x_2  \geeq 0$.\label{fig: admissible_left_x_2_at_least_0}}
	\end{center}
\end{figure}

\begin{figure}
	\begin{center}
		 \includegraphics{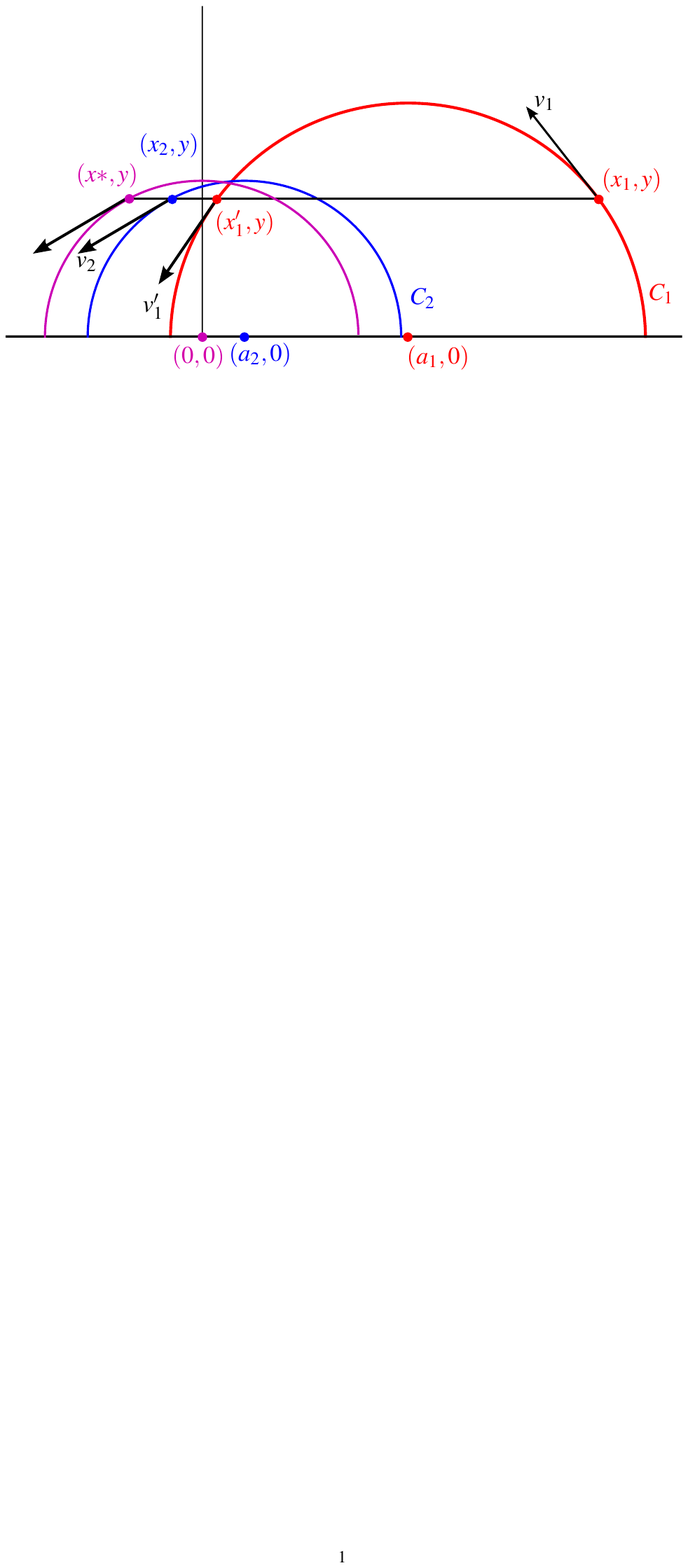}
	
	    \caption{The vectors $v_1$ and $v_2$ are admissible with respect to $(x_1,y)$ and $(x_2,y)$ with $x_2  \st{<} 0$.\label{fig: admissible_left_x_2_less_than_0}}
	\end{center}
\end{figure}

\begin{proposition} 
\label{admissibility theorem, left case}
If $v_1$ and $v_2$ are admissible with respect to $(x_1,y)$ and $(x_2,y)$, then $H_1$ is larger at $(x_2, y)$ with respect to $v_2$ than at $(x_1, y)$ with respect to $v_1$, i.e.
\[
\frac{(x_2, y)}{|(x_2,y)|^2} \st{\cdot} v_2^{\bot} \st{>} \frac{(x_1,y)}{|(x_1,y)|^2} \st{\cdot} v_1^{\bot}.
\]
\end{proposition}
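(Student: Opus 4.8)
Throughout write $x_1' \eeq 2a_1-x_1$ for the reflection of $x_1$ across the abscissa of the center of $C_1$ and $v_1'$ for the reflection of $v_1$ across the $e_1$-axis, as in Definition \ref{admissible, left case}; recall (by symmetry of $C_1$ about the line $x \eeq a_1$, exactly as in the proof of Proposition \ref{H1_comparison_right}) that $C_1$ is also the canonical circle with respect to $v_1'$ at $(x_1',y)$, so $v_1'^{\bot}$ is the outward unit normal of $C_1$ at $(x_1',y)$. Since $v_1$ lies strictly in the second quadrant, $(x_1,y)$ sits on the upper right arc of the (counterclockwise) circle $C_1$, whence $x_1 \st{>} a_1$ and $x_1' \st{<} a_1$. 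The plan is to mirror the proof of Proposition \ref{H1_comparison_right} with every inequality reversed. The first step is to show
\[
\frac{(x_1,y)}{|(x_1,y)|^2} \cdot v_1^{\bot} \st{<} \frac{(x_1',y)}{|(x_1',y)|^2} \cdot v_1'^{\bot}.
\]
This needs no new computation: parameterizing $C_1$ by $\alpha(t) \eeq (a_1 \pl R_1\cos t, R_1\sin t)$ and taking $t_1 \in (0,\pi/2)$ with $\alpha(t_1)\eeq (x_1,y)$ exactly as there, the difference of the two sides equals $\dfrac{2a_1(a_1^2-R_1^2)\cos t_1}{|\alpha(t_1)|^2|\alpha(\pi-t_1)|^2}$, which is now \emph{negative}, because $\cos t_1 \st{>} 0$ (as $x_1 \st{>} a_1$) while $a_1 \st{<} R_1$ by admissibility condition (1) --- the opposite sign to the right case.

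It then suffices to prove $\dfrac{(x_1',y)}{|(x_1',y)|^2}\cdot v_1'^{\bot} \leeq \dfrac{(x_2,y)}{|(x_2,y)|^2}\cdot v_2^{\bot}$, and I would split on the sign of $x_2$. When $x_2 \geeq 0$, condition (4) gives $0 \leeq x_2 \leeq x_1'$, and this reduces to an angle-and-distance comparison, run just as in Proposition \ref{H1_comparison_right}: writing each side as $|(x_i,y)|^{-1}\cos\!\big(\theta(v_i^{\bot})-\theta((x_i,y))\big)$ with both angles in $(0,\pi)$, the bound $x_2 \leeq x_1'$ forces $|(x_2,y)| \leeq |(x_1',y)|$ and $\theta((x_2,y)) \geeq \theta((x_1',y))$, while condition (2) gives $\theta(v_2^{\bot}) \leeq \theta(v_1'^{\bot})$; since cosine is decreasing on $(0,\pi)$ these combine to $\cos(\theta(v_2^{\bot})-\theta((x_2,y))) \geeq \cos(\theta(v_1'^{\bot})-\theta((x_1',y))) \st{>} 0$, and the inequality follows. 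This subcase uses only conditions (1), (2), and the upper bound in (4).

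For $x_2 \st{<} 0$ --- the configuration of Figure \ref{fig: admissible_left_x_2_less_than_0} --- I would instead use the explicit description of $C_2$. Its center lies where the line through $(x_2,y)$ normal to $v_2$ meets the $e_1$-axis; unwinding the definition of $x^*$ gives $a_2 \eeq x_2-x^*$ and $R_2 \eeq |(x^*,y)|$. Consequently
\[
\frac{(x_2,y)}{|(x_2,y)|^2}\cdot v_2^{\bot} \eeq \frac{1}{R_2}\left(1-\frac{a_2 x_2}{x_2^2+y^2}\right) \eeq \frac{1}{R_2}\cdot\frac{y^2+x^* x_2}{x_2^2+y^2} \geeq \frac{1}{R_2},
\]
the last inequality because $x_2(x^*-x_2) \geeq 0$ (both factors nonpositive, using the lower bound $x^* \leeq x_2$ in condition (4)). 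By condition (3), $R_2 \leeq R_1$, so the right side is at least $1/R_1$, and it remains to check $\dfrac{(x_1',y)}{|(x_1',y)|^2}\cdot v_1'^{\bot} \leeq 1/R_1$; since this equals $\dfrac{1}{R_1}\big(1-a_1 x_1'/((x_1')^2+y^2)\big)$, it holds at once when $x_1' \geeq 0$, while for $x_1' \st{<} 0$ one falls back on the angle inequality of condition (2) together with $R_2 \leeq R_1$.

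The main obstacle is this last case, and especially the subcase $x_1' \st{<} 0$: there the reciprocal-distance and cosine comparisons pull in opposite directions, so conditions (2), (3), and both halves of (4) must be used simultaneously, and I expect the argument to come down to verifying a single explicit rational inequality in $x_1', x_2, x^*, a_1, y$ under those constraints. Every other step is a faithful mirror of the proof of Proposition \ref{H1_comparison_right}.
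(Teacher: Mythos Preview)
Your $x_2 \geeq 0$ case is fine and coincides with what the paper does (the paper simply says ``similar argument to Proposition \ref{H1_comparison_right}''). The genuine gap is the subcase $x_2 \st{<} 0$, $x_1' \st{<} 0$, which you yourself flag as the ``main obstacle'' without resolving. The route you chose --- first reflect to $(x_1',y)$, then try to bound $\dfrac{(x_1',y)}{|(x_1',y)|^2}\cdot v_1'^{\bot}$ by $1/R_1$ --- cannot work there: with your own formula that quantity equals $\dfrac{1}{R_1}\bigl(1-\tfrac{a_1 x_1'}{(x_1')^2+y^2}\bigr)$, which is \emph{strictly larger} than $1/R_1$ once $x_1' \st{<} 0$. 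So the reflection step, while harmless, discards exactly the margin you need.

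The paper's fix is not to salvage that inequality but to skip the reflection entirely when $x_2 \st{<} 0$. It compares $(x_1,y)$ directly with the auxiliary point $(x^*,y)$: since $x_1 \st{>} a_1 \st{>} 0$ one has $|(x_1,y)|^2 \eeq R_1^2 + a_1(2x_1-a_1) \st{>} R_1^2$, whence
\[
\frac{(x_1,y)}{|(x_1,y)|^2}\cdot v_1^{\bot} \leeq \frac{1}{|(x_1,y)|} \st{<} \frac{1}{R_1} \leeq \frac{1}{R_2} \eeq \frac{(x^*,y)}{|(x^*,y)|^2}\cdot v_2^{\bot},
\]
using condition (3) and the fact that $(x^*,y)$ is parallel to $v_2^{\bot}$, so the normalized dot product there is $1$. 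Your own computation then gives $\dfrac{(x_2,y)}{|(x_2,y)|^2}\cdot v_2^{\bot} \geeq 1/R_2$, finishing the chain. In short: the same $1/R$ pivot you used works cleanly if you apply it to $(x_1,y)$ rather than to its reflection.
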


\begin{proof} 
We take cases according to whether $x_2  \geeq  0$ or $x_2  \st{<}  0$.  In the case that $x_2  \geeq  0$,  $|(x_2,y)|  \leeq  |(x_1',y)|$, and the result follows by a similar argument to that in Proposition \ref{H1_comparison_right}.
 In the case case that $x_2  \st{<} 0$, we will  prove two inequalities:
\begin{align}
\frac{(x_1,y)}{|(x_1,y)|^2} \st{\cdot} v_1^{\bot} & \st{<}  \frac{(x^*,y)}{|(x^*,y)|^2} \st{\cdot} v_2^{\bot}, \label{compare:1}\\
\frac{(x^*,y)}{|(x^*,y)|^2} \st{\cdot} v_2^{\bot} & \leeq  \frac{(x_2,y)}{|(x_2,y)|^2} \st{\cdot} v_2^{\bot}.\label{compare:2}
\end{align}

\indent Beginning with (\ref{compare:1}), note that since $a_1  \st{>} 0$, we must have that $|(x_1,y)|  \st{>} R_1$.  Additionally, $|(x^*,y)|  \eeq   R_2$. Combining these observations with the inequality $R_1  \geeq  R_2$, we have 
$|(x_1,y)|  \st{>} R_1  \geeq  R_2  \eeq   |(x^*,y)|.$  It follows that
$$\frac{1}{|(x_1,y)|}  \st{<}  \frac{1}{|(x^*,y)|},$$ so proving (\ref{compare:1}) has been reduced to showing that 

$$\frac{(x_1,y)}{|(x_1,y)|} \st{\cdot} v_1^{\bot}  \leeq  \frac{(x^*,y)}{|(x^*,y)|} \st{\cdot} v_2^{\bot}  \neeq   0.$$
This inequality is immediate when we recognize that $$\frac{(x^*,y)}{|(x^*,y)|} \st{\cdot} v_2^{\bot}  \eeq   \cos(\theta(v_2^{\bot})-\theta((x^*,y)))  \eeq   \cos(0)  \eeq  1.$$
\indent To prove (\ref{compare:2}), we will rewrite the right side of the inequality using the subtraction identity for cosine.  As noted above, $\theta(v_2^{\bot})  \eeq   \theta((x^*,y))$, so 
\[
\cos(\theta(v_2^{\bot}))  \eeq   \frac{x^*}{|(x^*,y)|} \text{\indent and \indent} \sin(\theta(v_2^{\bot}))  \eeq   \frac{y}{|(x^*,y)|}.
\]
Hence, we have
\begin{equation*}
\begin{split}
\frac{(x_2,y)}{|(x_2,y)|^2} \st{\cdot} v_2^{\bot} & \eeq   \frac{1}{|(x_2,y)|}\cos(\theta(v_2^{\bot}))-\theta((x_2,y)))\\
& \eeq  \frac{1}{|(x_2,y)|} \left(\cos(\theta(v_2^{\bot}))\cos(\theta((x_2,y))) \pl \sin(v_2^{\bot})\sin(\theta((x_2,y)))\right)\\
& \eeq    \frac{1}{|(x_2,y)|} \left(\frac{x^*}{|(x^*,y)|} \frac{x_2}{|(x_2,y)|} \pl \frac{y}{|(x^*,y)|} \frac{y}{|(x_2,y)|}\right)\\
&  \eeq   \frac{1}{|(x^*,y)|} \left(\frac{x^* x_2}{|(x_2,y)|^2} \pl \frac{y^2}{|(x_2,y)|^2}\right).
\end{split}
\end{equation*}
By (4) in Definition \ref{admissible, left case} and the assumption that $x_2  \st{<}  0$, we have $x^*  \leeq  x_2  \st{<} 0$. We multiply through by $x_2$ to obtain $x^* x_2  \geeq  x_2^2  \st{>}0$.  Substituting this into the above equation, we have 
$$\frac{(x_2,y)}{|(x_2,y)|^2} \st{\cdot} v_2^{\bot}  \geeq  \frac{1}{|(x^*,y)|} \left(\frac{x_2^2 \pl y^2}{|(x_2,y)|^2}\right)  \eeq   \frac{1}{|(x^*,y)|}  \eeq   \frac{(x^*,y)}{|(x^*,y)|^2} \st{\cdot} v_2^{\bot},$$
completing the second case.
\end{proof}

\indent Before we define the upper and lower curves, we require several lemmas.  Propositions \ref{Greg's Fact 4} and \ref{F_less_than_R_persists}, which concern points where the unit tangent vector is in the second quadrant, are later used to check the conditions for admissibility.  Meanwhile, we determine some properties that hold at points on the curve with positive first coordinates.  
\begin{lemma}
\label{tangent vector points left}
Suppose that $s \in (0, \beta)$ and that $\gamma_{\, 1}(s)  \geeq  0$.  Then $\gamma_{\, 1}\,'(s)  \st{<}  0$.
\end{lemma}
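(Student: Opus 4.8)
The plan is to argue by contradiction. Suppose there is an $s_0 \st{\in} (0,\beta)$ with $\gamma_{\,1}(s_0) \geeq 0$ and $\gamma_{\,1}\,'(s_0) \geeq 0$; I will call such a point \emph{bad}. The goal is to show that the curve is then forced to arrive at $\gamma(\beta)$ on the $e_1$-axis with a tangent vector having strictly positive first coordinate, so that $\gamma \st{\cdot} \gamma\,' \st{>} 0$ on a left neighborhood of $\beta$, contradicting the Tangent Restriction Lemma (Lemma \ref{tangent_restriction}).

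First I would record the local structure at a bad point $s$. Since $s \st{\in}(0,\beta)$ we have $\gamma_{\,2}(s) \st{>} 0$, so Lemma \ref{tangent_restriction} gives $0 \geeq \gamma(s) \st{\cdot} \gamma\,'(s) \eeq \gamma_{\,1}(s)\gamma_{\,1}\,'(s) \pl \gamma_{\,2}(s)\gamma_{\,2}\,'(s) \geeq \gamma_{\,2}(s)\gamma_{\,2}\,'(s)$, whence $\gamma_{\,2}\,'(s) \leeq 0$; thus $\gamma\,'(s)$ lies in the closed fourth quadrant. Consequently the outward normal $\nu(s)$ (the clockwise quarter-turn of $\gamma\,'(s)$) lies in the closed third quadrant, so $\gamma(s) \st{\cdot} \nu(s) \leeq 0$ and hence $H_1(s) \leeq 0$ by (\ref{H_1_computation}). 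Moreover the canonical circle $C_s$ (Definition \ref{canonical_circle}) is either a vertical line, giving $\lambda(s) \eeq 0$, or is clockwise oriented (its center $(F(s),0)$, with $F(s) \eeq \gamma(s) \st{\cdot}\gamma\,'(s)/\gamma_{\,1}\,'(s) \leeq 0$, lies weakly left of $\gamma(s)$ while $\gamma\,'(s)$ points down and to the right), giving $\lambda(s) \leeq 0$. Feeding these signs into the constant generalized mean curvature equation of Corollary \ref{generalized_curvature_ODE}, and using that $c \eeq H_f(0) \eeq \kappa(0) \pl (n-2)\lambda(0) \pl H_1(0) \eeq (n-1)\kappa(0) \pl p \st{>} 0$ (by Proposition \ref{unavg_std_curvature_formula}, Lemma \ref{kappa(0) equals lambda(0)}, $\kappa(0) \st{>} 0$, and $H_1(0) \eeq p$), I get $\kappa(s) \eeq c \mn (n-2)\lambda(s) \mn H_1(s) \geeq c \st{>} 0$. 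So at every bad point the curve turns strictly counterclockwise. A short separate argument handles the degenerate case $\gamma\,'(s_0) \eeq (1,0)$: Lemma \ref{tangent_restriction} then forces $\gamma_{\,1}(s_0) \eeq 0$, and since $\kappa(s_0) \st{>} 0$ the vector $\gamma\,'$ immediately rotates into the open first quadrant while $\gamma_{\,1}$ becomes positive, so $\gamma \st{\cdot}\gamma\,' \st{>} 0$ just after $s_0$, contradicting Lemma \ref{tangent_restriction}.

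Assuming $\gamma\,'(s_0) \neeq (1,0)$, let $[s_0, S)$ be the maximal interval on which $\gamma\,'$ stays in the closed fourth quadrant; since $\kappa(s_0) \st{>} 0$ this interval is nondegenerate. On $[s_0,S)$ we have $\gamma_{\,1}\,' \geeq 0$, so $\gamma_{\,1}$ is nondecreasing and stays $\geeq \gamma_{\,1}(s_0) \geeq 0$; hence every point is bad, $\kappa \geeq c \st{>} 0$ throughout, and $\gamma\,'$ rotates monotonically counterclockwise toward $(1,0)$, the total turning being at most $\pi/2$, so $S \mn s_0 \leeq \pi/(2c)$. If $S \st{<} \beta$, then $\gamma\,'(S) \eeq (1,0)$, and Lemma \ref{tangent_restriction} at $S$ gives $\gamma_{\,1}(S) \leeq 0$; combined with $\gamma_{\,1}(S) \geeq \gamma_{\,1}(s_0) \geeq 0$ this forces $\gamma_{\,1} \eequiv 0$ on $[s_0,S]$, hence $\gamma\,'$ constantly equal to $(0,-1)$ there, contradicting $\gamma\,'(S) \eeq (1,0)$. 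Therefore $S \eeq \beta$. In that case $\gamma\,'$ has rotated strictly counterclockwise away from the downward direction while staying in the closed fourth quadrant, so $\lim_{s \to \beta^-}\gamma\,'(s)$ exists and has strictly positive first coordinate, and $\gamma_{\,1}$ is strictly increasing on $(s_0,\beta)$, so $\gamma_{\,1}(\beta) \st{>} \gamma_{\,1}(s_0) \geeq 0$. Since $\gamma(\beta)$ lies on the $e_1$-axis, $\gamma(\beta) \st{\cdot} \lim_{s\to\beta^-}\gamma\,'(s) \eeq \gamma_{\,1}(\beta)\cdot(\lim_{s\to\beta^-}\gamma_{\,1}\,'(s)) \st{>} 0$, so $\gamma \st{\cdot}\gamma\,' \st{>} 0$ on a left neighborhood of $\beta$, again contradicting Lemma \ref{tangent_restriction}. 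Hence no bad point exists, proving the lemma.

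The main obstacle is the bookkeeping around the degenerate tangent directions $(1,0)$ and $(0,-1)$ and around the orientation of the canonical circle at bad points; everything else is routine once those are pinned down. The conceptual heart is that Lemma \ref{tangent_restriction} keeps $|\gamma(s)|$ nonincreasing while, once $\gamma$ is in the fourth quadrant with $\gamma_{\,1} \geeq 0$, the generalized mean curvature equation forces the curve to keep curling counterclockwise — and, exactly as in the right case, these two facts become incompatible at the endpoint $\beta$.
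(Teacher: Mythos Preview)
Your proof is correct and follows essentially the same approach as the paper: both reduce to showing that once $\gamma'$ lies in the (closed) fourth quadrant with $\gamma_1 \geq 0$, the constancy of $H_f$ forces $\kappa > 0$, so this configuration propagates all the way to $\beta$ and violates the Tangent Restriction Lemma there. The only difference is packaging: the paper invokes Lemmas~\ref{curving_tight_right} and~\ref{curving_tight_after_eta_right} from the right case (``replacing $\eta$ with $s$''), whereas you re-derive those facts inline, making your argument more self-contained but otherwise identical in substance.
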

\begin{proof}
Suppose for contradiction that \( \gamma_{\,1} \, '(s) \geeq 0 \).
 If \( \gamma \, '(s) \) were in the first quadrant and not equal to \( (1,0) \), this would violate the Tangent Restriction Lemma (Lemma \ref{tangent_restriction}).  If \( \gamma \, '(s) \eeq (1,0) \), then, by Lemma \ref{curving_tight_after_eta_right}, \( \kappa(s) \st{>} 0 \), which implies by continuity of \( \gamma \,' \) that there exists \(t \st{>} s\) so that \( \gamma_{\,1}(t) > 0 \), \( \gamma_{\, 2} (t) > 0 \), and \( \gamma \, '(t) \) is strictly in the first quadrant, producing the same contradiction to the Tangent Restriction Lemma.  Thus, if  \( \gamma_{\,1} \, '(s) \geeq 0 \), then \( \gamma \, '(s) \) must be in the fourth quadrant and not equal to \( (1,0) \).  However, this also yields a contradiction, because, replacing $\eta$ with $s$, we could then apply Lemmas \ref{curving_tight_after_eta_right} and \ref{curving_tight_right} to achieve the same contradiction as in the right case.   These lemmas would apply because \( \gamma_{\,1}(s) \ge 0 \).
\end{proof}

\begin{lemma} 
\label{obvious}
Suppose that $s \in (0, \beta)$ and that $\gamma_{\, 1}(s) \ge 0$.  Then $\gamma_{\, 1}(t) > 0$ for all $t \in [0,s)$.
\end{lemma}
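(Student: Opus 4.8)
The plan is to argue by contradiction, using Lemma \ref{tangent vector points left} in the following form: wherever $\gamma_{\,1}$ is nonnegative on $(0,\beta)$, its derivative is strictly negative, so $\gamma_{\,1}$ is strictly decreasing there and in particular cannot return to a nonnegative value once it has become negative.

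First I would record the consequence of this at the point $s$ itself: since $\gamma_{\,1}(s) \geq 0$, Lemma \ref{tangent vector points left} gives $\gamma_{\,1}\,'(s) < 0$, so there is some $\varepsilon > 0$ with $\gamma_{\,1} > 0$ on $(s-\varepsilon, s)$. (This harmless-looking observation is what disposes of the otherwise awkward borderline case $\gamma_{\,1}(s) = 0$.) Now if $\gamma_{\,1} > 0$ throughout $[0,s)$ we are done, so suppose not, and set $A = \{t \in [0,s) : \gamma_{\,1}(t) \leq 0\}$, a nonempty set. Because $\gamma_{\,1}(0) = 1$, continuity of $\gamma_{\,1}$ forces $A$ to be bounded away from $0$; because $\gamma_{\,1} > 0$ on $(s-\varepsilon, s)$, we have $A \subset [0, s-\varepsilon]$. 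Hence $t^{*} := \sup A$ lies in the open interval $(0, s)$, and in particular $t^{*} \in (0,\beta)$.

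It remains to extract the contradiction. Since $\{t : \gamma_{\,1}(t) \leq 0\}$ is closed we get $\gamma_{\,1}(t^{*}) \leq 0$; and since, by maximality of $t^{*}$, every $t \in (t^{*}, s)$ satisfies $\gamma_{\,1}(t) > 0$, continuity gives $\gamma_{\,1}(t^{*}) \geq 0$. Therefore $\gamma_{\,1}(t^{*}) = 0$. Applying Lemma \ref{tangent vector points left} at $t^{*}$ yields $\gamma_{\,1}\,'(t^{*}) < 0$, so $\gamma_{\,1} < 0$ on some interval $(t^{*}, t^{*}+\delta)$; shrinking $\delta$ so that $t^{*}+\delta < s$, this contradicts $\gamma_{\,1} > 0$ on $(t^{*}, s)$. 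Hence $\gamma_{\,1} > 0$ on $[0,s)$, as claimed. The whole argument is a routine extremal-point continuity argument; the only step requiring any care is guaranteeing $\sup A < s$, which is precisely what the endpoint observation of the second paragraph provides.
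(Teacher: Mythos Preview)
Your proof is correct and follows essentially the same approach as the paper: both argue by contradiction, use Lemma \ref{tangent vector points left} at the endpoint $s$ to get $\gamma_{\,1}>0$ just to the left of $s$, locate an extremal point where $\gamma_{\,1}$ vanishes, and then apply Lemma \ref{tangent vector points left} again at that point to get a contradiction. The only cosmetic difference is that the paper works with the infimum of the set $C=\{t\in[0,s):\gamma_{\,1}>0\text{ on }[t,s)\}$ while you work with the supremum of the complementary set $A=\{t\in[0,s):\gamma_{\,1}(t)\le 0\}$; these produce the same extremal point and the same contradiction.
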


\begin{proof}
It is clear that $\gamma_{\, 1}(0) >0$.  To prove the result on $(0, s)$, consider the set 
$C = \{t \in [0,s) : \gamma_{\, 1}(u) > 0$ for all $u \in [t,s) \}$.  By Lemma \ref{tangent vector points left}, $\gamma_{\,1}\,'(s) < 0$, so there exists $\varepsilon > 0$ such that $(s-\varepsilon,s) \st{\subset} C$.  Since $C$ is nonempty and bounded below, it has a greatest lower bound.  Let $c = \inf \, C$. 
It suffices to prove that $c = 0$.  Suppose for contradiction that $c > 0$.  By continuity of $\gamma_{\,1}$, $\gamma_{\, 1}(c) \eeq 0$;  if $\gamma_{\,1}(c)$ were positive, then we could extend $C$ farther back, whereas if it were negative, then $c$ would not be the greatest lower bound.  By Lemma \ref{tangent vector points left}, $\gamma_{\,1}\,'(c) < 0$.  It follows that there exists $\varepsilon' >0$ such that $\gamma_{\, 1} <0$ on $(c, c+ \varepsilon')$, contradicting the fact that $c$ is the greatest lower bound of $C$.
\end{proof}

\indent Now we consider the initial canonical circle $C_0$.  By spherical symmetry, $F(0)  \geeq  0$.  It must actually be the case that $F(0)  \st{>} 0$; otherwise, by Remark \ref{centered_circ_rmk}, $\gamma$ would be a circle centered at the origin, contradicting the fact that balls centered at the origin are not stable (\cite[Thm. 3.10]{rosales}).  Given this strict inequality, it follows by the computations in the proof of Lemma \ref{kappa''(0)} that 
 $\kappa''(0)  \st{<} 0$.\\
 \indent A natural next step would be to extend the inequality $F(0)  \st{<}  R(0)$.  We will eventually prove that $F(s)  \st{<}  R(s)$ for all $s$ with $\gamma_{\, 1}(s)  \geeq  0$ (Proposition \ref{F_less_than_R_persists}).  Since \( R ' \) may alternate signs, this is slightly more complicated than merely reversing the inequalities in the proof of Lemma \ref{F_dominates_R_right}. To show that the sign of $R'$ does not matter,  we define an auxiliary function that keeps track of the discrepancy between $F$ and $R$.
 
 \begin{definition}
 \label{DefininG}
 Define $G: (-\beta,\beta) \to \R$  by letting $G(s)$ be the $e_1$-coordinate of the leftmost point on $C_s$; i.e.  $G(s)  \eeq   F(s)-R(s)$.  Likewise, for a fixed $s$, if $\alpha$, $\ti{F}$, and $\ti{R}$ are as in Definition \ref{tilde quantities}, then we let $\ti{G} \eeq \ti{F} \mn \ti{R}$.
  \end{definition}
  For a given $s$, we can compute the derivatives of $\ti{F}$ and $\ti{G}$ on the approximating circle $A_s$ to prove the following lemma.
 
\begin{lemma}
\label{Fact 10}
\emph{(cf. \cite[Lemma 5.3] {chambers})} 
Let $s \in [0, \beta)$. If $\gamma_{\, 1}(s)  \geeq  0$ and $\kappa(s)  \leeq  \lambda(s)$, then $F'(s)  \leeq  0$ and $G'(s)  \leeq  0$.
\end{lemma}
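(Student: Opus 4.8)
The plan is to imitate the proof of Lemma~\ref{F_dominates_R_right}, transferring everything to the osculating circle $A_s$ and exploiting the reversed curvature inequality. By the remarks following Definition~\ref{tilde quantities}, $F'(s)=\ti F'(\ti s)$ and $R'(s)=\ti R'(\ti s)$, hence $G'(s)=\ti G'(\ti s)$; so it suffices to compute $\ti F'$ and $\ti G'$ on $A_s$ at the parameter $\ti s$ with $\alpha(\ti s)=\gamma(s)$. The case $s=0$ is immediate from symmetry: since $\gamma_{\,1}(-s)=\gamma_{\,1}(s)$ and $\gamma_{\,2}(-s)=-\gamma_{\,2}(s)$, the circle $C_{-s}$ is the reflection of $C_s$ across the $e_1$-axis, which has the same center abscissa and radius, so $F$ and $R$ are even functions and $F'(0)=R'(0)=G'(0)=0$.

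Now fix $s\in(0,\beta)$ with $\gamma_{\,1}(s)\geeq 0$. Two facts set the stage: first, $\gamma_{\,2}(s) \st{>} 0$, because $\gamma\,'(0)=(0,1)$ and $\beta$ is the first return of $\gamma$ to the $e_1$-axis; second, $\gamma_{\,1}\,'(s) \st{<} 0$, by Lemma~\ref{tangent vector points left}. Since $\gamma(s)$ then lies above the center $(F(s),0)$ of $C_s$ and the tangent there points left, $C_s$ is counterclockwise oriented, so $\lambda(s) \st{>} 0$. Parameterize $A_s$ as in (\ref{approx_circ_param}) with center $(a,b)$; in the generic case $\kappa(s) \st{>} 0$ take $r \eeq 1/\kappa(s)$. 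The center of $A_s$ is $\gamma(s)+\kappa(s)^{-1}J\gamma\,'(s)$ and the center of $C_s$ is $\gamma(s)+\lambda(s)^{-1}J\gamma\,'(s) \eeq (F(s),0)$, where $J$ is counterclockwise rotation by $\pi/2$; subtracting second coordinates gives
\[
b \eeq \Bigl(\tfrac{1}{\kappa(s)}-\tfrac{1}{\lambda(s)}\Bigr)\gamma_{\,1}\,'(s).
\]
Because $0 \st{<} \kappa(s)\leeq\lambda(s)$ the first factor is nonnegative, and $\gamma_{\,1}\,'(s) \st{<} 0$, so $b\leeq 0$; moreover $r\sin(\ti s/r)=\gamma_{\,2}(s)-b \st{>} 0$, so $\sin(\ti s/r) \st{>} 0$. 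Substituting $b\leeq 0$ into (\ref{F'ormula}) and (\ref{foR'mula}),
\[
\ti F'(\ti s)=\tfrac{b}{r}\csc^2(\ti s/r)\leeq 0,\qquad
\ti G'(\ti s)=\ti F'(\ti s)-\ti R'(\ti s)=\tfrac{b}{r}\csc^2(\ti s/r)\bigl(1+\cos(\ti s/r)\bigr)\leeq 0,
\]
using $1+\cos\geeq 0$. Hence $F'(s)\leeq 0$ and $G'(s)\leeq 0$.

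I expect the delicate point to be the bookkeeping around the sign of $\kappa(s)$ — equivalently, on which side of the $e_1$-axis the center of $A_s$ lies — since the parameterization (\ref{approx_circ_param}) presupposes $\kappa(s) \st{>} 0$. When $\kappa(s)=0$, $A_s$ is the tangent line and one computes directly $\ti F'(\ti s)=1/\gamma_{\,1}\,'(s) \st{<} 0$ and $\ti G'(\ti s)=(1+\gamma_{\,2}\,'(s))/\gamma_{\,1}\,'(s)\leeq 0$. When $\kappa(s) \st{<} 0$, the same subtraction gives $b\geeq 0$, but parameterizing $A_s$ with the reversed orientation flips the sign in the formula for $\ti F$ and replaces $1+\cos$ by $1-\cos$, so again $\ti F'(\ti s)\leeq 0$ and $\ti G'(\ti s)\leeq 0$. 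In every case $F'(s)\leeq 0$ and $G'(s)\leeq 0$, as claimed.
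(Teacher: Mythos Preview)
Your proof is correct and follows essentially the same approach as the paper: transfer to the osculating circle $A_s$, split into the three cases $\kappa(s)>0$, $\kappa(s)=0$, $\kappa(s)<0$, and in each case determine the sign of $b$ (or use the line parameterization) to read off $\ti F'(\ti s)\leeq 0$ and $\ti G'(\ti s)\leeq 0$ from (\ref{F'ormula}), (\ref{foR'mula}) and their clockwise analogues. Your explicit formula $b=(\kappa(s)^{-1}-\lambda(s)^{-1})\gamma_{\,1}\,'(s)$ and your separate treatment of $s=0$ via symmetry are slightly more detailed than the paper's version, but the argument is the same.
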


\begin{proof}
We take three cases according to whether \( \kappa(s)  \st{>} 0 \), \( \kappa(s)  \eeq   0\), or \( \kappa(s)  \st{<}  0\).  If \( \kappa(s) \eeq 0 \), then 
\( A_s \) is the oriented line through \( \gamma(s) \) that has direction vector \( \gamma \,'(s) \).  By Lemma \ref{tangent vector points left}, 
$\gamma_{\,1} \, '(s) < 0$.  We parameterize \( A_s \) by \( \alpha(t) \eeq \gamma(s) \pl t \gamma \, '(s) \).  Let \( \ti{F}(t) \) denote the \(e_1\)-coordinate of the center of the canonical circle to \(A_s \) at \( \alpha(t) \), and let \( \ti{R}(t) \) denote its radius.  Then we can compute that
\[
\ti{F}'(t) \eeq \frac{1}{\gamma_{\,1} \, '(s)} < 0 \text{ \indent and \indent}
\ti{G}'(t) \eeq  \frac{1 \pl \gamma_{\, 2} \, '(s)}{\gamma_{\,1} \, '(s)}.
\]
Since \( \gamma \) is an arclength parameterization, the numerator of \( \ti{G}' \) is necessarily nonnegative.  Thus, \( \ti{G}' \leeq 0 \).\\
\indent If \( \kappa(s)  \neeq 0 \), let \( (a,b) \) be the center of \( A_s \), and let \(r \) be the radius.   If \( \kappa(s)  \st{>} 0 \), then \( b  \leeq  0 \), and we parameterize $A_s$ as in (\ref{approx_circ_param}).
By (\ref{F'ormula}) and (\ref{foR'mula}), we have 
\[
\ti{F}'(t) \eeq \frac{b}{r} \csc^2 \tor
\]
and
\[
\ti{G}'(t) \eeq \frac{b}{r} \csc^2 \tor \left(1 \pl \cos \tor \right)
\]
for all $t$ with $\alpha_2(t) > 0$.
Since $b \leeq 0$, $\ti{F}'(\ti{s}) \leeq 0$ and $\ti{G}'(\ti{s}) \leeq 0$.

Finally, if $\kappa(s) \st{<} 0$, then $b \st{>} 0$.  We now parameterize $A_s$ by 
\[
\alpha(t) \eeq \left(a \pl r \cos \tor, b \mn r \sin \tor \right).
\]
For a given $t$ with $\alpha_2(t) > 0$, the line segment from $\alpha(t)$ to the center of $\ti{C}_t$ is in the direction of the \emph{outward} unit normal vector to $A_s$ at $\alpha(t)$, as shown in Figure \ref{clockwise computations figure}.
By similar computations to those in the proof of Lemma \ref{F_dominates_R_right}, we have that 
\begin{equation*}
\ti{F} '(t) \eeq - \frac{b}{r} \csc^2 \tor,
\end{equation*}
and
\begin{equation*}
\ti{G} '(t) \eeq - \frac{b}{r} \, \csc^2 \tor \left(1 \mn \cos \tor \right).
\end{equation*}
Since $b > 0$, we conclude that $\ti{F}'(\ti{s}) \leeq 0$ and  $\ti{G}'(\ti{s}) \leeq 0$.
\end{proof}

\begin{figure}
\begin{center}
\includegraphics{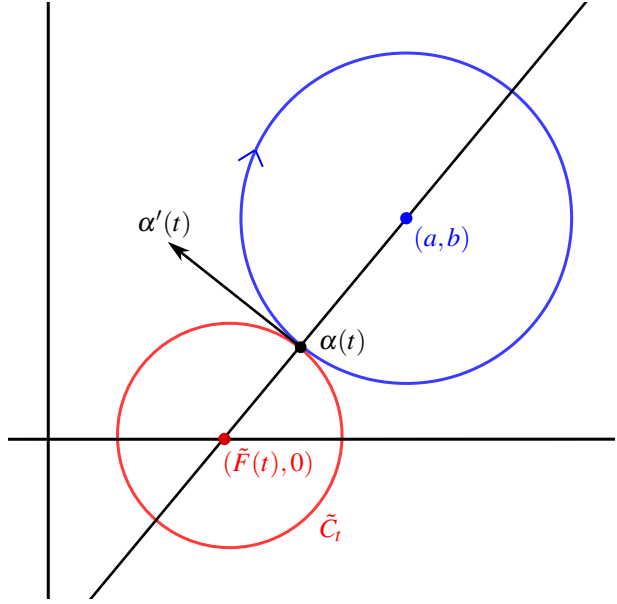}
\caption{The canonical circle at \( \alpha(t) \) in the case that \(A_s\) is oriented clockwise \label{clockwise computations figure}}
\end{center}
\end{figure}

Although we used both hypotheses of Lemma \ref{Fact 10} in the proof, it is actually the case that the first hypothesis implies the second, as we prove below.

\begin{lemma}
\label{kappa_le_lambda}
Let $s \in [0,\beta)$.  If $\gamma_{\, 1}(s)  \geeq  0$, then $\kappa(s)  \leeq  \lambda(s)$.
\end{lemma}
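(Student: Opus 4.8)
The plan is a proof by contradiction, driven by the first-order behavior of $\kappa-\lambda$ at a would-be first crossing point. Suppose $\kappa(s) \st{>} \lambda(s)$ for some $s$ with $\gamma_{\,1}(s) \geeq 0$, let $B$ be the set of all such $s$, and set $\tau = \inf B$. The first step is to show $\tau \st{>} 0$ and to describe $[0,\tau]$. By Lemma \ref{kappa(0) equals lambda(0)} we have $\kappa(0) \eeq \lambda(0)$; this forces $A_0 \eeq C_0$, a circle centered on the $e_1$-axis, so $\tilde\lambda$ is constant and $\lambda'(0) \eeq \lambda''(0) \eeq 0$, while $\kappa'(0) \eeq 0$ and $\kappa''(0) \st{<} 0$ (the last as recorded just before Definition \ref{DefininG}). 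Hence $\kappa-\lambda$ vanishes to first order at $0$ with strictly negative second derivative, so $\kappa \st{<} \lambda$ on some $(0,\varepsilon)$, giving $\tau \geeq \varepsilon \st{>} 0$. Applying Lemma \ref{obvious} at $\tau$ (legitimate since $\gamma_{\,1}(\tau) \geeq 0$) gives $\gamma_{\,1} \st{>} 0$ on $[0,\tau)$, so the hypothesis "$\gamma_{\,1} \geeq 0$" holds throughout $[0,\tau]$; therefore $\kappa \leeq \lambda$ on $[0,\tau)$, and combining this with $\tau \eeq \inf B$ and continuity yields $\kappa(\tau) \eeq \lambda(\tau)$. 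Since $\tau \in (0,\beta)$, $\gamma$ is smooth at $\tau$.

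Next I would extract a first-order contradiction at $\tau$. Because $\kappa - \lambda \leeq 0$ on $[0,\tau]$ with equality at the right endpoint, $(\kappa-\lambda)'(\tau) \geeq 0$. On the other hand $\kappa(\tau) \eeq \lambda(\tau)$ again forces $A_\tau \eeq C_\tau$, a circle centered on the $e_1$-axis, so $\tilde\lambda$ is constant near $\tilde\tau$ and $\lambda'(\tau) \eeq 0$; differentiating the constant generalized mean curvature identity (\ref{H_f}) at $\tau$ then gives $\kappa'(\tau) \eeq -H_1'(\tau)$, whence $(\kappa-\lambda)'(\tau) \eeq -H_1'(\tau)$. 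So it suffices to prove $H_1'(\tau) \st{>} 0$. Since $\gamma_{\,2}(\tau) \st{>} 0$ and, by Lemma \ref{tangent vector points left}, $\gamma'(\tau)$ points into the left half-plane, $C_\tau$ is counterclockwise; parameterizing $A_\tau \eeq C_\tau$ as in (\ref{approx_circ_param}) with center $(a,0)$, $a \eeq F(\tau)$, $r \eeq R(\tau)$, formula (\ref{H_1_computation_right}) with $b \eeq 0$ (together with the correspondence from the discussion following Definition \ref{tilde quantities}) exhibits $H_1'(\tau)$ as a positive multiple of $-(a^2 - r^2)\,a$, the sine factor being positive because $\gamma_{\,2}(\tau) \st{>} 0$.

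The crux is then to sign $a \eeq F(\tau)$. I would use the identity $F(\tau) \eeq \gamma(\tau) \st{\cdot} \gamma'(\tau) / \gamma_{\,1}'(\tau)$ from the proof of Lemma \ref{kappa(0) equals lambda(0)} (valid since $\gamma_{\,1}'(\tau) \neeq 0$ by Lemma \ref{tangent vector points left}): its numerator is $\leeq 0$ by the Tangent Restriction Lemma (Lemma \ref{tangent_restriction}) and its denominator is negative, so $F(\tau) \geeq 0$. The value $F(\tau) \eeq 0$ is excluded, for then $C_\tau$ would be centered at the origin with $\kappa(\tau) \eeq \lambda(\tau)$, so by Remark \ref{centered_circ_rmk} $\gamma$ would be a circle centered at the origin, contradicting the instability of origin-centered balls (\cite[Thm. 3.10]{rosales}). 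Finally, since $\kappa \leeq \lambda$ and $\gamma_{\,1} \geeq 0$ on $[0,\tau]$, Lemma \ref{Fact 10} gives $G' \leeq 0$ there, so $G(\tau) \leeq G(0) \eeq F(0) - R(0) \st{<} 0$ (using $F(0) \st{<} 1/2$ and $F(0) + R(0) \eeq 1$); that is, $a \st{<} r$. Hence $0 \st{<} a \st{<} r$, so $-(a^2 - r^2)\,a \st{>} 0$, giving $H_1'(\tau) \st{>} 0$ and thus $(\kappa-\lambda)'(\tau) \st{<} 0$, contradicting $(\kappa-\lambda)'(\tau) \geeq 0$. Therefore $B \eeq \emptyset$.

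I expect the main obstacle to be controlling $F(\tau)$ at the critical point from both sides. The upper bound $F(\tau) \st{<} R(\tau)$ genuinely needs the monotonicity of the auxiliary discrepancy $G \eeq F - R$ from Lemma \ref{Fact 10}, since mere persistence of $F' \leeq 0$ is not enough (the radius derivative $R'$ may change sign). The lower bound $F(\tau) \st{>} 0$ likewise does not follow from $F' \leeq 0$ and must be squeezed out of the Tangent Restriction Lemma via the formula $F \eeq \gamma \st{\cdot} \gamma'/\gamma_{\,1}'$, with the degenerate case $F(\tau) \eeq 0$ handled separately by the stability obstruction; getting all three of these ($F(\tau) \st{>} 0$, $F(\tau) \st{<} R(\tau)$, $\lambda'(\tau) \eeq 0$) in place simultaneously is what makes the sign of $H_1'(\tau)$, and hence of $(\kappa-\lambda)'(\tau)$, come out the wrong way for the hypothetical crossing.
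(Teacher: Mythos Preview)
Your proof is correct and follows essentially the same strategy as the paper's: both argue by contradiction at the first point where $\kappa \leeq \lambda$ fails (you take $\tau = \inf B$, the paper takes $u = \sup S$ for the complementary ``good'' set), both use $\kappa(\tau)=\lambda(\tau)$ to identify $A_\tau = C_\tau$ and get $\lambda'(\tau)=0$, both invoke Lemma~\ref{Fact 10} on $G$ to obtain $a<r$, and both exclude $a=0$ via Remark~\ref{centered_circ_rmk} and instability. The one notable difference is in establishing $F(\tau)\geeq 0$: the paper appeals to an argument ``similar to [C, Lemma 3.3]'', whereas you read it off directly from $F(\tau)=\gamma(\tau)\cdot\gamma'(\tau)/\gamma_1'(\tau)$ using the Tangent Restriction Lemma and Lemma~\ref{tangent vector points left}, which is cleaner and fully self-contained.
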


\begin{proof}  
Due to Lemma \ref{kappa(0) equals lambda(0)} and the fact that $\kappa'(0)  \eeq   0$, $\lambda$ and $\kappa$ are equal up to order two at $0$.  However, $\lambda''(0)  \eeq   0$, whereas $\kappa''(0)  \st{<} 0$.  Hence, there exists $t  \st{>} 0$ so that 
$\kappa  \leeq  \lambda$ on $[0,t]$.  
Let $S  \eeq   \{t \in [0, \beta): \kappa  \leeq  \lambda$ and $\gamma_{\, 1}  \geeq  0$ on $[0,t]\}$, and let $u  \eeq   \sup S$. 
Since the inequalities that define $S$ are not strict, it follows by smoothness of $\gamma$ that $u \in S$.
If  $\gamma_{\, 1}(u)  \eeq   0$, then, by Lemma \ref{obvious}, $\gamma_{\, 1}(s)  \geeq  0$ only if $s \in [0, u]$.  Thus, to prove that  $\kappa(s)  \leeq  \lambda(s)$ for all $s$ with $\gamma_{\, 1}(s)  \geeq  0$, it suffices to prove that $\gamma_{\, 1}(u) \eeq  0$.\\
\indent Suppose for contradiction that $\gamma_{\, 1}(u)  \st{>} 0$.  We will show that $u$ is not an upper bound for $S$, but, instead, that there exists $\varepsilon  \st{>} 0$ so that $[u, u \pl \varepsilon) \subset S$.  We can obviously find $\varepsilon_1  \st{>}0$ so that $\gamma_{\, 1}  \geeq  0$ on $[u, u \pl \varepsilon_1)$.  It remains to show that there exists $\varepsilon_2  \st{>}0$ so that $\kappa  \leeq  \lambda$ on $[u, u \pl \varepsilon_2)$.  The proof will be similar to that of Lemma 3.4 in [C].\\
\indent First, we can prove by contradiction 
that $\kappa(u)  \eeq   \lambda(u).$ 
Given this equation, we have that $C_u  \eeq   A_u$,  so $\lambda'(u)  \eeq   \ti{\lambda}'(\ti{u})  \eeq   0$.  Thus, to guarantee the existence of a  $\varepsilon_2  \st{>}0$ so that $\kappa  \leeq  \lambda$ on $[u, u \pl \varepsilon_2)$, it suffices to show that $\kappa'(u)  \st{<}  0$.
 Since $\kappa  \leeq  \lambda$ and $\gamma_{\, 1}  \geeq  0$ on $[0, u]$, it follows from Proposition \ref{Fact 10} that $G' \leeq  0$ on $[0,u]$.  Therefore, $G(u)  \leeq  G(0)  \st{<} 0$  by assumption that \(F(0)  \st{<}  R(0)\), and it  follows by a similar argument to that in the proof  of Lemma \ref{app_circle_curvatures_equal} that \( \kappa'(u)  \st{<}  0 \).  While the inequality \(a  \st{>} 0\) was immediate in the case that \( a  \st{>} r\), here it is more subtle.  The fact that \( a  \geeq  0 \) follows from a similar argument to that in \cite[Lemma 3.3]{chambers}.  To prove strict inequality, note that if $a  \eeq   0$, then $\gamma$ is a circle centered at the origin, which contradicts the fact that balls centered at the origin are not stable (\cite[Thm. 3.10]{rosales}).\end{proof}

We use Lemmas \ref{Fact 10} and \ref{kappa_le_lambda} to prove two propositions used in checking the conditions for admissibility (Props. \ref{Greg's Fact 4} and \ref{F_less_than_R_persists}), but first we require one additional lemma.

\begin{lemma} 
\label{Greg's Fact 5}
Suppose that $s \in (0, \beta)$ and that $\gamma\,'(s)$ is in the second quadrant.  Then $\gamma_{\, 1}(s)  \st{>}0$.
\end{lemma}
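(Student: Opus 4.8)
The plan is to argue by contradiction using the Tangent Restriction Lemma (Lemma~\ref{tangent_restriction}), together with the fact that $\gamma_{\,2}(s) > 0$ for every $s \in (0,\beta)$ (since $\gamma$ does not meet the $e_1$-axis on $(0,\beta)$ while $\gamma_{\,2}\,'(0) = 1 > 0$). So I would suppose $\gamma_{\,1}(s) \leeq 0$; since $\gamma\,'(s)$ lies in the second quadrant, $\gamma_{\,1}\,'(s) \leeq 0$ and $\gamma_{\,2}\,'(s) \geeq 0$.

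The core observation is that $\gamma(s)\st{\cdot}\gamma\,'(s) = \gamma_{\,1}(s)\gamma_{\,1}\,'(s) + \gamma_{\,2}(s)\gamma_{\,2}\,'(s)$ is a sum of two nonnegative terms (the first a product of two nonpositive numbers, the second because $\gamma_{\,2}(s) > 0$ and $\gamma_{\,2}\,'(s) \geeq 0$). By Lemma~\ref{tangent_restriction} this sum is $\leeq 0$, hence both terms vanish: from $\gamma_{\,2}(s)>0$ we get $\gamma_{\,2}\,'(s) = 0$, which forces the unit vector $\gamma\,'(s)$ to equal $(-1,0)$; then $\gamma_{\,1}\,'(s) = -1 \neeq 0$ together with the vanishing of the first term forces $\gamma_{\,1}(s) = 0$. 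Thus $\gamma\,'(s)$ strictly interior to the second quadrant is already impossible, and it remains only to exclude the boundary configuration $\gamma(s) = (0,\gamma_{\,2}(s))$, $\gamma\,'(s) = (-1,0)$.

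In that configuration $C_s$ is the circle of radius $\gamma_{\,2}(s)$ centered at the origin, oriented counterclockwise at $\gamma(s)$, so $\lambda(s) = 1/\gamma_{\,2}(s)$. Since $\gamma$ is not a circle centered at the origin (centered balls being unstable, \cite[Thm.~3.10]{rosales}), Remark~\ref{centered_circ_rmk} gives $\kappa(s) \neeq \lambda(s)$, and Lemma~\ref{kappa_le_lambda} (applicable because $\gamma_{\,1}(s) = 0 \geeq 0$) then yields $\kappa(s) < 1/\gamma_{\,2}(s)$. Since $\gamma\,'(s) = (-1,0)$ we have $\gamma\,''(s) = (0,-\kappa(s))$, so, using $\gamma(s)\st{\cdot}\gamma\,'(s) = 0$, the second $s$-derivative of $|\gamma|$ at $s$ equals $(1 - \gamma_{\,2}(s)\kappa(s))/\gamma_{\,2}(s) > 0$; hence $|\gamma|$ has a strict local minimum at $s$. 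Consequently, for $r_0$ slightly larger than $\gamma_{\,2}(s)$ there are parameters $s_1 < s < s_2$ near $s$ with $|\gamma(s_1)| = |\gamma(s_2)| = r_0$ and $\gamma_{\,2}(s_1), \gamma_{\,2}(s_2) > 0$; taking $r_0$ close enough to $\gamma_{\,2}(s)$ we may also assume $\{|x| = r_0\}$ is not contained in $A$ (as $\{r : S_r \subset A\}$ is open and excludes $\gamma_{\,2}(s)$), whence, by spherical symmetry of $A$, $\partial A$ has a unique point in the open upper half-plane at distance $r_0$ from the origin. This forces $\gamma(s_1) = \gamma(s_2)$, contradicting injectivity of the Jordan curve $\gamma$.

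The tangent-restriction computation is routine; the main obstacle is the degenerate boundary case $\gamma\,'(s) = (-1,0)$ on the $e_2$-axis, which Lemma~\ref{tangent_restriction} does not exclude by itself and which has to be ruled out using the curvature comparison $\kappa(s) \leeq \lambda(s)$, the instability of origin-centered balls, and the global embeddedness of $\gamma$. (If the paper's convention takes ``second quadrant'' to mean the open quadrant, the second paragraph already completes the proof and the third is unnecessary.)
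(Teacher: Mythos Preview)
Your proof is correct and follows the same overall route as the paper: Tangent Restriction disposes of every configuration except $\gamma(s)=(0,\gamma_{\,2}(s))$ with $\gamma\,'(s)=(-1,0)$, and then Lemma~\ref{kappa_le_lambda} together with the instability of centered balls (via Remark~\ref{centered_circ_rmk}) yields $\kappa(s)<\lambda(s)$ in that boundary case.

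The only difference is how the contradiction is extracted from $\kappa(s)<\lambda(s)$. You compute that $|\gamma|$ has a strict local minimum at $s$ and then argue via spherical symmetry and injectivity of $\gamma$; the paper instead notes that Tangent Restriction already makes $t\mapsto|\gamma(t)|$ non-increasing on $(0,\beta)$ (since $\tfrac{d}{dt}|\gamma|^2=2\gamma\cdot\gamma'\leq 0$), so $\gamma(t)$ must remain in the closed disk bounded by $C_s$ for $t\geq s$, while $\kappa(s)<\lambda(s)$ forces it to leave that disk locally. Your second-derivative computation is precisely this observation, so you can conclude immediately---a strict local minimum of $|\gamma|$ at $s$ already contradicts $|\gamma|$ being non-increasing---without the injectivity detour. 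That shortcut also avoids the one delicate point in your version, namely the claim that $\partial A\cap S_{r_0}$ meets the open upper half-plane in a single point, which is true here but tacitly relies on regularity of $\partial A$ near $\gamma(s_i)$.
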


\begin{proof}
By Lemma \ref{tangent_restriction}, \( \gamma \, '(s)  \neeq   (0, 1) \).  Thus, $\gamma_{\,2}\,'(s)  \geeq  0$ and $\gamma_{\, 1}\,'(s)  \st{<} 0$.
If $\gamma_{\, 1}(s)  \st{<}  0$ or s satisfies both $\gamma_{\, 1}(s)  \eeq   0$ and $\gamma_{\,2}\,'(s)  \st{>} 0$, then we can obtain a contradiction to Lemma \ref{tangent_restriction}.  It remains to cover the case in which $\gamma_{\, 1}(s)  \eeq   0$ and $\gamma\,'(s)  \eeq   (-1,0)$.  By Proposition \ref{kappa_le_lambda}, $\kappa(s)  \leeq  \lambda(s)$.  If $\kappa(s)  \eeq   \lambda(s)$, then $\gamma$ is a circle centered at the origin, contradicting the fact that centered balls are not stable (\cite[Thm. 3.10]{rosales}).  Now, suppose that $\kappa(s)  \st{<}  \lambda(s)$.
Since $|\gamma(t)|$ is a non-increasing function of t and $C_s$ is centered at the origin,  $\gamma(t)$ must be contained in $C_s$ for $t  \geeq  s$.  However, since $\kappa(s)  \st{<}  \lambda(s)$, the curve locally leaves the disk bounded by $C_s$.
\end{proof}

\begin{proposition} 
\label{Greg's Fact 4}
Let $s \in [0, \beta)$.  If $\gamma\,'(s)$ is in the second quadrant, then $F(s)  \st{>}0$.
\end{proposition}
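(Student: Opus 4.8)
The plan is to split on the value of $\gamma\,'(s)$ and, away from the degenerate directions, to extract $F(s)$ directly from the tangent-vector identity for $F$ that is used in the proof of Lemma \ref{kappa(0) equals lambda(0)}. First I would dispose of $s \eeq 0$: there $\gamma\,'(0) \eeq (0,1)$, and $F(0) \st{>} 0$ has already been noted (in the discussion just before Lemma \ref{Fact 10}) via Remark \ref{centered_circ_rmk} and the instability of centered balls. So assume $s \in (0,\beta)$. Next I would rule out the remaining degenerate direction $\gamma\,'(s) \eeq (0,1)$: it would force $\gamma(s) \st{\cdot} \gamma\,'(s) \eeq \gamma_{\,2}(s) \st{>} 0$, since $\gamma_{\,2} \st{>} 0$ on $(0,\beta)$, contradicting the Tangent Restriction Lemma (Lemma \ref{tangent_restriction}). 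Hence for the remaining second-quadrant vectors we have $\gamma_{\,1}\,'(s) \st{<} 0$, and the identity $F(s) \eeq \frac{\gamma(s) \st{\cdot} \gamma\,'(s)}{\gamma_{\,1}\,'(s)}$ is available.

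The next step is to read off sign information. Since $\gamma(s) \st{\cdot} \gamma\,'(s) \leeq 0$ by the Tangent Restriction Lemma and $\gamma_{\,1}\,'(s) \st{<} 0$, the identity gives $F(s) \geeq 0$ at once, with equality precisely when $\gamma(s) \st{\cdot} \gamma\,'(s) \eeq 0$, i.e.\ precisely when the center of $C_s$ is the origin. Thus the whole content of the proposition reduces to excluding the case that $C_s$ is centered at the origin. To do this, I would invoke Lemma \ref{Greg's Fact 5} to get $\gamma_{\,1}(s) \st{>} 0$, hence Lemma \ref{kappa_le_lambda} to get $\kappa(s) \leeq \lambda(s)$, and then split once more. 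In the equality case $\kappa(s) \eeq \lambda(s)$, Remark \ref{centered_circ_rmk} forces $\gamma$ to be a circle centered at the origin, contradicting the fact that balls centered at the origin are not stable (\cite[Thm.\ 3.10]{rosales}). In the strict case $\kappa(s) \st{<} \lambda(s)$, I would reuse the argument from the end of the proof of Lemma \ref{Greg's Fact 5}: $|\gamma(t)|$ is non-increasing on $(0,\beta)$ (again by Lemma \ref{tangent_restriction}), so $\gamma(t)$ stays in the closed disk of radius $|\gamma(s)|$ about the origin for all $t \geeq s$; but $C_s$ bounds that disk, is tangent to $\gamma$ at $\gamma(s)$ with the same unit tangent, and has strictly larger signed curvature, so $\gamma$ exits that disk locally near $\gamma(s)$ --- contradicting the previous sentence for $t$ slightly larger than $s$. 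This exhausts the cases, so $F(s) \st{>} 0$.

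The main obstacle is this last step, ruling out $F(s) \eeq 0$: everything before it is bookkeeping with the formula for $F$ and direct appeals to the Tangent Restriction Lemma, whereas the case $F(s) \eeq 0$ requires the rigidity of origin-centered circles (Remark \ref{centered_circ_rmk}) together with the instability of centered balls and the ``curve locally exits the disk'' comparison --- exactly the package already assembled for Lemma \ref{Greg's Fact 5} and Lemma \ref{kappa_le_lambda}. A secondary point worth checking carefully is the orientation of $C_s$ in the strict case: since $\gamma(s)$ lies in the open first quadrant (using $\gamma_{\,1}(s) \st{>} 0$ and $\gamma_{\,2}(s) \st{>} 0$) and $\gamma\,'(s)$ lies in the second quadrant, $C_s$ is counterclockwise oriented, so its bounded side is indeed $\{\,|x| \st{<} |\gamma(s)|\,\}$, which is what makes the comparison with $|\gamma(t)|$ run.
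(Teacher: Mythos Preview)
Your argument is correct, but it follows a genuinely different route from the paper's. The paper does not work pointwise at $s$: instead it runs forward along $\gamma$ to find (via the Intermediate Value Theorem and Lemma~\ref{tangent_restriction}) a point $t \geeq s$ with $\gamma\,'(t) \eeq (-1,0)$, invokes Lemma~\ref{Greg's Fact 5} to get $\gamma_{\,1}(t) \st{>} 0$, and then uses Lemmas~\ref{obvious}, \ref{kappa_le_lambda}, and \ref{Fact 10} to deduce $F' \leeq 0$ on $[s,t]$, whence $F(s) \geeq F(t) \eeq \gamma_{\,1}(t) \st{>} 0$.

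What this buys: the paper never has to treat $F(s) \eeq 0$ as a separate rigidity case, because the strict inequality is inherited from $\gamma_{\,1}(t) \st{>} 0$ at the horizontal tangent. Your approach trades the forward-in-time IVT step for the explicit formula $F(s) \eeq \gamma(s) \st{\cdot} \gamma\,'(s)/\gamma_{\,1}\,'(s)$ together with Lemma~\ref{tangent_restriction}, which immediately gives $F(s) \geeq 0$; the price is that you must then recycle the entire rigidity package from the end of Lemma~\ref{Greg's Fact 5} (Remark~\ref{centered_circ_rmk}, instability of centered balls, and the local-exit-from-the-disk comparison) to dispose of the origin-centered case. Both arguments ultimately lean on Lemmas~\ref{kappa_le_lambda} and~\ref{Greg's Fact 5}, so neither is logically lighter; yours is more pointwise and self-contained at $s$, while the paper's is shorter once the monotonicity of $F$ is in hand.
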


\begin{proof}
We know that $\gamma$ must eventually curve down and arrive at the $e_1$-axis.  Thus, there are points where $\gamma\,'$ is in the third or fourth quadrant, and, by the Intermediate Value Theorem, combined with the fact that \( \gamma \, '  \neeq (0,1) \) on $(0, \beta)$ (Lemma \ref{tangent_restriction}), there is a point $t \geeq s$ such that $\gamma\,'(t)  \eeq   (-1,0)$.  By Lemma \ref{Greg's Fact 5}, $\gamma_{\, 1}(t)  \st{>}0$; moreover,  by Lemma \ref{obvious}, $\gamma_{\, 1}  \st{>}0$ on the interval $[s,t]$.  Therefore, $F'  \leeq  0$ on $[s,t]$, from which it follows that 
$F(s)  \geeq  F(t)  \eeq   \gamma_{\, 1}(t)  \st{>}0.$
\end{proof}

\begin{proposition}
\label{F_less_than_R_persists}
If  \( \gamma\,'(s) \) is in the second quadrant, then $F(s)  \st{<}  R(s)$.
\end{proposition}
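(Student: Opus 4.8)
The plan is to reduce the inequality $F(s) < R(s)$, i.e.\ $G(s) < 0$ for the auxiliary function $G = F - R$ of Definition~\ref{DefininG}, to a monotonicity argument: show that $G$ is non-increasing on the whole initial segment $[0,s]$ and invoke the left-case assumption $G(0) = F(0) - R(0) < 0$.

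First I would dispose of the endpoint: if $s = 0$ then $\gamma\,'(0) = (0,1)$ and $F(0) < R(0)$ is precisely the standing hypothesis of the left case (equivalently $F(0) < 1/2$), so there is nothing to prove. Assume then $s \in (0,\beta)$ with $\gamma\,'(s)$ in the second quadrant. The key step is to check that the hypotheses needed to apply Lemmas~\ref{kappa_le_lambda} and~\ref{Fact 10} hold not just at $s$ but at every $t \in [0,s]$. By Lemma~\ref{Greg's Fact 5}, $\gamma_{\,1}(s) > 0$; then Lemma~\ref{obvious}, applied at this $s$, gives $\gamma_{\,1}(t) > 0$ for every $t \in [0,s)$, so $\gamma_{\,1} \geq 0$ on all of $[0,s]$. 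Lemma~\ref{kappa_le_lambda} now yields $\kappa(t) \leq \lambda(t)$ for all $t \in [0,s]$, and hence Lemma~\ref{Fact 10} gives $G'(t) \leq 0$ for all $t \in (0,s)$.

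It remains only to integrate this back to $G(0)$. On $(0,s]$ the tangent vector $\gamma\,'(t)$ is never vertical: it is not $(0,1)$ by the Tangent Restriction Lemma (Lemma~\ref{tangent_restriction}), and $\gamma_{\,1}\,'(t) < 0$ by Lemma~\ref{tangent vector points left} (whose hypothesis $\gamma_{\,1}(t) \geq 0$ we have just verified) rules out $(0,-1)$; thus $C_t$, and with it $G$, is well-defined and smooth on $(0,s]$, while at $0$ the canonical circle is the limit guaranteed by regularity, so $G$ is continuous on $[0,s]$. Being continuous on $[0,s]$ with non-positive derivative on $(0,s)$, $G$ is non-increasing there, whence $G(s) \leq G(0) = F(0) - R(0) < 0$, i.e.\ $F(s) < R(s)$.

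The argument carries essentially no new analytic content; all of it was built into Lemmas~\ref{Fact 10} (that $G' \leq 0$ irrespective of the sign of $R'$) and~\ref{kappa_le_lambda} (that $\kappa \leq \lambda$ wherever $\gamma_{\,1} \geq 0$). The only place one must be slightly careful is the propagation of the hypotheses from the single point $s$ to the entire segment $[0,s]$ — which is exactly what Lemma~\ref{obvious} supplies — and the continuity of $G$ up to the left endpoint, so that the monotonicity inequality may be applied between $0$ and $s$.
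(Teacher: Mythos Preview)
Your proof is correct and follows essentially the same route as the paper: use Lemma~\ref{Greg's Fact 5} and Lemma~\ref{obvious} to get $\gamma_{\,1} > 0$ on $[0,s]$, then Lemma~\ref{kappa_le_lambda} and Lemma~\ref{Fact 10} to conclude $G' \leq 0$ there, and finish with $G(s) \leq G(0) < 0$. You supply more detail than the paper (the $s=0$ case, the continuity of $G$ at the endpoint, and the check that $\gamma'$ is never vertical on $(0,s]$), but the argument is the same.
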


\begin{proof}
Since \( \gamma\,'(s) \) is in the second quadrant, \( \gamma_{\, 1}(s)  \st{>} 0 \).  In fact, for all \( t \in [0, s] \), $\gamma_{\, 1}(t)  \st{>} 0$, so $\kappa \leeq  \lambda$ on \( [0, s] \). Consequently, by Lemma \ref{Fact 10}, $G'  \leeq  0$ on $[0, s]$.  By hypothesis that $F(0)  \st{<}  \gamma_{\, 1}(0)/2$, we have that $G(0)  \st{<}  0$.  Therefore,
$G(s)  \leeq  G(0)  \st{<}  0$.
\end{proof}

Having proved the propositions necessary for checking the conditions of admissibility, we define the upper and lower curves and prove that the curvature at a point on the lower curve is less than the curvature at its counterpart on the upper curve (Prop. \ref{kappa comparison, left}).

\begin{definition} 
\label{upper_curve_left}
\emph{A point $s$ is in the} upper curve \emph{$K \subset (0, \beta)$ if and only if for all $t \in (0,s)$, $\gamma\,'(t)$ is strictly in the second quadrant.}
\end{definition}

Note that $K$ is nonempty because $\gamma\,'(0)  \eeq   (0,1)$ and $\kappa(0)  \st{>} 0$ (both consequences of spherical symmetry) and because \( \kappa \) is continuous at \( 0 \).  Thus, $K$ has a least upper bound $\delta$.  Since $\gamma\,'$ is strictly in the second quadrant on $(0, \delta)$, $\gamma_{\,2}(\delta)  \st{>} 0$, so $\delta  \st{<}  \beta$, from which it follows that $\gamma$ is smooth at $\delta$.  In particular, \( \gamma\,'\) is continuous at \( \delta \).  We apply the Intermediate Value Theorem, along with Lemma \ref{tangent vector points left}, to conclude that \( \gamma\,'(\delta)  \eeq   (-1, 0) \). 

\begin{definition} 
\label{lower_left}
\emph{ We define the} lower curve \emph{$L \subset [\delta, \beta)$ as follows: $s \in L$ if and only if for all $t \in [\delta, s]$, the following hold:
\begin{enumerate}
\item $\gamma\,'(t)$ is in the third quadrant, with \( \gamma \, '(t)  \neeq   (-1,0) \) if \( t  \st{>} \delta \),
\item 
\label{LL_kappa}
If $\overline{t}$ is the unique point in $K$ so that $\gamma_{\,2}(\overline{t})  \eeq   \gamma_{\,2}(t)$, then $\kappa(t)  \leeq  \kappa(\overline{t})$.
\end{enumerate}
Since $\delta \in L$,  $L$ is nonempty and therefore has a supremum, which we denote by $\eta$.} 
\end{definition}

By Proposition \ref{just_delta}, $\delta$ is the only point in $[0, \beta)$ at which the tangent vector is $(-1,0)$.  We can use this fact to prove that $\eta  \st{>} \delta$. In addition to Proposition \ref{just_delta}, our proof that $\eta  \st{>} \delta$ utilizes the following lemma, which shows that at any point on $\gamma$ where the tangent vector is \( (-1,0) \) and the curvature is $0$, the curvature has a negative derivative.  

\begin{proposition}
\label{fact_3}
Let $s \in (0, \beta)$, and suppose that $\gamma\,'(s)  \eeq   (-1,0)$. If $\kappa(s)  \geeq  0$, then $\kappa'(s)  \st{<} 0$.
\end{proposition}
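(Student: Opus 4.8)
The plan is to differentiate the constant generalized mean curvature equation at $s$ and reduce the statement to a sign computation for $H_1'(s)$. First I would record that $\gamma_{\,2}(s) > 0$ (since $s \in (0,\beta)$), so $\gamma$ is smooth at $s$ (Remark~\ref{off-axis_regularity}) and $\kappa'(s)$ is defined, and that differentiating $c = \kappa + (n-2)\lambda + H_1$ from Corollary~\ref{generalized_curvature_ODE} yields $\kappa'(s) = -(n-2)\lambda'(s) - H_1'(s)$. Because $\gamma\,'(s) = (-1,0)$, the point $\gamma(s)$ is the topmost point of the canonical circle $C_s$, so $F(s) = \gamma_{\,1}(s)$, $R(s) = \gamma_{\,2}(s)$, and $\lambda(s) = 1/\gamma_{\,2}(s) > 0$. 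Since $\gamma_{\,1}'(s) = -1 < 0$ and $\gamma_{\,2}'(s) = 0$, Lemma~\ref{Greg's Fact 5} applies and gives $\gamma_{\,1}(s) > 0$, hence $\gamma_{\,1}(t) > 0$ for all $t \in [0,s)$ by Lemma~\ref{obvious}. It then suffices to prove, in each of the cases $\kappa(s) = 0$ and $\kappa(s) > 0$, that $\lambda'(s) = 0$ and $H_1'(s) > 0$; the conclusion $\kappa'(s) = -H_1'(s) < 0$ follows.

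For the case $\kappa(s) = 0$, I would use that $A_s$ is the oriented line through $\gamma(s)$ with direction $(-1,0)$, parameterized by $\alpha(t) = \gamma(s) + t\,\gamma\,'(s)$ with $\tilde{s} = 0$. Every canonical circle $\tilde{C}_t$ to this line is centered directly below $\alpha(t)$ on the $e_1$-axis with radius $\gamma_{\,2}(s)$, so $\tilde{R}$ is constant and $\lambda'(s) = \tilde{\lambda}'(0) = 0$; and the outward normal along $A_s$ is the constant vector $(0,1)$, so $\tilde{H_1}(t) = p\,\gamma_{\,2}(s)/\left((\gamma_{\,1}(s) - t)^2 + \gamma_{\,2}(s)^2\right)$, whence $H_1'(s) = \tilde{H_1}'(0) = 2p\,\gamma_{\,1}(s)\gamma_{\,2}(s)/|\gamma(s)|^4 > 0$.

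For the case $\kappa(s) > 0$, set $r = 1/\kappa(s)$ and parameterize $A_s$ as in~(\ref{approx_circ_param}) with center $(a,b)$. Since $\gamma\,'(s) = (-1,0)$ and $\gamma\,''(s) = (0,-\kappa(s))$, the center of $A_s$ lies directly below $\gamma(s)$, so $a = \gamma_{\,1}(s) > 0$, $b = \gamma_{\,2}(s) - r$, and $\gamma(s) = \alpha(\pi r/2)$. By Lemma~\ref{kappa_le_lambda}, $\kappa(s) \leq \lambda(s) = 1/\gamma_{\,2}(s)$, so $r \geq \gamma_{\,2}(s)$ and $b \leq 0$. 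Inverting~(\ref{foRmula}) and differentiating gives $\lambda'(s) = \tilde{\lambda}'(\pi r/2) = 0$ (the $\cos(\pi/2)$ factor vanishes), and then~(\ref{H_1_computation_right}) gives $H_1'(s) = \tilde{H_1}'(\pi r/2) = -pa(a^2 + b^2 - r^2)/\left(r|\alpha(\pi r/2)|^4\right)$. The remaining point is to show $a^2 + b^2 < r^2$. Here I would invoke the left-case hypothesis: $\gamma_{\,1} \geq 0$ on $[0,s]$ (Lemma~\ref{obvious}), hence $\kappa \leq \lambda$ on $[0,s]$ (Lemma~\ref{kappa_le_lambda}), hence $G' \leq 0$ on $[0,s]$ (Lemma~\ref{Fact 10}); therefore $G(s) \leq G(0) = 2F(0) - 1 < 0$ (using $R(0) = 1 - F(0)$ and $F(0) < 1/2$), i.e. $a = F(s) < R(s) = b + r$. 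Combining $0 < a - b < r$ with $b \leq 0$ gives $a^2 + b^2 \leq (a - b)^2 < r^2$, so $H_1'(s) > 0$ and $\kappa'(s) = -H_1'(s) < 0$.

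The hard part is the case $\kappa(s) > 0$ and, within it, the inequality $a^2 + b^2 < r^2$ — equivalently, that the origin lies inside the osculating circle $A_s$; the sign of $\tilde{H_1}'$ at $\pi r/2$ flips according to this, and the right-case analogue (Lemma~\ref{kappa'(delta)}) is the opposite situation. Establishing it requires propagating the strict initial inequality $F(0) < R(0)$ forward along $[0,s]$ through the monotonicity of $G = F - R$ (which itself depends on $\gamma_{\,1} \geq 0$ there) and then converting $F(s) < R(s)$ together with $b \leq 0$ into the quadratic inequality. The case $\kappa(s) = 0$, by contrast, is a routine direct computation on a line.
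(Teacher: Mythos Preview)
Your proof is correct and follows essentially the same approach as the paper. The paper treats $\kappa(s)=0$ exactly as you do, and for $\kappa(s)>0$ it simply says ``the result follows by a similar argument to the proof of Lemma~\ref{kappa'(delta)}''; your write-up spells out that analogy in full, using Lemmas~\ref{obvious}, \ref{Fact 10}, \ref{kappa_le_lambda}, and \ref{Greg's Fact 5} to obtain $b\le 0$ and $0<a-b<r$, hence $a^2+b^2\le (a-b)^2<r^2$, which is precisely the left-case mirror of the $a^2+b^2>r^2$ step in Lemma~\ref{kappa'(delta)}.
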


\begin{proof} In the case that  $\kappa(s)  \st{>} 0$, the result follows by a similar argument to the proof of Lemma \ref{kappa'(delta)}.   Now, suppose that $\kappa(s)  \eeq   0$. The osculating circle to $\gamma$ at $\gamma(s)$ is an oriented horizontal line which we parameterize by
$\alpha(t)  \eeq   \gamma(s)  \pl  t(-1,0).$ For each $t$, $\tilde{R}(t)  \eeq   \gamma_{\,2}(s)$, so $\tilde{\lambda}(t)$ is constant; in particular, \( \lambda'(s) \eeq \ti{\lambda}'(0) \eeq 0 \).  Meanwhile, for all $t$,
\begin{equation}
\label{line_case}
\tilde{H_1}(t)  \eeq   \frac{p}{|\alpha(t)|^2}\gamma_{\,2}(s)
\end{equation}
 By Lemma \ref{Greg's Fact 5}, $\gamma_{\, 1}(s)  \st{>} 0$.  Differentiating (\ref{line_case}), we have
 \[
  H_1'(s)  \eeq   \ti{H_1}'(0)  \eeq   -2 \frac{p \, \gamma_{\,2}  (s)}{|\alpha(0)|^4} \, (-\gamma_{\, 1}(s)) \st{>} 0.
 \]
 Thus, \( \kappa'(s) \st{<} 0 \).
\end{proof}

\begin{lemma}
\label{eta_exceeds_delta_left}
Given that $\gamma([0,\beta))$ has tangent vector $(-1,0)$ only at $\delta$, we have $\eta  \st{>} \delta$.
\end{lemma}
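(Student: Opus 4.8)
The plan is to show that $[\delta,\delta+\varepsilon)\subset L$ for some $\varepsilon>0$; since $\delta\in L$ and $\eta=\sup L$, this gives $\eta\geq\delta+\varepsilon>\delta$. By Definition \ref{lower_left}, for $t$ slightly larger than $\delta$ I must check that (1) $\gamma\,'(t)$ is strictly in the third quadrant and (2) $\kappa(t)\leq\kappa(\overline{t})$, where $\overline{t}$ is the point of $K$ at the same height as $\gamma(t)$. Everything rests on first proving $\kappa(\delta)>0$. That $\kappa(\delta)\geq 0$ is immediate: on $(0,\delta)$ the vector $\gamma\,'(s)$ is strictly in the second quadrant, so $\theta(\gamma\,'(s))\in(\pi/2,\pi)$ there while $\theta(\gamma\,'(\delta))=\pi$; since $\frac{d}{ds}\theta(\gamma\,'(s))=\kappa(s)$ for $s$ near $\delta$, the left-hand difference quotients of $s\mapsto\theta(\gamma\,'(s))$ at $\delta$ are nonnegative, whence $\kappa(\delta)\geq 0$.

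To rule out $\kappa(\delta)=0$ — the one place where the hypothesis (Proposition \ref{just_delta}) is genuinely used — suppose $\kappa(\delta)=0$. Then Proposition \ref{fact_3} gives $\kappa'(\delta)<0$, so $s\mapsto\theta(\gamma\,'(s))$ has a strict local maximum of value $\pi$ at $\delta$, and hence $\gamma\,'$ is strictly in the second quadrant on some interval $(\delta,\delta+\varepsilon_0)$. Let $\delta''$ be the supremum of the set of $s$ for which $\gamma\,'$ is strictly in the second quadrant on $(\delta,s)$. If $\delta''<\beta$, then $\gamma$ is smooth at $\delta''$ and $\gamma\,'(\delta'')$ lies on the boundary of the second-quadrant sector, so it equals $(0,1)$ or $(-1,0)$; the former is impossible because $\gamma_{\,2}$ is increasing on $[\delta,\delta'']$ (as $\gamma_{\,2}\,'=\sin\theta(\gamma\,')>0$ on $(\delta,\delta'')$), giving $\gamma(\delta'')\cdot\gamma\,'(\delta'')=\gamma_{\,2}(\delta'')>\gamma_{\,2}(\delta)>0$ in violation of the Tangent Restriction Lemma (Lemma \ref{tangent_restriction}), while the latter contradicts the hypothesis that $\delta$ is the only point of $[0,\beta)$ with tangent vector $(-1,0)$. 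If $\delta''=\beta$, then $\gamma_{\,2}$ is increasing on $[\delta,\beta]$, contradicting $\gamma_{\,2}(\beta)=0<\gamma_{\,2}(\delta)$. Hence $\kappa(\delta)>0$, and then Proposition \ref{fact_3} (whose hypothesis $\kappa(\delta)\geq 0$ now holds) yields $\kappa'(\delta)<0$.

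It remains to verify (1) and (2) for small $\varepsilon>0$. Condition (1): $\kappa$ is continuous and positive near $\delta$, so $s\mapsto\theta(\gamma\,'(s))$ is strictly increasing there; as it equals $\pi$ at $\delta$ it takes values in $(\pi,3\pi/2)$ on $(\delta,\delta+\varepsilon)$, so $\gamma\,'$ is strictly in the third quadrant there (in particular $\neq(-1,0)$), and the $t=\delta$ case holds since $\gamma\,'(\delta)=(-1,0)$. Condition (2): from $\kappa(\delta)>0$ we get $\gamma_{\,2}\,'(\delta)=0$ and $\gamma_{\,2}\,''(\delta)=-\kappa(\delta)<0$, so on a neighborhood of $\delta$ we may write $\gamma_{\,2}(\delta)-\gamma_{\,2}(s)=w(s)^2$ with $w$ smooth, $w(\delta)=0$, and $w'(\delta)=\sqrt{\kappa(\delta)/2}>0$. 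Then the reflection map $\rho(t):=w^{-1}(-w(t))$ is smooth near $\delta$, with $\rho(\delta)=\delta$, $\rho'(\delta)=-1$, $\gamma_{\,2}(\rho(t))=\gamma_{\,2}(t)$, and $\rho(t)\in(0,\delta)\subset K$ for $t$ slightly above $\delta$, so $\rho(t)=\overline{t}$. The function $\Phi(t):=\kappa(t)-\kappa(\rho(t))$ is then smooth with $\Phi(\delta)=0$ and $\Phi'(\delta)=\kappa'(\delta)-\kappa'(\delta)\rho'(\delta)=2\kappa'(\delta)<0$, so $\Phi(t)<0$ for $t$ slightly larger than $\delta$, that is, $\kappa(t)<\kappa(\overline{t})$. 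Choosing $\varepsilon$ small enough that (1), (2), the existence of $\overline{t}$, and the smoothness of $\rho$ all hold on $[\delta,\delta+\varepsilon)$ gives $[\delta,\delta+\varepsilon)\subset L$, so $\eta>\delta$.

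I expect the main obstacle to be ruling out $\kappa(\delta)=0$: this is the only place the hypothesis (Proposition \ref{just_delta}) genuinely enters, and it forces the case analysis above on where a second-quadrant arc issuing from $\delta$ could terminate. The verification of (2) is also somewhat delicate, since $t$ and $\overline{t}$ both collapse onto $\delta$ and the height-parameterization of $\kappa$ has infinite derivative there; writing $\gamma_{\,2}(\delta)-\gamma_{\,2}$ as a perfect square linearizes the reflection $t\mapsto\overline{t}$ at $\delta$ and reduces the desired inequality to the sign of $\Phi'(\delta)=2\kappa'(\delta)$.
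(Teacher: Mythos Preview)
Your proof is correct and follows essentially the same approach as the paper: establish $\kappa(\delta)>0$ by ruling out $\kappa(\delta)=0$ via Proposition~\ref{fact_3} and the uniqueness hypothesis, then invoke $\kappa'(\delta)<0$ to push both defining conditions of $L$ past $\delta$. Your treatment is in fact more careful than the paper's in two places: your elimination of $\kappa(\delta)=0$ by tracking the second-quadrant arc and appealing to Lemma~\ref{tangent_restriction} is cleaner than the paper's brief IVT sketch, and your explicit construction of the smooth reflection $\rho(t)=w^{-1}(-w(t))$ to linearize $t\mapsto\overline{t}$ at $\delta$ rigorously justifies condition~(2), which the paper simply asserts follows from $\kappa'(\delta)<0$.
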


\begin{proof}
It suffices to prove that there exist $\varepsilon_1, \varepsilon_2  \st{>}0$ so that for all \( s \in (\delta, \delta  \pl  \varepsilon_1) \), $\gamma\,'(s)$ is in the third quadrant with \( \gamma \, '(s)  \neeq   (-1,0) \), and for all \( s \in [\delta, \delta  \pl  \varepsilon_2) \),  \( \kappa(s)  \leeq  \kappa(\ol{s}) \). For the existence of such an $\varepsilon_2$, we observe that since $\gamma\,'(s)$ is in the second quadrant for all $s \in [0, \delta]$, $\kappa(\delta)  \geeq  0$.  Therefore, by Proposition \ref{fact_3}, $\kappa'(\delta)  \st{<}  0$.  To prove that there is an $\varepsilon_{\,1} \st{>}0$ as described, it suffices to prove the strict inequality $\kappa(\delta)  \st{>}0$. By Proposition \ref{fact_3}, if $\kappa(\delta)  \eeq   0$, then \( \kappa'(\delta)  \st{<}  0 \).  Hence, there exists \( q \in (\delta, \beta) \) with \( \gamma_{\,2}  (q) st{>} \gamma_{\,2}  (\delta) \).  By the Intermediate Value Theorem, applied to \( \gamma_{\,2}   \) on \( [0, \beta] \), there is a later point at the same height as \( \gamma (\delta) \).  Since \( \gamma \, '  \neeq   (0,1) \) on \( (0, \beta) \) this implies the existence of \( q \, '  \geeq  q \) with \( \gamma \,'(q \, ')  \eeq   (-1, 0) \), a contradiction.
\end{proof}

\begin{proposition}
\label{3.13 analogue, left case}
Given that $\gamma([0,\beta))$ has tangent vector $(-1,0)$ only at $\delta$, let $s \in L$ with $s  \st{>}\delta$, and let $\ol{s}$ be the unique point in $K$ so that $\gamma_{\,2}(\ol{s})  \eeq   \gamma_{\,2}(s)$.  Then the following two inequalities hold:
\begin {equation}
\label{gamma_one_ineq}
\gamma_{\, 1}(\ol{s})-\gamma_{\, 1}(\delta)  \leeq  \gamma_{\, 1}(\delta)-\gamma_{\, 1}(s),
\end{equation}

\begin{equation}
\label{theta_ineq}
\theta(\gamma\,'(s))  \leeq  2 \pi -\theta(\gamma\,'(\ol{s})).
\end {equation}

\end{proposition}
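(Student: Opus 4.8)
The plan is to run the proof of Lemma~\ref{lower_curve_lemma} in the left case, feeding two height-parameterized graphs into the curvature comparison theorem (Proposition~\ref{curvature_comparison_theorem}); the only structural change from the right case is that the two graphs must be assigned the \emph{opposite} roles, because condition~$(2)$ in the definition of the lower curve (Definition~\ref{lower_left}) carries the curvature inequality in the reverse direction here. First I would record that $\eta \st{>} \delta$ (Lemma~\ref{eta_exceeds_delta_left}, available under the standing hypothesis that $\gamma([0,\beta))$ has tangent vector $(-1,0)$ only at $\delta$), so that $\gamma_{\, 2}$ is a strictly increasing $C^2$ function on $(0,\delta)$ and a strictly decreasing $C^2$ function on $(\delta,\eta)$ (by Definitions~\ref{upper_curve_left} and~\ref{lower_left} the tangent vector is strictly in the second quadrant on the former and in the closed third quadrant, and unequal to $(-1,0)$, on the latter). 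Let $h$ and $k$ be the local inverses of these two restrictions on the common height interval $(\gamma_{\, 2}(\eta),\gamma_{\, 2}(\delta))$, as in Definition~\ref{local_inverses}, and let $f(y) = 2\gamma_{\, 1}(\delta) - \gamma_{\, 1}(h(y))$ and $g(y) = \gamma_{\, 1}(k(y))$ as in Definition~\ref{deFininG}; thus $g(y)$ is the abscissa of the point of $\gamma(L)$ at height $y$, $f(y)$ is the abscissa of the reflection across the line $x = \gamma_{\, 1}(\delta)$ of the point of $\gamma(K)$ at height $y$, and $\ol{s} = h(\gamma_{\, 2}(s))$ is the point of $K$ named in the statement.

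Next I would check that $g$, playing the role of the theorem's first function, and $f$, playing the role of its second function, satisfy hypotheses~$(1)$--$(5)$ of Proposition~\ref{curvature_comparison_theorem} on $(a,b) = (\gamma_{\, 2}(\eta),\gamma_{\, 2}(\delta))$. Smoothness of $\gamma$ at $\delta$ together with $\gamma\,'(\delta) = (-1,0)$ give hypotheses~$(1)$ and~$(2)$, with $f(y),g(y) \to \gamma_{\, 1}(\delta)$ as $y \to \gamma_{\, 2}(\delta)^-$ and with both tangent directions tending to the vertical; the quadrant conditions give~$(3)$, since $f' = -\gamma_{\, 1}\,'(h)/\gamma_{\, 2}\,'(h) \geeq 0$ on $(0,\delta)$ and $g' = \gamma_{\, 1}\,'(k)/\gamma_{\, 2}\,'(k) \geeq 0$ on $(\delta,\eta)$. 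The first part of~$(4)$ is the equality $\lim f = \lim g = \gamma_{\, 1}(\delta)$ at $\gamma_{\, 2}(\delta)^-$, and the $\theta$-part of~$(4)$ together with the pointwise curvature inequality~$(5)$ is precisely condition~$(2)$ of Definition~\ref{lower_left}: a short computation, tracking the reflection over the line $y=x$ (and, for $f$, also over $x = \gamma_{\, 1}(\delta)$) that identifies each graph with a piece of $\gamma$, gives $\kappa_g(y) = \kappa(k(y))$ and $\kappa_f(y) = \kappa(h(y))$, so that $\kappa(s) \leeq \kappa(\ol{s})$ is exactly $\kappa_g \leeq \kappa_f$. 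One point not present in the right case: Proposition~\ref{curvature_comparison_theorem} is stated for functions valued in $\R_{\geeq 0}$, whereas $g$ may be negative on the part of $L$ lying left of the $e_2$-axis; this is harmless, since both conclusions of the theorem are unaffected by adding a common constant to $f$ and $g$, so one first replaces $f,g$ by $f+c,g+c$ with $c$ large enough to make them nonnegative.

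Finally I would read off the conclusions. Proposition~\ref{curvature_comparison_theorem} gives $g(y) \leeq f(y)$ and $\theta(t_g(y)) \geeq \theta(t_f(y))$ for all $y$ in the height interval. Evaluating at $y = \gamma_{\, 2}(s) = \gamma_{\, 2}(\ol{s})$, the first inequality reads $\gamma_{\, 1}(s) \leeq 2\gamma_{\, 1}(\delta) - \gamma_{\, 1}(\ol{s})$, which rearranges to~(\ref{gamma_one_ineq}). For the second, the computation in the proof of Lemma~\ref{lower_curve_lemma} applies verbatim (the upper and lower curves again having tangent vectors in the second and third quadrants), giving $\theta(t_g(y)) = \tfrac{3\pi}{2} - \theta(\gamma\,'(s))$ and $\theta(t_f(y)) = \theta(\gamma\,'(\ol{s})) - \tfrac{\pi}{2}$; substituting these into $\theta(t_g) \geeq \theta(t_f)$ yields $\theta(\gamma\,'(s)) \leeq 2\pi - \theta(\gamma\,'(\ol{s}))$, which is~(\ref{theta_ineq}). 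I expect the main obstacle to be the middle step, and within it the identification of hypothesis~$(5)$ with condition~$(2)$ of Definition~\ref{lower_left}: one must track signs carefully when relating the ``upward'' curvature of each graph to $\kappa$ through the reflections above, and must confirm the limiting behavior of $f$, $g$, and their tangent directions at $y \to \gamma_{\, 2}(\delta)^-$, where each graph becomes vertical. All of this duplicates the bookkeeping already done in the right case; the genuinely new inputs are only the role-swap forced by the reversed curvature inequality and the appeal to Proposition~\ref{just_delta} (via Lemma~\ref{eta_exceeds_delta_left}) ensuring $\eta \st{>} \delta$, so that the construction is non-vacuous.
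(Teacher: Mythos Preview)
Your proposal is correct and is essentially the paper's own argument: the paper achieves the role-swap by \emph{redefining} $f$ and $g$ (setting $f(y)=\gamma_{\,1}(k(y))$ and $g(y)=2\gamma_{\,1}(\delta)-\gamma_{\,1}(h(y))$) rather than by keeping Definition~\ref{deFininG} and reassigning which function plays which role in Proposition~\ref{curvature_comparison_theorem}, but this is a purely cosmetic difference. Your remark that the $\R_{\geeq 0}$ hypothesis can be met by translating both functions is a detail the paper omits.
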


\begin{proof}
By Definitions \ref{upper_curve_left} and \ref{lower_left}, $\gamma_{\,2}\,'$ does not vanish on $(0, \delta)$ or on $(\delta, \eta)$.  Thus,
we define $k,h$ as in the proof of Lemma \ref{lower_curve_lemma}.  However, to apply Proposition \ref{curvature_comparison_theorem}, we must define $f$ and $g$ in a different way than we defined them in Definition \ref{deFininG}: now we define
 $f,g: (\gamma_{\,2}(\eta), \gamma_{\,2}(\delta)) \to \R$ by $f(y)  \eeq   \gamma_{\, 1}(k(y))$ and $g(y)  \eeq   2\gamma_{\, 1}(\delta) - \gamma_{\, 1}(h(y))$.\\
\end{proof}

\begin{proposition}
\label{kappa comparison, left}
Given that $\gamma([0,\beta))$ has tangent vector $(-1,0)$ only at $\delta$, let $s \in L$ with $s \st{>}\delta$ and \( \gamma \, '(s) \neeq (0, -1) \).  Letting $\overline{s}$ be the unique point in $K$ so that $\gamma_{\,2}(s)  \eeq   \gamma_{\,2}(\overline{s})$,  we have $\kappa(s)  \st{<}  \kappa(\overline{s})$.
\end{proposition}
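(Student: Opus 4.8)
The plan is to mirror the proof of Proposition~\ref{lower_upper_curvature_comparison_right} from the right case, but with the relevant inequalities reversed. Since the generalized mean curvature $H_f$ is constant on the regular set (Corollary~\ref{generalized_curvature_ODE}), equation~(\ref{H_f}) at $s$ and at $\overline{s}$ gives
\[
\kappa(s) - \kappa(\overline{s}) = (n-2)\bigl(\lambda(\overline{s}) - \lambda(s)\bigr) + \bigl(H_1(\overline{s}) - H_1(s)\bigr).
\]
Because $n-2 \geq 1 > 0$, it therefore suffices to prove the two inequalities $\lambda(s) \geq \lambda(\overline{s})$ and $H_1(s) > H_1(\overline{s})$.

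First I would establish $\lambda(s) \geq \lambda(\overline{s})$ by right-triangle trigonometry; this also disposes of one of the admissibility conditions needed below. Since $\overline{s} \in K$, the vector $\gamma\,'(\overline{s})$ is strictly in the second quadrant, and since $s \in L$ with $s > \delta$ and $\gamma\,'(s) \neq (0,-1)$, the vector $\gamma\,'(s)$ is strictly in the third quadrant. In both cases the canonical circle (Definition~\ref{canonical_circle}) is counterclockwise oriented, so $\lambda$ is positive there, and one reads off $R(\overline{s}) = \gamma_{\,2}(\overline{s})/\bigl(-\cos\theta(\gamma\,'(\overline{s}))\bigr)$ and $R(s) = \gamma_{\,2}(s)/\bigl(-\cos\theta(\gamma\,'(s))\bigr)$ from the fact that the segment from the point to the center of its canonical circle runs along the inward normal and has vertical extent equal to the height of the point. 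Since $\gamma_{\,2}(s) = \gamma_{\,2}(\overline{s})$ and both $\theta(\gamma\,'(s))$ and $2\pi - \theta(\gamma\,'(\overline{s}))$ lie in $(\pi, 3\pi/2)$, where $\cos$ is increasing, the inequality $\theta(\gamma\,'(s)) \leq 2\pi - \theta(\gamma\,'(\overline{s}))$ of~(\ref{theta_ineq}) (from Proposition~\ref{3.13 analogue, left case}) gives $-\cos\theta(\gamma\,'(s)) \geq -\cos\theta(\gamma\,'(\overline{s}))$, hence $R(s) \leq R(\overline{s})$ and $\lambda(s) \geq \lambda(\overline{s})$.

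Next I would prove $H_1(s) > H_1(\overline{s})$ by verifying that $\gamma\,'(\overline{s})$ and $\gamma\,'(s)$ are admissible with respect to $\gamma(\overline{s})$ and $\gamma(s)$ in the sense of Definition~\ref{admissible, left case}, and then applying Proposition~\ref{admissibility theorem, left case} with $v_1 = \gamma\,'(\overline{s})$ and $v_2 = \gamma\,'(s)$, so that $v_i^{\perp}$ is the outward unit normal and, by~(\ref{H_1_computation}), the conclusion is exactly $H_1(s) > H_1(\overline{s})$. Condition (1), $0 < F(\overline{s}) < R(\overline{s})$, follows from Propositions~\ref{Greg's Fact 4} and~\ref{F_less_than_R_persists}, both applicable because $\gamma\,'(\overline{s})$ is in the second quadrant. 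Condition (2) is precisely~(\ref{theta_ineq}), since $\theta(\gamma\,'(\overline{s})') = 2\pi - \theta(\gamma\,'(\overline{s}))$. Condition (3) was obtained in the previous paragraph. For condition (4) I must show $x^* \leq \gamma_{\,1}(s) \leq 2F(\overline{s}) - \gamma_{\,1}(\overline{s})$. The defining property of $x^*$ is $\gamma\,'(s) \perp (x^*, \gamma_{\,2}(s))$; subtracting this from the Tangent Restriction inequality $\gamma(s)\cdot\gamma\,'(s) \leq 0$ (Lemma~\ref{tangent_restriction}) gives $(\gamma_{\,1}(s) - x^*)\,\gamma_{\,1}\,'(s) \leq 0$, and since $\gamma_{\,1}\,'(s) < 0$ (because $\gamma\,'(s)$ is strictly in the third quadrant) this yields $\gamma_{\,1}(s) \geq x^*$. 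For the upper bound, (\ref{gamma_one_ineq}) gives $\gamma_{\,1}(\overline{s}) + \gamma_{\,1}(s) \leq 2\gamma_{\,1}(\delta)$; since $F$ is nonincreasing on $K$ (by Lemma~\ref{Fact 10}, as $\gamma_{\,1} > 0$ and $\kappa \leq \lambda$ there) and $F(\delta) = \gamma_{\,1}(\delta)$ (the canonical circle at $\delta$ is tangent to the horizontal, so its center sits directly below $\gamma(\delta)$), we obtain $F(\overline{s}) \geq \gamma_{\,1}(\delta)$, and hence $\gamma_{\,1}(\overline{s}) + \gamma_{\,1}(s) \leq 2F(\overline{s})$.

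I expect condition (4) of admissibility to be the main obstacle: isolating the characterization of $x^*$, recognizing that its lower bound is simply a repackaging of the Tangent Restriction Lemma, and assembling the ingredients $F(\delta) = \gamma_{\,1}(\delta)$ together with the monotonicity of $F$ on $K$ for the upper bound. The orientation analysis feeding the formulas for $R(s)$ and $R(\overline{s})$ (and the fact that $\lambda$ is positive at both points) also deserves care, since $\gamma_{\,1}(s)$ may be negative on $L$, so one cannot assume $\gamma(s)$ lies in the first quadrant.
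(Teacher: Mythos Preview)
Your proposal is correct and follows essentially the same approach as the paper: use constancy of $H_f$ to reduce to $\lambda(s)\geq\lambda(\overline{s})$ (via (\ref{theta_ineq}) and right-triangle trigonometry) and $H_1(s)>H_1(\overline{s})$ (via admissibility and Proposition~\ref{admissibility theorem, left case}), checking condition~(4) with the Tangent Restriction for the lower bound and (\ref{gamma_one_ineq}) together with $F(\delta)=\gamma_{\,1}(\delta)$ and $F'\leq 0$ on $K$ for the upper bound. Your write-up is in fact slightly more detailed than the paper's in spelling out the $R$-formulas and the monotonicity of $\cos$ on $(\pi,3\pi/2)$.
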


\begin{proof}
Since generalized mean curvature is constant on $\gamma$, we have
\(\kappa(s)  \pl  (n-2)\lambda(s)  \pl  H_1(s)  \eeq   \kappa(\ol{s})  \pl  (n-2)\lambda(\ol{s})  \pl H_1(\ol{s}).\)
By (\ref{theta_ineq}) and right triangle trigonometry, $\lambda(s)  \geeq  \lambda(\ol{s})$.  Therefore, it suffices 
by Proposition \ref{admissibility theorem, left case} to prove that $\gamma\,'(\ol{s})$ and $\gamma\,'(s)$ are admissible with respect to $\gamma(\ol{s})$ and $\gamma(s)$.  Letting\\
 $(x_1, y)  \eeq   \gamma(\ol{s})$, $(x_2,y)  \eeq   \gamma(s)$, $v_1  \eeq   \gamma\,'(\ol{s})$, and $v_2  \eeq   \gamma\,'(s)$, we proceed to check each condition in the definition of admissibility.\\
\indent Condition (1) follows from Proposition \ref{Greg's Fact 4} and from Proposition \ref{F_less_than_R_persists}.
Recognizing that\\
 $\theta(v_1')  \eeq    2 \pi- \theta(v_1)$, we can derive condition (2) from the second inequality in Proposition \ref{3.13 analogue, left case}. Condition (3) follows by inverting the inequality $\lambda(s)  \geeq  \lambda(\ol{s})$.\\
\indent   To verify that condition (4) holds, we must show that  that $x_2  \leeq  x_1'$ and that  $x_2  \geeq  x^*$. 
The first inequality can be proved using the inequality $\gamma_{\, 1}(\ol{s})-\gamma_{\, 1}(\delta)  \leeq  \gamma_{\, 1}(\delta)-\gamma_{\, 1}(s)$ along with the fact that \( F'  \leeq  0 \) on \( (0, \delta) \) (which is a consequence of Lemmas \ref{Fact 10} and \ref{kappa_le_lambda}). To prove that $x_2  \geeq  x^*$, 
note that since $\gamma\,'(s)$ is tangent to the circle centered at the origin that passes through $(x^*,y)$, we have
$0  \eeq   (x^*,y) \st{\cdot} \gamma\,'(s)  \eeq   x^*\gamma_{\, 1}\,'(s)  \pl  y\,\gamma_{\,2}\,'(s)$.
Meanwhile, by Lemma \ref{tangent_restriction}, 
$0  \geeq  \gamma(s) \st{\cdot} \gamma\,'(s)  \eeq   \gamma_{\, 1}(s)\gamma_{\, 1}\,'(s)  \pl  \gamma_{\,2}(s)\gamma_{\,2}\,'(s)$.
Since $y \eeq   \gamma_{\,2}(s)$, it follows that $x^*\gamma_{\, 1}\,'(s)  \geeq  \gamma_{\, 1}(s)\gamma_{\, 1}\,'(s)$.  Dividing through by $\gamma_{\, 1}\,'(s)$ gives $x^*  \leeq  \gamma_{\, 1}(s)$.
\end{proof}

\begin{proposition}
\label{eta_equals_beta}
If there exists no $s  \neeq \delta$ so that $\gamma\,'(s)  \eeq   (-1,0)$, then $\eta  \eeq   \beta$.
\end{proposition}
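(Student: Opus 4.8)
The plan is to argue by contradiction: assume $\eta  \st{<}  \beta$ and enlarge $L$ past $\eta$, contradicting $\eta \eeq \sup L$. One uses throughout that $\gamma$ is smooth on $(-\beta,\beta)$, that $\gamma_{\,2}  \st{>}  0$ on $(0,\beta)$, and that the hypothesis here is exactly that of Lemma \ref{eta_exceeds_delta_left} (and of Propositions \ref{3.13 analogue, left case} and \ref{kappa comparison, left}), so $\eta  \st{>}  \delta$; hence $\eta \in (\delta,\beta)$ and $\gamma_{\,2}(\eta)  \st{>}  0$. First I would check that $\eta \in L$. For every $t \in (\delta,\eta)$ the two conditions of Definition \ref{lower_left} hold since $\eta \eeq \sup L$; in particular $\gamma\,'(t)$ lies in the third quadrant with $\gamma\,'(t) \neeq (-1,0)$, so $\gamma_{\,2}\,'(t)  \st{<}  0$ and $\gamma_{\,2}$ is strictly decreasing on $(\delta,\eta]$ with values in $(0,\gamma_{\,2}(\delta))$. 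Since $\gamma_{\,2}$ restricts to a continuous strictly increasing bijection of $K$ onto $[0,\gamma_{\,2}(\delta)]$, the map $t \mapsto \ol{t}$ is continuous on a left neighborhood of $\eta$, and letting $t \to \eta^-$ shows that $\gamma\,'(\eta)$ lies in the closed third quadrant and $\kappa(\eta)  \leeq  \kappa(\ol{\eta})$; also $\gamma\,'(\eta) \neeq (-1,0)$ since $\eta \neeq \delta$. Thus both conditions of Definition \ref{lower_left} hold at $\eta$, so $\eta \in L$.

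Next I would upgrade this, showing that $\gamma\,'(\eta)$ lies \emph{strictly} in the third quadrant. Because $\eta \in L$ with $\eta  \st{>}  \delta$, Proposition \ref{3.13 analogue, left case} applies at $s \eeq \eta$, giving $\theta(\gamma\,'(\eta))  \leeq  2\pi - \theta(\gamma\,'(\ol{\eta}))$. Since $\ol{\eta} \in K$, the vector $\gamma\,'(\ol{\eta})$ is strictly in the second quadrant, so $\theta(\gamma\,'(\ol{\eta})) \in (\pi/2,\pi)$ and hence $\theta(\gamma\,'(\eta))  \st{<}  3\pi/2$; combined with $\theta(\gamma\,'(\eta))  \st{>}  \pi$ (from the closed-third-quadrant statement above together with $\gamma\,'(\eta) \neeq (-1,0)$), this puts $\gamma\,'(\eta)$ strictly inside the third quadrant, so in particular $\gamma\,'(\eta) \neeq (0,-1)$. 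Proposition \ref{kappa comparison, left} then applies at $s \eeq \eta$ and yields the strict inequality $\kappa(\eta)  \st{<}  \kappa(\ol{\eta})$.

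Finally I would enlarge $L$. Since $\gamma\,'(\eta)$ is strictly in the third quadrant, continuity of $\gamma\,'$ gives $\varepsilon_1  \st{>}  0$ with $\gamma\,'(t)$ strictly in the third quadrant, hence $\neeq (-1,0)$, for all $t \in (\eta,\eta+\varepsilon_1)$; there $\gamma_{\,2}\,'  \st{<}  0$, so $\gamma_{\,2}$ continues to decrease and $\ol{t}$ remains well-defined and continuous just past $\eta$. Since $\kappa(\eta)  \st{<}  \kappa(\ol{\eta})$, continuity of $\kappa$ and of $t \mapsto \ol{t}$ gives $\varepsilon_2  \st{>}  0$ with $\kappa(t)  \leeq  \kappa(\ol{t})$ for all $t \in (\eta,\eta+\varepsilon_2)$. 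As both conditions of Definition \ref{lower_left} already hold on $[\delta,\eta]$, setting $\varepsilon \eeq \min(\varepsilon_1,\varepsilon_2)$ shows they hold at every point of $[\delta,\eta+\varepsilon)$, so $\eta+\varepsilon/2 \in L$, contradicting $\eta \eeq \sup L$. Hence $\eta \eeq \beta$.

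The crux should be the second step — ruling out $\gamma\,'(\eta) \eeq (0,-1)$. This has to be settled before the extension argument can run: if the tangent vector at $\eta$ were $(0,-1)$, the curve could bend into the fourth quadrant immediately afterward (violating condition (1) of Definition \ref{lower_left}), and Proposition \ref{kappa comparison, left} — which supplies the strict curvature inequality needed to propagate condition (2) — would be unavailable. The $\theta$-inequality of Proposition \ref{3.13 analogue, left case}, together with the fact that $\ol{\eta}$ lies on the strictly-second-quadrant upper curve $K$, is precisely what excludes it.
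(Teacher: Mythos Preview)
Your argument is correct and follows essentially the same route as the paper's proof: assume $\eta<\beta$, pass the defining conditions of $L$ to $\eta$ by continuity, rule out $\gamma'(\eta)=(-1,0)$ via the hypothesis and $\gamma'(\eta)=(0,-1)$ via the $\theta$-inequality of Proposition~\ref{3.13 analogue, left case}, then invoke Proposition~\ref{kappa comparison, left} for the strict curvature inequality and extend $L$. You are more explicit than the paper in writing out the $\varepsilon_1,\varepsilon_2$ extension; the paper leaves that implicit. One small wording issue: you justify ``$\gamma'(\ol\eta)$ strictly in the second quadrant'' by ``$\ol\eta\in K$,'' but Definition~\ref{upper_curve_left} only constrains $\gamma'(t)$ for $t$ \emph{strictly below} $s\in K$; the clean reason is simply $\ol\eta\in(0,\delta)$.
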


\begin{proof}
Suppose for contradiction that $\eta  \st{<}  \beta$. We will show that $\eta$ is not an upper bound for $L$, but instead, that $L$ can be extended.   Recall that by defining a local inverse function \( h: (\gamma_{\,2}  (\eta), \gamma_{\,2}  (\delta)) \to (0, \delta) \) as on p. 13, we can explicitly write \( \ol{s}  \eeq   h(\gamma_{\,2}  (s)) \).  By continuity of $\gamma\,'$, of $\kappa$, and of $\kappa \circ h \circ \gamma_{\,2}$, \( \gamma \, '(\eta) \) is in the third quadrant, and \( \kappa(\eta) \leeq \kappa(\ol{\eta}) \).  To show that \( \eta \in L \), we need 
only show that \( \gamma \, '(\eta) \neeq (-1,0) \).  If $\gamma\,'(\eta)  \eeq   (-1, 0)$, this would contradict the fact that $\gamma$ does not have multiple horizontal tangents.  Meanwhile,  if $\gamma\,'(\eta)  \eeq   (0, -1)$, this would contradict (\ref{theta_ineq}), which holds at $\eta$ by continuity of $\gamma\,$ on $(0, \beta)$ and by our assumption that $\eta  \st{<}  \beta$.  Thus, \( \gamma \, '(\eta) \) is strictly in the third quadrant.  Finally, by an identical argument to that in Proposition \ref{kappa comparison, left}, $\kappa(\eta)  \st{<}  \kappa(\ol{\eta})$.  
\end{proof}

Having shown that $\eta  \eeq   \beta$, we are near to proving Lemma \ref{left_tangent} with the assumption that \( \delta \) is the only point at which \( \gamma \,' \) equals \( (-1, 0) \).  First, we show that  $\gamma_{\, 1}(\beta)  \st{<}  0$.  In order to discuss $\lim_{s \to \beta^-}\gamma\,'(s)$, we must first show that the limit exists.  For this, we prove in Proposition \ref{kappa_negative} that $\kappa$ is eventually negative.  The proof requires Proposition \ref{end_left} as well as a lemma giving a bound on $\gamma_{\, 1}\,'$ (Lemma \ref{gamma_one_prime_bound}).

\begin{proposition}
\label{end_left}
Given that $F(0)  \st{<}  1/2$ and that $\gamma([0,\beta))$ has tangent vector $(-1,0)$ only at $\delta$, we have that $\gamma_{\, 1}(\beta)  \st{<}  0$.
\end{proposition}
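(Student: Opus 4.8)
The plan is to combine the position comparison of Proposition \ref{3.13 analogue, left case} with the fact that, under the present hypotheses, the lower curve $L$ runs all the way to $\beta$, and then to evaluate the resulting inequality in the limit as the height tends to $0$. Since the hypotheses here are exactly those of Proposition \ref{eta_equals_beta}, I first conclude $\eta  \eeq   \beta$. Hence $L  \eeq   [\delta, \beta)$, the tangent $\gamma\,'$ is in the third quadrant on all of $(\delta, \beta)$, and the local inverses $h, k$ and the functions $f, g$ from the proof of Proposition \ref{3.13 analogue, left case} are defined on the full interval $(\gamma_{\,2}(\beta), \gamma_{\,2}(\delta))  \eeq   (0, \gamma_{\,2}(\delta))$. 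In particular, inequality (\ref{gamma_one_ineq}) of that proposition, $\gamma_{\, 1}(\ol{s}) - \gamma_{\, 1}(\delta)  \leeq  \gamma_{\, 1}(\delta) - \gamma_{\, 1}(s)$, holds for every $s \in (\delta, \beta)$, where $\ol{s}  \eeq   h(\gamma_{\,2}(s))$ is the point of $K$ at the same height as $\gamma(s)$.

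Next I would let $s \to \beta^-$, equivalently $y  \eeq   \gamma_{\,2}(s) \to 0^+$. Since $\gamma_{\,2}$ is continuous on $[0,\beta]$ and strictly monotone on each of $(0,\delta)$ and $(\delta,\beta)$, we have $h(y) \to 0$ and $k(y) \to \beta$; so by continuity of $\gamma_{\, 1}$ on $[0,\beta]$, $\gamma_{\, 1}(\ol{s}) \to \gamma_{\, 1}(0)  \eeq   1$ and $\gamma_{\, 1}(s) \to \gamma_{\, 1}(\beta)$, and passing to the limit gives $\gamma_{\, 1}(\beta)  \leeq  2\gamma_{\, 1}(\delta) - 1$. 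It then remains only to show $\gamma_{\, 1}(\delta)  \st{<}  1/2$. Because $\gamma\,'(\delta)  \eeq   (-1,0)$ is horizontal, the center of the canonical circle $C_{\delta}$ lies directly below $\gamma(\delta)$ on the $e_1$-axis, so $F(\delta)  \eeq   \gamma_{\, 1}(\delta)$; and on $(0,\delta)$ the tangent is strictly in the second quadrant, so $\gamma_{\, 1}  \st{>} 0$ there by Lemma \ref{Greg's Fact 5}, hence $\kappa  \leeq  \lambda$ there by Lemma \ref{kappa_le_lambda}, hence $F'  \leeq  0$ on $(0,\delta)$ by Lemma \ref{Fact 10}. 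Since $F$ is continuous on $[0,\delta]$, this yields $\gamma_{\, 1}(\delta)  \eeq   F(\delta)  \leeq  F(0)  \st{<}  1/2$, the last inequality being the hypothesis (recall $\gamma_{\, 1}(0)  \eeq   1$). Combining, $\gamma_{\, 1}(\beta)  \leeq  2\gamma_{\, 1}(\delta) - 1  \st{<}  0$.

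I do not expect a genuinely hard step here: there is no new geometric idea, and the heavy lifting is done by Propositions \ref{3.13 analogue, left case} and \ref{eta_equals_beta}. The one place requiring care is the limiting argument — that the inequality $f  \leeq  g$ (equivalently (\ref{gamma_one_ineq})) survives $y \to 0^+$, which rests on continuity of $\gamma$ up to the endpoint $\beta$, where $\gamma$ is only continuous rather than smooth, and on $h(y) \to 0$, $k(y) \to \beta$ — together with the small observations that $\gamma_{\, 1}(\delta)  \eeq   F(\delta)$ and that $F$ is continuous at $\delta$ (legitimate since $\delta  \st{<}  \beta$, so $\gamma$ and $\gamma\,'$ are continuous there and the canonical circle varies continuously even when the tangent vector is horizontal).
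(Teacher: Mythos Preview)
Your proposal is correct and follows essentially the same route as the paper: invoke $\eta = \beta$ from Proposition \ref{eta_equals_beta}, pass the position comparison (\ref{gamma_one_ineq}) to the limit $s \to \beta^-$ (so $\ol{s} \to 0$), and then use $\gamma_{\,1}(\delta) = F(\delta) \leeq F(0) < 1/2$ to finish. The only cosmetic difference is that the paper bounds $\gamma_{\,1}(\beta)$ by $2F(0) - \gamma_{\,1}(0)$ rather than $2\gamma_{\,1}(\delta) - 1$, but since $\gamma_{\,1}(\delta) = F(\delta) \leeq F(0)$ and $\gamma_{\,1}(0) = 1$ these are equivalent.
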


\begin{proof}
To prove that $\gamma_{\, 1}(\beta)  \st{<}  0$, we begin with Proposition \ref{3.13 analogue, left case}, which states that if $s \in L$, and $\ol{s}$ is the corresponding point in $K$ such that $\gamma_{\,2}(s)  \eeq   \gamma_{\,2}(\ol{s})$, then $\gamma_{\, 1}(\ol{s}) - \gamma_{\, 1}(\delta)  \leeq  \gamma_{\, 1}(\delta) - \gamma_{\, 1}(s)$.  Since $\beta  \eeq   \sup \, L$ and $\gamma$ is continuous at $\beta$, this inequality also holds for $s  \eeq  \beta$.  Noting that $\ol{\beta}  \eeq   0$, we have $\gamma_{\, 1}(0) - \gamma_{\, 1}(\delta)  \leeq  \gamma_{\, 1}(\delta) - \gamma_{\, 1}(\beta) $.  Since $\gamma\,'(\delta)  \eeq   (-1,0)$, $\gamma_{\, 1}(\delta)  \eeq   F(\delta)$.  In turn, since \(F\) is non-increasing on the upper curve, $F(\delta)  \leeq  F(0)$.  Consequently,  
$\gamma_{\, 1}(0) - F(0)  \leeq  \gamma_{\, 1}(0) - \gamma_{\, 1}(\delta)  \leeq  \gamma_{\, 1}(\delta) - \gamma_{\, 1}(\beta)  \leeq  F(0) - \gamma_{\, 1}(\beta).$
Rearranging gives
$\gamma_{\, 1}(\beta)  \leeq  2F(0) - \gamma_{\, 1}(0)  \st{<} 0$.
\end{proof}

\begin{lemma}
\label{gamma_one_prime_bound}
Given that $\gamma([0,\beta))$ has tangent vector $(-1,0)$ only at $\delta$, there exists $\xi  \st{>} 0$ so that $\gamma_{\, 1}\,'(s)  \leeq  -\xi$ for all $s \in L$.
\end{lemma}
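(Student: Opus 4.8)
The plan is first to make the structure of $L$ explicit, then to reduce the lemma to a statement about the limiting tangent direction at $\beta$ and to rule out the bad possibility by a compactness argument. Under the standing hypothesis, Proposition \ref{eta_equals_beta} gives $\eta=\beta$, so $L=[\delta,\beta)$; and for every $s\in(\delta,\beta)$ the inequality (\ref{theta_ineq}) of Proposition \ref{3.13 analogue, left case} gives $\theta(\gamma'(s))<3\pi/2$, while condition $(1)$ of Definition \ref{lower_left} together with the standing hypothesis gives $\theta(\gamma'(s))\in[\pi,3\pi/2]\setminus\{\pi\}$; hence $\theta(\gamma'(s))\in(\pi,3\pi/2)$, so $\gamma'(s)$ lies strictly in the third quadrant and $\gamma_1'(s)<0$ on $(\delta,\beta)$, with $\gamma_1'(\delta)=-1$. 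Thus $\gamma_1'$ is continuous and strictly negative on $[\delta,\beta)$, and the lemma amounts to the assertion that $\gamma_1'$ does not tend to $0$ as $s\to\beta^-$.

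Suppose it did. Then there is a sequence $s_n$ in $L$ with $\gamma_1'(s_n)\to 0$ and, after passing to a subsequence, $s_n\to s^*\in[\delta,\beta]$; since each $\gamma'(s_n)$ is a unit vector in the closed third quadrant whose first coordinate tends to $0$, we get $\gamma'(s_n)\to(0,-1)$. If $s^*<\beta$ then $\gamma$ is smooth at $s^*$, so $\gamma'(s^*)=(0,-1)$; but $L=[\delta,\beta)$ forces $s^*\in L$, and $\gamma'(\delta)=(-1,0)\neq(0,-1)$ while for $s^*>\delta$ we have just shown $\gamma'(s^*)$ lies strictly in the third quadrant — a contradiction either way. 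So we may assume $s^*=\beta$.

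The case $s^*=\beta$ is the crux, and I would close it by combining several a priori bounds valid near $\beta$. By Proposition \ref{end_left}, $\gamma_1(\beta)<0$, so by continuity $|\gamma(s)|$ is bounded between positive constants for $s$ near $\beta$, whence $H_1(s)=p\,\frac{\gamma(s)\cdot\nu(s)}{|\gamma(s)|^2}$ is bounded there. A short computation from Definition \ref{canonical_circle} shows that on $L\setminus\{\delta\}$ the canonical circle is counterclockwise with $\lambda(s)=-\gamma_1'(s)/\gamma_2(s)>0$ and with center abscissa $F(s)=\gamma_1(s)+\gamma_2(s)\gamma_2'(s)/\gamma_1'(s)$, and the Tangent Restriction Lemma (Lemma \ref{tangent_restriction}) gives $F(s)\geq 0$; consequently, once $\gamma_1(s)<0$, one has $R(s)\geq|\gamma_1(s)-F(s)|\geq|\gamma_1(s)|$, so $\lambda$ stays bounded on a left neighborhood of $\beta$. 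Finally, Proposition \ref{kappa comparison, left} gives $\kappa(s)<\kappa(\overline s)\leq\max_{[0,\delta]}\kappa$ for $s\in(\delta,\beta)$, so $\kappa$ is bounded above on $L$. Feeding these bounds into the constancy $c=H_f(s)=\kappa(s)+(n-2)\lambda(s)+H_1(s)$ of Corollary \ref{generalized_curvature_ODE} pins down the remaining quantities and contradicts $\gamma'(s_n)\to(0,-1)$; since $\gamma_1'=\cos\theta$ and $\cos$ is increasing on $(\pi,3\pi/2)$, this simultaneously yields the uniform bound $\gamma_1'(s)\leq-\xi$.

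The main obstacle is precisely this endpoint case. Because $\beta$ is a point of $\partial E$ at which regularity — and hence the behavior of $\kappa$ — may genuinely fail, one cannot pass to a smooth limit, and the several bounds above (tangent restriction, canonical-circle radius, the curvature comparison, and the mean-curvature ODE) must be weighed against each other carefully; in addition, the argument must avoid invoking Proposition \ref{kappa_negative}, which is proved later and uses the present lemma.
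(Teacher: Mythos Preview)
Your reduction to ruling out $\gamma'(s_n)\to(0,-1)$ along a sequence $s_n\to\beta$ is fine, and the preliminary bounds you collect (on $|\gamma|$, on $H_1$, on $\lambda$ via $R(s)\ge|\gamma_1(s)|$, and on $\kappa$ from above) are all correct. The gap is in the final step: you assert that ``feeding these bounds into the constancy $c=\kappa+(n-2)\lambda+H_1$ pins down the remaining quantities and contradicts $\gamma'(s_n)\to(0,-1)$,'' but you never say how, and in fact the bounds alone do not yield a contradiction. They show only that $\kappa$ is bounded on $L$, hence that $\theta\circ\gamma'$ is Lipschitz and extends continuously to $\beta$; but nothing in your list prevents that limit from being $3\pi/2$. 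The non-strict inequality (\ref{theta_ineq}) degenerates in the limit, since $\overline{s}\to 0$ and $\theta(\gamma'(\overline{s}))\to\pi/2$, giving only $\theta(\gamma'(s))\le 3\pi/2$ as $s\to\beta^-$. So the endpoint case is not closed.

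The paper's argument avoids the endpoint altogether by extracting a \emph{uniform} angle gap from an \emph{interior} strict inequality. One fixes any $s_0>\delta$ (say with $\theta\circ\gamma'<5\pi/4$ on $[\delta,s_0]$), notes that Proposition \ref{kappa comparison, left} gives the strict inequality $\kappa(s_0)<\kappa(\overline{s_0})$, and then invokes the ``furthermore'' clause of Proposition \ref{curvature_comparison_theorem}: a single strict curvature inequality at height $y_0=\gamma_2(s_0)$ produces a constant $\phi>0$ with $\theta(t_f(y))\ge\theta(t_g(y))+\phi$ for all $y\in(0,y_0)$. Translating back, this is exactly $\theta(\gamma'(s))\le 2\pi-\theta(\gamma'(\overline{s}))-\phi<3\pi/2-\phi$ for all $s$ beyond $s_0$, which together with the trivial bound on $[\delta,s_0]$ gives the desired uniform $\xi$. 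The missing idea in your proposal is this use of a strict interior comparison to manufacture a gap that persists all the way to $\beta$; your boundedness estimates, while true, are not a substitute for it.
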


\begin{proof}
It suffices to prove that  there exists \( \tau  \st{>} 0 \) such that \( \theta(\gamma\,'(s)) \st{<} 3\pi/2 - \tau \) for all \( s \) in \( L \).  Since \( \gamma \, '( \delta)  \eeq   (-1,0) \) and \( \gamma \, ' \) is continuous on \( [0, \beta) \), there exists \( s_0 \st{>} \delta \) such that \( \theta \circ \gamma \, ' \st{<}  5\pi/4 \) on \( [ \delta, s_0] \).  By Proposition \ref{kappa comparison, left}, \( \kappa(s_0) \st{<} \kappa(\ol{s_0}) \).  Letting \( y_0  \eeq   \gamma_{\,2}(s_0) \), we have that the upward curvatures of the graphs of the functions \( f \) and \( g\) defined in Proposition \ref{3.13 analogue, left case} satisfy \( \kappa_f(y_0) \st{<} \kappa_g(y_0) \).  Therefore, by Proposition \ref{curvature_comparison_theorem}, 
there exists  \( \phi  \st{>} 0 \) such that
\begin{equation}
\label{phi_left}
\theta(t_f(y))  \geeq  \theta(t_g(y))  \pl  \phi
\end{equation}
for all \( y \in (0,y_0) \). 
Take \( \tau  \eeq   \min \{ \phi, \pi/4 \} \).  If \( s \in [\delta, s_0] \), then \( \theta( \gamma \, '(s))  \st{<}  5 \pi/4  \eeq   3 \pi/2 \mn \pi/4   \leeq 3\pi/2 \mn \tau. \)
If \( s \in (s_0, \beta) \), let \( y  \eeq   \gamma_{\,2}  (s) \).  Then  \( \theta(t_f(y))  \eeq   3 \pi/2-\theta(\gamma\,'(s)) \), and \( \theta(t_g(y))  \eeq   \theta(\gamma\,'(\ol{s})) - \pi/2 \).  Substituting into (\ref{phi_left}), we obtain
\( \theta(\gamma\,'(s))  \leeq   2 \pi-\theta(\gamma\,'(\ol{s})) - \phi \).
Finally, since \( y  \st{>} 0 \), it follows by Lemma \ref{tangent vector points left} that \( \theta(\gamma\,'(\ol{s}))  \st{>} \pi/2 \).  Therefore, \( \theta(\gamma\,'(s))  \st{<}   3 \pi/2 - \phi   \leeq 3\pi/2 \mn \tau \).
\end{proof}

\begin{proposition}
\label{kappa_negative}
Given that $\gamma([0,\beta))$ has tangent vector $(-1,0)$ only at $\delta$, there exists $\varepsilon  \st{>} 0$ such that $\kappa  \st{<}  0$ on $(\beta - \varepsilon, \beta)$.
\end{proposition}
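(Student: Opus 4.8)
The plan is to exploit the constancy of generalized mean curvature,
\[
c \;=\; H_f(s) \;=\; \kappa(s)\pl(n-2)\lambda(s)\pl H_1(s)
\]
from Corollary \ref{generalized_curvature_ODE}, and to show that as $s\to\beta^-$ the canonical-circle curvature $\lambda(s)$ blows up to $+\infty$ while $H_1(s)$ stays bounded; since $n-2\geeq 1$, this forces $\kappa(s)=c-(n-2)\lambda(s)-H_1(s)\to-\infty$, which is much more than we need.

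First I would record the behavior of $\gamma$ near $\beta$. Under the standing assumptions of this section we have $\eta=\beta$ (Proposition \ref{eta_equals_beta}), so $L=[\delta,\beta)$ and $\gamma\,'(s)$ is strictly in the third quadrant for $s\in(\delta,\beta)$; hence $\gamma_{\,2}$ is strictly decreasing there and, since $\gamma_{\,2}(\beta)=0$, we get $\gamma_{\,2}(s)\to 0^+$. By Proposition \ref{end_left}, $\gamma_{\, 1}(s)\to\gamma_{\, 1}(\beta)\st{<}0$, so $|\gamma(s)|\to|\gamma_{\, 1}(\beta)|\st{>}0$ is bounded away from the origin; consequently $|H_1(s)|\leeq p/|\gamma(s)|$ is bounded on some interval $(\beta-\varepsilon_0,\beta)$.

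Next I would estimate the radius $R(s)$ of the canonical circle $C_s$. Writing $\gamma\,'(s)=(\cos\theta(s),\sin\theta(s))$, tangency of $C_s$ to $\gamma$ at $\gamma(s)$ means the vector from the center $(F(s),0)$ to $\gamma(s)$ is orthogonal to $\gamma\,'(s)$, which gives $\gamma_{\, 1}(s)-F(s)=-\gamma_{\,2}(s)\tan\theta(s)$ and hence $R(s)^2=\gamma_{\,2}(s)^2\sec^2\theta(s)$, i.e. $R(s)=\gamma_{\,2}(s)/|\cos\theta(s)|$. By Lemma \ref{gamma_one_prime_bound} there is $\xi\st{>}0$ with $\cos\theta(s)=\gamma_{\, 1}\,'(s)\leeq-\xi$ for all $s\in L$, so $|\cos\theta(s)|\geeq\xi$ and therefore $R(s)\leeq\gamma_{\,2}(s)/\xi\to 0$. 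A brief orientation check shows $\lambda(s)=+1/R(s)$: the center lies on the inward-normal side of $\gamma$ at $\gamma(s)$, and with $\gamma\,'(s)$ strictly in the third quadrant and $\gamma_{\,2}(s)\st{>}0$ this makes $C_s$ counterclockwise (this is the same sign computation as in the proof of Proposition \ref{unavg_std_curvature_formula}). Thus $\lambda(s)\geeq\xi/\gamma_{\,2}(s)\to+\infty$.

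Combining, for $s\in(\beta-\varepsilon_0,\beta)$ we have $\kappa(s)=c-(n-2)\lambda(s)-H_1(s)\leeq c+p/|\gamma(s)|-(n-2)\lambda(s)$, and since $n-2\geeq 1$ and $\lambda(s)\to+\infty$ while the remaining terms are bounded, $\kappa(s)\to-\infty$; in particular there is $\varepsilon\in(0,\varepsilon_0]$ with $\kappa\st{<}0$ on $(\beta-\varepsilon,\beta)$. The one step that genuinely needs care is the sign of $\lambda(s)$: we must know the canonical circle is counterclockwise-oriented near $\beta$ so that $\lambda(s)=+1/R(s)$ rather than $-1/R(s)$, since the latter would make the argument produce $\kappa(s)\to+\infty$ instead. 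This is exactly where the third-quadrant direction of $\gamma\,'$ on $(\delta,\beta)$ together with $\gamma_{\,2}(s)\st{>}0$ is used.
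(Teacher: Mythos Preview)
Your argument is correct and follows essentially the same route as the paper: both compute $R(s)=\gamma_{\,2}(s)/|\gamma_{\,1}'(s)|$, invoke Lemma \ref{gamma_one_prime_bound} to get $\lambda(s)\geeq\xi/\gamma_{\,2}(s)$, and then use the ODE $c=\kappa+(n-2)\lambda+H_1$ to force $\kappa<0$. The only cosmetic difference is that the paper shows $H_1(s)>0$ near $\beta$ (since $\gamma(s)$ and $\nu(s)$ both lie in the second quadrant once $\gamma_{\,1}(s)<0$) and stops as soon as $(n-2)\lambda(s)>c$, whereas you bound $|H_1|$ and let $\lambda\to+\infty$; either bookkeeping works.
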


\begin{proof}
We show that for $s$ close to $\beta$, we can make $(n-2)\lambda(s)  \pl  H_1(s)$ larger than $c$, the constant of the differential equation $H_f  \eeq   c$.  First, we show that by taking $s$ sufficiently close to $\beta$, we can make $\lambda(s)$ large.
The radius of the canonical circle at $\gamma(s)$ satisfies
\begin{equation*}
\begin{split}
R(s)^2  \eeq   (\gamma_{\, 1}(s) - F(s))^2  \pl  \gamma_{\,2}(s)^2  \eeq    \left(\gamma_{\, 1}(s) - \frac{\gamma(s) \st{\cdot} \gamma\,'(s)}{\gamma_{\, 1}\,'(s)} \right)^2  \pl  \gamma_{\,2}(s)^2  \eeq   \frac{\gamma_{\,2}(s)^2}{\gamma_{\, 1}\,'(s)^2}.
\end{split}
\end{equation*}
Since $\gamma_{\, 1}\,'(s)  \st{<}  0$, we have 
$\displaystyle R(s)  \eeq   \frac{\gamma_{\,2}(s)}{-\gamma_{\, 1}\,'(s)}$\,\, and\,\, $\displaystyle \lambda(s)  \eeq   \frac{-\gamma_{\, 1}\,'(s)}{\gamma_{\,2}(s)}$.\\
By Lemma \ref{gamma_one_prime_bound}, 
$\lambda(s)  \geeq  \xi/\gamma_{\,2}(s)$.  Since $\gamma_{\,2}(\beta)  \eeq   0$ and $\gamma$ is continuous, there exists $\varepsilon_1  \st{>} 0$ so that if $s \in (\beta - \varepsilon_1, \beta)$, then $\gamma_{\,2}(s)  \st{<}  \xi/c$.  Consequently, for all $s \in (\beta - \varepsilon_1, \beta)$, $\lambda(s)  \st{>} c$.\\
\indent Now we will show that for $s$ sufficiently large, $H_1(s)$ is positive.
By Proposition \ref{end_left} and continuity of $\gamma$, there exists $\varepsilon_2  \st{>} 0$ such that $\gamma_{\, 1} \st{<} 0$ on $(\beta-\varepsilon_2, \beta)$.  For any $s$ in this interval, $\gamma(s)$ and $\nu(s)$ are both strictly in the second quadrant, so $H_1(s)  \st{>}0$.\\
\indent Set $\varepsilon  \eeq   \min \{\varepsilon_1, \varepsilon_2\}$, and suppose that $s \in (\beta-\varepsilon, \beta)$.  By our observations above and our assumption that $n  \geeq  3$, we have  $(n-2)\lambda(s)  \pl  H_1(s)  \st{>} (n-2)\lambda(s)  \geeq  \lambda(s)  \st{>} c$.  Therefore, $\kappa(s)$ must be less than 0 to compensate.
\end{proof}

\begin{proof}[Proof of the Left Tangent Lemma (Lemma \ref{left_tangent})]
By Proposition \ref{end_left}, \( \gamma_{\, 1}(\beta) \st{<} 0 \).  By Proposition \ref{kappa_negative}, there exists $\varepsilon \st{>}0$ such that $\kappa \st{<} 0$ on $(\beta - \varepsilon, \beta)$.  On this interval, $\theta \circ \gamma\,'$ is a decreasing function of $s$.  Since $\theta \circ \gamma\,'$ is decreasing and bounded below by $\pi$, $\lim_{s \to \beta^-} \gamma\,'(s)$ exists.  Moreover, since $\gamma\,'$ is strictly in the third quadrant on $(\delta, \eta)$ and $\theta \circ \gamma\,'$ is decreasing on $(\beta - \varepsilon, \beta)$, $\lim_{s \to \beta^-} \gamma\,'(s)$ is in the third quadrant but not equal to $(0,-1)$.  
\end{proof}

\subsection{Proof That There is Only One Horizontal Tangent}
Finally, we supply a proof of the result used from Proposition \ref{eta_exceeds_delta_left} onward that \( \delta \) is the only point in \( [0, \beta) \) with tangent vector \( (-1,0) \).  It is expedient to  consider the sets
 \( T \eeq   \{s \in [0, \beta): \gamma\,'(s)  \eeq   (-1,0)$ and $\kappa(s)  \geeq  0\}$ and $U  \eeq   \{s \in [0, \beta): \gamma\,'(s)  \eeq   (-1,0) \}$.  Consider the supremum \( \delta \) of the upper curve \( K \) (Defn. \ref{upper_curve_left}).  In the proof of Lemma \ref{eta_exceeds_delta_left}, by assuming that \( \delta \) was the only point in \( [0, \beta) \) where the tangent vector was \( (-1,0) \) (the fact that we are about to prove), we could show that \( \kappa(\delta) > 0 \).  However, even without this assumption, it must be the case that \( \kappa (\delta) \ge 0 \), because \( \gamma \, ' \) is strictly in the second quadrant on \( (0,\delta) \) (cf. proof of Lemma \ref{eta_exceeds_delta_left}).  Thus, \( \delta \in T \).  Since \( T \) is nonempty, it has a least upper bound \( v\).  

\begin{lemma}
\label{sup_T}
The supremum of \( T \) satisfies the following:
\begin{enumerate}
\item $v  \st{<}  \beta$,
\item $v$ is the maximum element of U,
\item \( \kappa(v) > 0 \).
\end{enumerate}
\end{lemma}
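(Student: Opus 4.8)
We have already noted $\delta\in T$, so $v=\sup T$ is defined and $v\ge\delta>0$; I would prove the three parts in the order (1), (3), (2), since (3) and (2) rest on (1) together with a single geometric mechanism, which I record first. \emph{The mechanism:} if $\gamma\,'(s_0)$ lies in the closed second quadrant for some $s_0\in(v,\beta)$, then $\gamma\,'$ lies in the closed second quadrant on all of $[s_0,\beta)$. Indeed $\gamma\,'$ cannot rotate through $(0,1)$ into the first quadrant, since $\gamma\,'\neq(0,1)$ on $(0,\beta)$ by the Tangent Restriction Lemma (Lemma \ref{tangent_restriction}); and it cannot rotate through $(-1,0)$ into the third quadrant, because at the last parameter $w$ before such an excursion one has $\gamma\,'(w)=(-1,0)$ with $\theta\circ\gamma\,'$ nondecreasing at $w$, hence $\kappa(w)\ge 0$, so $w\in T$ with $w>v=\sup T$ --- impossible (the same estimate handles the case where $\theta\circ\gamma\,'$ merely touches $\pi$ from below, forcing $\kappa(w)=0$). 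But then $\gamma_{\,2}\,'\ge 0$ on $[s_0,\beta)$, so $\gamma_{\,2}(\beta)\ge\gamma_{\,2}(s_0)>0$, contradicting $\gamma_{\,2}(\beta)=0$.

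\emph{Part (1).} Suppose $v=\beta$. Since $\gamma\,'$ is strictly in the second quadrant on $(0,\delta)$ and $\gamma\,'(0)=(0,1)$, the set $T$ contains parameters in $(\delta,\beta)$ accumulating at $\beta$; at each such $t$ we have $\gamma\,'(t)=(-1,0)$, so $\gamma_{\,1}(t)\ge 0$ (Lemma \ref{tangent_restriction}), whence $\gamma_{\,1}>0$ on $[0,\beta)$ by Lemma \ref{obvious}. Then $\kappa\le\lambda$ on $[0,\beta)$ (Lemma \ref{kappa_le_lambda}), and $G=F-R$ is nonincreasing there (Lemma \ref{Fact 10}), so $G\le G(0)<0$; as the canonical circle at $t\in T$ is centered at $(\gamma_{\,1}(t),0)$ with radius $\gamma_{\,2}(t)$, we get $\gamma_{\,1}(t)-\gamma_{\,2}(t)=G(t)<0$, and letting $t\to\beta$ forces $\gamma_{\,1}(\beta)=0$, i.e.\ $\gamma$ ends at the origin. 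By (\ref{H_1_computation}), $H_1(s)=p\,\gamma(s)\cdot\nu(s)/|\gamma(s)|^2=p\,\varphi\,'(s)$, where $\varphi(s)$ is the polar angle of $\gamma(s)$; integrating $\kappa+(n-2)\lambda+H_1=c$ from $0$ to $s$ and using $\int_0^s\kappa=\theta(\gamma\,'(s))-\pi/2$ gives $(n-2)\int_0^s\lambda = cs-(\theta(\gamma\,'(s))-\pi/2)-p\,\varphi(s)$, whose right side is bounded as $s\to\beta$ (since $\theta$ takes values in $(0,2\pi]$ and $\varphi\in[0,\pi/2)$), so $\int_0^\beta\lambda\,ds<\infty$. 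But $\lambda(s)=-\gamma_{\,1}\,'(s)/\gamma_{\,2}(s)>0$ (as in the proof of Proposition \ref{kappa_negative}), and the substitution $x=\gamma_{\,1}(s)$ --- valid because $\gamma_{\,1}$ is strictly decreasing by Lemma \ref{tangent vector points left} --- turns $\int_0^\beta\lambda\,ds$ into $\int_0^1 dx/\gamma_{\,2}(s(x))$ with $s(x)=\gamma_{\,1}^{-1}(x)$. Regularity of $\partial E$ at $\gamma(\beta)$ gives $\gamma\,'$ a limit at $\beta$, necessarily $(-1,0)$ since $T$ accumulates there, so $\gamma_{\,2}\,'(\beta)=0$ and $\gamma_{\,1}\,'(\beta)=-1$; hence $\gamma_{\,2}(s(x))=o(x)$ as $x\to 0^+$ and the integral diverges --- a contradiction. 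Thus $v<\beta$.

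\emph{Parts (3) and (2).} Since $v<\beta$, $\gamma$ is smooth at $v$, and the closed conditions defining $T$ give $v\in T\subseteq U$, so $\gamma\,'(v)=(-1,0)$ and $\kappa(v)\ge 0$. If $\kappa(v)=0$ then $\kappa\,'(v)<0$ by Proposition \ref{fact_3}, so $\gamma\,'$ enters the open second quadrant immediately after $v$; applying the mechanism with $s_0$ slightly larger than $v$ yields a contradiction, so $\kappa(v)>0$, which is (3). For (2), $v\in U$ already; if $u\in U$ with $u>v$, then $u\notin T$, so $\kappa(u)<0$, hence $\gamma\,'$ is in the open second quadrant just after $u$, and the mechanism again gives a contradiction. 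Therefore $v=\max U$.

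The step I expect to be the main obstacle is (1), and within it the subcase where $\gamma$ returns to the origin: there $r^p$ is not smooth, so the input ``$\gamma\,'$ has a limit at $\beta$'' must be justified from the limited ($C^{1,\alpha}$) regularity available via Proposition \ref{regularity result}, or --- in high dimensions, where the origin could a priori be a singular point of $\partial E$ --- replaced by a separate argument showing that a generating curve through the origin would be a circle through the origin, contradicting $F(0)<1/2$.
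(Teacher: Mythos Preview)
Your treatment of parts (2) and (3) via the ``mechanism'' is essentially the paper's argument, repackaged: the paper first shows $v\in T\subseteq U$ by continuity, then rules out any $u\in U$ with $u>v$ (if $\kappa(u)<0$ then $u$ is a local minimum of $\gamma_{\,2}$, forcing a later local maximum $t$ with $\gamma\,'(t)=(-1,0)$ and $\kappa(t)\ge 0$, i.e.\ $t\in T$, $t>v$); this yields $\gamma_{\,2}\,'<0$ on $(v,\beta)$, which then rules out $\kappa(v)=0$ exactly as you do. Your formulation is clean and correct.

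Part (1) is where you diverge sharply from the paper, and the gap you flag at the end is real --- but the entire detour through $\gamma(\beta)=0$ and the divergent integral is unnecessary. The paper's argument is a few lines: since $\gamma_{\,2}(\beta)=0$, choose $\varepsilon_0>0$ with $\gamma_{\,2}(s)<1/c$ on $(\beta-\varepsilon_0,\beta)$; if $s$ lies in this interval with $\gamma\,'(s)=(-1,0)$, then $\nu(s)=(0,1)$, so $\lambda(s)=1/\gamma_{\,2}(s)>c$ and $H_1(s)=p\,\gamma_{\,2}(s)/|\gamma(s)|^2>0$, whence $(n-2)\lambda(s)+H_1(s)>c$ and $\kappa(s)<0$. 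Thus $T\cap(\beta-\varepsilon_0,\beta)=\emptyset$ and $v\le\beta-\varepsilon_0<\beta$. This uses only continuity of $\gamma_{\,2}$ at $\beta$ and the ODE at interior points; no regularity at the origin is invoked, and whether $\gamma(\beta)=0$ never comes up.

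Your route instead deduces $\gamma(\beta)=0$ and then needs $\gamma\,'(s)\to(-1,0)$ as $s\to\beta^-$ to obtain $\gamma_{\,2}(s(x))=o(x)$ and force $\int_0^\beta\lambda=\infty$. Since $r^p$ is not smooth at the origin, Proposition~\ref{regularity result} does not furnish $C^1$ regularity of $\partial E$ there, and in dimensions $n\ge 8$ the origin could a priori be singular; without that input your divergence step does not close. So the obstacle you anticipate is genuine --- but it is an obstacle of your own making, entirely bypassed by the direct estimate above.
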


\begin{proof}
To prove that  \( v < \beta \), it suffices to show that there there exists $\varepsilon_0  \st{>} 0$ so that if $s \in (\beta - \varepsilon_0, \beta)$ and $\gamma\,'(s)  \eeq   (-1,0)$, then $\kappa(s)  \st{<}  0$.  To achieve this result, we consider the ODE $H_f \eeq  c$. 
We know that the constant $c$ is positive, because $H_1(0)  \eeq   p$,  $\kappa(0)  \st{>} 0$, and $\lambda(0)  \eeq   \kappa(0)$ by Proposition \ref{kappa(0) equals lambda(0)}.\\
\indent Since $\gamma_{\,2}(\beta)  \eeq   0$ and the curve is continuous at $\beta$, there exists $\varepsilon_0  \st{>} 0$ so that for any s in  $(\beta-\varepsilon_0, \beta)$, we have $\gamma_{\,2}(s)  \st{<}  1/c$.\,  Let $s \in  (\beta-\varepsilon_0, \beta)$ and suppose that $\gamma\,'(s)  \eeq   (-1,0)$.  Then $\lambda(s)  \eeq   1/\gamma_{\,2}(s) \st{>}c.$  Meanwhile, the outward unit normal at $s$ is $\nu(s)  \eeq   (0,1)$, so \[
H_1(s)  \eeq   \frac{p}{|\gamma(s)|^2}\,(\gamma_{\, 1}(s), \gamma_{\,2}(s)) \st{\cdot} (0, 1)  \eeq   \frac{p}{|\gamma(s)|^2}\, \gamma_{\,2}(s)  \st{>} 0.
\]
  Given that $n  \geeq  3$, we have that $(n-2)\lambda(s)  \pl  H_1(s)  \geeq  \lambda(s)  \pl  H_1(s)  \st{>}c$, which means that $\kappa(s)$ must be negative to compensate.\\
\indent Given that $v \st{<} \beta$, it can be shown by continuity of $\gamma\,'$ and $\kappa$ on $(0, \beta)$ that $\gamma\,'(v)  \eeq   (-1,0)$ and that $\kappa(v)  \geeq  0$. Since \( \gamma \, '(v) \eeq (-1,0) \), \( v \in U \eeq   \{s \in [0, \beta): \gamma\,'(s)  \eeq   (-1,0) \} \). 
We claim that $v$ is the largest point in $U$.  By definition of \( T\), there exists no \( s \st{>} v \) so that \( \gamma \, '(s) \eeq (-1,0) \) and \( \kappa(s) \geeq 0 \).  Meanwhile, if there were an \( s \st{>} v \) so that \( \gamma \, '(s) \eeq (-1,0) \) and \( \kappa(s) \st{<} 0 \), then \( s \) would be a local minimum point of \( \gamma_{\,2} \).  Since \( \gamma_{\,2}(\beta) \eeq 0 \), there would exist \( t \st{>} s \) so that \( t \) was a local maximum point of \( \gamma_{\, 2} \).  Since \( \gamma \, ' \neeq (0,1) \) on  \( (0, \beta) \) (Lemma \ref{tangent_restriction}), \( \gamma \, '(t) \neeq (1,0) \).  Thus, it must be the case that  \( \gamma \, '(t) \eeq (-1,0) \), contradicting the fact that \(v \eeq \sup \, T \).  We conclude that \( v \) is the maximum element of \( U \).  Again, since  \( \gamma \, ' \neeq (0,1) \) on  \( (0, \beta) \), this means that \( \gamma_{\,2} \, ' < 0 \) on \( (v ,\beta) \). \\
 \indent Finally, to prove that \( \kappa(v) > 0 \), suppose for contradiction that \( \kappa(v) \eeq 0 \).  By  Lemma \ref{fact_3}, there exists \( \varepsilon > 0 \) so that  \( \kappa \st{<} 0 \) on \( (v, v \pl \varepsilon) \).  Since \( \gamma \,'(v) \eeq (-1,0) \), this would imply the existence of an interval following \( v \) on which the tangent vector was strictly in the second quadrant, contradicting the fact that \( \gamma_{\,2} \,' < 0 \) on \( (v, \beta) \) (cf. proof of Lemma \ref{eta_exceeds_delta_left}).  Thus, $\kappa(v)  \st{>} 0$.
 \end{proof}

\begin{proposition}
\label{just_delta}
There is only one point $\delta \in [0, \beta)$ so that $\gamma\,'(\delta)  \eeq   (-1,0)$.
\end{proposition}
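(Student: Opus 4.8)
The plan is to use the sets $T$ and $U$ and the point $v=\sup T$ introduced above, together with Lemma \ref{sup_T}, which tells us that $v<\beta$, that $v$ is the maximum element of $U$, and that $\kappa(v)>0$ (its proof also yields $\gamma_{\,2}\,'<0$ on $(v,\beta)$). Since $\gamma\,'(\delta)=(-1,0)$, the point $\delta$ belongs to $U$, so $\delta\le v$; hence it suffices to prove $\delta=v$, for then $U=\{v\}=\{\delta\}$ and $\delta$ is the unique point of $[0,\beta)$ with $\gamma\,'=(-1,0)$.

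The first step is to show $\delta=\min U$. Put $m=\min U$; this exists, since $U$ is closed in $[0,\beta)$, bounded, and nonempty, and $m>0$ because $\gamma\,'(0)=(0,1)$. On $(0,m)$ the tangent $\gamma\,'$ never equals $(-1,0)$, by minimality, nor $(0,1)$, by Lemma \ref{tangent_restriction}. Since $\gamma\,'(0)=(0,1)$ and $\kappa(0)>0$, the tangent is strictly in the second quadrant just past $0$; let $\rho$ be the supremum of the interval on which $\gamma\,'$ is strictly in the second quadrant. On $(0,\rho)$ we have $\gamma_{\,1}\,'<0$, hence $\gamma_{\,1}>0$, so $\gamma_{\,1}(\rho)\ge 0$; if $\rho<m$, then $\gamma\,'(\rho)$ lies on the boundary of the second quadrant, hence equals $(-1,0)$ or $(0,1)$ (the latter ruled out once more by Lemma \ref{tangent_restriction}, the former by $\rho<m=\min U$) -- impossible. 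So $\gamma\,'$ is strictly in the second quadrant on all of $(0,m)$, whence $m\in K$ and $\delta=\sup K=m$.

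Now suppose, for contradiction, that $\delta<v$. First I would record that $\gamma_{\,1}\ge 0$ on $[0,v]$: we know $\gamma_{\,1}>0$ on $[0,\delta]$, and $\gamma_{\,1}(v)>0$ since $v\in U$ and hence $\gamma\,'(v)=(-1,0)$ lies in the second quadrant (Lemma \ref{Greg's Fact 5}), so a point of $[\delta,v]$ with $\gamma_{\,1}<0$ would violate Lemma \ref{obvious} applied at $v$. Consequently $\gamma_{\,1}$ is strictly decreasing on $[0,v]$ (Lemma \ref{tangent vector points left}), $\kappa\le\lambda$ on $[0,v]$ (Lemma \ref{kappa_le_lambda}), and, by Lemma \ref{Fact 10}, $F'\le 0$ and $(F-R)'\le 0$ on $[0,v]$; since $(F-R)(0)=2F(0)-1<0$ by the left-case hypothesis, we get $0<F<R$ on $[0,v]$. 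Next I would put $m'=\max\bigl(U\cap[0,v)\bigr)$, so $\gamma\,'\ne(-1,0)$ on $(m',v)$; since $\kappa(v)>0$ forces $\gamma\,'$ to lie strictly in the open second quadrant just before $v$, and on $(m',v)$ the tangent also avoids $(0,1)$ and $(1,0)$ (the last because $\gamma_{\,1}\ge 0$ there, by Lemma \ref{tangent vector points left}), a connectedness argument for $\theta\circ\gamma\,'$ shows $\gamma\,'$ lies strictly in the open second quadrant on all of $(m',v)$. Thus $\gamma_{\,2}$ is strictly increasing on $[m',v]$, with a local maximum of $\gamma_{\,2}$ at $v$ (using $\gamma_{\,2}\,'<0$ on $(v,\beta)$), and $\kappa(m')\le 0$. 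Combining this with a case split on the sign of $\kappa(\delta)$ -- when $\kappa(\delta)\ge 0$, which in particular covers $m'=\delta$ since then $\gamma\,'$ reaches $(-1,0)$ at $\delta$ from the second quadrant, one has $\kappa(\delta)=0$ and Proposition \ref{fact_3} applies -- I would run the lower-curve argument of Section 7: the proofs of Lemma \ref{eta_exceeds_delta_left} and of Propositions \ref{3.13 analogue, left case}, \ref{kappa comparison, left}, and \ref{eta_equals_beta} invoke the ``only one horizontal tangent'' hypothesis only to make the lower curve $L$ nontrivial and to chain Proposition \ref{kappa comparison, left} along $L$, so, once $\kappa(\delta)>0$ is forced, they produce a lower curve $L\subset[\delta,\beta)$ whose supremum $\eta$ satisfies $\delta<\eta\le v$ and $\gamma\,'(\eta)\in\{(0,-1),(-1,0)\}$; tracking $\gamma\,'$ past $\eta$ by Lemmas \ref{curving_tight_right} and \ref{curving_tight_after_eta_right} shows $\gamma\,'$ stays strictly in the fourth quadrant on $(\eta,\beta)$, so it never again equals $(-1,0)$ before $\beta$, contradicting $v\in U\cap(\eta,\beta)$.

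I expect the main obstacle to be precisely this final case $\delta<v$: one must isolate the portions of the Section 7 lower-curve machinery that do not already presuppose the statement being proved, and organize the case analysis (first on $\kappa(\delta)$, then on the structure of $U\cap[\delta,v]$ and the value of $\gamma\,'(\eta)$) so that every branch terminates in a contradiction with the maximality or membership assertions of Lemma \ref{sup_T}.
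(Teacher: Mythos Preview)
Your opening moves are sound: identifying $\delta=\min U$ and recording the monotonicity facts on $[0,v]$ are correct. The gap is in the final contradiction. You propose to run the Section~7 lower-curve machinery starting at $\delta$, but this requires $\gamma'$ to enter the third quadrant immediately after $\delta$, i.e., $\kappa(\delta)>0$. You assert this can be ``forced,'' yet in the very subcase you single out ($m'=\delta$, so $\gamma'$ lies strictly in the second quadrant on $(\delta,v)$) you correctly compute $\kappa(\delta)=0$; Proposition~\ref{fact_3} then gives $\kappa'(\delta)<0$, the tangent returns to the second quadrant, and the lower curve $L$ of Definition~\ref{lower_left} reduces to $\{\delta\}$. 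The machinery yields nothing there. Even when $\kappa(\delta)>0$, your endgame is incomplete: without the uniqueness hypothesis, the argument of Proposition~\ref{eta_equals_beta} only shows that either $\eta=\beta$ or $\gamma'(\eta)=(-1,0)$ (the possibility $\gamma'(\eta)=(0,-1)$ is indeed excluded by~(\ref{theta_ineq})). If $\gamma'(\eta)=(-1,0)$ you cannot invoke Lemmas~\ref{curving_tight_right}--\ref{curving_tight_after_eta_right} to push $\gamma'$ into the fourth quadrant, and no contradiction with $v\in U$ emerges.

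The paper avoids both obstacles by anchoring the comparison at $v$ rather than at $\delta$. It takes $u=\max(U\setminus\{v\})$, shows $\gamma'$ is strictly in the second quadrant on all of $(u,v)$, and then runs the admissibility/curvature comparison of Proposition~\ref{kappa comparison, left} with $(u,v)$ playing the role of the upper curve and $(v,w)$ (where $w>v$ satisfies $\gamma_2(w)=\gamma_2(u)$) playing the role of the lower curve. The analogue of~(\ref{theta_ineq}) at $s=w$ then gives $\theta(\gamma'(w))\le 2\pi-\theta(\gamma'(u))=\pi$, forcing $\gamma'(w)=(-1,0)$ with $w>v$, which contradicts $v=\max U$. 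The point is that $\kappa(v)>0$ is already secured by Lemma~\ref{sup_T}, so the comparison always launches cleanly, whereas $\kappa(\delta)$ may vanish.
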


\begin{proof}
\indent Suppose for contradiction that  $U - \{v\}$ is nonempty.  Since \( \gamma \, '(v) \eeq (-1,0) \) and  \( \kappa(v) > 0 \), there exists $\varepsilon  \st{>}0$ so that $\gamma\,'$ is strictly in the second quadrant on $(v-\varepsilon,v)$ and $\gamma\,'$ is strictly in the third quadrant on $(v,v  \pl  \varepsilon)$.  Since
$\gamma\,'$ is strictly in the second quadrant on $(v-\varepsilon, v)$,  $U-\{v\}  \eeq   \{s \in [0, v-\varepsilon]: \gamma\,'(s)  \eeq   (-1,0)\}$; that is,
$U-\{v\}$ is a level set of the restriction of $\gamma\,'$ to $[0,v-\varepsilon]$.  As such, $U - \{v\}$ is closed in $[0, v-\varepsilon]$, which means that $U - \{v\}$ is a compact subset of $\R$ and has a maximum element $u$.\\
\indent We claim that $\gamma\,'(s)$ is strictly in the second quadrant for all $s \in (u,v)$.  To prove so, suppose for contradiction that there exists \( s \st{ \in} (u,v) \) so that \( \gamma \, '(s) \) is not strictly in the second quadrant.  By Lemma \ref{obvious}, \( \gamma_{\, 1}(s) \st{>} 0 \).  Hence, we apply Lemma \ref{tangent vector points left} to conclude that \( \gamma \, '(s) \) is in the third quadrant.  Since  \( \gamma_{\,1} \,' \st{<} 0 \) on \((0, \delta] \) (Lemma \ref{tangent vector points left}) and \( \gamma \, ' \) is strictly in the second quadrant on \(( v \mn \varepsilon, v) \), there exists \( t \st{\in} [s, v) \) so that \( \gamma \, '(t) \eeq (-1,0) \), contradicting maximality of \( u \) in \( U - \{v\} \).\\
\indent We define $w$ to be the unique point in $(v,\beta)$ so that $\gamma_{\,2}(w)  \eeq   \gamma_{\,2}(u)$.  
We will ultimately achieve a contradiction by showing that $\gamma\,'(w)  \eeq   (-1,0)$.  In turn, we will accomplish this by curvature comparison.  Let $s \in (v,w)$, and let $\ol{s}$ be the unique point in $(u,v)$ so that $\gamma_{\,2}(\ol{s})  \eeq   \gamma_{\,2}(s)$.  We claim that $\kappa(s)  \leeq  \kappa(\ol{s})$.  Since $\kappa'(v)  \st{<}  0$ (Lemma \ref{fact_3}), we already know that this inequality holds for all $s$ sufficiently close to $v$.
Additionally, recall that there exists $\varepsilon  \st{>}0$ so that $\gamma\,'(s)$ is strictly in the third quadrant for all $s$ in $(v, v \pl  \varepsilon)$.  We will prove that $\gamma\,'(s)$ is strictly in the third quadrant for all $s \in (v,w)$.\\
\indent Let $W  \eeq   \{s \in (v,w): \gamma\,'(t)$ is strictly in the third quadrant and $\kappa(t)  \leeq  \kappa(\ol{t})$ for all $t$ in $(v,s]\}$. Since  \(W \) is nonempty and bounded above,  \( W \) has a supremum, which we shall denote by \(z\).   As in Proposition \ref{3.13 analogue, left case}, the following inequalities hold for all $s$ in $(v,z)$:
\begin{equation}
\label{gamma_one_ineq_2}
\gamma_{\, 1}(\ol{s}) - \gamma_{\, 1}(v)  \leeq  \gamma_{\, 1}(v) - \gamma_{\, 1}(s),
\end{equation}

\begin{equation}
\label{theta_ineq_2}
\theta(\gamma\,'(s))  \leeq  2 \pi-\theta(\gamma\,'(\ol{s})).
\end{equation}
It can also be proved that $\lambda(s)  \geeq  \lambda(\ol{s})$ for all $s$ in $(v,z)$.
By continuity of all relevant quantities on $(0, \beta)$, it follows that these inequalities hold at $z$ as well.\\
\indent Finally, since \( \gamma \,' \) is strictly in the second quadrant on \((u,v) \) and strictly in the third quadrant on \(W\), it can be proved by a similar argument to that in Proposition \ref{kappa comparison, left}  that $w  \eeq   z$.  It follows that the inequalities (\ref{gamma_one_ineq_2}) and (\ref{theta_ineq_2}) hold for all $s$ in $(v,w]$.  By (\ref{theta_ineq_2}),  $\theta(\gamma\,'(w))  \leeq  2 \pi \mn \theta(\gamma\,'(\ol{w})) \eeq   2 \pi \mn \theta(\gamma\,'(u))  \eeq   \pi$.  Since $\theta \circ \gamma\,' \in (\pi, 3\pi/2)$ on $(v,w)$, it must be the case that $\theta(\gamma\,'(w))  \eeq   \pi$.  That is,
 $\gamma\,'(w)  \eeq   (-1,0)$, contradicting the fact that there exists no $s  \st{>} v$ with $\gamma\,'(s)  \eeq   (-1,0)$.
\end{proof}

\section{Glossary of Notation}
Throughout this section, we assume, as at the beginning of Section 3, that $E$ is a spherically symmetric 
 isoperimetric region, and that $A \subset \R^2$ is a spherically symmetric set that generates $E$ when rotated about the $e_1$-axis.  We first summarize the meanings that we have assigned to characters of the Latin alphabet, then proceed through the characters of the Greek alphabet that are used in the article.  Characters used only in Section 7.1 are excluded.
 
 \begin{longtable}[l]{p{50pt}  p{352pt}} 
$A_s$ & Given an $s \in (-\beta, \beta)$, $A_s$ denotes the osculating circle to $\gamma$ at $\gamma(s)$ (see Defn. \ref{as parameterized}). \\
\, & \, \\
$C_s$ & Given an $s \in (-\beta, \beta)$, $C_s$ denotes the canonical circle to $\gamma$ at $\gamma(s)$, i.e. the unique oriented circle that is 
tangent to $\gamma$ at $\gamma(s)$ and has its center on the $e_1$-axis (see Defn. \ref{canonical_circle}). \\
\, & \, \\
$\ti{C}_t$ & For a fixed $s$, let $\alpha$ be an arclength parameterization of $A_s$.  Given $t$ in the domain of $\alpha$ such that $\alpha_2(t) \neeq 0$ or $\alpha'(t) \eeq (0, \pm 1)$, $\ti{C}_t$ denotes the canonical circle to  $\alpha$ at $\alpha(t)$, i.e. the unique oriented circle that is 
tangent to $\alpha$ at $\alpha(t)$ and has its center on the $e_1$-axis (see Defn. \ref{tilde quantities}). \\
\, & \, \\
$F$ & Given $s \in (-\beta, \beta)$, $F(s)$ denotes the abscissa of the center of $C_s$ (see Defn. \ref{canonical_circle}).\\
\, & \, \\
$\ti{F}$ & For a fixed $s$, let $\alpha$ be an arclength parameterization of $A_s$. For any $t$ such that $\ti{C}_t$ exists, $\ti{F}(t)$ denotes the abscissa of the center of $\ti{C}_t$ (see Defn. \ref{tilde quantities}).\\
\, & \, \\
$G$ & We define the function $G$ on $(-\beta, \beta)$ by $G(s) \eeq F(s) \mn R(s)$ (see Defn. \ref{DefininG}). \\
\, & \, \\
$\ti{G}$ & For a fixed $s$, let $\alpha$ be an arclength parameterization of $A_s$. For any $t$ such that $\ti{C}_t$ exists, let $\ti{G}(t) \eeq \ti{F}(t) \mn \ti{R}(t)$ (see Defn. \ref{DefininG}).\\
\, & \, \\
$H_0$ & Given a regular point $x \in \partial E$, $H_0(x)$ denotes the unaveraged mean curvature of $\partial E$ at $x$ 
(i.e. the sum of the principal curvatures of $\partial E$ at $x$).  After parameterizing (the rightmost component of) $\partial A$, we also 
 consider $H_0$ as a function of arclength: given $s \in (-\beta, \beta)$, we let $H_0(s)$ denote the unaveraged  
 mean curvature of $\partial E$ at $\gamma(s)$ (see Defn. \ref{GMC}).\\
 \, & \, \\
$H_1$ &  Given a regular point $x \in \partial E$, $H_1(x)$ denotes the directional derivative of the log of the density 
function in the direction of the outward unit normal vector to $\partial E$ at $x$.  Meanwhile, given 
$s \in (-\beta, \beta)$, we let $H_1(s)$ denote the directional derivative of the log of the density function in 
the direction of the outward unit normal vector to $\partial E$ at $\gamma(s)$ (see Defn. \ref{GMC}).  \\
\, & \, \\
$\ti{H_1}$ &  For a fixed $s$, let $\alpha$ be an arclength parameterization of $A_s$.  For each $t$ in the domain of $\alpha$, let
\[
\ti{H_1}(t) \eeq \frac{p}{|\alpha(t)|} \frac{\alpha(t)}{|\alpha(t)|} \cdot \nu(t),
\]
where $\nu(t)$ is the outward unit normal vector to $\alpha$ at $\alpha(t)$ (see Defn. \ref{as parameterized}). \\
\, & \, \\
$H_f$ & Given a regular point $x \in \partial E$, $H_f$ denotes the generalized mean curvature of $\partial E$ at $x$.  Given 
$s \in (-\beta, \beta)$, we let $H_f(s)$ denote the generalized mean curvature of $\partial E$ at $\gamma(s)$ (see Defn. \ref{GMC}). \\
\, & \, \\
$h$ & In both cases, $h$ denotes a local inverse function for $\gamma_{\,2}$ with codomain $(0, \delta)$:  if $y \in (\gamma_{\,2}(\eta), \gamma_{\,2}(\delta))$, then $h(y)$ is the unique $t \in (0, \delta)$ so that $\gamma_{\,2}(t) \eeq y$ (see Defn. \ref{local_inverses}). \\
\, & \, \\
$K$ &In both cases, $K$ denotes the subset of $[0, \beta)$ that we call the upper curve.  In the right case, 
the upper curve is defined as the set of $s \in [0, \beta)$ so that  $\gamma\,'(t)$ lies in the second quadrant and 
$\kappa(t)  \geeq \lambda(t)  \st{>} 0$ for all $t \in [0,s]$  (see Defn. \ref{upper_curve_right}).  In the left case, the upper curve is defined as 
 the set of $s \in [0, \beta)$ so that $\gamma\,'(t)$ is strictly in the second quadrant for all $t \in (0,s)$ (see Defn. \ref{upper_curve_left}). \\
 \, & \, \\
 $k$ & In both cases, $k$ denotes a local inverse function for $\gamma_{\,2}$ with codomain $(\delta, \eta)$:  if $y \in (\gamma_{\,2}(\eta), \gamma_{\,2}(\delta))$, then $k(y)$ is the unique $t \in (\delta, \eta)$ so that $\gamma_{\,2}(t) \eeq y$ (see Defn. \ref{local_inverses}). \\
\, & \, \\
$L$ & In both cases,  $L$ denotes the subinterval of $[0, \beta)$ that we call the lower curve.  In each case, the definition of $L$ is rather technical, so we refer the reader to Definition \ref{lower_curve_right} in the right case (Section 6) and to Definition \ref{lower_left} in the left case (Section 7).\\
\, & \, \\
$R$ & Given $s \in (-\beta, \beta)$, we let $R(s)$ denote the radius of $C_s$ 
(see Defn. \ref{canonical_circle}). \\
\, & \, \\
$\ti{R}$ & For a fixed $s$, let $\alpha$ be an arclength parameterization of $A_s$. For any $t$ such that $\ti{C}_t$ exists, we let $\ti{R}(t)$ denote the radius of $\ti{C}_t$ (see Defn. \ref{tilde quantities}).\\
\, & \, \\
$\ol{s}$ & In each case, if $s \in L$, we let $\ol{s}$ denote the unique point in $K$ so that $\gamma_{\, 2}(\ol{s}) \eeq \gamma_{\,2}(s)$ (see Prop. 6.13, Prop. 7.16). \\
\, & \, \\
$\ti{s}$ & For a fixed $s$, let $\alpha$ be an arclength parameterization of $A_s$.  We let $\ti{s}$ denote the point in the domain of $\alpha$ such that $\alpha(\ti{s}) \eeq \gamma(s)$ (see Defn. \ref{as parameterized}). \\
\, & \, \\
$\alpha$ & For a fixed $s$, we let $\alpha$ denote an arclength parameterization of $A_s$ (see Defn. \ref{as parameterized}). \\
\, & \, \\
$\pm \beta$ & Endpoints of the domain of $\gamma$ \\
 \, & \, \\
$\gamma$
& Denotes an arclength parameterization of a component of $\partial A$ (which, in fact, turns out to be the only component of $\partial A$; see the beginning of Section 3). \\
 \, & \, \\
$\delta$ & In each case, $\delta$ denotes the supremum of the upper curve. (In the right case (Section 6), see  
Defn. \ref{upper_curve_right} and following. In the left case (Section 7), see Defn. \ref{upper_curve_left} and following.) \\
\, & \, \\ 
$\eta$ & In each case, $\eta$ denotes the supremum of the lower curve. (In the right case (Section 6), see Defn. \ref{lower_curve_right}.  In the left case (Section 7), see Defn. \ref{lower_left}.) \\
\, & \, \\
$\theta$ & We define $\theta: S^{1} \to (0, 2\pi]$ by letting $\theta(v)$ be the angle in the specified interval that $v$ makes with the positive $e_1$-axis (see Defn. \ref{theta definition}). \\
\, & \, \\
$\kappa$ & Given $s \in (-\beta, \beta)$, $\kappa(s)$ denotes the signed curvature of $\gamma$ at $\gamma(s)$. \\
\, & \, \\
$\ti{\kappa}$ & For a fixed $s$, let $\alpha$ be an arclength parameterization of $A_s$. For any $t$ in the domain of $\alpha$, we let $\ti{\kappa}(t)$ denote the signed curvature of $A_s$ at $\alpha(t)$ (see Defn. \ref{as parameterized}). \\
\, & \, \\
$\lambda$ & Given $s \in (-\beta, \beta)$, $\lambda(s)$ denotes the signed curvature of $C_s$ (see Defn. \ref{canonical_circle}).  \\
\, & \, \\
$\ti{\lambda}$ & For a fixed $s$, let $\alpha$ be an arclength parameterization of $A_s$. For any $t$ such that $\ti{C}_t$ exists, $\ti{\lambda}(t)$ denotes the signed curvature of $\ti{C}_t$ (see Defn. \ref{tilde quantities}).  
\end{longtable}

\section*{Acknowledgements}
\noindent This paper is the work of Gregory Chambers with the 2014 Williams College NSF ``SMALL'' Geometry Group, advised by Frank Morgan, and was completed in undergraduate thesis work by Tammen with Ted Shifrin at the University of Georgia. We would like to thank  the NSF, Williams College, and the MAA for supporting the ``SMALL'' REU, Chambers' visit to Williams, and our travel to MathFest.   We would also like to thank the anonymous referee who gave us many helpful suggestions. \\

\bibliographystyle{abbrv}

\bigskip
\noindent Wyatt Boyer
\newline 
Department of Mathematics 
\newline
Boston College
\newline
boyerw@bc.edu
\newline
\newline
Bryan Brown
\newline
Department of Mathematics
\newline
Pomona College
\newline
bcb02011@mymail.pomona.edu
\newline
\newline
Gregory R. Chambers
\newline
Department of Mathematics
\newline
University of Chicago
\newline
chambers@math.uchicago.edu
\newline
\newline
Alyssa Loving
\newline
Department of Mathematics
\newline
University of Illinois Urbana-Champaign
\newline
aloving2@illinois.edu
\newline
\newline
Sarah Tammen
\newline
Department of Mathematics
\newline
Massachusetts Institute of Technology
\newline
setammen@mit.edu

\end{document}